\documentclass{amsart}
\usepackage{amssymb,amscd}
\usepackage{graphicx}
\usepackage[all]{xy}
\usepackage{amsmath}
\usepackage{amsfonts}
\usepackage{amsthm}
\usepackage{euscript}
\usepackage{tikz,subcaption} % To draw figures and write subcaptions.
\usepackage{color}
\usepackage{epstopdf}

\unitlength=1mm

\frenchspacing

\emergencystretch=5pt
\tolerance=600

\newtheorem{theorem}{Theorem}[section]
\newtheorem{problem}[theorem]{Problem}
\newtheorem{proposition}[theorem]{Proposition}
\newtheorem{lemma}[theorem]{Lemma}
\newtheorem{corollary}[theorem]{Corollary}

\theoremstyle{definition}
\newtheorem{definition}[theorem]{Definition}
\newtheorem{example}[theorem]{Example}
\newtheorem{construction}[theorem]{Construction}

\theoremstyle{remark}
\newtheorem*{remark}{Remark}

\numberwithin{equation}{section}
%\numberwithin{figure}{chapter}
%\numberwithin{section}{chapter}

\renewcommand{\phi}{\varphi}

%Blackboard bold letters
\def\C{\mathbb C}

\def\Q{\mathbb Q}
\def\R{\mathbb R}

\def\Z{\mathbb Z}

%Calligraphic letter
\def\sK{\mathcal K}

%Bold letters
\def\k{\mathbf k}

%Bold italic letters
\newcommand{\mb}[1]{{\textbf {\textit#1}}}

%Smaller fractions

%Russian binomial coefficients
%\newcommand{\bin}[2]{{L_{#1}^{#2}}}

%inequality signs look better
\renewcommand{\ge}{\geqslant}
\renewcommand{\le}{\leqslant}

%Longer right arrows

%General math macros

%Algebra macros
\def\Ann{\mathop{\mathrm{Ann}}\nolimits}

\newcommand{\Ker}{\mathop{\rm Ker}}
\def\mdeg{\mathop{\mathrm{mdeg}}}

\newcommand{\Tor}{\mathop{\rm Tor}\nolimits}

%Categorical macros

\def\lim{\mathop\mathrm{lim}\nolimits}

%Combinatorial macros

\newcommand{\lk}{\mathop{\rm link}\nolimits}
\newcommand{\st}{\mathop{\rm star}\nolimits}

%Topology macros

%Toric macros
\newcommand{\zk}{\mathcal Z_{\mathcal K}}
\newcommand{\zp}{\mathcal Z_P}

%Red and blue colored text

\begin{document}

\title[Cohomological rigidity of
manifolds defined by 3-polytopes]{Cohomological rigidity of
manifolds defined by right-angled 3-dimensional polytopes}

\author[V. Buchstaber]{Victor Buchstaber}
\address{Steklov Mathematical Institute, Russian Academy of
Sciences, Gubkina str.~8, 119991 Moscow, RUSSIA,
\newline\indent
Department of Mathematics and Mechanics, Moscow State University,
\quad \emph{and}
\newline\indent Institute for Information Transmission Problems,
Russian Academy of Sciences} \email{buchstab@mi.ras.ru}

\author[N. Erokhovets]{Nikolay Erokhovets}
\address{Department of Mathematics and Mechanics, Moscow
State University, Leninskie Gory, 119991 Moscow, RUSSIA}
\email{erochovetsn@hotmail.com}

\author[M. Masuda]{Mikiya Masuda}
\address{Department of Mathematics, Osaka City University,
3-3-138 Sugimoto, Sumiyoshi-ku, Osaka-shi 558-8585, JAPAN}
\email{masuda@sci.osaka-cu.ac.jp}

\author[T. Panov]{Taras Panov}
\address{Department of Mathematics and Mechanics, Moscow
State University, Leninskie Gory, 119991 Moscow, RUSSIA,
\newline\indent Institute for Theoretical and Experimental Physics,
Moscow, \quad \emph{and}
\newline\indent Institute for Information Transmission Problems,
Russian Academy of Sciences} \email{tpanov@mech.math.msu.su}
\urladdr{http://higeom.math.msu.su/people/taras/}

\author[S. Park]{Seonjeong Park}
\address{Osaka City University Advanced Mathematical Institute (OCAMI), 3-3-138 Sugimoto, Sumiyoshi-ku
Osaka-shi 558-8585, JAPAN} \email{seonjeong1124@gmail.com}

\thanks{The research of the first, second and fourth author was
supported by the Russian Foundation for Basic Research (grants
RFBR 17-01-00671 and 16-51-55017-GFEN). The second author was
supported by the Young Russian Mathematics Award. The third author
was supported by JSPS Grant-in-Aid for Scientific Research (C)
16K05152.}

\subjclass[2010]{Primary: 57R91, 57M50; secondary: 05C15, 14M25,
52A55, 52B10}

\begin{abstract}
A family of closed manifolds is called cohomologically rigid if a cohomology ring isomorphism implies a diffeomorphism for any two manifolds in the family. We establish cohomological rigidity for large families of 3-dimensional and 6-dimensional manifolds defined by 3-dimensional polytopes.

We consider the class $\mathcal P$ of 3-dimensional combinatorial simple polytopes~$P$, different from a tetrahedron, whose facets do not form 3- and 4-belts. This class includes mathematical fullerenes, i.\,e. simple 3-polytopes with only 5-gonal and
6-gonal facets. By a theorem of Pogorelov, any polytope from $\mathcal P$ admits
a right-angled realisation in Lobachevsky 3-space, which is unique up to isometry.

Our families of smooth manifolds are associated with polytopes from the class~$\mathcal P$. The first family consists of 3-dimensional small covers of polytopes from $\mathcal P$, or hyperbolic 3-manifolds of L\"obell type. The second family consists of 6-dimensional quasitoric manifolds over polytopes from~$\mathcal P$.  Our main result is that both families are cohomologically rigid,
i.\,e. two manifolds $M$ and $M'$ from either of the families are diffeomorphic if and
only if their cohomology rings are isomorphic. We also prove that if $M$ and $M'$
are diffeomorphic, then their corresponding polytopes $P$ and $P'$
are combinatorially equivalent. These results are intertwined with the classical
subjects of geometry and topology, such as combinatorics of 3-polytopes, the Four Colour Theorem, aspherical manifolds, diffeomorphism classification of 6-manifolds and invariance of Pontryagin classes.  The proofs use techniques of toric topology.
\end{abstract}

%\enlargethispage\baselineskip
\maketitle

\tableofcontents

\section{Introduction}
The following naive question goes back to the early days of
differential topology: given two closed smooth manifolds $M$ and
$M'$, when does an isomorphism $H^*(M)\cong H^*(M')$ of integral
cohomology rings imply that $M$ and $M'$ are diffeomorphic? This
is generally regarded as an unlikely case, as in the 20th century
topologists discovered many important series of manifolds for
which the cohomology ring, or even the homotopy type, does not
determine the diffeomorphism class. Three-dimensional lens spaces,
Milnor's exotic spheres and Donaldson's four-dimensional manifolds
are prominent examples of different level of complexity. Many
interesting examples appear in dimension~$6$, which is given a
special attention in our work. There is a family of ``fake''
complex projective $3$-spaces, i.\,e. simply connected smooth
$6$-manifolds whose cohomology rings are isomorphic to that of~$\C
P^3$. Such manifolds are homotopy equivalent to~$\C P^3$, but not
pairwise diffeomorphic in general.

We say that a family of closed smooth manifolds is \emph{cohomologically rigid} if a cohomology ring isomorphism $H^*(M)\cong H^*(M')$
implies a diffeomorphism $M\cong M'$ for any two manifolds in the family.

In this paper we establish cohomological rigidity for two
particular families of manifolds of dimension $3$ and $6$,
respectively. Each of these families arises from an important
class of combinatorial polytopes, which we refer to as the
\emph{Pogorelov class}~$\mathcal P$. It consists of simple
$3$-dimensional polytopes which are flag and do not have $4$-belts
of facets. In particular, polytopes in $\mathcal P$ do not have
triangular and quadrangular facets. The class $\mathcal P$
includes all mathematical fullerenes, i.\,e. simple $3$-polytopes
with only pentagonal and hexagonal facets. Mathematical fullerenes
are particularly interesting as they provide models for physical
fullerenes, i.\,e. molecules of carbon, whose discovery was
awarded with the Nobel Prize in Chemistry in~1996~\cite{bu-er}.

By the results of Pogorelov~\cite{pogo67} and Andreev~\cite{andr70}, the class $\mathcal P$ coincides with the class of combinatorial $3$-polytopes which can be realised in Lobachevsky (hyperbolic) space $\mathbb L^3$ with right angles between adjacent facets (\emph{right-angled $3$-polytopes} for short).

The conditions specifying the Pogorelov class $\mathcal P$ also feature as the ``no-$\triangle$'' and ``no-$\square$'' conditions in Gromov's construction~\cite{grom87} of piecewise Euclidean cubical spaces of non-positive curvature. The latter is defined via the comparison inequality of Alexandrov--Toponogov (the so-called $\mathrm{CAT}(0)$-\emph{inequality}). Gromov proved that non-positivity of the curvature (in the $\mathrm{CAT}(0)$ sense) is equivalent to the no-$\triangle$-condition (the absence of $3$-belts in the dual polytope), while the no-$\square$-condition (the absence of $4$-belts) implies that the curvature is strictly negative. As pointed out in~\cite[\S4.6]{grom87}, the barycentric subdivision of every polytope satisfies the no-$\triangle$-condition, but the no-$\square$ is harder to get. Thanks to fullerenes, we now have a large class of polytopes satisfying both conditions. It follows from the results of Thurston~\cite{thur98} that the number of combinatorially different fullerenes with $p_6$ hexagonal facets grows as~$p_6^9$. Furthermore, we show in Corollary~\ref{Pogpk2} that for any finite sequence of nonnegative integers $p_k$, $k\geqslant 7$, there exists a Pogorelov polytope whose number of $k$-gonal facets is~$p_k$.

Our first family consists of \emph{hyperbolic $3$-manifolds of L\"obell type}, studied by Vesnin in~\cite{vesn87}. They arise from right-angled realisations of polytopes from the Pogorelov class~$\mathcal P$ (see the details in Subsection~\ref{sechyp}). Each hyperbolic $3$-manifold $N$ of L\"obell type is composed of $8$ copies of a polytope $P\in\mathcal P$. Furthermore, $N$ is a branched covering of~$P$, a \emph{small cover} in the sense of Davis and Januszkiewicz~\cite{da-ja91}. We prove in Theorem~\ref{3cohrigreal} that two such manifolds $N$ and $N'$ are diffeomorphic (or isometric) if and only if their $\Z_2$-cohomology rings are isomorphic. Hyperbolic $3$-manifolds of L\"obell type are aspherical, and their fundamental groups are certain finite extensions of the commutator subgroups of hyperbolic right-angled reflection groups. Our cohomological rigidity result has a pure algebraic interpretation: the fundamental groups of $N$ are distinguished by their $\Z_2$-cohomology rings. Another example of this situation was studied in~\cite{ka-ma09}: it was proved there that the fundamental groups of small covers which admit a Riemannian flat metric (that is, small cover over $n$-cubes) are distinguished by their $\Z_2$-cohomology rings (see also~\cite{c-m-o16}).

In this regard, we note the following well-known problem: describe the class of groups realisable as fundamental groups of finite cell complexes. According to the conjecture of Arnold, Pham and Thom, this class contains all Artin groups (including those whose corresponding Coxeter group is infinite). In~\cite{ch-da95} this conjecture was proved for almost all Artin groups, including right-angled ones.

The second family arises from toric topology: it consists of quasitoric (or topological toric) manifolds
whose quotient polytopes are in the class $\mathcal P$. These are $6$-dimensional smooth manifolds acted on by a $3$-torus $T^3$ with quotient $P\in\mathcal P$. We show (in Theorem~\ref{3cohrig} and Corollary~\ref{coro:5-1}) that this family is cohomologically rigid, i.\,e. two manifolds $M$ and $M'$ in the family are diffeomorphic if and only if their cohomology rings are isomorphic. In general a non-equivariant diffeomorphism between quasitoric manifolds $M$ and $M'$ does not imply that the corresponding polytopes $P$ and $P'$ are combinatorially equivalent, but this is the case when the quotient polytopes are in the class~$\mathcal P$ (see Theorem~\ref{3cohrig}).

Our proofs use both combinatorial and cohomological techniques of toric topology. Namely, we reduce the $3$-dimensional statement (Theorem~\ref{3cohrigreal}) to the $6$-dimensional one (Theorem~\ref{3cohrig}) using the fact that the cohomology ring of a small cover and the cohomology ring of a quasitoric manifold (with coefficients in~$\Z_2$) have the same structure and differ only in grading. Then we raise the dimension even higher, by reducing the $6$-dimensional statement to certain cohomological properties of moment-angle manifolds of dimension $m+3$, where $m$ is the number of facets in the Pogorelov polytope. After reducing the statement to analysing the cohomology of moment-angle manifolds, we apply several nontrivial combinatorial and algebraic lemmata of Fan, Ma and Wang~\cite{f-m-w,fa-wa}, used in their proof of cohomological rigidity for moment-angle compexes of flag $2$-spheres without chordless cycles of length~$4$. Families of polytopes from the class $\mathcal P$ also feature in the works~\cite{bu-er,bu-erS} on combinatorial constructions of fullerenes.
%The general phenomenon behind our argument is that the cohomology rings of all manifolds associated %with the polytopes from the Pogorelov class~$\mathcal P$ admit much less automorphisms than in the %general case.

The following question is still open: is the whole family of toric or topological toric manifolds cohomologically rigid? Surprisingly, no counterexamples to this ``toric cohomological rigidity problem'' have been found up to date. This question is linked to classical problems of classification of simply connected manifolds and cohomological invariance of Pontryagin characteristic classes.

In real dimension~$6$ the families of quasitoric and topological toric manifolds coincide and contain strictly the family of toric manifolds (smooth complete toric varieties).
The family of quasitoric (or topological toric) manifolds whose quotient polytopes are in the class $\mathcal P$ is large enough, as there is at least one quasitoric manifold over any simple $3$-polytope.
Indeed, the Four Colour Theorem implies that any simple $3$-polytope admits a ``characteristic function'' (see Proposition~\ref{4cprop}); this remarkable observation was made by Davis and Januszkiewicz in~\cite{da-ja91}. Algebraic toric manifolds whose associated polytopes are in $\mathcal P$ are fewer, but still abundant; many concrete examples were produced recently by Suyama~\cite{suya15}. However, there are no \emph{projective} toric manifolds among them. This follows from a result of Delaunay~\cite{dela05} that a Delzant $3$-polytope must have at least one triangular or quadrangular facet.

Our results on cohomological rigidity of toric manifolds chime
with the problem of diffeomorphism classification for simply
connected manifolds, which is a classical subject of algebraic and
differential topology. The foundations of this classification in
dimensions $\ge5$ were laid in the works of Browder and Novikov
(see~\cite{brow72}, \cite{novi96}). Novikov~\cite{novi64} showed
that for a given simply connected manifold~$M$ of dimension $\ge5$
there are only finitely many manifolds $M'$ for which there exists
a homotopy equivalence $M\stackrel\simeq\longrightarrow M'$
preserving the Pontryagin classes. The case of low dimensions
$5,6,7$ was also considered in~\cite{novi64}. In dimension $6$
appear first examples of manifolds whose rational Pontryagin
classes are not homotopy invariant. The following setting of the
classification problem is related to the question of cohomological
rigidity: under which additional assumptions an integer cohomology
ring isomorphism implies a diffeomorphism of manifolds? In this
setting, complete classification results in dimension $6$ were
obtained in the works of Wall~\cite{wall66}, Jupp~\cite{jupp73}
and Zhubr~\cite{zhub00}.

Toric, quasitoric or topological toric manifolds $M$ are simply connected, and their cohomology rings $H^*(M)$ are generated by $2$-dimensional classes. Two such manifolds of dimension $6$ are diffeomorphic if there is an isomorphism of their cohomology rings preserving the first Pontryagin class~$p_1$; this can be deduced from the classification result of Wall and Jupp using classical homotopy-theoretical techniques, see Section~\ref{class6}. Therefore, the toric cohomological rigidity problem in dimension $6$ reduces to establishing the invariance of $p_1$ under integer cohomology ring isomorphisms. This turns out to be a purely combinatorial and linear algebra problem. However, we were not able to prove directly the invariance of $p_1$ under cohomology isomorphisms for toric manifolds over simple $3$-polytopes from the class~$\mathcal P$.
One of our main results (Theorem~\ref{3cohrig}) can be interpreted as a classification result for a particular large family of simply connected $6$-dimensional manifolds. We note that our proof is independent of the general classification results of~\cite{wall66} and~\cite{jupp73}.

\medskip

\noindent\textbf{Acknowledgements}
The authors are grateful to
\begin{itemize}
\item[--]Feifei Fan for introducing us to the results of the works~\cite{f-m-w} and~\cite{fa-wa} on the
cohomological rigidity of moment-angle complexes;

\smallskip

\item[--]Alexandr Gaifullin for drawing our attention to Siebennman's  no-$\square$-condition in Gromov's theory of hyperbolic groups, which is equivalent to the absence of $4$-belts in the case of polytopes;

\smallskip

\item[--] Sergei Novikov for stimulating discussions on the diffeomorphism classification of simply connected manifolds and invariance of Pontryagin classes;

\smallskip

\item[--]Andrei Vesnin for his help in linking our results to hyperbolic geometry; in particular, to hyperbolic $3$-manifolds of L\"obell type and Pogorelov's theorem on right-angled $3$-polytopes.
\end{itemize}

We thank the Institute for Mathematical Science, National University of Singapore, and the organisers of the Program on Combinatorial and Toric Homotopy, where we were able to exchange ideas which laid the foundation of this work. We also thank the JSPS program for Advancing Strategic International Networks to Accelerate the Circulation of Talented Researchers based on Osaka City University Advanced Mathematical Institute (OCAMI),
which gave us an opportunity to meet and work together on this project.

\section{Preliminaries}
Here we collect the necessary information about toric varieties,
quasitoric manifolds and moment-angle manifolds; the details can
be found in~\cite{bu-pa15}. We also review small covers and hyperbolic manifolds here.

\subsection{Simple polytopes}
Let $\R^n$ be an $n$-dimensional Euclidean space with the scalar
product $\langle\;,\,\rangle$. A \emph{convex polytope} $P$ is a
nonempty bounded intersection of finitely many half-spaces in
some~$\R^n$:
\begin{equation}
\label{ptope}
  P=\bigl\{\mb x\in\R^n\colon\langle\mb a_i,\mb x\rangle+b_i\ge0\quad\text{for }
  i=1,\ldots,m\bigr\},
\end{equation}
where $\mb a_i\in\R^n$ and $b_i\in\R$. We often fix a presentation
by inequalities~\eqref{ptope} alongside with the polytope~$P$. We
assume that $P$ is $n$-dimensional, that is, the dimension of the
affine hull of $P$ is~$n$. We also assume that each inequality
$\langle\mb a_i,\mb x\rangle+b_i\ge0$ in~\eqref{ptope} is not
redundant, that is, cannot be removed without changing~$P$. Then
$P$ has $m$ \emph{facets} $F_1,\ldots,F_m$, where
\[
  F_i=\{\mb x\in P\colon\langle\mb a_i,\mb x\rangle+b_i=0\}.
\]
Each facet is a polytope of dimension $n-1$. A \emph{face} of $P$
is a nonempty intersection of facets. Zero-dimensional faces are
\emph{vertices}, and one-dimensional faces are \emph{edges}.

We refer to $n$-dimensional polytopes simply as
\emph{$n$-polytopes.}

Two polytopes $P$ and $Q$ are \emph{combinatorially equivalent} ($P\simeq Q$) if there is a
bijection between their faces preserving the inclusion relation. A
\emph{combinatorial polytope} is a class of combinatorially
equivalent polytopes.

We denote by $G_P$ the vertex-edge graph of a polytope~$P$, and
refer to it simply as the \emph{graph of~$P$}.
%By the result of Blind and Mani, two simple polytopes are combinatorially
%equivalent if and only if their graphs are isomorphic (see~\cite[Theorem~3.12]{zieg07}).
A graph is \emph{simple} if it has no loops and multiple edges. A
connected graph $G$ is \emph{$3$-connected} if it has at least $6$
edges and deletion of any one or two vertices with all incident
edges leaves $G$ connected. The following classical result
describes the graphs of $3$-polytopes.

\begin{theorem}[{Steinitz, see~\cite[Theorem~4.1]{zieg07}}]\label{S-Theorem}
A graph $G$ is the graph of a $3$-polytope if and only it is simple,
planar and $3$-connected.
\end{theorem}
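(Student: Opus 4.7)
The theorem has two directions. The easier direction asserts that the graph $G_P$ of any $3$-polytope $P$ is simple, planar, and $3$-connected. Simplicity is immediate from convexity: two distinct vertices of $P$ determine at most one edge, and edges have distinct endpoints. Planarity follows from the Schlegel diagram construction: projecting $\partial P$ from a point just beyond one of its facets into that facet gives a straight-line planar embedding of $G_P$. For $3$-connectedness, I would argue that after removing any one or two vertices the remaining graph stays connected, using that $\partial P$ is a $2$-sphere, that any two vertices are linked by boundary paths of edges, and that every vertex of a $3$-polytope has degree at least~$3$, giving the required redundancy.

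The substantial direction is the converse: realising a given simple planar $3$-connected graph $G$ as the vertex–edge graph of some convex $3$-polytope. The plan is to proceed inductively, following Steinitz's original strategy. One shows that any simple planar $3$-connected graph distinct from $K_4$ admits an elementary reduction—contracting an edge whose endpoints both have degree $\ge 3$, or deleting an edge lying on two triangles, or removing a degree-$3$ vertex—that preserves simplicity, planarity and $3$-connectedness while decreasing edge count. Starting from $K_4$, which is realised by the tetrahedron, one reverses each reduction by a geometric move on the polytope: cutting off a vertex by a hyperplane, stacking a shallow pyramid on a facet, or a suitable projective perturbation that pulls an edge across a face. Iterating these moves builds a polytope at each stage whose graph matches the corresponding graph in the reduction sequence.

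A cleaner modern route is via Tutte's spring embedding theorem together with the Maxwell--Cremona lifting correspondence. Fix a triangular face of a plane embedding of $G$, pin its vertices to the vertices of a triangle in $\R^2$, and place the remaining vertices at the spring-equilibrium position with unit weights on all edges; by Tutte's theorem the result is a planar straight-line embedding all of whose bounded faces are convex. An equilibrium stress then exists, and by the Maxwell--Cremona correspondence the embedding lifts to a strictly convex piecewise-linear surface over the bounded region, whose convex hull together with the plane of the outer triangle closes up to the desired convex $3$-polytope.

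The principal obstacle is the inductive or lifting step of the hard direction: verifying that the geometric construction can always be carried out while maintaining strict convexity of every dihedral angle and without merging faces. Combinatorial control, guaranteeing that reductions can be undone in general position, or equivalently that the Tutte embedding has the correct face lattice, rests essentially on $3$-connectedness, which forbids the planar embedding from containing non-facial separating cycles and thereby ensures that each combinatorial face of $G$ corresponds to an actual face of the resulting polytope.
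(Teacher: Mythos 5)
The paper does not prove this statement at all: it is quoted as a classical theorem of Steinitz with a reference to Ziegler's book (where it is proved by reducing every simple planar $3$-connected graph to $K_4$ via $\Delta Y$/$Y\Delta$ transformations and realising each transformation geometrically). So there is no ``paper proof'' to compare against, and your proposal has to be judged on its own. As it stands it is a plan rather than a proof, and it has concrete gaps in both directions. For the easy direction, $3$-connectedness of $G_P$ is not a matter of ``redundancy'' coming from vertex degrees $\ge 3$; the honest argument is Balinski's theorem (or an ad hoc version of it for $d=3$), which uses linear functionals and a genuine connectivity argument on the faces --- nothing in your sketch supplies this.

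For the hard direction, the modern route you describe is broken as written: you ``fix a triangular face'' before applying Tutte's spring embedding and the Maxwell--Cremona lifting, but a $3$-connected planar graph need not have any triangular face (the cube and the dodecahedron already fail this). The standard fix is to observe, via Euler's formula, that $G$ or its planar dual has a triangular face, realise the dual as a polytope, and recover a realisation of $G$ by polarity; alternatively one must run the lifting argument with a non-triangular outer face, which requires extra care with the equilibrium stress on the boundary. The classical inductive route has the same status in your write-up: the existence of reductions preserving simplicity, planarity and $3$-connectedness, and above all the fact that each inverse reduction can be performed on an actual convex polytope while keeping all faces planar and all dihedral angles strictly convex, is precisely the content of Steinitz's theorem, and you state it as ``the principal obstacle'' without resolving it. Either route can be completed, but what you have written leaves the essential work undone.
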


%\begin{remark}\label{GSU}
%Moreover, for a $3$-polytope the cycles in $G(P)$ bounding facets
%are exactly non-chordless cycles $C$ such that $G\setminus C$ is
%disconnected. Therefore the combinatorics of the embedding
%$G\subset S^2$ is uniquely defined.
%\end{remark}

An  $n$-polytope $P$ is \emph{simple} if exactly $n$ facets meet
at each vertex of~$P$. A simple polytope $P$ is called a
\emph{flag polytope} if every collection of its pairwise
intersecting facets has a nonempty intersection. An
\emph{$n$-simplex} $\varDelta^n$ is not flag for $n\ge2$. An
\emph{$n$-cube} $I^n$ is flag for any~$n$.

A \emph{$k$-belt} (or a \emph{prismatic $k$-circuit}) in a simple
$3$-polytope is a cyclic sequence $\mathcal
B_k=(F_{i_1},\dots,F_{i_k})$ of $k\ge 3$ facets in which pairs of
consecutive facets (including $F_{i_k},F_{i_1}$) are adjacent,
other pairs of facets do not intersect, and no three facets have a
common vertex.

A $3$-polytope $P$ with a triangular facet has a 3-belt around it,
unless $P=\varDelta^3$. A simple 3-polytope $P\ne\varDelta^3$ is
flag if and only if it does not contain 3-belts.

A \emph{fullerene} is a simple 3-polytope with only pentagonal and
hexagonal facets. A simple calculation with Euler characteristic
shows that the number of pentagonal facets in a fullerene is~$12$.
The number of hexagonal facets can be arbitrary except for $1$
(see \cite[Proposition~2]{d-s-s13}). Also, any fullerene is a flag
polytope without $4$-belts (see~\cite{dosl03}
and~\cite[Corollary~3.16]{bu-er}).

\subsection{Toric varieties and manifolds}
A \emph{toric variety} is a normal complex algebraic variety~$V$
containing an algebraic torus $(\C^\times)^n$ as a Zariski open
subset in such a way that the natural action of $(\C^\times)^n$ on
itself extends to an action on~$V$. We only consider nonsingular
complete (compact in the usual topology) toric varieties, also
known as \emph{toric manifolds}.

There is a bijective correspondence between the isomorphism
classes of complex $n$-dimensional toric manifolds and complete
nonsingular fans in~$\R^n$. A \emph{fan} is a finite collection
$\Sigma=\{\sigma_1,\ldots,\sigma_s\}$ of strongly convex polyhedral cones
$\sigma_i$ in $\R^n$ such that every face of a cone in $\Sigma$
belongs to $\Sigma$ and the intersection of any two cones in
$\Sigma$ is a face of each. A fan $\Sigma$ is \emph{nonsingular} (or \emph{regular}) if
each of its cones $\sigma_j$ is generated by part of a basis of
the lattice $\Z^n\subset\R^n$. Each one-dimensional cone of such $\Sigma$
is generated by a primitive vector $\mb a_i\in \Z^n$. A fan
$\Sigma$ is \emph{complete} if the union of its cones is the
whole~$\R^n$.

Projective toric varieties are particularly important. A
projective toric manifold $V$ is defined by a lattice Delzant
polytope~$P$. Given a simple $n$-polytope $P$ with the vertices in
the lattice~$\Z^n$, the \emph{normal fan} $\Sigma_P$ has one
$n$-dimensional cone $\sigma_v$ for each vertex $v$ of $P$, where
$\sigma_v$ is generated by the primitive inside-pointing normals
to the facets of $P$ meeting at~$v$. The polytope $P$ is
\emph{Delzant} whenever its normal fan $\Sigma_P$ is nonsingular.
The fan $\Sigma_P$ defines a projective toric manifold~$V_P$.
Different lattice Delzant polytopes with the same normal fan
produce different projective embeddings of the same toric
manifold.

Irreducible torus-invariant subvarieties of complex codimension
one in~$V$ correspond to one-dimensional cones of~$\Sigma$. When
$V$ is projective, they also correspond to the facets of~$P$. We
assume that there are $m$ one-dimensional cones (or facets),
denote the corresponding primitive vectors by $\mb a_1,\ldots,\mb
a_m$, and denote the corresponding codimension-one subvarieties
by~$V_1,\ldots,V_m$.

\begin{theorem}[{Danilov--Jurkiewicz, see~\cite[Theorem~5.3.1]{bu-pa15}}]\label{cohomtoric}
Let $V$ be a toric manifold of complex dimension~$n$ with the
corresponding complete nonsingular fan~$\Sigma$. The cohomology
ring $H^*(V;\Z)$ is generated by the degree-two classes $[v_i]$
dual to the invariant submanifolds $V_i$, and is given by
\[
  H^*(V;\Z)\cong \Z[v_1,\ldots,v_m]/\mathcal I,\qquad\deg v_i=2,
\]
where $\mathcal I$ is the ideal generated by elements of the
following two types:
\begin{itemize}
\item[(a)] $v_{i_1}\cdots v_{i_k}$ such that $\mb a_{i_1},\ldots,\mb
a_{i_k}$ do not span a cone of~$\Sigma$;
\item[(b)] $\displaystyle\sum_{i=1}^m\langle\mb a_i,\mb x\rangle v_i$, for
any vector $\mb x\in\Z^n$.
\end{itemize}
\end{theorem}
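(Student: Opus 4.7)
The plan is to construct a ring homomorphism
\[
\bar\phi\colon\Z[v_1,\ldots,v_m]/\mathcal I\longrightarrow H^*(V;\Z)
\]
and prove it is an isomorphism by matching Hilbert functions degree by degree.

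First I would construct the map. Let $V_i\subset V$ be the codimension-one invariant subvariety corresponding to the one-dimensional cone of $\Sigma$ generated by $\mb a_i$, and let $v_i\in H^2(V;\Z)$ be its Poincar\'e dual, equivalently the first Chern class of the line bundle $\mathcal O(V_i)$. Since distinct invariant subvarieties intersect transversally in the pattern dictated by $\Sigma$, the assignment $v_i\mapsto[V_i]^*$ extends to a ring map $\phi\colon\Z[v_1,\ldots,v_m]\to H^*(V;\Z)$. The type-(a) relations vanish because $V_{i_1}\cap\cdots\cap V_{i_k}=\varnothing$ whenever $\mb a_{i_1},\ldots,\mb a_{i_k}$ do not span a cone of $\Sigma$. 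The type-(b) relations vanish because each $\mb x\in\Z^n$ is a character of $(\C^\times)^n$, whose extension to a rational function on $V$ has principal divisor $\sum_i\langle\mb a_i,\mb x\rangle V_i$. Hence $\phi$ descends to $\bar\phi$.

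Next I would compute both sides degreewise. Topologically, a generic one-parameter subgroup $\mb u\in\R^n$ of $T^n$ induces a perfect Bott--Morse function on $V$ whose critical set is the isolated $T^n$-fixed point set, in bijection with the maximal cones $\sigma_v\in\Sigma$; all Morse indices are even because the Hessian at a $T^n$-fixed point decomposes into complex lines under the torus action. Counting critical points by index gives $\mathrm{rank}\,H^{2i}(V;\Z)=h_i$, the $h$-numbers of the simplicial complex underlying $\Sigma$; in particular $H^*(V;\Z)$ is torsion-free and concentrated in even degrees. Algebraically, the $n$ linear forms in~(b) form a regular sequence on the Stanley--Reisner ring $\Z[v_1,\ldots,v_m]/\mathcal I_{\mathrm{SR}}$, where $\mathcal I_{\mathrm{SR}}$ is the type-(a) ideal: completeness of $\Sigma$ ensures the $\mb a_i$ span $\R^n$, and nonsingularity together with shellability of the simplicial fan makes the Stanley--Reisner ring integrally Cohen--Macaulay. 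The quotient by this regular sequence therefore has Hilbert series $\sum_i h_i t^{2i}$. To pass from rank equality to isomorphism one needs $\bar\phi$ to be surjective in degree~$2$; this follows from $H^2(V;\Z)\cong\mathrm{Pic}(V)$ (via the exponential sequence together with $H^i(V;\mathcal O)=0$ for $i>0$ on a smooth complete toric manifold) plus the classical fact that $\mathrm{Pic}(V)$ is generated by the invariant divisor classes $[V_i]$. Combined with the fact that $H^*(V;\Z)$ is generated in degree~$2$, this yields surjectivity in every degree, and Hilbert-function equality forces $\bar\phi$ to be an isomorphism.

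The main obstacle is working integrally rather than rationally. Over $\Q$ the regular-sequence property is immediate from Reisner's Cohen--Macaulay criterion, but over $\Z$ one must verify that no hidden torsion enters the algebraic quotient; this reduces to showing that the $\mb a_i$ generate $\Z^n$ as an integer lattice within each maximal cone, which is exactly the nonsingularity hypothesis. A conceptually cleaner route, taken in~\cite{bu-pa15}, bypasses Morse theory entirely: one realises $V$ as the free quotient $\mathcal Z_\Sigma/K$ of the moment-angle manifold by a subtorus $K\cong(S^1)^{m-n}$, and extracts the Danilov--Jurkiewicz presentation from the known Stanley--Reisner description of the Davis--Januszkiewicz space $DJ_\Sigma\simeq ET^m\times_{T^m}\mathcal Z_\Sigma$ via the Eilenberg--Moore spectral sequence associated with the $K$-action.
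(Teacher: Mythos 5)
The paper itself does not prove this statement: it is quoted as the classical Danilov--Jurkiewicz theorem with a pointer to \cite[Theorem~5.3.1]{bu-pa15}, so the comparison is with that standard proof rather than with an argument in the text. Your outline (divisor classes $v_i=c_1(\mathcal O(V_i))$, verification of the monomial and linear relations, Betti numbers equal to the $h$-numbers, integral regular sequence giving the Hilbert series, then surjectivity plus a rank count) is the classical route, but it contains one genuinely circular step: to pass from surjectivity in degree $2$ to surjectivity in all degrees you invoke ``the fact that $H^*(V;\Z)$ is generated in degree~$2$''. That fact is exactly (part of) the conclusion of the theorem and is not available independently; evenness of the cell structure alone does not give it (spaces such as $S^4$ or $\mathbb{H}P^n$ have only even cells but are not generated in degree $2$), so the $\mathrm{Pic}$ argument does not bootstrap. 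The standard repair is to use the cell filtration itself: the closures of the Bia{\l}ynicki-Birula cells are the invariant cycles $V(\sigma)$, so $H^*(V;\Z)$ is additively spanned by the classes $[V(\sigma)]$, and transversality gives $[V(\sigma)]=\prod_{i\in\sigma}[V_i]$; this yields surjectivity of $\bar\phi$ in every degree, and together with the equality of ranks of free abelian groups it forces an isomorphism.

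Two smaller points. A generic one-parameter subgroup produces a Morse function via a moment map only in the projective case; since the theorem (and this paper) covers non-projective toric manifolds, one should instead use the algebraic Bia{\l}ynicki-Birula decomposition, or Fulton's ordering of the maximal cones by a generic covector, which gives the same even-cell filtration and the $h$-vector count of Betti numbers. Also, shellability of the simplicial complex underlying a complete simplicial fan is not known in general (only polytopal spheres are known to be shellable), but you do not need it: the underlying complex is a simplicial sphere, so Reisner's criterion gives Cohen--Macaulayness over every field, and nonsingularity makes the linear forms of type~(b) a linear system of parameters modulo every prime, whence $\Z[\sK_{\Sigma}]$ is a free module over $\Z[t_1,\ldots,t_n]$ and the algebraic quotient is torsion-free with Hilbert series $\sum_i h_i t^{2i}$. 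Your closing remark correctly identifies the route actually taken in the cited source (moment-angle manifold and Davis--Januszkiewicz space), which sidesteps all of these issues.
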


It is convenient to consider the integer $n\times m$-matrix
\begin{equation}\label{Lambdatoric}
  \varLambda=\begin{pmatrix}
  a_{11}&\cdots& a_{1m}\\
  \vdots&\ddots&\vdots\\
  a_{n1}&\cdots& a_{nm}
  \end{pmatrix}
\end{equation}
whose columns are the vectors $\mb a_i$ written in the standard
basis of~$\Z^n$. Then the ideal~(b) of Theorem~\ref{cohomtoric} is
generated by the $n$ linear forms $a_{j1}v_1+\cdots+a_{jm}v_m$
corresponding to the rows of~$\varLambda$.

The quotient of a projective toric manifold $V_P$ by the action of
the compact torus $T^n\subset(\C^\times)^n$ is the polytope~$P$.
When a toric manifold $V$ is not projective, the quotient $V/T^n$
has a face structure of a \emph{manifold with corners}. This face structure locally
looks like that of a simple convex polytope, but globally may fail to be so even combinatorially. In the case $n=3$, however, the quotient $V/T^3$ is combinatorially
equivalent to a simple $3$-polytope, by Steinitz's theorem (Theorem~\ref{S-Theorem}).

\subsection{Quasitoric manifolds}\label{qtmani}
In their 1991 work~\cite{da-ja91} Davis and Januszkiewicz
suggested a topological generalisation of projective toric
manifolds, which became known as quasitoric manifolds.

A \emph{quasitoric manifold} over a combinatorial simple
$n$-polytope $P$ is a topological manifold $M$ of dimension $2n$ with a
locally standard action of $T^n$ and a projection $\pi\colon
M\to P$ whose fibres are the orbits of the $T^n$-action. (An action of
$T^n$ on $M$ is \emph{locally standard} if every point $x\in
M$ is contained in a $T^n$-invariant neighbourhood
equivariantly homeomorphic to an open subset in $\C^n$ with a linear effective action of~$T^n$.
%standard coordinatewise action of~$T^n$ twisted by an automorphism of the torus.
The orbit space of a locally standard torus action is a
manifold with corners. For a quasitoric manifold $M$, the orbit space $M/T^n$ is
homeomorphic to~$P$.)

Not every simple polytope can be the quotient of a quasitoric
manifold. Nevertheless, quasitoric manifolds constitute a much
larger family than projective toric manifolds, and enjoy more
flexibility for topological applications.

Let $\mathcal F=\{F_1,\ldots,F_m\}$ be the set of facets of $P$. Each
$M_i=\pi^{-1}(F_i)$ is a quasitoric submanifold of $M$ of
codimension~2, called a \emph{characteristic submanifold}. The
characteristic submanifolds $M_i\subset M$ are analogues of the
invariant divisors $V_i$ on a toric manifold~$V$. Each $M_i$ is
fixed pointwise by a closed one-dimensional subgroup (a subcircle)
$T_i\subset T^n$ and therefore corresponds to a primitive vector
$\lambda_i\in\Z^n$ defined up to a sign. Choosing a direction of
$\lambda_i$ is equivalent to choosing an orientation for the
normal bundle $\nu(M_i\subset M)$ or to choosing an
orientation for $M_i$, provided that $M$ itself is oriented. An
\emph{omniorientation} of a quasitoric manifold $M$ consists of a
choice of orientation for $M$ and each characteristic submanifold
$M_{i}$.

The vectors $\lambda_i$ are analogues of the generators $\mb a_i$
of the one-dimensional cones in the fan corresponding to a toric
manifold~$V$, or analogues of the normal vectors to the facets of
$P$ when $V$ is projective. However, the vectors $\lambda_i$ need not be
the normal vectors to the facets of $P$ in general.

There is an analogue of Theorem~\ref{cohomtoric} for quasitoric
manifolds:

\begin{theorem}[\cite{da-ja91}]\label{cohomqtoric}
Let $M$ be an omnioriented quasitoric manifold of dimension~$2n$
over a simple $n$-polytope~$P$. The cohomology ring $H^*(M;\Z)$ is generated
by the degree-two classes $[v_i]$ dual to the oriented
characteristic submanifolds $M_i$, and is given by
\[
  H^*(M;\Z)\cong \Z[v_1,\ldots,v_m]/\mathcal I,\qquad\deg v_i=2,
\]
where $\mathcal I$ is the ideal generated by elements of the
following two types:
\begin{itemize}
\item[(a)] $v_{i_1}\cdots v_{i_k}$ such that $F_{i_1}\cap\cdots\cap F_{i_k}=\varnothing$ in~$P$;
\item[(b)] $\displaystyle\sum_{i=1}^m\langle\lambda_i,\mb x\rangle v_i$, for
any vector $\mb x\in\Z^n$.
\end{itemize}
\end{theorem}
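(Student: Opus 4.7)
The plan is to compute $H^*(M;\Z)$ via the $T^n$-equivariant cohomology, using the description of $M$ as a quotient of a moment-angle manifold by a freely acting subtorus. Package the characteristic data into the integer matrix $\varLambda=(a_{ji})$ as in~\eqref{Lambdatoric}, viewed as a homomorphism $\varLambda\colon T^m\to T^n$ whose $i$th coordinate circle has weight $\lambda_i$. The characteristic condition (that the $\lambda_i$ at each vertex of $P$ form a basis of $\Z^n$) is precisely the condition that $K:=\Ker\varLambda$, a subtorus of rank $m-n$, acts freely on the moment-angle manifold $\zp$. A local coordinate check at each vertex, using local standardness of the $T^n$-action on~$M$, then produces a $T^n$-equivariant homeomorphism $\zp/K\cong M$.

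With this identification, the freeness of $K$ gives a homotopy equivalence
\[
ET^n\times_{T^n}M\;\simeq\;ET^m\times_{T^m}\zp,
\]
and the right-hand side is homotopy equivalent to the Davis--Januszkiewicz space $DJ(P)=\bigcup_\sigma BT^{\sigma}$ indexed by the faces $\sigma$ of~$P$. A standard Mayer--Vietoris argument over this cover identifies $H^*(DJ(P);\Z)$ with $\Z[v_1,\ldots,v_m]/\mathcal I_{(a)}$ on degree-two generators, where $\mathcal I_{(a)}$ is generated by the squarefree monomials $v_{i_1}\cdots v_{i_k}$ with $F_{i_1}\cap\cdots\cap F_{i_k}=\varnothing$~-- exactly the relations~(a). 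Hence $H^*_{T^n}(M;\Z)$ admits the face-ring presentation with relations~(a).

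To descend to ordinary cohomology I would examine the Serre spectral sequence of the Borel fibration $M\hookrightarrow ET^n\times_{T^n}M\to BT^n$. Under the edge homomorphism $H^*(BT^n;\Z)\to H^*_{T^n}(M;\Z)$, the $j$th canonical generator of $H^2(BT^n;\Z)$ maps to $\sum_{i}a_{ji}v_i$, because $\lambda_i$ is by construction the restriction of the equivariant first Chern class of the normal bundle $\nu(M_i\subset M)$. Collapse of the spectral sequence at $E_2$ then yields
\[
H^*(M;\Z)\;\cong\;H^*_{T^n}(M;\Z)\big/\bigl(a_{j1}v_1+\cdots+a_{jm}v_m\colon j=1,\ldots,n\bigr),
\]
which is exactly the quotient by the linear relations~(b).

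The main obstacle is establishing the $E_2$-collapse integrally. My preferred route is to show that the $n$ linear forms coming from the rows of $\varLambda$ form a regular sequence on the face ring $\Z[P]$: since the boundary of the simple polytope $P$ is a simplicial sphere, $\Z[P]$ is Cohen--Macaulay by Reisner's criterion, and the characteristic condition guarantees that these forms generate a parameter ideal. A rank count via the $h$-vector of $P$ (the total rank of $H^*(M;\Z)$ equals the number of vertices of $P$) matches the ranks on the two sides of the edge homomorphism and forces collapse. It then remains to verify that all intervening identifications respect the multiplicative structure, which follows from the naturality of cup products under Borel-type constructions.
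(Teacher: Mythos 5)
The paper does not prove this statement at all: it is quoted from Davis--Januszkiewicz \cite{da-ja91}, so the comparison is with the original argument. Your skeleton is the standard one: identify $M$ with $\zp/\Ker\varLambda_T$, observe that the Borel construction $ET^n\times_{T^n}M$ is the Davis--Januszkiewicz space whose cohomology is the face ring $\Z[\sK_P]$ (relations~(a)), and then kill the linear ideal coming from the rows of $\varLambda$ (relations~(b)). Where you genuinely diverge from \cite{da-ja91} is the collapse step: they first establish $H^{\mathrm{odd}}(M)=0$ and the $h$-vector Betti numbers via an even-dimensional cell decomposition of $M$ built from an index function on $P$, and only then invoke the Serre spectral sequence; you propose to replace that by commutative algebra (Cohen--Macaulayness of $\Z[\sK_P]$ for the sphere $\partial P^*$ plus regularity of the linear sequence), which is a legitimate and common alternative, in the spirit of Theorem~\ref{zkcohred}.

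However, as written your descent from equivariant to ordinary cohomology has a real gap. The Serre spectral sequence of $M\to ET^n\times_{T^n}M\to BT^n$ has $E_2=H^*(BT^n;H^*(M))$ and converges to $H^*_{T^n}(M)$: it computes the known equivariant ring from the unknown $H^*(M)$, so ``collapse at $E_2$'' does not by itself yield the presentation of $H^*(M)$ unless you already control $H^*(M)$. Worse, the rank count you invoke --- that the total rank of $H^*(M;\Z)$ equals the number of vertices of $P$ --- is not available at this stage: the fixed-point/Euler-characteristic argument only gives the alternating sum, and ``total rank $=$ number of vertices'' is equivalent to the vanishing of odd cohomology, i.e.\ to a substantial part of the theorem (it is exactly what Davis--Januszkiewicz prove via the cell structure). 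So the argument is circular as stated. Two standard repairs: either prove $H^{\mathrm{odd}}(M)=0$ first via the index-function cell decomposition, after which the Serre sequence degenerates for parity reasons; or drop the rank count entirely and argue as follows: once the rows of $\varLambda$ form a regular sequence on $\Z[\sK_P]$ (for which you need the determinant condition~\eqref{starprop} to give a linear system of parameters over every residue field $\Z_p$, not just rational Cohen--Macaulayness), $\Z[\sK_P]$ is a free module over $\Z[t_1,\ldots,t_n]=H^*(BT^n)$, and the Eilenberg--Moore spectral sequence of the pullback of the Borel fibration along $\mathit{pt}\to BT^n$ has $\Tor^{-i}=0$ for $i>0$; it therefore collapses with no rank input and gives $H^*(M)\cong\Z[\sK_P]\otimes_{\Z[t_1,\ldots,t_n]}\Z=\Z[v_1,\ldots,v_m]/\mathcal I$ as rings (everything sits in even degrees, so there are no extension or multiplicativity problems). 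Finally, note that the identification $M\cong\zp/\Ker\varLambda_T$ for an abstract quasitoric $M$ is the Davis--Januszkiewicz reconstruction $M\cong M(P,\varLambda)$ (quoted in the paper as Proposition~2.9); it requires gluing the local trivialisations over the contractible base $P$, not merely a coordinate check at each vertex, though it is reasonable to quote it.
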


We record a simple corollary for the latter use.

\begin{corollary}\label{viprod}
In the notation of Theorem~\ref{cohomqtoric},
\begin{itemize}
\item[(a)] the product $[v_{i_1}]\cdots[v_{i_n}]$ of $n$ different
classes is a generator of $H^{2n}(M)\cong\Z$ if
$F_{i_1}\cap\cdots\cap F_{i_n}\ne\varnothing$ and is zero
otherwise;

\item[(b)] for $i\ne j$, we have $[v_i][v_j]=0$ if and only if $F_i\cap
F_j=\varnothing$.
\end{itemize}
\end{corollary}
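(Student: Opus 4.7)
The plan is to derive both statements directly from Theorem~\ref{cohomqtoric}, using the simple polytope condition to control intersections of facets.

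For part~(a), if $F_{i_1}\cap\cdots\cap F_{i_n}=\varnothing$ then the monomial $v_{i_1}\cdots v_{i_n}$ lies in the ideal generated by relations of type~(a), so $[v_{i_1}]\cdots[v_{i_n}]=0$ in $H^*(M;\Z)$. Suppose instead that $F_{i_1}\cap\cdots\cap F_{i_n}\neq\varnothing$. Since $P$ is simple, exactly $n$ facets meet at each vertex, so the intersection must be a single vertex $v$ of $P$. The preimage $\pi^{-1}(v)$ is a single $T^n$-fixed point of $M$, and geometrically $[v_{i_j}]$ is Poincar\'e dual to the characteristic submanifold $M_{i_j}$. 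The characteristic submanifolds $M_{i_1},\ldots,M_{i_n}$ meet transversely at $\pi^{-1}(v)$ (the tangential condition at the fixed point being exactly the requirement that the associated vectors $\lambda_{i_1},\ldots,\lambda_{i_n}$ form a basis of $\Z^n$, which is built into the definition of a quasitoric manifold). Hence the product evaluates to $\pm1$ on the fundamental class of $M$, so it is a generator of $H^{2n}(M)\cong\Z$. An alternative purely algebraic verification is to use the linear relations (b) of Theorem~\ref{cohomqtoric} to rewrite any $v_j$ in terms of $v_{i_1},\ldots,v_{i_n}$ via the basis $\lambda_{i_1},\ldots,\lambda_{i_n}$, and then to check that the only monomial of degree $n$ not killed by the relations of type~(a) and equal to $v_{i_1}\cdots v_{i_n}$ modulo lower products is itself.

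For part~(b), the ``if'' direction is immediate from the relations of type~(a). For the ``only if'' direction, suppose $F_i\cap F_j\neq\varnothing$. Then $F_i\cap F_j$ is a face of $P$ of dimension $n-2$, and since $P$ is simple this face lies in some vertex $v$, at which exactly $n$ facets meet, namely $F_i$, $F_j$, and some $F_{k_1},\ldots,F_{k_{n-2}}$. By part~(a), the product $[v_i][v_j][v_{k_1}]\cdots[v_{k_{n-2}}]$ generates $H^{2n}(M)$; in particular $[v_i][v_j]\neq0$.

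The only nontrivial step is the second half of part~(a), namely showing that the product is actually a \emph{generator} (not merely nonzero). This is where the characteristic/non-singularity condition on the vectors $\lambda_i$ enters in an essential way; all the rest is bookkeeping with the simple polytope property and the two types of relations in Theorem~\ref{cohomqtoric}.
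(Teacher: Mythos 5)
Your argument is correct and is essentially the standard one: the paper records this corollary without proof as an immediate consequence of Theorem~\ref{cohomqtoric}, the implicit justification being exactly your two points — the Stanley--Reisner relations of type~(a) kill products over empty intersections, while over a nonempty intersection (necessarily a vertex, by simplicity) the characteristic submanifolds meet transversally in the single fixed point $\pi^{-1}(v)$ thanks to the locally standard charts, so the product of their Poincar\'e dual classes evaluates to $\pm1$ on the fundamental class and generates $H^{2n}(M)\cong\Z$. Your deduction of~(b) from~(a) by completing $F_i\cap F_j$ to a vertex is likewise the intended reduction, so nothing further is needed (the sketched ``purely algebraic'' alternative is dispensable and would require more care, but your main argument already suffices).
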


By analogy with \eqref{Lambdatoric}, we consider the integer
\emph{characteristic matrix}
\begin{equation}\label{Lambdaqtoric}
  \varLambda=\begin{pmatrix}
  \lambda_{11}&\cdots& \lambda_{1m}\\
  \vdots&\ddots&\vdots\\
  \lambda_{n1}&\cdots& \lambda_{nm}
  \end{pmatrix}
\end{equation}
whose columns are the vectors $\lambda_i$ written in the standard
basis of~$\Z^n$. The matrix $\varLambda$ has the following property:
\begin{equation}\label{starprop}
  \det(\lambda_{i_1},\ldots,\lambda_{i_n})=\pm1\quad
  \text{whenever $F_{i_1}\cap\cdots\cap F_{i_n}\ne\varnothing$ in~$P$.}
\end{equation}

Note that the ideal~(b) of Theorem~\ref{cohomqtoric} is generated by the $n$
linear forms $\lambda_{j1}v_1+\cdots+\lambda_{jm}v_m$
corresponding to the rows of~$\varLambda$.

\medskip

A map $\lambda\colon \mathcal F\to\Z^n$, $F_i\mapsto\lambda_i$, satisfying~\eqref{starprop} is called a \emph{characteristic function} for a simple $n$-polytope~$P$. One can produce a characteristic matrix $\varLambda$ from a characteristic function $\lambda$ by fixing an ordering of facets.
A \emph{characteristic pair} $(P,\varLambda)$ consists of a simple polytope $P$ with a fixed ordering of facets and its characteristic matrix~$\varLambda$.

A quasitoric manifold $M$ defines a characteristic pair $(P,\varLambda)$. On the other hand, each characteristic pair gives rise to a quasitoric manifold as follows.

\begin{construction}[\cite{da-ja91}]\label{canqt}
Let $(P,\varLambda)$ be a characteristic pair.
For each facet $F_i$ of $P$ we denote by $T_i$ the circle subgroup of $T^n=\R^n/\Z^n$ corresponding to the $i$th column $\lambda_i\in\Z^n$ of the characteristic matrix~$\varLambda$.
For each point $x\in P$, define a torus
\[
  T(x)=\prod_{i\colon x\in F_i}T_i,
\]
assuming that $T(x)=\{1\}$ if there are no facets containing~$x$. Property~\eqref{starprop} implies that $T(x)$ embeds as a subgroup in~$T^n$. Then define
\[
  M(P,\varLambda)=P\times T^n/\!\sim,
\]
where the equivalence relation $\sim$ is given by $(x,t)\sim(x',t')$ whenever $x=x'$ and $t'-t\in T(x)$. One can see that $M(P,\varLambda)$ is a quasitoric manifold over~$P$.
\end{construction}

Changing the basis in the lattice results in
multiplying $\varLambda$ from the left by a matrix from
$\mbox{\textrm{GL}}(n,\Z)$. Changing the orientation of the
$i$th characteristic submanifold $M_i$ in the omniorientation data
results in changing the sign of the $i$th column of~$\varLambda$.
A combinatorial equivalence between polytopes $P$ and $P'$ allows us to identify their sets of facets $\mathcal F$ and $\mathcal F'$ and therefore identify their characteristic functions.
These observations lead us to the following definition.

{\samepage\begin{definition}\label{defcharpair}
%Two unordered characteristic pairs $(P,\lambda)$ and $(P',\lambda')$ are said to be
%\emph{equivalent} if $P$ and $P'$ are combinatorially equivalent, and $\lambda'$ is obtained from %$\lambda$ by a change of basis in~$\Z^n$ and changing the direction of some of the vectors %$\lambda(F_i)$ to the opposite.
Two characteristic pairs $(P,\varLambda)$ and $(P',\varLambda')$ are
\emph{equivalent} if
\begin{itemize}
\item[(a)] there is a combinatorial equivalence $P\simeq P'$, and
\item[(b)] $\varLambda'=A\varLambda B$, where $A\in \mbox{\textrm{GL}}(n,\Z)$ and
$B$ is a diagonal $(m\times m)$-matrix with $\pm1$ on the diagonal.
\end{itemize}
\end{definition}
}

Quasitoric manifolds
$M(P,\varLambda)$ and $M(P',\varLambda')$ corresponding to
equivalent pairs are equivariantly homeomorphic (in the weak sense). The
latter means that there is a homeomorphism $f\colon
M(P,\varLambda)\stackrel\cong\longrightarrow M(P',\varLambda')$
satisfying $f(t\cdot x)=\psi(t)\cdot f(x)$ for any $t\in T^n$ and
$x\in M(P,\varLambda)$, where $\psi\colon T^n\to T^n$ is the
automorphism of the torus given by the matrix~$A$. Furthermore, we have

\begin{proposition}[{\cite[Proposition~1.8]{da-ja91} and \cite[Proposition~7.3.8]{bu-pa15}}]
There is a one-to-one correspondence between equivariant homeomorphism classes of quasitoric manifolds and equivalence classes of characteristic pairs. In particular, for any quasitoric manifold $M$ over $P$ with characteristic matrix $\varLambda$, there is an equivariant homeomorphism $M\cong M(P,\varLambda)$.
\end{proposition}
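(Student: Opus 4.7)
The plan is to prove both halves of the bijection by passing through a global section construction. \textbf{Easy direction.} For equivalent pairs $(P,\varLambda)\sim(P',\varLambda')$, I would show that Construction~\ref{canqt} is invariant under each operation in Definition~\ref{defcharpair}. A combinatorial equivalence $P\simeq P'$ identifies the face posets, and hence the stabiliser tori $T(x)$ and $T(x')$ at corresponding points. Left multiplication by $A\in\mathrm{GL}(n,\Z)$ induces an automorphism $\psi_A$ of $T^n=\R^n/\Z^n$ that sends the circle subgroup $T_i$ determined by $\lambda_i$ to the circle determined by $A\lambda_i$; right multiplication by a $\pm1$-diagonal matrix $B$ merely reorients each $T_i$ and so preserves it setwise. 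The map $(x,t)\mapsto (x',\psi_A(t))$ on $P\times T^n$ therefore descends to a weak equivariant homeomorphism $M(P,\varLambda)\stackrel\cong\longrightarrow M(P',\varLambda')$.

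\textbf{Main direction.} Given a quasitoric manifold $M$ over $P$ with characteristic matrix $\varLambda$ and projection $\pi\colon M\to P$, I would build a weak equivariant homeomorphism $\phi\colon M(P,\varLambda)\to M$ by first exhibiting a continuous section $s\colon P\to M$ of $\pi$. Near a vertex $v=F_{i_1}\cap\cdots\cap F_{i_n}$, the locally standard hypothesis supplies a $T^n$-equivariant chart identifying a neighbourhood of $\pi^{-1}(v)$ with an open set of $\C^n$ under the standard coordinate action; property~\eqref{starprop} forces the characteristic vectors read off this chart to agree, up to sign, with $\lambda_{i_1},\dots,\lambda_{i_n}$. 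Choose a section $s_v$ on the open star of each vertex, and then extend inductively across the open edges, the open $2$-faces, and so on, finishing with the interior of~$P$; each additional cell is contractible and the locally standard charts along its closure prescribe the shape of the required extension. Setting $\phi(x,t)=t\cdot s(x)$ on $P\times T^n$, the isotropy of $s(x)$ is exactly $T(x)$ (checked inside a vertex chart of the smallest face containing~$x$), so $\phi$ factors through the relation $\sim$ of Construction~\ref{canqt} and yields a continuous, $T^n$-equivariant bijection $\bar\phi\colon M(P,\varLambda)\to M$. Since both sides are compact Hausdorff $2n$-manifolds, $\bar\phi$ is automatically a homeomorphism.

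The principal obstacle is the inductive construction of~$s$: after the sections $s_v$ are chosen near the vertices, the extensions over edges and higher faces must be selected in a compatible way so that the isotropy of the resulting value at each boundary point is precisely the subgroup $T(x)$ dictated by $\varLambda$, and so that no monodromy obstruction appears when one closes a loop in the $1$-skeleton of~$P$. Combinatorially this is bookkeeping over the face lattice; the crucial geometric ingredients are contractibility of each open face together with the rigid equivariant models furnished by the locally standard structure.

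Finally, for uniqueness of the equivalence class, any weak equivariant homeomorphism $M(P,\varLambda)\cong M(P',\varLambda')$ descends to a homeomorphism of orbit polytopes (hence a combinatorial equivalence $P\simeq P'$) and is covered by a torus automorphism $\psi_A$; the demand that the $i$th characteristic submanifold be carried to the corresponding one of $M(P',\varLambda')$ then translates into the column-matching condition $\varLambda'=A\varLambda B$ of Definition~\ref{defcharpair}, establishing the inverse correspondence.
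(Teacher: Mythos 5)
Your overall strategy---produce a global section $s\colon P\to M$ of the orbit map, set $\phi(x,t)=t\cdot s(x)$, and check that it descends to the canonical model---is exactly the standard route: the paper itself gives no proof but cites \cite{da-ja91} and \cite{bu-pa15}, whose arguments are built on the same two ingredients you invoke (the locally standard charts and the vanishing of obstructions over the contractible base $P$). Your easy direction, and the recovery of the relation $\varLambda'=A\varLambda B$ of Definition~\ref{defcharpair} from a weakly equivariant homeomorphism (orbits go to orbits, faces to faces, characteristic submanifolds to characteristic submanifolds), are fine and consistent with Construction~\ref{canqt} and Proposition~\ref{qteqd}.

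The gap is in the main direction: the inductive construction of $s$, which you dismiss as ``bookkeeping over the face lattice,'' is in fact the whole content of the proposition, and your sketch neither carries it out nor supplies the idea that makes it work. Over the relative interior of a face of codimension $c$ the fibre of $\pi$ is a torus $T^n/T(x)\cong T^{n-c}$, so the only genuinely obstructed extension step is across a $2$-dimensional face $G$ (for faces of dimension $k\ge 3$ the obstruction lives in $\pi_{k-1}$ of a torus, which vanishes): after trivialising the principal $T^n/T(G)$-bundle $\pi^{-1}(\mathrm{relint}\,G)\to\mathrm{relint}\,G$ over the contractible interior, the section already chosen over $\partial G$ determines a monodromy class in $\pi_1(T^2)\cong\Z^2$ which must vanish before any extension exists. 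Contractibility of $G$ and the ``rigid equivariant models'' do not force this; the charts do not ``prescribe the shape of the required extension,'' they only constrain $s$ along $\partial G$ to lie in the fixed-point sets of the circles $T_i$, and it is precisely this constraint (together with the freedom, which you also leave unexamined, of re-choosing the edge extensions, which differ by elements of $\pi_1$ of the circle fibres and feed into the obstruction over adjacent $2$-faces) that must be exploited to kill the class. The cited proofs handle exactly this point, e.g.\ by first trivialising the free part $\pi^{-1}(\mathrm{int}\,P)\cong \mathrm{int}\,P\times T^n$ using $H^2(P;\Z^n)=0$ and then extending the identification over the boundary strata via collars and the local models. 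Until the vanishing of these $\pi_1$-obstructions---equivalently the existence of $s$ with prescribed isotropy---is actually proved, the equivariant homeomorphism $M\cong M(P,\varLambda)$ has not been established.
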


\begin{remark}
Both $M$ and $M(P,\varLambda)$ were defined as topological manifolds in~\cite{da-ja91}. The manifold $M(P,\varLambda)$ can be endowed with a canonical smooth structure by defining it as the quotient of the moment-angle manifold $\zp$ by a smooth free torus action, see~\cite{b-p-r07} and Subsection~\ref{zpm}. Nevertheless, for a smooth quasitoric manifold $M$, the existence of a \emph{diffeomorphism} $M\cong M(P,\varLambda)$ is a delicate issue, see the discussion in~\cite[\S7.3]{bu-pa15}. On the other hand, in the case of $6$-dimensional quasitoric manifolds (which is our main concern in this paper), such a diffeomorphism follows from the classification results of Wall and Jupp discussed in Section~\ref{class6}.
\end{remark}

In dimensions $n\ge4$, there are simple $n$-polytopes $P$ which do
not admit any characteristic matrix $\varLambda$, see~\cite[1.22]{da-ja91}.
Such a polytope cannot be the quotient of a quasitoric manifold. On the other hand, we have the following observation by Davis and Januszkiewicz, whose
proof remarkably uses the Four Colour Theorem:

\begin{proposition}[\cite{da-ja91}]\label{4cprop}
Any simple $3$-polytope admits a characteristic matrix~$\varLambda$.
\end{proposition}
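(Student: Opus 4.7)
The plan is to use the Four Colour Theorem to produce a proper colouring of the facets of $P$ by four ``colours,'' and then to realise the four colours by four vectors in $\Z^3$ chosen so that any three of them form a $\Z$-basis.

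First I would observe that the facets of a simple $3$-polytope $P$ naturally form a planar map. Indeed, by Steinitz's theorem the graph $G_P$ is planar and $3$-connected; embedding $G_P$ in $S^2$ (or passing to a Schlegel diagram) realises $P$ as a planar map whose regions are the facets of~$P$ and whose edges record the facet-adjacencies. Two facets $F_i, F_j$ share an edge of $P$ if and only if the corresponding regions are adjacent in this planar map. Hence the Four Colour Theorem applies to give a map $c\colon\mathcal F\to\{1,2,3,4\}$ such that $c(F_i)\neq c(F_j)$ whenever $F_i\cap F_j$ is an edge of~$P$.

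Next I would fix four vectors $\mb w_1,\mb w_2,\mb w_3,\mb w_4\in\Z^3$ with the property that any three of them form a basis of $\Z^3$. A convenient choice is
\[
\mb w_1=\mb e_1,\quad \mb w_2=\mb e_2,\quad \mb w_3=\mb e_3,\quad \mb w_4=\mb e_1+\mb e_2+\mb e_3,
\]
where $\mb e_1,\mb e_2,\mb e_3$ is the standard basis; a direct check shows $\det(\mb w_i,\mb w_j,\mb w_k)=\pm 1$ for any three distinct indices. Now define $\lambda\colon\mathcal F\to\Z^3$ by $\lambda(F_i)=\mb w_{c(F_i)}$.

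Finally I would verify the characteristic condition \eqref{starprop}. A vertex $v$ of the simple $3$-polytope $P$ is the intersection of exactly three facets $F_{i_1},F_{i_2},F_{i_3}$, and any two of these three facets meet along an edge of $P$ through~$v$. By the properness of the colouring, $c(F_{i_1}), c(F_{i_2}), c(F_{i_3})$ are three distinct elements of $\{1,2,3,4\}$, so $\lambda(F_{i_1}),\lambda(F_{i_2}),\lambda(F_{i_3})$ are three distinct vectors among $\mb w_1,\dots,\mb w_4$, whose determinant is $\pm 1$ by construction. Thus $\lambda$ is a characteristic function, and any ordering of the facets turns it into a characteristic matrix~$\varLambda$.

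The only ``hard'' ingredient is the Four Colour Theorem itself; once it is invoked, the rest is a short linear-algebra verification. The one subtlety worth flagging is that the Four Colour Theorem must be applied to facet adjacency (sharing an edge of $P$), not to the vertex-edge graph $G_P$; this is why passing to the planar-map viewpoint via Steinitz's theorem is the correct first step.
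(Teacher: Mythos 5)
Your proposal is correct and follows essentially the same route as the paper: apply the Four Colour Theorem to the facets and assign the vectors $\mb e_1,\mb e_2,\mb e_3,\mb e_1+\mb e_2+\mb e_3$ to the four colours, noting that any three of these form a basis of~$\Z^3$. The extra details you supply (the planar-map viewpoint via Steinitz and the vertex-by-vertex determinant check) are just an expansion of the same argument.
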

\begin{proof}
By the Four Colour Theorem, there is a \emph{regular $4$-colouring} of the facets of $P$, i.\,e. a map $\chi\colon\mathcal F\to\{1,2,3,4\}$ such that $\chi(F_i)\ne\chi(F_j)$ whenever $F_i\cap F_j\ne\varnothing$.
Given such a regular $4$-colouring, we assign to a
facet of $i$th colour the $i$th basis vector $\mb e_i\in\Z^3$ for $i=1,2,3$ and the vector
$\mb e_1+\mb e_2+\mb e_3$ for $i=4$. The resulting $3\times m$-matrix~$\varLambda$ satisfies~\eqref{starprop},
as any three of the four vectors $\mb e_1,\mb e_2,
\mb e_3,\mb e_1+\mb e_2+\mb e_3$ form a basis of $\Z^3$.
\end{proof}

A projective toric manifold is a quasitoric manifold. A
non-projective toric manifold $V$ may fail to be quasitoric, as the
quotient manifold with corners $V/T^n$ is not necessarily a simple
polytope, even combinatorially. First examples of this sort appear
in dimension $n=4$, see~\cite{suya14}. All complex 3-dimensional
toric manifolds, even non-projective ones, are quasitoric by the Steinitz theorem (Theorem~\ref{S-Theorem}).

\subsection{Small covers} Replacing the torus $T^n$ in the definition of a quasitoric manifold by the group $\Z_2^n\subset T^n$ (generated by $n$ commuting involutions), one obtains the definition of a small cover~\cite{da-ja91}. A \emph{small cover} of a simple
$n$-polytope $P$ is a manifold $N$ of dimension $n$ with a
locally standard action of $\Z_2^n$ and a projection $\pi\colon
N\to P$ whose fibres are the orbits of the $\Z_2^n$-action.
%such that the quotient $N/\Z_2^n$ is homeomorphic to~$P$ as a manifold with corners.

The set of real points of a projective toric manifold $V_P$
(i.\,e. the set of points fixed under the complex conjugation) is
a small cover of~$P$; it is sometimes called a \emph{real toric
manifold}.

The theory of small covers parallels that of quasitoric manifolds, and we just outline the most crucial points.

\begin{theorem}[\cite{da-ja91}]\label{cohomsc}
Let $N$ be a small cover of a simple $n$-polytope~$P$. The cohomology ring $H^*(N;\Z_2)$ is generated by the degree-one classes $[v_i]$ dual to the
characteristic submanifolds $N_i$, and is given by
\[
  H^*(N;\Z_2)\cong \Z_2[v_1,\ldots,v_m]/\mathcal I,\qquad\deg v_i=1,
\]
where $\mathcal I$ is the ideal generated by elements of the
following two types:
\begin{itemize}
\item[(a)] $v_{i_1}\cdots v_{i_k}$ such that $F_{i_1}\cap\cdots\cap F_{i_k}=\varnothing$ in~$P$;
\item[(b)] $\displaystyle\sum_{i=1}^m\langle\lambda_i,\mb x\rangle v_i$, for
any vector $\mb x\in\Z_2^n$.
\end{itemize}
\end{theorem}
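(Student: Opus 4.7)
The plan is to run the Davis--Januszkiewicz proof of Theorem~\ref{cohomqtoric} in the $\Z_2$-setting: replace the torus $T^n$ by the $2$-torus $G=\Z_2^n$, integer coefficients by $\Z_2$-coefficients, and degree-two classes by degree-one classes; otherwise the argument is line-for-line the same.

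First, by the $\Z_2$-analogue of Construction~\ref{canqt}, any small cover $N$ over $P$ is $G$-equivariantly homeomorphic to the canonical model $N(P,\varLambda)=P\times\Z_2^n/\!\sim$ built from its characteristic matrix $\varLambda$ with entries in~$\Z_2$ (the nonsingularity condition~\eqref{starprop} is read modulo~$2$). The preimages of the vertices of~$P$ are $G$-fixed points of~$N$, which will supply a section of the Borel fibration below.

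To compute $H^*(N;\Z_2)$, I would use the Borel construction $N_G:=EG\times_G N$ and the fibration $N\to N_G\to BG\simeq(\R P^\infty)^n$. The key claim is that $N_G$ is homotopy equivalent to the real polyhedral product $Z:=(\R P^\infty,\ast)^{\mathcal K_P}$, where $\mathcal K_P$ is the nerve of the facet cover of~$\partial P$. This follows by decomposing $N_G$ along the face structure: the preimage of a face $F=F_{i_1}\cap\cdots\cap F_{i_k}$ has isotropy subgroup $\prod_j\Z_2\langle\lambda_{i_j}\rangle\cong\Z_2^k$, so the corresponding Borel piece deformation retracts onto $(B\Z_2)^k$; gluing over the face poset reproduces the polyhedral product~$Z$. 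The $\Z_2$-cohomology of $Z$ is the Stanley--Reisner ring $\Z_2[v_1,\ldots,v_m]/\mathcal I_a$ with $\deg v_i=1$, the ideal $\mathcal I_a$ being generated precisely by the squarefree monomials in type~(a); and the classifying map $N_G\to BG$ sends $t_j\in H^1(BG;\Z_2)$ to the linear form $\sum_i\lambda_{ji}v_i$, which are the generators of type~(b).

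Finally, I would apply the Serre spectral sequence of $N\to N_G\to BG$. A $G$-fixed point of~$N$ produces a section $BG\to N_G$, so $H^*(BG;\Z_2)\hookrightarrow H^*(N_G;\Z_2)$ is injective and split as a module map. The face-poset filtration of $N$ used above exhibits $H^*(N_G;\Z_2)$ as a free module over $H^*(BG;\Z_2)$, so the spectral sequence collapses at~$E_2$, giving
$$H^*(N;\Z_2)\;\cong\;H^*(N_G;\Z_2)\otimes_{H^*(BG;\Z_2)}\Z_2\;\cong\;\Z_2[v_1,\ldots,v_m]/(\mathcal I_a+\mathcal I_b),$$
which is the claimed presentation. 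The main obstacle is the collapse of the Serre spectral sequence at~$E_2$ over~$\Z_2$: unlike the $T^n$-case of Theorem~\ref{cohomqtoric}, one cannot invoke degree parity of even-dimensional cells, so collapse must be forced by hand via the freeness of $H^*(N_G;\Z_2)$ over $H^*(BG;\Z_2)$. All remaining steps are routine translations of the quasitoric proof.
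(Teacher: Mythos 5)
The paper offers no proof of Theorem~\ref{cohomsc}: it is quoted from \cite{da-ja91}, and your outline is essentially the Davis--Januszkiewicz Borel-construction argument (canonical model, $N_G\simeq(\R P^\infty,\ast)^{\sK_P}$, Serre spectral sequence of $N\to N_G\to BG$). The architecture is right, but as written there are two genuine gaps, both concentrated at the step you yourself flag as the main obstacle.

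First, the identification $E_2=H^*(BG;\Z_2)\otimes H^*(N;\Z_2)$ requires the monodromy action of $\pi_1(BG)\cong\Z_2^n$ on $H^*(N;\Z_2)$ to be trivial; since $BG=(\R P^\infty)^n$ is not simply connected, this is a genuine hypothesis. The natural argument for triviality --- each $g\in G$ preserves every characteristic submanifold $N_i$, hence fixes the classes $[v_i]$, which generate --- uses degree-one generation, i.e.\ part of the statement being proved, so you must establish triviality of the action independently or reorganise the argument. Second, the assertion that ``the face-poset filtration of $N$ exhibits $H^*(N_G;\Z_2)$ as a free $H^*(BG;\Z_2)$-module'' is not a proof. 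Freeness amounts to the linear forms $\theta_j=\sum_i\lambda_{ji}v_i$ being a regular sequence in $\Z_2[\sK_P]$, which needs Cohen--Macaulayness of the face ring (Reisner's theorem, since $\sK_P$ is a sphere) together with the nonsingularity condition~\eqref{starprop} mod~$2$ to guarantee they form a linear system of parameters; and even granted freeness, ``freeness forces $E_2$-collapse'' is itself a theorem requiring an argument (an Eilenberg--Moore approach runs into the same non-simply-connected base). Davis and Januszkiewicz close exactly this step differently: they first prove $\dim_{\Z_2}H^i(N;\Z_2)=h_i(P)$ directly from the Morse-theoretic cell decomposition of a small cover, after which the Poincar\'e series of $\Z_2[\sK_P]\cong H^*(N_G;\Z_2)$ equals that of the $E_2$-term, so no nonzero differentials can occur; collapse then delivers both the freeness and the presentation $H^*(N;\Z_2)\cong\Z_2[\sK_P]/(\theta_1,\ldots,\theta_n)$. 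Your fixed-point section does give a split injection $H^*(BG;\Z_2)\hookrightarrow H^*(N_G;\Z_2)$, but by itself it does not force degeneration, so the quantitative input (Betti numbers or a proved regular-sequence/collapse statement) cannot be omitted.
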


The characteristic matrix $\varLambda$ corresponding to a small cover $N$ has entries in~$\Z_2$ and satisfies the same condition~\eqref{starprop}. The equivalence of $\Z_2$-characteristic pairs is defined
in the same way as in the quasitoric case, with $\mathrm{GL}(n,\Z)$ replaced by $\mathrm{GL}(n,\Z_2)$. A small cover $N$ of $P$ is equivariantly homeomorphic to the ``canonical model''
\[
  N(P,\varLambda)=P\times \Z_2^n/\!\sim
\]
with the equivalence relation $\sim$ defined as in the quasitoric case. Note that $N(P,\varLambda)$ is composed of $2^n$ copies of the polytope $P$, glued together along their facets.

Reducing a $\Z$-characteristic matrix $\mod 2$ we obtain a $\Z_2$-characteristic matrix. The following question is open:

\begin{problem}\label{mod2prob}
Assume given a $\Z_2$-characteristic pair $(P,\varLambda)$ consisting of a  simple $n$-polytope $P$ and an $(n\times m)$-matrix $\varLambda$ with entries in~$\Z_2$ satisfying~\eqref{starprop}. Can $\varLambda$ be obtained by reduction $\mod 2$ from an integer matrix satisfying the same condition~\eqref{starprop}?
\end{problem}

The answer to the above problem is positive for $3$-polytopes:

\begin{proposition}\label{lambda2}
For a simple $3$-polytope $P$, every $\Z_2$-characteristic pair
$(P,\varLambda)$ is the $\mod2$ reduction of a $\Z$-characteristic
pair.
\end{proposition}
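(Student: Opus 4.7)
The plan is to use the most naive integer lift and to show that it already works, thanks to a small-determinant phenomenon special to $3\times 3$ matrices. Given a $\Z_2$-characteristic pair $(P,\varLambda)$ with entries $\lambda_{ij}\in\Z_2=\{0,1\}$, define $\widetilde\varLambda$ to be the integer matrix whose $(i,j)$-entry is $\widetilde\lambda_{ij}=\lambda_{ij}$ regarded as an element of $\{0,1\}\subset\Z$. Then $\widetilde\varLambda\equiv\varLambda\pmod 2$ automatically, and the entire task is to verify condition~\eqref{starprop} for $\widetilde\varLambda$: for every vertex $F_{i_1}\cap F_{i_2}\cap F_{i_3}\ne\varnothing$ of $P$, the integer determinant $\det(\widetilde\lambda_{i_1},\widetilde\lambda_{i_2},\widetilde\lambda_{i_3})$ equals $\pm1$.

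The key combinatorial input is a tiny lemma I would prove first: any $3\times 3$ matrix with entries in $\{0,1\}$ has determinant in $\{-2,-1,0,1,2\}$. This can be established either by enumerating the six signed monomials in Leibniz's expansion (each lies in $\{0,1\}$, and a short case analysis excludes $|\det|=3$: achieving $\det=\pm3$ would force all three cofactors of one row to have absolute value $1$ with matched signs, and tracking which entries must equal $1$ quickly yields a contradiction) or by applying Hadamard's inequality after the standard $\{0,1\}\leftrightarrow\{\pm 1\}$ shift.

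Granting the lemma, the proof closes in one line. Because $\varLambda$ is a $\Z_2$-characteristic matrix, $\det(\widetilde\lambda_{i_1},\widetilde\lambda_{i_2},\widetilde\lambda_{i_3})$ reduces mod~$2$ to $\det_{\Z_2}(\lambda_{i_1},\lambda_{i_2},\lambda_{i_3})=1$, hence is odd; combined with the bound $|\det|\le 2$ from the lemma, the determinant is forced to be $\pm 1$, so $\widetilde\varLambda$ satisfies~\eqref{starprop}. Consequently $(P,\widetilde\varLambda)$ is a $\Z$-characteristic pair lifting $(P,\varLambda)$.

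The only real obstacle is the $\{0,1\}$-determinant bound, and this is a short, elementary case check rather than a substantive difficulty. It is worth emphasising that the argument is genuinely special to dimension $n=3$: already for $n=4$ there are $4\times 4$ matrices with entries in $\{0,1\}$ whose determinant equals $\pm 3$, so the parity-plus-bound mechanism breaks down; this is entirely consistent with Problem~\ref{mod2prob} being open in general.
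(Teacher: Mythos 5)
Your proof is correct, and its overall strategy is the same as the paper's: take the naive $\{0,1\}$ integer lift of $\varLambda$ and reduce the whole statement to an elementary fact about $3\times 3$ matrices with entries $0$ and $1$. Where you genuinely differ is in how that elementary fact is established. The paper classifies the relevant matrices: a $\{0,1\}$-matrix with odd determinant either has a column with two zeros or is, up to permutation of rows and columns, $\bigl(\begin{smallmatrix}1&0&1\\1&1&1\\0&1&1\end{smallmatrix}\bigr)$, and each case is checked directly to have determinant $\pm1$. You instead use the extremal bound that a $3\times3$ matrix with entries in $\{0,1\}$ has determinant of absolute value at most $2$, so parity alone forces $\pm1$; that bound is indeed true, and the exclusion of $\pm3$ is even quicker than your cofactor sketch, since $\det=\pm3$ forces the three Leibniz terms of one sign all to equal $1$, hence all nine entries to equal $1$, whence $\det=0$. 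Your route buys a cleaner, enumeration-free argument whose mechanism (parity plus the maximal-determinant bound) also makes transparent why it is confined to $n=3$: for $n=4$ there are $\{0,1\}$-matrices of determinant $\pm3$, which is consistent with Problem~\ref{mod2prob} being open in higher dimensions. The paper's structural check is equally short; the two verifications are interchangeable here.
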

\begin{proof}
It is enough to check that any $(3\times3)$-matrix with entries $0$ or $1$ and determinant $1\mod 2$  has determinant $\pm1$ when viewed as an integer matrix. Indeed, such a matrix either has a column with two zeros, or is
$\begin{pmatrix}1&0&1\\1&1&1\\0&1&1\end{pmatrix}$ up to permutation of rows and columns. The required property is then verified directly.
\end{proof}

\subsection{Right-angled polytopes and hyperbolic manifolds}\label{sechyp}
A particularly important class of $3$-dimensional small covers are \emph{hyperbolic $3$-manifolds of L{\"o}bell type}, introduced and studied by Vesnin in~\cite{vesn87}.

\begin{construction}
Let $P$ be a (compact) polytope in the $3$-dimensional Lobachevsky
space~$\mathbb L^3$ with right angles between adjacent facets (a
\emph{right-angled $3$-polytope} for short). It is easy to see
that a right-angled $3$-polytope is simple. Denote by $G(P)$ the
group generated by the reflections in the facets $F_1,\ldots,F_m$
of~$P$. It is a \emph{right-angled Coxeter group} given by the
presentation
\begin{equation}\label{racg}
  G(P)=\langle g_1,\ldots,g_m\;\vert\; g_i^2=1,\; g_ig_j=g_jg_i\;
  \text{if }F_i\cap F_j\ne\varnothing\rangle,
\end{equation}
where $g_i$ denotes the reflection in the facet~$F_i$. The reflections in adjacent facets commute because of the right-angledness. There are no relations
between the reflections in non-adjacent faces, as the corresponding reflection hyperplanes do not intersect in~$\mathbb L^3$.

The group $G(P)$ acts on $\mathbb L^3$ discretely with finite
isotropy subgroups and with the fundamental domain~$P$. Vertices
$v$ of reflection copies of $P$ have maximal isotropy subgroups,
isomorphic to~$\Z_2^3$ and generated by the reflections in the
three facets meeting at~$v$. This implies the following result.

\begin{lemma}[{\cite{vesn87}}]\label{vesninlemma}
Consider an epimorphism $\varphi^{(k)}\colon G(P)\to\Z_2^k$ for some~$k$. Its kernel $\Ker \varphi^{(k)}\subset G(P)$ does not contain elements of finite
order if and only if the images of the reflections in any three facets of $P$ that have a
common vertex are linearly independent in~$\Z_2^k$. In this case, the group $\Ker\varphi^{(k)}$ acts freely on~$\mathbb L^3$.
\end{lemma}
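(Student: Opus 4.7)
The plan is to characterise torsion in $G(P)$ geometrically via the action on $\mathbb L^3$, and then translate the condition ``$\Ker\varphi^{(k)}$ is torsion-free'' into a linear-algebraic condition about vertex stabilisers.

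First I would recall the basic mechanism of the action. The group $G(P)$ acts on $\mathbb L^3$ properly discontinuously with fundamental domain $P$ (this is the standard Poincaré polyhedron reflection picture used immediately before the lemma). Consequently, every point stabiliser is finite, and an element $g\in G(P)$ has a fixed point in $\mathbb L^3$ if and only if $g$ has finite order. Thus a subgroup $H\subseteq G(P)$ acts freely on $\mathbb L^3$ if and only if $H$ is torsion-free. This already proves the last assertion of the lemma from the first.

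Next I would identify where torsion lives. For a point $x\in\mathbb L^3$, the stabiliser under $G(P)$ is the subgroup generated by the reflections in those facets of the tessellation whose supporting hyperplane contains~$x$. Inside the fundamental domain $P$, such collections of facets correspond exactly to the faces of $P$; the maximal stabilisers occur at vertices, where the three facets meeting at a vertex $v=F_{i_1}\cap F_{i_2}\cap F_{i_3}$ give a subgroup $G_v=\langle g_{i_1},g_{i_2},g_{i_3}\rangle\cong\Z_2^3$ (the three reflections pairwise commute by~\eqref{racg}, and right-angledness forces $G_v$ to be elementary abelian of rank~$3$). Moving to other fundamental domains just conjugates these $G_v$'s, so every nontrivial finite-order element of $G(P)$ is conjugate to a nonidentity element of some vertex stabiliser~$G_v$.

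Now the equivalence is a short calculation. Since $\Ker\varphi^{(k)}$ is normal in $G(P)$, it contains a torsion element iff it meets some vertex stabiliser $G_v$ nontrivially. By the classical fact that a homomorphism from an elementary abelian $2$-group is injective iff it is injective on a generating set, the restriction $\varphi^{(k)}|_{G_v}$ is injective iff the three vectors $\varphi^{(k)}(g_{i_1}),\varphi^{(k)}(g_{i_2}),\varphi^{(k)}(g_{i_3})\in\Z_2^k$ are linearly independent. Equivalently, $\Ker\varphi^{(k)}\cap G_v=\{1\}$ iff these three images are linearly independent. Running this over all vertices $v$ gives the stated criterion, and the freeness conclusion follows from the first paragraph.

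The only slightly subtle step is the identification of all torsion in $G(P)$ with the vertex stabilisers; this is where one uses the specific geometry (compact right-angled hyperbolic polytope in dimension~$3$, so that the faces of positive codimension are facets, edges and vertices, and the two smaller types of face stabilisers $\Z_2$ and $\Z_2^2$ sit inside some $\Z_2^3=G_v$). Everything else is a direct translation between fixed points, finite order and linear independence modulo~$2$.
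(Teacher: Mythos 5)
Your proposal is correct and follows essentially the same route the paper (quoting Vesnin) sketches: the action of $G(P)$ on $\mathbb L^3$ is discrete with finite isotropy and fundamental domain $P$, all torsion is conjugate into face stabilisers whose maximal members are the vertex groups $G_v\cong\Z_2^3$, and the normal subgroup $\Ker\varphi^{(k)}$ is torsion-free precisely when $\varphi^{(k)}$ is injective on each $G_v$, i.e.\ when the images of the three reflections at each vertex are linearly independent in $\Z_2^k$. One small wording slip: ``a homomorphism from an elementary abelian $2$-group is injective iff it is injective on a generating set'' is false as a general statement; the correct (and equally immediate) justification is that such a homomorphism is $\mathbb F_2$-linear, hence injective iff it sends a basis of $G_v$ to linearly independent vectors, which is exactly the condition you use.
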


If $\varphi^{(k)}\colon G(P)\to\Z_2^k$ satisfies the condition of Lemma~\ref{vesninlemma}, then the quotient $N=\mathbb L^3/\Ker\varphi^{(k)}$ is a closed \emph{hyperbolic $3$-manifold}. It is composed of $|\Z_2^k|=2^k$ copies of $P$ and has a Riemannian metric of constant negative curvature. Furthermore, such a manifold $N$ is aspherical (has the homotopy type of Eilenberg--Mac Lane space $K(\Ker\varphi^{(k)},1)$), as its universal cover $\mathbb L^3$ is contractible.
\end{construction}

The abelianisation homomorphism
$G(P)\stackrel{\textrm{ab}}\longrightarrow \Z_2^m$ satisfies the condition of Lemma~\ref{vesninlemma}. Its kernel is the commutator subgroup $G(P)'$ of the right-angled Coxeter group~$G(P)$. The hyperbolic $3$-manifold $\mathcal R_P=\mathbb L^3/G(P)'$ can be identified with the real version of the \emph{moment-angle manifold}~$\zp$, see Subsections~\ref{mamanifolds} and~\ref{zpm}. It is also known as the \emph{universal abelian cover} of~$P$, see~\cite{grom87} and~\cite{da-ja91}.

The smallest possible value of $k$ for which $\varphi^{(k)}\colon G(P)\to\Z_2^k$ can satisfy the condition of Lemma~\ref{vesninlemma} is $k=3$. The corresponding quotient manifold  $N=\mathbb L^3/\Ker\varphi^{(3)}$, composed of $8$ copies of~$P$, was called a \emph{hyperbolic $3$-manifold of L\"obell type} in~\cite{vesn87}. L\"obell constructed first examples of these manifolds in 1931. The epimorphism $\varphi^{(3)}$ factors as $G(P)\stackrel{\textrm{ab}}\longrightarrow \Z_2^m\stackrel\varLambda\longrightarrow\Z_2^3$, where $\varLambda$ is a linear map. The condition of Lemma~\ref{vesninlemma} is equivalent to that $\varLambda$ satisfies~\eqref{starprop},  i.\,e.  $\varLambda$ is given by a $\Z_2$-characteristic matrix. We therefore can identify the hyperbolic manifold $N=\mathbb L^3/\Ker\varphi^{(3)}$ with the small cover $N(P,\varLambda)$.

\medskip

Pogorelov~\cite{pogo67} asked the following question in 1967: which combinatorial $3$-polytopes have right-angled realisations in~$\mathbb L^3$? Results of Pogorelov~\cite{pogo67} and Andreev~\cite{andr70} give a complete answer, which can be formulated in our terms as follows:

\begin{theorem}[\cite{pogo67,andr70}]\label{Pogth}
A combinatorial $3$-polytope can be realised as a right-angled polytope in Lobachevsky space~$\mathbb L^3$ if and only if it is simple, flag and does not have $4$-belts. Furthermore, such a realisation is unique up to isometry.
\end{theorem}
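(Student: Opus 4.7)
The plan is to split the proof into necessity of the combinatorial conditions, existence of a right-angled realisation, and uniqueness up to isometry of~$\mathbb L^3$.

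For necessity, let $P\subset\mathbb L^3$ be a compact right-angled $3$-polytope. I would first show $P$ is simple by analysing the link at each vertex~$v$: this link is a convex spherical polygon whose vertex angles equal the dihedral angles of $P$ at the edges through~$v$, hence are all $\pi/2$. The spherical Gauss--Bonnet formula gives the area of such a $k$-gon as $(2-k/2)\pi$, forcing $k=3$, so exactly three facets meet at every vertex. To rule out $3$- and $4$-belts I would appeal to Andreev's necessary angle inequalities for compact hyperbolic polytopes: for a $3$-belt the sum of dihedral angles at its three edges is strictly less than~$\pi$, and for a $4$-belt the sum at its four edges is strictly less than~$2\pi$. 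For a right-angled polytope these sums equal $3\pi/2$ and $2\pi$ respectively, violating both. The geometric content of the $3$-belt inequality is that the three pairwise orthogonal supporting hyperplanes, having no common vertex by the belt condition, admit a common perpendicular hyperbolic plane whose intersection with them forms a compact hyperbolic triangle; its angle sum, equal to the dihedral-angle sum of the belt, is then bounded above by~$\pi$ via the hyperbolic Gauss--Bonnet formula. The $4$-belt case is analogous, producing a compact right-angled hyperbolic quadrilateral whose angle sum must be less than~$2\pi$.

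For existence, the assertion is the right-angled case of Andreev's theorem~\cite{andr70}: any simple abstract $3$-polytope satisfying the listed combinatorial conditions admits a hyperbolic realisation with all dihedral angles equal to $\pi/2$. Andreev's proof uses the continuity method: one parametrises the space of hyperbolic polytopes of the given combinatorial type by their dihedral angles, shows that the admissible region is nonempty and suitably well-behaved, and applies a degree argument (a proper map onto a contractible region of the angle parameter space) to conclude that every admissible angle assignment is realised by an actual polytope; the right-angled point lies in this admissible region precisely under the assumption of no $3$- and $4$-belts.

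Uniqueness is the rigidity part of Andreev's theorem, due in the right-angled setting to Pogorelov~\cite{pogo67}: a compact convex hyperbolic $3$-polytope is determined up to isometry of~$\mathbb L^3$ by its combinatorial type together with its dihedral angles, via a Cauchy-style polyhedral rigidity argument. The principal obstacle in the whole proof is the existence step: Andreev's theorem is a substantial piece of hyperbolic geometry, requiring continuity-method techniques and a delicate analysis of boundary degenerations, and is essentially independent of the toric-topological machinery developed elsewhere in the paper. Necessity and uniqueness, in contrast, reduce to comparatively elementary Gauss--Bonnet computations and classical polyhedral rigidity.
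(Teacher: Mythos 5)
Your proposal is essentially the paper's own treatment: the paper gives no independent proof of this theorem but cites Pogorelov~\cite{pogo67} and Andreev~\cite{andr70}, explaining in the accompanying remark that Andreev's angle conditions (vertex sums $>\pi$, $3$-belt sums $<\pi$, $4$-belt sums $<2\pi$) yield existence in the right-angled case once $3$- and $4$-belts are excluded, while Pogorelov supplies uniqueness — exactly the division of labour you describe, with your Gauss--Bonnet sketch of necessity being the standard argument behind those cited inequalities.
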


\begin{remark}
More specifically, Pogorelov's theorem stated that a combinatorial $3$-polytope has a right-angled realisation in~$\mathbb L^3$ if and only if it is simple, flag, does not have $4$-belts, and \emph{has a realisation in~$\mathbb L^3$ with all dihedral angles~$<\frac\pi2$}. Pogorelov also proved the uniqueness of a right-angled realisation.

Andreev considered the problem of description of discrete
reflection groups in Lobachevsky spaces, posed by Vinberg in
1967~\cite{vinb67}. This problem reduces to describing polytopes
with dihedral angles $\frac\pi n$, $n\ge 2$. Andreev's famous
theorem~\cite[Theorem~2]{andr70} gives necessary and sufficient
conditions for a combinatorial simple $3$-polytope $P$ with
prescribed values of dihedral angles $\le\frac\pi2$ to be
realisable in~$\mathbb L^3$. In particular, this theorem implies
Pogorelov's result. When $P$ is not a simplex or triangular prism,
Andreev's conditions are as follows:
\begin{itemize}
\item[(a)] the sum of dihedral angles between the facets meeting at a vertex is $>\pi$;
\item[(b)] the sum of dihedral angles between the facets forming a $3$-belt is $<\pi$;
\item[(c)] the sum of dihedral angles between the facets forming a $4$-belt is $<2\pi$.
\end{itemize}
In the absence of $3$- and $4$-belts the conditions (b) and (c)
are empty, so the result of Andreev implies that Pogorelov's last
condition follows from the other three conditions (simpleness,
flagness and the absence of $4$-belts).
%
%Andreev also considered the question of realisability of combinatorial $3$-polytopes with dihedral %angles~$\le\frac\pi2$. Situation here is more complicated than in the right-angled case. Any flag
%$3$-polytope can be realised in~$\mathbb L^3$ with dihedral angles~$\le\frac\pi2$ (for example, %there is a realisation with all dihedral angles equal to $\frac{2\pi}5$). There are exist non-flag simple %$3$-polytopes which are not realisable in this way. The simplest example is a $3$-cube with four vertex %truncations, where truncated vertices are pairwise non-adjacent. A criterion for a combinatorial simple %$3$-polytope to be realisable in~$\mathbb L^3$ with dihedral angles~$\le\frac\pi2$ is given %in~\cite[Theorem~9]{andr70}.
\end{remark}

We refer to the class of simple flag $3$-polytopes without $4$-belts as the \emph{Pogorelov class}~$\mathcal P$. It will feature prominently throughout the rest of our paper.

A polytope from the class $\mathcal P$ has neither triangular nor
quadrangular facets. The Pogorelov class contains all fullerenes;
this follows from the results of Do\v{s}li\'{c}~\cite{dosl03} (see
also~\cite[Corollary~3.16]{bu-er15} and~\cite{bu-er,bu-erS}). As
we mentioned in the Introduction, the results of~\cite{thur98}
imply that the number of combinatorially different fullerenes with
$p_6$ hexagonal facets grows as~$p_6^9$. We also note that the
class $\mathcal P$ contains simple $3$-polytopes with pentagonal,
hexagonal and one heptagonal facet, which are used in the
construction of fullerenes by means of truncations
(see~\cite{bu-er, bu-erS, bu-erC}). Finally, we show in
Corollary~\ref{Pogpk} that for any finite sequence of nonnegative
integers $p_k$, $k\geqslant 7$, there exists a Pogorelov polytope
whose number of $k$-gonal facets is~$p_k$. All these facts imply
that the Pogorelov class of polytopes is large enough.

We summarise the constructions and results above as follows.

\begin{theorem}
A small cover $N(P,\varLambda)$ of a $3$-polytope $P$ from the Pogorelov class~$\mathcal P$ has the structure of a hyperbolic $3$-manifold $\mathbb L^3/\Ker\varphi^{(3)}$ of L\"obell type, with the epimorphsim $\varphi^{(3)}$ given by the composition $G(P)\stackrel{\mathrm{ab}}\longrightarrow \Z_2^m\stackrel\varLambda\longrightarrow\Z_2^3$. Furthermore, such a $3$-manifold $N(P,\varLambda)$ is aspherical.
\end{theorem}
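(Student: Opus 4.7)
The proof assembles results already laid out in the excerpt, so the task is mainly to fit them together in the correct order.

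First, since $P\in\mathcal P$ is a simple, flag $3$-polytope without $4$-belts, Pogorelov–Andreev (Theorem~\ref{Pogth}) gives a right-angled realisation of $P$ in~$\mathbb L^3$, unique up to isometry. Reflections in the facets of this realisation generate the right-angled Coxeter group $G(P)$ presented by~\eqref{racg}, and $G(P)$ acts on $\mathbb L^3$ discretely with fundamental domain~$P$; isotropy subgroups are generated by reflections in the facets meeting at a given face, so at a vertex they are copies of~$\Z_2^3$.

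Next I would verify that $\varphi^{(3)}\colon G(P)\xrightarrow{\mathrm{ab}}\Z_2^m\xrightarrow{\varLambda}\Z_2^3$ satisfies the hypothesis of Lemma~\ref{vesninlemma}. The image of the reflection $g_i$ under $\varphi^{(3)}$ is exactly the $i$th column $\lambda_i$ of $\varLambda$. Since $\varLambda$ is a $\Z_2$-characteristic matrix, condition~\eqref{starprop} says that $\lambda_{i_1},\lambda_{i_2},\lambda_{i_3}$ are linearly independent in $\Z_2^3$ whenever $F_{i_1}\cap F_{i_2}\cap F_{i_3}\ne\varnothing$, which is precisely the requirement of the lemma. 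Hence $\Ker\varphi^{(3)}$ acts freely on $\mathbb L^3$, and the quotient $N=\mathbb L^3/\Ker\varphi^{(3)}$ is a closed hyperbolic $3$-manifold, composed of $|\Z_2^3|=8$ copies of~$P$; by definition this is a hyperbolic $3$-manifold of L\"obell type.

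The step that requires a little more care is identifying $N$ with the small cover $N(P,\varLambda)=P\times\Z_2^3/\!\sim$. The residual action of $\Z_2^3=G(P)/\Ker\varphi^{(3)}$ on $N$ is locally standard: near an interior point of a codimension-$k$ face of the fundamental domain $P$, the isotropy in $G(P)$ is generated by the reflections in the $k$ incident facets, so its image in $\Z_2^3$ is the subgroup spanned by the corresponding $\lambda_i$'s. The orbit map $N\to N/\Z_2^3=P$ therefore realises $N$ as a small cover whose characteristic function sends $F_i$ to~$\lambda_i$, and by the classification of small covers by their characteristic pairs this forces an equivariant homeomorphism $N\cong N(P,\varLambda)$ (matching the equivalence-relation description of $N(P,\varLambda)$ by explicitly sending $[x,t]$ to the image in $N$ of a lift of $x$ translated by a preimage of $t$).

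Finally, asphericity is immediate: the universal cover of $N=\mathbb L^3/\Ker\varphi^{(3)}$ is $\mathbb L^3$, which is contractible, so $N$ is a $K(\Ker\varphi^{(3)},1)$. The main conceptual obstacle is really Step~3, the explicit identification of the geometric quotient $\mathbb L^3/\Ker\varphi^{(3)}$ with the combinatorial model $N(P,\varLambda)$; once the local isotropy picture along each face stratum is matched with the corresponding subtorus $T(x)$ in Construction~\ref{canqt} (in its $\Z_2$-version), everything else follows from the results already cited.
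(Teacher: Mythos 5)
Your proposal is correct and follows essentially the same route as the paper, which presents this theorem as a summary of the preceding construction: Pogorelov--Andreev for the right-angled realisation, Vesnin's Lemma~\ref{vesninlemma} applied to $\varphi^{(3)}=\varLambda\circ\mathrm{ab}$ (with condition~\eqref{starprop} giving exactly the linear-independence hypothesis), identification of $\mathbb L^3/\Ker\varphi^{(3)}$ with the canonical model $N(P,\varLambda)$, and asphericity from the contractibility of~$\mathbb L^3$. Your extra care in matching the face-stratum isotropy groups with the combinatorial model only fills in a detail the paper leaves implicit.
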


The conditions specifying the Pogorelov class $\mathcal P$ also feature in Gromov's theory of hyperbolic groups. Namely, the ``no $\triangle$-condition'' from~\cite[\S4.2.E]{grom87} for a simplicial complex $\sK$ is the absence of missing $2$-faces, while the ``no $\square$-condition'' is the absence of chordless $4$-cycles. When $\sK$ is the dual complex of a simple polytope (see Subsection~\ref{scfr} below), these two conditions translate to the absence of $3$- and $4$-belts, respectively.

The relationship between small covers and hyperbolic manifolds was
also mentioned in the work of Davis and
Januszkiewicz~\cite[p.~428]{da-ja91}, although the criterion for
right-angledness was stated there incorrectly (as not every
$3$-polytope without triangular and quadrangular facets has a
right-angled realisation, see Example~\ref{csexa}).

Compact right-angled $n$-polytopes exist in Lobachevsky space
$\mathbb L^n$ of dimension $n=2,3,4$ only. On Lobachevsky
plane~$\mathbb L^2$, there are right-angled $m$-gons for any
$m\ge5$. The three-dimensional case has been described above.
There exist compact right-angled $4$-polytopes in $\mathbb L^4$,
but no classification is known up to date. The most well-known
example is the \emph{regular $120$-cell}. Given two right-angled
polytopes $P_1$ and $P_2$ with isometric facets $F_1\subset P_1$,
$F_2\subset P_2$ one can obtain a new right-angled polytope by
gluing $P_1$ and $P_2$ along~$F_1\cong F_2$. In this way, one can
produce infinitely many different right-angled polytopes in
$\mathbb L^4$ starting from the right-angled regular $120$-cell.
All known examples of right-angled $4$-polytopes are obtained in
this way. Note that for any convex polytope in Lobachevsky space
there is a combinatorial equivalent convex polytope in Euclidean
space; this follows easily by considering the Beltrami--Klein
model of~$\mathbb L^n$. The absence of right-angled polytopes in
$\mathbb L^n$ for $n\ge5$ was proved by Vinberg in~\cite{vinb84}
using Nikulin's inequalities~\cite{niku81} on the average number
of faces in a simple polytope. These inequalities imply that a
simple polytope of dimension $n\ge5$ has a triangular or
quadrangular $2$-face, which is impossible for a right-angled
polytope. See~\cite{po-vi05} for a survey of results on
right-angled polytopes.

\subsection{Topological toric manifolds}
A toric manifold is not necessarily a quasitoric manifold and
a quasitoric manifold is also not necessarily a toric manifold.
However, both toric and quasitoric manifolds are examples of topological toric manifolds introduced in~\cite{i-f-m13}.
Recall that a toric manifold admits an algebraic action of $(\C^\times)^n$ with an open dense orbit.
It has local charts equivariantly isomorphic to a sum of complex one-dimensional \emph{algebraic} representations of $(\C^\times)^n$.
A \emph{topological toric manifold} is a compact smooth $2n$-dimensional manifold with an effective smooth action of
$(\C^\times)^n$ having an open dense orbit and covered by finitely many invariant open subsets each equivariantly diffeomorphic to a sum of complex one-dimensional \emph{smooth} representation spaces of~$(\C^\times)^n$.
(The latter condition automatically follows from the existence of a dense orbit in the algebraic category, but not in the smooth category.)

The cohomology ring of a topological toric manifold is described similarly to the toric or quasitoric case; there is an analogue of
Theorems~\ref{cohomtoric} or~\ref{cohomqtoric}, see~\cite[Proposition~8.3]{i-f-m13}.

\subsection{Simplicial complexes and face rings}\label{scfr}
Let $\sK$ be an (abstract) \emph{simplicial complex} on the set $[m]=\{1,\ldots,m\}$,
i.\,e. $ \mathcal K$ is a collection of subsets $I\subset [m]$ such that
for any $I\in\sK$ all subsets of $I$ also belong to~$\sK$. We
always assume that the empty set $\varnothing$ and all one-element
subsets $\{i\}\subset[m]$ belong to~$\sK$; the latter are \emph{vertices} of~$\sK$. We refer to $ I \in
\mathcal K $ as a \emph{simplex} (or a \emph{face}) of~$\sK$.
Every abstract simplicial complex $\sK$ has a
\emph{geometric realisation} $|\sK|$, which is a polyhedron in a
Euclidean space (a union of convex geometric simplices).

A \emph{non-face} of $\sK$ is a subset $I\subset [m]$ such that $I\notin
\sK$. A \emph{missing face} (a \emph{minimal non-face}) of
$\sK$ is an inclusion-minimal non-face of $\sK$, that is, a subset
$I\subset[m]$ such that $I$ is not a simplex of~$\sK$, but every
proper subset of $I$ is a simplex of~$\sK$.

A simplicial complex $\sK$ is called a \emph{flag complex} if each
of its missing faces consists of two vertices. Equivalently, $\sK$
is flag if any set of vertices of $\sK$ which are pairwise
connected by edges spans a simplex. Every flag complex $\sK$ is
determined by its 1-skeleton $\sK^1$, and is obtained from the
graph $\sK^1$ by filling in all complete subgraphs by simplices.

Let $P$ be a simple $n$-polytope with $m$ facets $F_1,\ldots,F_m$.
%For a subset $I\subset[m]$ define $G(I)=\bigcap\limits_{i\in
%I}F_i$. For a face $G=F_{i_1}\cap \dots\cap F_{i_k}$ define
%$I(G)=\{i_1,\dots,i_k\}$. Set $I(P)=\varnothing$. Then
%\[
%  \sK_P=\bigl\{I=I(G)\subset[m]\colon
%  G\ne\varnothing\bigr\}
%\]
Then
\[
  \sK_P=\bigl\{I=\{i_1,\ldots,i_k\}\in[m]\colon
  F_{i_1}\cap\cdots\cap F_{i_k}\ne\varnothing\bigr\}
\]
is a simplicial complex on $[m]$, called the \emph{dual complex}
of~$P$. The vertices of $\sK_P$ correspond to the facets of~$P$,
and the empty simplex $\varnothing$ corresponds to~$P$ itself.
Geometrically, $|\sK_P|$ is an $(n-1)$-dimensional sphere
simplicially subdivided as the boundary of the \emph{dual
polytope} of~$P$.
%
%The set of facets $\{F_{i_1},\dots, F_{i_k}\}$ is called a \emph{non-face} of $P$ if
%$F_{i_1}\cap\dots\cap F_{i_k}=\varnothing$. Non-faces minimal by inclusion are called \emph{missing faces}.
%A set $\{F_{i_1},\dots,F_{i_k}\}$ is a non-face (a missing face) of the polytope $P$ if and only if
%the set $\{i_1,\dots,i_k\}$ is a non-face (a missing face) of the simplicial complex  $\mathcal{K}_P$.

The definitions of flag polytopes and complexes agree: $P$ is a
flag polytope if and only if $\sK_P$ is a flag complex. A $k$-belt
in $P$ with $k\ge4$ corresponds to a \emph{chordless $k$-cycle} in
the graph~$\sK_P^1$.

The barycentric subdivision of any simplicial complex $\mathcal K$ on $[m]$ has a structure of a cubical complex $\mathrm{cub}(\sK)$, which embeds canonically into the cubical complex of faces of an $m$-dimensional cube~$I^m$~\cite[\S1.5]{bu-pa00}. The cubical complex $\mathrm{cub}(\sK)$ has a piecewise Euclidean structure in which each cubical face is a Euclidean cube.  It was shown
in~\cite[\S4]{grom87} that the corresponding piecewise Euclidean metric has non-positive curvature (in the sense of the comparison  $\mathrm{CAT}(0)$-inequality of Alexandrov and Toponogov) if all links satisfy the no-$\triangle$-condition (which is equivalent to the flagness of~$\sK$), whereas the no-$\square$-condition implies that the curvature is strictly negative. Hyperbolic manifolds associated with $3$-polytopes from the Pogorelov class (see Subsection~\ref{sechyp}) satisfy a much stronger condition: they carry a genuine Riemannian metric of constant negative curvature.

We fix a commutative ring $\k$ with unit.

The \emph{face ring} of $\sK$ (also known as the
\emph{Stanley--Reisner ring}) is defined as the quotient of the
polynomial ring $\k[v_1,\ldots,v_m]$ by the square-free
monomial ideal generated by non-simplices of~$\sK$:
\[
  \k[\sK]=\k[v_1,\ldots,v_m]\big/\bigl(v_{i_1}\cdots v_{i_k}\
  \colon
  \{i_1,\ldots,i_k\}\notin\sK\bigr).
\]
As $\k[\sK]$ is the quotient of the polynomial ring by a monomial
ideal, it has a grading or even a multigrading (a $\Z^m$-grading).
We use an even grading: $\deg v_i=2$ and $\mdeg v_i=2\mb e_i$,
where $\mb e_i\in\Z^m$ is the $i$th standard basis vector.

Note that when $\sK=\sK_P$ for a simple polytope $P$, the ring
$\Z[P]$ coincides with the quotient of $\Z[v_1,\ldots,v_m]$ by the
relations~(a) in Theorem~\ref{cohomtoric} or in
Theorem~\ref{cohomqtoric}.

A simplicial complex $\sK$ is flag if and only if
$\k[\sK]$ is a \emph{quadratic algebra}, i.\,e. the quotient of
$\k[v_1,\ldots,v_m]$ by an ideal generated by quadratic monomials
(which have degree~4 in our grading).

\subsection{Moment-angle complexes and manifolds}\label{mamanifolds}
Let $\sK$ be a simplicial complex on the set $[m]$,
and let $(D^2,S^1)$ denote the pair of a disc and its boundary
circle. For each simplex $I=\{i_1,\ldots,i_k\}\in\sK$, set
\[
  (D^2,S^1)^I=\{(x_1,\ldots,x_m)\in(D^2)^m\colon x_i\in S^1\text{ when }i\notin I\}.
\]
The \emph{moment-angle complex} is defined as
\begin{equation}\label{defzk}
  \zk=(D^2,S^1)^\sK=\bigcup_{I\in\sK}(D^2,S^1)^I\subset(D^2)^m.
\end{equation}
If $|\sK|$ is homeomorphic to a sphere $S^{n-1}$, then $\zk$ is a
topological manifold. If $|\sK|$ is the boundary of a convex
polytope or is a starshaped sphere (the underlying complex of a complete
simplicial fan), then $\zk$ has a smooth structure~\cite{pano13}.

In the polytopal case there is an alternative way to define $\zk$
in terms of the dual simple polytope~$P$. Namely, assume given a
presentation of a convex $n$-dimensional polytope $P$ by
inequalities~\eqref{ptope}. Define the map
\[
  i_P\colon\R^n\to\R^m,\quad\mb x\mapsto
  \bigl(\langle\mb a_1,\mb x\rangle+b_1,\ldots,\langle\mb a_m,\mb x\rangle+b_m\bigr),
\]
so $i_P(P)\subset\R^m_\ge=\{(y_1,\ldots,y_m)\in\R^m\colon
y_i\ge0\}$. Also, define the map
\begin{equation}\label{mumap}
  \mu\colon\C^m\to\R^m_\ge,\quad(z_1,\ldots,z_m)\mapsto(|z_1|^2,\ldots,|z_m|^2).
\end{equation}
Then define the space $\zp$ by the pullback diagram
\begin{equation}\label{defzp}
\begin{CD}
  \zp @>>> \C^m\\
  @VVV @VV{\mu}V\\
  P @>{i_p}>> \R^m_\ge
\end{CD}
\end{equation}
The space $\zp=\mu^{-1}(i_P(P))$ can be written as an intersection of $(m-n)$
Hermitian quadrics in $\C^m$, and this intersection is
nondegenerate precisely when the polytope $P$ is simple. In the
latter case, $\zp$ is a smooth $(m+n)$-dimensional manifold.
Furthermore, the manifold $\zp$ is diffeomorphic to the
moment-angle complex~$\mathcal Z_{\sK_P}$. In particular, the
diffeomorphism type of $\zp$ depends only on the combinatorial
type of~$P$. We shall therefore not distinguish between $\zp$ and
$\mathcal Z_{\sK_P}$ and refer to it as the \emph{moment-angle
manifold} corresponding to a simple polytope~$P$. The details of
these constructions can be found in~\cite{pano13} or in~\cite[Chapter~6]{bu-pa15}.

The standard coordinatewise action of the $m$-torus $T^m$ on
$(D^2)^m$ or $\C^m$ induces the \emph{canonical} $T^m$-action on
$\zk$ or~$\zp$.

There is a ``real'' version of these constructions with the pair $(D^2,S^1)$ replaced by $(D^1,S^0)$ and the map~\eqref{mumap} replaced by
\[
  \mu_\R\colon\R^m\to\R^m_\ge,\quad(y_1,\ldots,y_m)\mapsto(y_1^2,\ldots,y_m^2).
\]
The resulting \emph{real moment-angle manifold} $\mathcal R_P=\mu_\R^{-1}(i_P(P))$ has dimension $n$ and is given as an intersection of $(m-n)$ quadrics in~$\R^m$. It features in the constructions of Hamiltonian-minimal Lagrangian submanifolds of~\cite{miro04},~\cite{mi-pa13},~\cite{pano13}.

\subsection{Cohomology of moment-angle complexes}
We consider
(co)homology with coefficients in~$\k$. Denote by $\Lambda[u_1,\ldots,u_m]$ the
exterior algebra on $m$ generators over~$\k$ which satisfy the relations
$u_i^2=0$ and $u_iu_j=-u_ju_i$.

The \emph{Koszul complex} (or the \emph{Koszul algebra}) of the
face ring $\k[\sK]$ is defined as the differential
$\Z\oplus\Z^m$-graded algebra
$(\Lambda[u_1,\ldots,u_m]\otimes\k[\sK],d)$, where
\begin{equation}\label{bigrading}
  \mdeg u_i=(-1,2\mb e_i),\quad\mdeg v_i=(0,2\mb e_i),\qquad
  du_i=v_i,\quad dv_i=0.
\end{equation}
Cohomology of $(\Lambda[u_1,\ldots,u_m]\otimes\k[\sK],d)$ is
the \emph{$\Tor$-algebra} $\Tor_{\k[v_1,\ldots,v_m]}(\k[\sK],\k)$.
It also inherits a $\Z\oplus\Z^m$-grading.

\begin{theorem}[{\cite{b-b-p04}, \cite[Theorem~4.5.5]{bu-pa15}}]\label{zkcoh}There
are isomorphisms of (multi)graded commutative algebras
\begin{align*}
  H^*(\zk)&\cong\Tor_{\k[v_1,\ldots,v_m]}\bigl(\k[\sK],\k\bigr)\\
  &\cong H\bigl(\Lambda[u_1,\ldots,u_m]\otimes\k[\sK],d\bigr).
\end{align*}
\end{theorem}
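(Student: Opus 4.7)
My plan is to establish the two isomorphisms in turn, with the first (the identification of $\Tor$ with the cohomology of the Koszul complex) being purely algebraic, and the second (the identification with $H^*(\zk)$) being topological.

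First, I would recall that the Koszul complex $(\Lambda[u_1,\ldots,u_m]\otimes\k[v_1,\ldots,v_m],d)$ with the differential $du_i=v_i$, $dv_i=0$ is the standard free resolution of $\k$ as a module over the polynomial ring $\k[v_1,\ldots,v_m]$. Tensoring this resolution with $\k[\sK]$ over $\k[v_1,\ldots,v_m]$ annihilates the $v_i$'s on the polynomial side and leaves $\Lambda[u_1,\ldots,u_m]\otimes\k[\sK]$ with the induced differential $du_i=v_i$, $dv_i=0$. This gives the second isomorphism
\[
  \Tor_{\k[v_1,\ldots,v_m]}\bigl(\k[\sK],\k\bigr)\cong H\bigl(\Lambda[u_1,\ldots,u_m]\otimes\k[\sK],d\bigr)
\]
by definition of $\Tor$ as the homology of a resolution tensored with the second argument.

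For the topological half, I would consider the Borel construction for the canonical $T^m$-action on $\zk$. The homotopy quotient $ET^m\times_{T^m}\zk$ is homotopy equivalent to the Davis--Januszkiewicz space $DJ(\sK)=(\C P^\infty)^\sK$, defined as the polyhedral product analogous to~\eqref{defzk} but with $(\C P^\infty,\pt)$ in place of~$(D^2,S^1)$. A direct cellular computation, or induction on the simplices of~$\sK$ using Mayer--Vietoris, shows that $H^*(DJ(\sK))\cong\k[\sK]$ as graded algebras, with the generators $v_i$ pulled back from $H^2(\C P^\infty)$ and with the Stanley--Reisner relations arising precisely from the non-faces of~$\sK$. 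The Borel fibration
\[
  \zk\lllra ET^m\times_{T^m}\zk\lllra BT^m
\]
together with the Eilenberg--Moore spectral sequence for the pullback square whose vertices are $\zk,\;ET^m\times_{T^m}\zk,\;ET^m,\;BT^m$ then yields a spectral sequence with $E_2$-term $\Tor_{H^*(BT^m)}\bigl(H^*(DJ(\sK)),\k\bigr)=\Tor_{\k[v_1,\ldots,v_m]}(\k[\sK],\k)$ converging to $H^*(\zk)$.

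The main obstacle is to show that this Eilenberg--Moore spectral sequence collapses at $E_2$ and that the induced filtration is trivial, yielding an isomorphism of algebras and not merely of associated graded objects. I would handle this by exhibiting an explicit quasi-isomorphism at the chain level rather than by invoking formality of $DJ(\sK)$. Concretely, $\zk$ inherits a cell decomposition from the product cell structure on $(D^2)^m$ (with $D^2$ having cells $1,T,D$ in dimensions $0,1,2$), and the cellular cochain complex of $\zk$ is naturally a quotient of $\Lambda[u_1,\ldots,u_m]\otimes\k[v_1,\ldots,v_m]$, namely the subquotient spanned by monomials $u_Iv_J$ with $I\cap J=\varnothing$ and $I\cup J\in\sK$. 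Comparing with the Koszul model $\Lambda[u_1,\ldots,u_m]\otimes\k[\sK]$ via an explicit chain map that sends $u_I\otimes v_J$ to the corresponding cellular cochain, I would verify (using the multigraded structure and a deformation retraction of the Koszul model onto its minimal part) that this map is a multiplicative quasi-isomorphism, completing the proof. The bookkeeping of the multigrading~\eqref{bigrading} is crucial here, as both sides split as direct sums over $\Z^m$ and the comparison can be performed component by component.
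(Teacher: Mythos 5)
Your route is the standard one from the sources the paper cites for this theorem ([b-b-p04] and~\cite[\S4.5]{bu-pa15}): Koszul resolution for the algebraic half, and a chain-level cellular model of $\zk$ (the algebra $R^*(\sK)$ of Lemma~\ref{cellappr}) to avoid collapse and extension problems in the Eilenberg--Moore spectral sequence, which in fact makes the EMSS/Davis--Januszkiewicz detour redundant. However, as written there are two genuine problems in the topological half. First, your description of the cellular cochains of $\zk$ is wrong: for the cell structure coming from $D^2=\{1\}\cup T\cup \mathring D$, a product cell with $D$-factors in positions $J$ and $T$-factors in positions $I$ lies in $\zk$ if and only if $J\in\sK$ (the $S^1$-coordinates are unconstrained), so the basis is $u_Iv_J$ with $J\in\sK$ and $I\cap J=\varnothing$, $I$ arbitrary. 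Your condition $I\cup J\in\sK$ instead describes the cellular cochains of the contractible polyhedral product $(D^2,\pt)^\sK$, and the claimed quasi-isomorphism fails for it: already for $\sK=\{\varnothing,\{1\},\{2\}\}$ (two disjoint vertices, $\zk\cong S^3$) the span of monomials with $I\cup J\in\sK$ is $\langle 1,u_1,u_2,v_1,v_2\rangle$, whose cohomology is $\k$ concentrated in degree~$0$, while $H^3(\zk)\cong\k$.

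Second, the phrase ``multiplicative quasi-isomorphism'' presupposes that the cellular cochains of $\zk$ carry an associative product computing the cup product, which is not automatic for cellular cochains of a space. The key content of the cited proof is precisely the construction of a canonical cellular approximation of the diagonal of $\zk$ (built coordinatewise from one on $D^2$) which turns $C^*_{\mathrm{cell}}(\zk)\cong R^*(\sK)$ into a differential graded algebra and makes the projection $\Lambda[u_1,\ldots,u_m]\otimes\k[\sK]\to R^*(\sK)$ a map of DGAs; only then does the componentwise (multigraded) retraction argument you sketch upgrade the additive identification to the ring isomorphism of the theorem. Without supplying this diagonal approximation (or some substitute, e.g.\ a formality argument for the Davis--Januszkiewicz space), the multiplicative part of the statement is not established.
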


The cohomology of $\zk$ therefore acquires a multigrading, with
the multigraded and ordinary graded components of~$H^*(\zk)$ given
by
\[
  H^{-i,2J}(\zk)=\Tor^{-i,2J}_{\k[v_1,\ldots,v_m]}\bigl(\k[\sK],\k\bigr),
  \quad H^\ell(\zk)=\bigoplus_{-i+2|J|=\ell}H^{-i,2J}(\zk),
\]
where $J=(j_1,\ldots,j_m)\in\Z^m$ and $|J|=j_1+\cdots+j_m$.

The Koszul algebra
$(\Lambda[u_1,\ldots,u_m]\otimes\k[\sK],d\bigr)$ is infinitely
generated as a $\k$-module. We define its quotient algebra
\[
  R^*(\sK)=\Lambda[u_1,\ldots,u_m]\otimes\k[\sK]\bigr/(v_i^2=u_iv_i=0,\;
  1\le i\le m)
\]
with the induced multigrading and differential~\eqref{bigrading}.
Note that $R^*(\sK)$ has a finite $\k$-basis. Passing to
$R^*(\sK)$ does not change the cohomology. This can be proved
either algebraically~\cite[Lemma~4.4]{pano08} or using the
following topological interpretation:

\begin{lemma}[{\cite[Lemma~4.5.3]{bu-pa15}}]\label{cellappr}
The algebra $R^*(\sK)$ coincides with the cellular cochains of
$\zk$ for the appropriate cell structure. In particular, there is
an isomorphism of cohomology algebras
\[
  H(R^*(\mathcal K))\cong H^*(\zk).
\]
\end{lemma}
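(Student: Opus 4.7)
The plan is to realise $\zk$ as a CW subcomplex of $(D^2)^m$ and to identify its cellular cochain algebra (with its induced coboundary) on the nose with the differential graded algebra~$R^*(\sK)$.

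First I would equip $D^2$ with the CW structure having three cells: a basepoint $0$-cell $\mathbf{1}\in S^1$, a $1$-cell $\mathbf{T}=S^1\setminus\{\mathbf{1}\}$, and a $2$-cell $\mathbf{D}=D^2\setminus S^1$. The product CW structure on $(D^2)^m$ has cells indexed by pairs $(J,L)$ of disjoint subsets of $[m]$: the $i$-th factor is $\mathbf{D}$ if $i\in J$, $\mathbf{T}$ if $i\in L$, and $\mathbf{1}$ otherwise. Inspecting~\eqref{defzk}, a cell of type $(J,L)$ is contained in $(D^2,S^1)^I$ iff $J\subset I$, so it lies in $\zk$ precisely when $J\in\sK$. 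Thus $\zk$ is a CW subcomplex of $(D^2)^m$.

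Second, I would match bases. Let $v_i$ and $u_i$ denote the cellular cochains dual to the cells $\mathbf{D}$ and $\mathbf{T}$ in the $i$-th factor. The cells of $\zk$ then correspond bijectively to the monomials $v_Ju_L$ with $J\in\sK$ and $J\cap L=\varnothing$, which form a $\k$-basis of $R^*(\sK)$. The cell dimension $2|J|+|L|$ equals the total grading of $v_Ju_L$, and the $\Z^m$-multigrading also matches~\eqref{bigrading}. Multiplicatively, the cellular cross product on the product CW complex $(D^2)^m$ identifies its cellular cochains with $\bigotimes_{i=1}^m(\Lambda[u_i]\otimes\k[v_i]/(v_i^2,u_iv_i))$; restricting to the subcomplex $\zk$ kills precisely the squarefree monomials $v_{i_1}\cdots v_{i_k}$ for $\{i_1,\dots,i_k\}\notin\sK$, which gives an isomorphism of graded algebras $R^*(\sK)\cong C^*_{\mathrm{cell}}(\zk)$.

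Third, the differential: on $D^2$ the cellular boundary satisfies $\partial\mathbf{D}=\mathbf{T}$, $\partial\mathbf{T}=0$, so dually $\delta u_i=v_i$, $\delta v_i=0$. Extending as a derivation over tensor products and passing to the quotient, the cellular coboundary on $C^*_{\mathrm{cell}}(\zk)$ coincides with the Koszul differential from~\eqref{bigrading}. Taking cohomology and invoking Theorem~\ref{zkcoh} (or simply computing directly) yields $H(R^*(\sK))\cong H^*(\zk)$.

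The main obstacle is ensuring that the cellular cup product is \emph{strictly} (anti-)commutative and agrees with the algebra $\Lambda[u_i]\otimes\k[v_i]/(v_i^2,u_iv_i)$ on the nose, rather than only up to chain homotopy. This is handled by choosing the cells $\mathbf{1},\mathbf{T},\mathbf{D}$ of $D^2$ invariant under complex conjugation and orienting them so that the cross product of dual cells is represented by the dual of the product cell with the standard Koszul sign rule; once these choices are fixed, the identification is canonical and the lemma follows.
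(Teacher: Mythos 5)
Your cell structure, the bijection between cells of $\zk$ and the monomial basis $u_Jv_I$ ($I\in\sK$, $I\cap J=\varnothing$), the matching of (multi)gradings, and the identification of the cellular coboundary with the Koszul differential~\eqref{bigrading} are all correct, and this is exactly the route taken in the cited source (the paper itself gives no proof, referring to \cite[Lemma~4.5.3]{bu-pa15} and noting an alternative purely algebraic argument). So the additive half of the lemma --- ``$R^*(\sK)$ coincides with the cellular cochains of $\zk$'' as a cochain complex --- is fine.

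The gap is in the multiplicative half. Cellular cochains of a CW complex carry no natural cup product: the cellular cross product you invoke only identifies $C^*_{\mathrm{cell}}\bigl((D^2)^m\bigr)$ with the tensor product of the factors' cochain complexes, i.e.\ it handles the \emph{external} product; to compare the internal product of $R^*(\sK)$ with the cup product of $H^*(\zk)$ you must choose a cellular approximation of the diagonal $\zk\to\zk\times\zk$ and check that the induced product on cellular cochains is precisely the quotient of the Koszul multiplication. Your final paragraph does not do this: choosing the cells $\mathbf 1,T,D$ conjugation-invariant and fixing orientations affects signs only and produces no cup product at all, let alone a strictly commutative one (indeed $R^*(\sK)$ is only commutative up to the relations imposed, and the honest cellular cup product is in general noncommutative at the cochain level). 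The standard repair, as in \cite[\S4.5]{bu-pa15}, is to write down an explicit cellular map $\widetilde\Delta\colon D^2\to D^2\times D^2$ homotopic to the diagonal, take its $m$-fold product and restrict to $\zk$; with this choice the induced pairing on cellular cochains is exactly the product of $R^*(\sK)$, and passing to cohomology gives the ring isomorphism $H(R^*(\sK))\cong H^*(\zk)$. Alternatively one can bypass the cellular diagonal entirely by quoting Theorem~\ref{zkcoh} together with the fact that the quotient map $\Lambda[u_1,\ldots,u_m]\otimes\k[\sK]\to R^*(\sK)$ is a multiplicative quasi-isomorphism (the algebraic route mentioned in the paper), but as written your argument establishes only an additive isomorphism in cohomology.
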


The multigraded component $R^{-i,2J}(\sK)$ is zero unless all
coordinates of the vector $J\in\Z^m$ are $0$ or~$1$,
and the same is true for the multigraded cohomology
$H^{-i,2J}(\zk)$.

We can identify subsets $J\subset[m]$ with vectors $\sum_{j\in
J}\mb e_j\in\Z^m$. Given $J=\{j_1,\ldots,j_k\}\subset[m]$, we
denote by $v_J$ the monomial $v_{j_1}\cdots
v_{j_k}\in\k[v_1,\ldots,v_m]$, and similarly consider exterior
monomials $u_J=u_{j_1}\cdots
u_{j_k}\in\Lambda[u_1,\ldots,u_m]$. We also use the notation
$u_Jv_I$ for the monomial $u_J\otimes v_I$ in the Koszul algebra
$\Lambda[u_1,\ldots,u_m]\otimes\k[\sK]$. Then $R^*(\sK)$ has a
finite $\k$-basis consisting of monomials $u_Jv_I$ where
$J\subset[m]$, $I\in\sK$ and $J\cap I=\varnothing$.

%Since the ring $H^*(\mathcal{Z}_P)$ is multigraded and there is a
%Poincare duality, we obtain the following result.
%\begin{proposition}\label{Pd}\marginpar{Add reference} We have the mutligraded Poincare duality in the form:
%$$
%\beta^{-i,2 J}=\beta^{-(m-n-i),2([m]\setminus J)}.
%$$
%\end{proposition}

Given $J\subset[m]$, define the corresponding \emph{full
subcomplex} of~$\sK$ %(the \emph{restriction} of $\sK$ to~$J$)
as
\[
  \sK_J=\{I\in\sK\colon I\subset J\}.
\]
Consider simplicial cochains $C^*(\sK_J)$  with coefficients
in~$\k$. Let $\alpha_L\in C^{p-1}(\sK_J)$ be the basis cochain
corresponding to an oriented simplex $L=(l_1,\ldots,l_p)\in\sK_J$;
it takes value~$1$ on $L$ and vanishes on all other simplices.
Define a $\k$-linear map
\begin{equation}\label{fmapr}
\begin{aligned}
  f\colon C^{p-1}(\sK_J)&\longrightarrow R^{p-|J|,2J}(\sK),\\
  \alpha_L&\longmapsto \varepsilon(L,J)\,u_{J\setminus L}v_L,
\end{aligned}
\end{equation}
where $\varepsilon(L,J)$ is the sign given by
$\varepsilon(L,J)=\prod_{j\in L}\varepsilon(j,J)$ and
$\varepsilon(j,J)=(-1)^{r-1}$ if $j$ is the $r$th element of the
set~$J\subset[m]$ written in increasing order.

\begin{theorem}[{\cite[Theorem~3.2.9]{bu-pa15}}]\label{HTor}
The maps~\eqref{fmapr} combine to an isomorphism of cochain
complexes $C^*(\sK_J)\to R^{\,*,2J}(\sK)$ and induce an
isomorphism
\[
  \widetilde{H}^{|J|-i-1}(\mathcal K_J)\cong \Tor_{\k[v_1,\ldots,v_m]}^{-i,2J}
  \bigl(\k[\sK],\k\bigr),
\]
where $\widetilde{H}^{k}(\mathcal K_J)$ denotes the $k$th reduced
simplicial cohomology group of~$\sK_J$.
\end{theorem}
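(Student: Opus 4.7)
The plan is to verify that the map $f$ defined in~\eqref{fmapr}, together with its extension in cochain degree $-1$ sending $1 \in \widetilde{C}^{-1}(\sK_J) = \k$ to $\varepsilon(\varnothing,J)u_J = u_J$, gives an isomorphism of cochain complexes $\widetilde{C}^{\,*-1}(\sK_J) \xrightarrow{\cong} R^{\,*-|J|,2J}(\sK)$. The theorem then follows from Lemma~\ref{cellappr} and Theorem~\ref{zkcoh} by setting $-i = p-|J|$.

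First I would describe bases. A $\k$-basis of $R^{p-|J|,2J}(\sK)$ consists of monomials $u_K v_L$ with $K \sqcup L = J$, $|L| = p$ and $L \in \sK$ (equivalently $L \in \sK_J$); here we use that $v_i^2 = u_i v_i = 0$ in $R^*(\sK)$ forces the $u$- and $v$-indices to be disjoint, and that $v_L = 0$ in $\k[\sK]$ unless $L$ is a simplex. Thus $K = J\setminus L$ is determined by $L$, and the basis elements $u_{J\setminus L}v_L$ are in bijection with the basis cochains $\alpha_L \in \widetilde{C}^{p-1}(\sK_J)$ (the case $L = \varnothing$ corresponding to the generator of $\widetilde{C}^{-1}(\sK_J)$). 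This makes $f$ a $\k$-linear isomorphism of multigraded modules; both sides vanish unless all coordinates of $J \in \Z^m$ are $0$ or $1$.

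Next I would check that $f$ is a cochain map. On the simplicial side, the coboundary of $\alpha_L$ is $\delta\alpha_L = \sum_{j \notin L,\ L\cup\{j\}\in\sK_J} \pm \alpha_{L\cup\{j\}}$ with signs determined by inserting $j$ into the ordered tuple $L$. On the Koszul side, since $d$ is a derivation with $du_i = v_i$ and $dv_i = 0$,
\[
  d\bigl(u_{J\setminus L}v_L\bigr) = \sum_{j \in J\setminus L} \pm\, u_{(J\setminus L)\setminus\{j\}}\, v_j v_L,
\]
and the terms with $L\cup\{j\}\notin\sK$ vanish automatically because $v_{L\cup\{j\}} = 0$ in $\k[\sK]$. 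So both differentials are supported on the same index set. The main combinatorial obstacle is to check that the product of the $\varepsilon(L,J)$ signs, the sign from moving $v_j$ past $v_L$, and the sign from removing $u_j$ from $u_{J\setminus L}$ agrees with the simplicial incidence $[L\cup\{j\}:L]$. This reduces to a direct bookkeeping argument using the defining formula $\varepsilon(j,J)=(-1)^{r-1}$ where $r$ is the position of $j$ in $J$; I expect this sign check to be the only non-formal step.

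Given that $f$ is an isomorphism of cochain complexes, passing to cohomology yields
\[
  \widetilde{H}^{p-1}(\sK_J) \cong H^{p-|J|,2J}\bigl(R^{\,*,2J}(\sK)\bigr).
\]
By Lemma~\ref{cellappr}, $H(R^*(\sK)) \cong H^*(\zk)$, and by Theorem~\ref{zkcoh} this in turn equals $\Tor_{\k[v_1,\ldots,v_m]}(\k[\sK],\k)$, with the multigrading preserved at each step. Substituting $-i = p-|J|$, so that $p-1 = |J|-i-1$, gives the stated isomorphism
\[
  \widetilde{H}^{|J|-i-1}(\sK_J) \cong \Tor^{-i,2J}_{\k[v_1,\ldots,v_m]}\bigl(\k[\sK],\k\bigr).
\]
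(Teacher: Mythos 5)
Your proposal is correct and follows essentially the same route as the proof the paper refers to (it cites \cite[Theorem~3.2.9]{bu-pa15} rather than reproving it): identify the monomial basis $u_{J\setminus L}v_L$ of $R^{\,*,2J}(\sK)$ with the augmented simplicial cochains of $\sK_J$, check that $f$ commutes with the differentials up to the sign bookkeeping, and then pass to cohomology via Lemma~\ref{cellappr} and Theorem~\ref{zkcoh}, keeping track of the multigrading. The only step you leave implicit is the actual sign verification that $\varepsilon(L,J)$ converts the Koszul differential into the simplicial incidence numbers, which is indeed the one non-formal (but routine) computation.
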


\begin{theorem}[{\cite[Theorem~4.5.8]{bu-pa15}}]\label{prodfsc}
There are isomorphisms of $\k$-modules
\[
  H^{-i,2J}(\zk)\cong
  \widetilde H^{|J|-i-1}(\sK_J),\qquad
  H^\ell(\zk)\cong\bigoplus_{J\subset[m]}
  \widetilde H^{\ell-|J|-1}(\sK_J).
\]
These isomorphisms combine to form a ring isomorphism
\[
  H^*(\zk)\cong\bigoplus_{J\subset[m]} \widetilde H^*(\sK_J),
\]
where the ring structure on the right hand side is given by the
product maps
\[
  H^{k-|I|-1}(\sK_{I})\otimes H^{\ell-|J|-1}(\sK_{J})\to
  H^{k+\ell-|I|-|J|-1}(\sK_{I\cup J})
\]
which are induced by the simplicial inclusions $\sK_{I\cup
J}\to\sK_I\mathbin{*}\sK_J$ for $I\cap J=\varnothing$ and are zero
otherwise.
\end{theorem}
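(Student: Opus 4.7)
The plan is to deduce the statement directly from Theorem~\ref{zkcoh}, Lemma~\ref{cellappr}, and Theorem~\ref{HTor}, using the multigraded decomposition of the finite model~$R^*(\sK)$.

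First, by Lemma~\ref{cellappr}, $H^*(\zk)\cong H(R^*(\sK))$, and $R^*(\sK)$ carries the multigrading inherited from~\eqref{bigrading}. Since $R^{-i,2J}(\sK)$ vanishes unless $J\subset[m]$ (i.e.\ all coordinates of $J$ are $0$ or~$1$), we get a decomposition of cochain complexes
\[
  R^*(\sK)=\bigoplus_{J\subset[m]} R^{\,*,2J}(\sK),
\]
and the differential~$d$ preserves the multidegree~$2J$. Theorem~\ref{HTor} identifies the summand $R^{\,*,2J}(\sK)$, as a cochain complex, with a shift of the simplicial cochain complex $C^*(\sK_J)$ of the full subcomplex~$\sK_J$ (with the cochain $\alpha_L\in C^{p-1}(\sK_J)$ sent to $\varepsilon(L,J)u_{J\setminus L}v_L\in R^{p-|J|,2J}(\sK)$). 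Passing to cohomology yields the first isomorphism
\[
  H^{-i,2J}(\zk)\cong\widetilde H^{|J|-i-1}(\sK_J),
\]
and summing over~$J$ with $-i+2|J|=\ell$ gives the second.

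For the ring structure, the key observation is that the multiplication on $R^*(\sK)$ respects the multigrading, so it takes the form $R^{\,*,2I}(\sK)\otimes R^{\,*,2J}(\sK)\to R^{\,*,2(I+J)}(\sK)$. If $I\cap J\neq\varnothing$, then any product of basis monomials $u_{I\setminus L}v_L\cdot u_{J\setminus M}v_M$ contains a factor $v_i^2$ or $u_iv_i$ for some~$i\in I\cap J$, and is therefore zero in $R^*(\sK)$; this accounts for the vanishing of the product on cohomology when $I\cap J\neq\varnothing$. When $I\cap J=\varnothing$, we have $I+J=I\cup J$ as subsets, and I would check that under the identification of Theorem~\ref{HTor}, the multiplication in $R^*(\sK)$ corresponds, up to the prescribed signs $\varepsilon(L,J)$, to the composition of the simplicial cross product
\[
  C^{k-|I|-1}(\sK_I)\otimes C^{\ell-|J|-1}(\sK_J)\longrightarrow C^{k+\ell-|I|-|J|-1}(\sK_I*\sK_J)
\]
with the restriction map $C^*(\sK_I*\sK_J)\to C^*(\sK_{I\cup J})$ induced by the simplicial inclusion $\sK_{I\cup J}\hookrightarrow \sK_I*\sK_J$ (which is well-defined because a simplex of $\sK_{I\cup J}$ splits uniquely as a union of a face in $\sK_I$ and a face in $\sK_J$ when $I\cap J=\varnothing$).

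I expect the main obstacle to be the verification of this last point: checking that the multiplication of basis monomials $(u_{I\setminus L}v_L)(u_{J\setminus M}v_M)=\pm\,u_{(I\cup J)\setminus(L\cup M)}v_{L\cup M}$ matches, with the correct sign, the simplicial cross product followed by restriction to $\sK_{I\cup J}$. This is a routine but delicate sign bookkeeping with the signs $\varepsilon(L,I)$, $\varepsilon(M,J)$, $\varepsilon(L\cup M,I\cup J)$ coming from~\eqref{fmapr}, together with the Koszul sign arising from commuting $v_L$ past $u_{J\setminus M}$ in the Koszul algebra. Once this sign computation is settled, passing to cohomology yields the claimed ring isomorphism, since the cup product on $\widetilde H^*(\sK_{I\cup J})$ is by definition induced by the cross product composed with restriction along the diagonal (or, as here, along the inclusion into a join).
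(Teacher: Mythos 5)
Your proposal is correct and follows essentially the same route as the proof in the cited source \cite[Theorem~4.5.8]{bu-pa15} (the paper itself only quotes the statement): the multigraded decomposition of the finite cellular model $R^*(\sK)$, the identification of each multidegree component with a shifted simplicial cochain complex of $\sK_J$ via Theorem~\ref{HTor}, vanishing of products for $I\cap J\ne\varnothing$ from the relations $v_i^2=u_iv_i=0$, and the sign check matching the monomial product with the cross product followed by restriction along $\sK_{I\cup J}\hookrightarrow\sK_I*\sK_J$. The sign bookkeeping you flag is indeed the only remaining routine verification, exactly as in the original argument.
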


\begin{proposition}\label{3dimcoh}
The $3$-dimensional cohomology $H^3(\zk)$ is freely generated by
the cohomology classes $[u_iv_j]=[u_jv_i]$ corresponding to pairs
of vertices $i,j$ such that $\{i,j\}\notin\sK$. If $\sK=\sK_P$ for
a simple polytope $P$, then these $3$-dimensional cohomology
classes correspond to pairs of non-adjacent facets $F_i,F_j$.
\end{proposition}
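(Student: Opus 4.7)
The plan is to apply Theorem~\ref{prodfsc} with $\ell=3$ and identify which summands in the decomposition
\[
  H^3(\zk)\cong\bigoplus_{J\subset[m]}\widetilde H^{2-|J|}(\sK_J)
\]
survive. Since every one-element subset of $[m]$ is a simplex of~$\sK$, the full subcomplex $\sK_J$ is non-empty for every $J\ne\varnothing$. This immediately kills most summands: for $|J|=0$ we have $\widetilde H^2(\sK_\varnothing)=0$; for $|J|=1$ the subcomplex $\sK_J$ is a single vertex and $\widetilde H^1(\sK_J)=0$; and for $|J|\ge3$ we have $2-|J|\le -1$ while $\sK_J$ is non-empty, so $\widetilde H^{2-|J|}(\sK_J)=0$. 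The only contributions come from $|J|=2$, and for $J=\{i,j\}$ the subcomplex $\sK_J$ is either an edge (contractible, giving zero) or two disjoint vertices (giving $\widetilde H^0(\sK_J)\cong\k$), according to whether $\{i,j\}\in\sK$ or~$\{i,j\}\notin\sK$. This already proves that $H^3(\zk)$ is a free $\k$-module of rank equal to the number of non-edges of~$\sK$.

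Next I would identify the generators explicitly in the cellular cochain model $R^*(\sK)$ using Lemma~\ref{cellappr}. For a non-edge $J=\{i,j\}$ with $i<j$, the group $\widetilde H^0(\sK_J)$ is freely generated by the class of the vertex cochain~$\alpha_i$. Under the isomorphism $f\colon C^*(\sK_J)\to R^{*,2J}(\sK)$ of Theorem~\ref{HTor}, the definition~\eqref{fmapr} gives $f(\alpha_i)=\varepsilon(\{i\},\{i,j\})\,u_jv_i=u_jv_i$, so the generator in $H^3(\zk)$ is represented by the cocycle~$u_jv_i$. A direct Leibniz computation in the Koszul algebra $(\Lambda[u_1,\ldots,u_m]\otimes\k[\sK],d)$ yields
\[
  d(u_iu_j)=v_iu_j-u_iv_j=u_jv_i-u_iv_j,
\]
so $[u_iv_j]=[u_jv_i]$ in $H^*(\zk)$ and the generator can equally be written as~$[u_iv_j]$; note that $u_iv_j$ is a cocycle precisely because $d(u_iv_j)=v_iv_j=0$ in $\k[\sK]$, which holds exactly when $\{i,j\}\notin\sK$.

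Finally, for the polytopal case $\sK=\sK_P$, by the very definition of the dual complex one has $\{i,j\}\in\sK_P$ if and only if $F_i\cap F_j\ne\varnothing$, so non-edges of $\sK_P$ correspond bijectively to unordered pairs of non-adjacent facets of~$P$. The only minor technical point to watch is the sign bookkeeping in the map~$f$ when passing from $\alpha_i$ to $u_jv_i$ versus $u_iv_j$, but the coboundary relation $d(u_iu_j)=u_jv_i-u_iv_j$ absorbs the sign ambiguity and shows that either monomial is a legitimate representative of the same free generator.
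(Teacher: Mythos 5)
Your argument is correct and is exactly the route the paper intends: Proposition~\ref{3dimcoh} is stated as an immediate consequence of Theorems~\ref{zkcoh}, \ref{HTor} and~\ref{prodfsc}, and your case analysis of the summands $\widetilde H^{2-|J|}(\sK_J)$ together with the identification of representatives via~\eqref{fmapr} and the relation $d(u_iu_j)=u_jv_i-u_iv_j$ fills in precisely those details. No gaps.
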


\begin{example}\label{exacochain}
Let $\mathcal K=\quad\begin{picture}(18,2) \put(0,1){\circle*{1}}
\put(5,1){\circle*{1}} \put(13,1){\circle*{1}}
\put(18,1){\circle*{1}} \put(0,1){\line(1,0){5}}
\put(13,1){\line(1,0){5}} \put(-2.5,0){\scriptsize 1}
\put(6.5,0){\scriptsize 2} \put(10.5,0){\scriptsize 3}
\put(19.5,0){\scriptsize 4}
\end{picture}\quad$
be the union of two segments. Then
 nontrivial integral cohomology
groups of $\zk$ are given below together with a basis represented by cocycles in the
algebra $R^*(\sK)$:
\begin{align*}
  H^0(\zk)&\cong\widetilde H^{-1}(\varnothing)\cong\Z && 1\\
  H^3(\zk)&\cong\bigoplus_{|J|=2}\widetilde H^{0}(\sK_J)\cong\Z^4
  && u_1v_3,\;u_1v_4,\;u_2v_3,\;u_2v_4\\
  H^4(\zk)&\cong\bigoplus_{|J|=3}\widetilde H^{0}(\sK_J)\cong\Z^4
  && u_1u_2v_3,\;u_1u_2v_4,\;u_3u_4v_1,\;u_3u_4v_2\\
  H^5(\zk)&\cong\widetilde H^0(\sK)\cong\Z && u_1u_2u_4v_3-u_1u_2u_3v_4
\end{align*}
Cochains in $C^0(\sK)$ are functions on the vertices of~$\sK$, and
cocycles are functions which are constant on the connected
components of~$\sK$. In our case, the cocycle
$\alpha_{\{3\}}+\alpha_{\{4\}}$ represents a generator
of~$\widetilde H^0(\sK)$. It is mapped by~\eqref{fmapr} to the
cocycle $u_1u_2u_4v_3-u_1u_2u_3v_4$ representing a generator
of~$H^5(\zk)$.
\end{example}

Moment-angle complexes $\zk$ may have nontrivial triple Massey
products of 3-dimensional cohomology classes. First examples
(found by Baskakov~\cite{bask03}) appear already for moment-angle
manifolds corresponding to 3-polytopes (see
also~\cite[\S4.9]{bu-pa15}). A complete description of the triple
Massey product $H^3(\zk)\otimes H^3(\zk)\otimes H^3(\zk)\to
H^8(\zk)$ is given by the following result of Denham and Suciu:

\begin{theorem}[{\cite[Theorem~6.1.1]{de-su07}}]\label{desu}
The following are equivalent:
\begin{itemize}
\item[(a)]
there exist cohomology classes $\alpha,\beta,\gamma\in H^3(\zk)$
for which the Massey product $\langle\alpha,\beta,\gamma\rangle$
is defined and non-trivial;

\item[(b)]
the graph $\sK^1$ contains an induced subgraph isomorphic to one
of the five graphs in Figure~\ref{5gr}.
\end{itemize}
\end{theorem}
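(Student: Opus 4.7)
The plan is to compute triple Massey products of three-dimensional cohomology classes directly in the cellular cochain model $R^*(\sK)$ from Lemma~\ref{cellappr}, and then translate the conditions for the Massey product to be defined and nontrivial into combinatorial conditions on the $1$-skeleton $\sK^1$. By Proposition~\ref{3dimcoh}, every class in $H^3(\zk)$ is a linear combination of classes of the form $[u_iv_j]$ with $\{i,j\}\notin\sK$, so after a suitable change of basis I may restrict attention to Massey products $\langle\alpha,\beta,\gamma\rangle$ where $\alpha,\beta,\gamma$ are such basis elements. Write $\alpha=[u_{a_1}v_{a_2}]$, $\beta=[u_{b_1}v_{b_2}]$, $\gamma=[u_{c_1}v_{c_2}]$, involving at most six vertices of $\sK$. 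By Theorem~\ref{prodfsc}, the vanishing of $\alpha\beta$ and of $\beta\gamma$ translates, via the identification of the product with the simplicial join map, into the statement that the full subcomplexes $\sK_{\{a_1,a_2,b_1,b_2\}}$ and $\sK_{\{b_1,b_2,c_1,c_2\}}$ are connected (more precisely, that the cocycles representing the cup products are coboundaries in the full subcomplexes).

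The first step is to record the explicit formulas: if $\alpha\beta=0$ in $R^*(\sK)$, pick $\eta\in R^5(\sK)$ with $d\eta=(u_{a_1}v_{a_2})(u_{b_1}v_{b_2})$, and similarly $\zeta$ with $d\zeta=(u_{b_1}v_{b_2})(u_{c_1}v_{c_2})$. Then $\langle\alpha,\beta,\gamma\rangle$ is represented by the cocycle $\eta(u_{c_1}v_{c_2})\pm(u_{a_1}v_{a_2})\zeta\in R^8(\sK)$, and its class lies in $H^8(\zk)$ modulo the indeterminacy $\alpha\cdot H^5(\zk)+H^5(\zk)\cdot\gamma$. Using the basis of monomials $u_Jv_I$ with $J\cap I=\varnothing$ and $I\in\sK$, one can list all possible configurations of the six indices $a_1,a_2,b_1,b_2,c_1,c_2$ (allowing coincidences) for which $\eta$ and $\zeta$ exist, and for each configuration compute the Massey product representative modulo indeterminacy in terms of the missing and present edges among the indices.

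For the implication (b)$\Rightarrow$(a), for each of the five obstruction graphs I would exhibit concrete $\alpha,\beta,\gamma$ (associated with three of the missing edges) and compute $\langle\alpha,\beta,\gamma\rangle$ following Baskakov's construction~\cite{bask03}; the nontriviality follows because the representative cocycle picks out a generator of $\widetilde H^{\,*}(\sK_J)$ for the relevant $J\subset[m]$, via Theorem~\ref{HTor}, that is not in the span of the indeterminacy. This direction is algorithmic once the five graphs are drawn: one checks directly that the required cobounding elements $\eta,\zeta$ exist and that $\eta(u_{c_1}v_{c_2})\pm(u_{a_1}v_{a_2})\zeta$ is cohomologous to a nonzero element whose multi\-degree differs from that of $\alpha H^5+H^5\gamma$.

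The hard part is the implication (a)$\Rightarrow$(b): assuming no induced subgraph of $\sK^1$ is one of the five listed graphs, I must show that every triple Massey product of $3$-dimensional classes is either undefined or trivial modulo indeterminacy. The strategy is to stratify by the combinatorial type of the induced subgraph of $\sK^1$ on the (at most six) vertices indexing $\alpha,\beta,\gamma$, together with the pattern of coincidences among these indices; for each type not containing any of the five graphs, either the definedness conditions $\alpha\beta=\beta\gamma=0$ fail (so the product is not defined), or the explicit cocycle representative above is cohomologous, modulo the indeterminacy, to zero in $R^8(\sK)$. This is a finite but genuinely involved case check; the main combinatorial input is that the absence of each forbidden induced subgraph forces one of the relevant full subcomplexes $\sK_J$ to be either disconnected in a controlled way or to have its reduced cohomology generated by classes that are already absorbed into $\alpha H^5(\zk)+H^5(\zk)\gamma$. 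This case analysis, together with the ring description of Theorem~\ref{prodfsc}, is the decisive step, and I expect it to be the main obstacle in the proof.
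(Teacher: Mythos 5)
You should first be aware that the paper does not prove Theorem~\ref{desu} at all: it is quoted verbatim from Denham and Suciu~\cite{de-su07}, so there is no in-paper argument to measure your proposal against; what you have written is an outline in the spirit of the original proof, and as it stands it has two genuine gaps. The first is the opening reduction: the claim that ``after a suitable change of basis'' one may assume $\alpha,\beta,\gamma$ are monomial classes $[u_{a_1}v_{a_2}]$, $[u_{b_1}v_{b_2}]$, $[u_{c_1}v_{c_2}]$ is not justified and is not true in the naive sense. The definedness conditions $\alpha\beta=0$, $\beta\gamma=0$ and the indeterminacy $\alpha H^5(\zk)+H^5(\zk)\gamma$ do not behave linearly under replacing a class by the terms of an arbitrary linear combination, so a nontrivial Massey product of sums of the $[u_iv_j]$ does not obviously yield a nontrivial one of single monomials. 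The honest way to make this step work is the $\Z^m$-multigrading: each $[u_iv_j]$ is multihomogeneous of multidegree $2(\mb e_i+\mb e_j)$, the defining system can be chosen multihomogeneous, and the product then lands in the single multidegree $2J$ for a set $J$ of at most six vertices, so that by Theorem~\ref{HTor} everything reduces to the full subcomplex $\sK_J$ on $\le 6$ vertices; one then needs a projection-to-a-multidegree argument to pass from arbitrary classes to multihomogeneous ones. Your proposal gestures at the multidegree of the representative only at the very end, and never supplies this reduction.

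The second gap is that the decisive step is explicitly deferred rather than carried out. For (a)$\Rightarrow$(b) you say the finite case analysis over induced subgraphs on the six indices ``is the main obstacle,'' and for (b)$\Rightarrow$(a) you assert, without computation, that for each of the five graphs of Figure~\ref{5gr} the cocycle $\eta(u_{c_1}v_{c_2})\pm(u_{a_1}v_{a_2})\zeta$ represents a class not absorbed by $\alpha H^5(\zk)+H^5(\zk)\gamma$. Both of these are exactly where the content of the Denham--Suciu theorem lies (their proof classifies the possible full subcomplexes on six vertices containing three pairwise vertex-disjoint missing edges and checks triviality or nontriviality, including the indeterminacy, case by case). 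As written, your text is a correct-looking plan whose two essential steps — the multigraded reduction and the exhaustive six-vertex case check with the indeterminacy analysis — are missing, so it cannot yet be accepted as a proof.
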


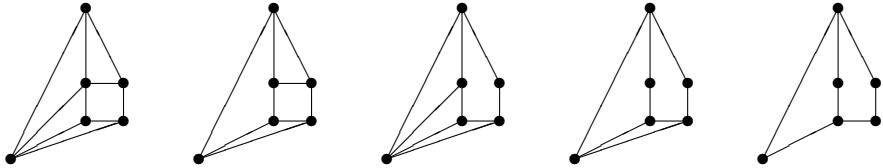
\begin{figure}[h]
\begin{center}
\begin{picture}(120,20)
  %1
  \put(0,0){\circle*{1.5}}
  \put(10,5){\circle*{1.5}}
  \put(15,5){\circle*{1.5}}
  \put(10,10){\circle*{1.5}}
  \put(15,10){\circle*{1.5}}
  \put(10,20){\circle*{1.5}}
  \put(0,0){\line(3,1){15}}
  \put(0,0){\line(2,1){10}}
  \put(0,0){\line(1,1){10}}
  \put(0,0){\line(1,2){10}}
  \put(10,5){\line(1,0){5}}
  \put(10,5){\line(0,1){15}}
  \put(10,10){\line(1,0){5}}
  \put(15,10){\line(-1,2){5}}
  \put(15,5){\line(0,1){5}}
  %2
  \put(25,0){\circle*{1.5}}
  \put(35,5){\circle*{1.5}}
  \put(40,5){\circle*{1.5}}
  \put(35,10){\circle*{1.5}}
  \put(40,10){\circle*{1.5}}
  \put(35,20){\circle*{1.5}}
  \put(25,0){\line(3,1){15}}
  \put(25,0){\line(2,1){10}}
  \put(25,0){\line(1,2){10}}
  \put(35,5){\line(1,0){5}}
  \put(35,5){\line(0,1){15}}
  \put(35,10){\line(1,0){5}}
  \put(40,10){\line(-1,2){5}}
  \put(40,5){\line(0,1){5}}
  %3
  \put(50,0){\circle*{1.5}}
  \put(60,5){\circle*{1.5}}
  \put(65,5){\circle*{1.5}}
  \put(60,10){\circle*{1.5}}
  \put(65,10){\circle*{1.5}}
  \put(60,20){\circle*{1.5}}
  \put(50,0){\line(3,1){15}}
  \put(50,0){\line(2,1){10}}
  \put(50,0){\line(1,1){10}}
  \put(50,0){\line(1,2){10}}
  \put(60,5){\line(1,0){5}}
  \put(60,5){\line(0,1){15}}
  \put(65,10){\line(-1,2){5}}
  \put(65,5){\line(0,1){5}}
  %4
  \put(75,0){\circle*{1.5}}
  \put(85,5){\circle*{1.5}}
  \put(90,5){\circle*{1.5}}
  \put(85,10){\circle*{1.5}}
  \put(90,10){\circle*{1.5}}
  \put(85,20){\circle*{1.5}}
  \put(75,0){\line(3,1){15}}
  \put(75,0){\line(2,1){10}}
  \put(75,0){\line(1,2){10}}
  \put(85,5){\line(1,0){5}}
  \put(85,5){\line(0,1){15}}
  \put(90,10){\line(-1,2){5}}
  \put(90,5){\line(0,1){5}}
  %5
  \put(100,0){\circle*{1.5}}
  \put(110,5){\circle*{1.5}}
  \put(115,5){\circle*{1.5}}
  \put(110,10){\circle*{1.5}}
  \put(115,10){\circle*{1.5}}
  \put(110,20){\circle*{1.5}}
  \put(100,0){\line(2,1){10}}
  \put(100,0){\line(1,2){10}}
  \put(110,5){\line(1,0){5}}
  \put(110,5){\line(0,1){15}}
  \put(115,10){\line(-1,2){5}}
  \put(115,5){\line(0,1){5}}
\end{picture}
\end{center}
\caption{Five graphs.}\label{5gr}
\end{figure}

\subsection{Moment-angle manifolds, quasitoric manifolds and small covers}\label{zpm}
Let $P$ be a simple $n$-polytope with the dual simplicial
complex~$\sK_P$. The existence of a characteristic matrix~\eqref{Lambdaqtoric} for~$P$
is equivalent to a choice of $n$ linear forms
\begin{equation}\label{linreg}
  t_j=\lambda_{j1}v_1+\cdots+\lambda_{jm}v_m,\quad j=1,\ldots,n
\end{equation}
such that $\Z[\sK_P]$ is a finitely generated free module over
$\Z[t_1,\ldots,t_n]$. Then $t_1,\ldots,t_n$ is a linear
\emph{regular sequence} in~$\Z[\sK_P]$. This implies that
$\k[\sK_P]$ is a \emph{Cohen--Macaulay ring} over any~$\k$, but
the condition of existence of a characteristic matrix is actually
stronger, as it implies the existence of a \emph{linear} regular
sequence over $\Z$ (and hence over any finite field).

Given a characteristic matrix~\eqref{Lambdaqtoric} (or a linear
regular sequence~\eqref{linreg} in $\Z[\sK_P]$), one can define
the corresponding homomorphism of tori $\varLambda_T\colon T^m\to
T^n$. Its kernel $\Ker\varLambda_T$ is an ${(m-n)}$-dimensional
subtorus in $T^m$ that acts \emph{freely} on~$\zp$. The quotient
$\zp/\Ker\varLambda_T$ can be identified with the quasitoric
manifold $M(P,\varLambda)$ from Construction~\ref{canqt}. As $\zp$
is a smooth intersection of quadrics~\eqref{defzp} and the torus
action is smooth, we obtain a canonical smooth structure on
$M(P,\varLambda)$ as in~\cite{b-p-r07}.

We say that $T^n$-manifolds $M$ and $M'$ are \emph{weakly equivariantly diffeomorphic} if there is a diffeomorphism $f\colon M\to M'$ and an automorphism $\theta\colon T^n\to T^n$ such that $f(t\cdot x)=\theta(t)\cdot f(x)$ for any $x\in M$ and $t\in T^n$. The following result is immediate.

\begin{proposition}\label{qteqd}
If characteristic pairs $(P,\varLambda)$ and $(P',\varLambda')$ are equivalent, then the corresponding quasitoric manifolds $M(P,\varLambda)$ and $M(P',\varLambda')$ are weakly equivariantly diffeomorphic.
\end{proposition}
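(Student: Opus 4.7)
The plan is to decompose the equivalence of characteristic pairs into three elementary moves and verify that each move produces a weakly equivariantly diffeomorphic quasitoric manifold, then compose them. Explicitly, the equivalence $(P,\varLambda)\sim(P',\varLambda')$ with $\varLambda'=A\varLambda B$ decomposes as the composition of (i) a combinatorial equivalence $P\simeq P'$ with the characteristic matrix kept fixed; (ii) right multiplication $\varLambda\mapsto\varLambda B$ by a diagonal $\pm 1$-matrix; (iii) left multiplication $\varLambda\mapsto A\varLambda$ by $A\in\mathrm{GL}(n,\Z)$.

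Move (ii) is essentially trivial: the column $\lambda_i$ and its negative $-\lambda_i$ span the same circle subgroup $T_i\subset T^n$, so the subtori $T(x)$ used in Construction~\ref{canqt} are unchanged. Hence $M(P,\varLambda)$ and $M(P,\varLambda B)$ coincide as quotients $P\times T^n/\!\sim$, and the identity map is tautologically equivariant. Move (iii) is handled by the automorphism $\theta=\theta_A\colon T^n\to T^n$ induced by $A\in\mathrm{GL}(n,\Z)$. Since $\theta$ sends the circle subgroup generated by $\lambda_i$ to the one generated by $A\lambda_i$, the map $(x,t)\mapsto(x,\theta(t))$ on $P\times T^n$ respects the equivalence relations on the two sides and descends to a homeomorphism $M(P,\varLambda)\to M(P,A\varLambda)$ satisfying the weak equivariance identity $f(t\cdot x)=\theta(t)\cdot f(x)$.

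For move (i), fix a face-preserving homeomorphism $h\colon P\to P'$, which exists for any combinatorial equivalence (for instance, via the barycentric subdivisions). Because the identification of facets prescribed by $P\simeq P'$ intertwines the data defining $T(x)$ on each side, the map $(x,t)\mapsto(h(x),t)$ descends to a $T^n$-equivariant homeomorphism $M(P,\varLambda)\to M(P',\varLambda)$.

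The one delicate point is upgrading these maps from homeomorphisms to diffeomorphisms in the canonical smooth structure discussed in the remark after Construction~\ref{canqt}. For this I would use the description $M(P,\varLambda)=\zp/\Ker\varLambda_T$ from Subsection~\ref{zpm}: the smooth structure on $M(P,\varLambda)$ is inherited from the smooth intersection of quadrics~\eqref{defzp} via the free smooth action of $\Ker\varLambda_T\subset T^m$. Move (ii) corresponds to conjugating $\Ker\varLambda_T$ by an element of $T^m$ of order~$2$ (since negating columns corresponds to the obvious sign involutions), and move (iii) corresponds to leaving $\Ker\varLambda_T$ setwise invariant while twisting the residual $T^n$-action by $\theta_A$; both extend obviously to $T^m$-equivariant diffeomorphisms of $\zp$ covering the desired maps of quotients. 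For move~(i) one invokes the fact, recorded just before Subsection~\ref{mamanifolds}, that the diffeomorphism type of $\zp$ depends only on the combinatorial type of $P$, and chooses a $T^m$-equivariant diffeomorphism $\zp\to\mathcal Z_{P'}$; it descends to a diffeomorphism of the quotients because $\Ker\varLambda_T$ is the same subtorus on both sides. The main obstacle is really this last verification of smoothness for move~(i), but it is precisely the content of the standard identification $\zp\cong\mathcal Z_{\sK_P}$ from Subsection~\ref{mamanifolds}, so no new work is required.
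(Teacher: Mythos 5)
Your overall route is the one the paper intends: the paper offers no argument at all (it declares the statement immediate from the description $M(P,\varLambda)=\zp/\Ker\varLambda_T$ of Subsection~\ref{zpm}), and your decomposition into the three moves, with the combinatorial move and the left multiplication by $A\in\mathrm{GL}(n,\Z)$ handled through that quotient description, is exactly the natural fleshing-out. Move (iii) is fine as you state it: $\Ker(A\varLambda)_T=\Ker\varLambda_T$, so the smooth quotient is literally unchanged and only the identification of the residual torus with $T^n$ is twisted by $\theta_A$. For move (i) you do need the \emph{equivariant} form of the fact that $\zp$ depends only on the combinatorial type of $P$ (a $T^m$-equivariant diffeomorphism $\zp\to\mathcal Z_{P'}$ compatible with the facet identification); the paper's text only records a diffeomorphism, but the equivariant statement is what is proved in~\cite{b-p-r07}, so invoking it is legitimate provided you say so.

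The genuine flaw is in your smooth-level treatment of move (ii). Writing $B_T\colon T^m\to T^m$ for the automorphism inverting the coordinates of the negated columns, one has $\Ker(\varLambda B)_T=B_T(\Ker\varLambda_T)$, which is in general a \emph{different} subtorus of $T^m$ (e.g.\ $\varLambda=(1\ 1)$, $B=\mathrm{diag}(1,-1)$). ``Conjugating $\Ker\varLambda_T$ by an element of $T^m$ of order $2$'' does nothing, since $T^m$ is abelian; and no genuinely $T^m$-equivariant diffeomorphism of $\zp$ can cover the desired map of quotients, because such a map carries $K$-orbits to $K$-orbits for each fixed subtorus $K$ and hence cannot interchange the orbit decompositions of $\Ker\varLambda_T$ and $\Ker(\varLambda B)_T$ when these differ. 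So the step as written fails. The repair is short: take the involution $\sigma_B$ of $\C^m$ that complex-conjugates exactly the coordinates indexed by the negated columns. It preserves $\zp$, since the quadrics in~\eqref{defzp} involve only $|z_i|^2$, and it is \emph{weakly} $T^m$-equivariant, $\sigma_B(t\cdot\mb z)=B_T(t)\cdot\sigma_B(\mb z)$; consequently it carries $\Ker\varLambda_T$-orbits onto $\Ker(\varLambda B)_T$-orbits and descends to a diffeomorphism $M(P,\varLambda)\to M(P,\varLambda B)$, which is $T^n$-equivariant with the identity automorphism of $T^n$ (because $(\varLambda B)_T\circ B_T=\varLambda_T$). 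With this substitution your composition $M(P,\varLambda)\cong M(P,\varLambda B)\cong M(P,A\varLambda B)\cong M(P',\varLambda')$ is correct and yields a weakly equivariant diffeomorphism with automorphism~$\theta_A$.
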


The general homological properties of regular sequences imply yet
another description of the cohomology of~$\zp$:

\begin{theorem}[{\cite[Theorem~4.2.11]{bu-pa00}, \cite[Lemma~A.3.5]{bu-pa15}}]\label{zkcohred}
Let $P$ be a simple $n$-polytope with $m$ facets, and assume there
exists a linear integral regular sequence~\eqref{linreg}. Denote
by $\mathcal J$ the ideal in~$\Z[v_1,\ldots,v_m]$ generated
by~$t_1,\ldots,t_n$. Then there is an isomorphism of cohomology
rings
\[
  H^*(\zp;\Z)\cong\Tor_{\Z[v_1,\ldots,v_m]/\mathcal J}\bigl(\Z[\sK_P]/\mathcal
  J,\Z\bigr).
\]
\end{theorem}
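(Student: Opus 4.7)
By Theorem~\ref{zkcoh}, $H^*(\zp;\Z)\cong\Tor_R(\Z[\sK_P],\Z)$ as graded rings, where $R=\Z[v_1,\ldots,v_m]$. It therefore suffices to establish a graded ring isomorphism
\[
  \Tor_R(\Z[\sK_P],\Z)\;\cong\;\Tor_{\bar R}(\Z[\sK_P]/\mathcal J,\,\Z),
\]
where $\bar R=R/\mathcal J$. This is a purely algebraic statement, which I would attack by a change-of-rings argument for the surjection $R\twoheadrightarrow\bar R$.

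The starting point is that $t_1,\ldots,t_n$ is a regular sequence in the polynomial ring $R$ (being a set of linearly independent linear forms in~$R$), so the Koszul complex $\bigl(\Lambda[u^t_1,\ldots,u^t_n]\otimes R,\,du^t_j=t_j\bigr)$ is a finite free resolution of $\bar R$ over~$R$. Tensoring this resolution over $R$ with $\Z[\sK_P]$ and taking homology computes $\Tor^{*}_R(\bar R,\Z[\sK_P])$. But by hypothesis the same sequence is regular in $\Z[\sK_P]$, so the resulting Koszul complex on $\Z[\sK_P]$ is acyclic in positive degrees, and
\[
  \Tor^{q}_R(\bar R,\Z[\sK_P])=
    \begin{cases}\Z[\sK_P]/\mathcal J\Z[\sK_P], & q=0,\\[2pt] 0, & q>0.\end{cases}
\]

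Next, I would feed this into the Cartan--Eilenberg change-of-rings spectral sequence for the ring map $R\to\bar R$ and the modules $\Z[\sK_P]$ (over $R$) and $\Z$ (over $\bar R$):
\[
  E_2^{p,q}=\Tor^{p}_{\bar R}\bigl(\Tor^{q}_R(\bar R,\Z[\sK_P]),\,\Z\bigr)\;\Longrightarrow\;\Tor^{p+q}_R(\Z[\sK_P],\Z).
\]
By the preceding step, $E_2$ is concentrated on the row $q=0$, so the sequence collapses and yields the desired isomorphism of bigraded modules. Since the Cartan--Eilenberg spectral sequence is multiplicative with respect to the Yoneda product on Tor, the induced isomorphism is automatically one of graded rings.

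The main obstacle is making the ring (as opposed to merely module) identification transparent and self-contained; invoking the multiplicativity of a spectral sequence abstractly is a bit unsatisfactory. A cleaner alternative I would consider is to work directly with the Koszul DGA model $(\Lambda[u_1,\ldots,u_m]\otimes\Z[\sK_P],\,du_i=v_i)$ of Theorem~\ref{zkcoh}: pick exterior lifts $\tilde u_j=\sum_i\lambda_{ji}u_i$ with $d\tilde u_j=t_j$, and use the regular sequence hypothesis to exhibit the sub-DGA they generate (together with the exact elements $t_1,\ldots,t_n$) as an acyclic tensor factor that may be contracted away. Quotienting by this factor provides an explicit multiplicative quasi-isomorphism onto the Koszul DGA $(\Lambda[\bar u_{n+1},\ldots,\bar u_m]\otimes\Z[\sK_P]/\mathcal J,\,d)$, which by Theorem~\ref{zkcoh} applied over $\bar R$ computes $\Tor_{\bar R}(\Z[\sK_P]/\mathcal J,\Z)$ as a graded ring.
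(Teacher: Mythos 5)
Your route is the intended one: the paper does not prove Theorem~\ref{zkcohred} itself but cites \cite[Theorem~4.2.11]{bu-pa00} and \cite[Lemma~A.3.5]{bu-pa15}, and the proofs there are precisely this kind of Koszul/change-of-rings reduction. Both of your variants work in principle: the collapse of the base-change spectral sequence (equivalently, the associativity $\Z[\sK_P]\otimes^{\mathbf L}_{R}\Z\simeq(\Z[\sK_P]\otimes^{\mathbf L}_{R}R/\mathcal J)\otimes^{\mathbf L}_{R/\mathcal J}\Z$ once $\Tor^R_q(R/\mathcal J,\Z[\sK_P])=0$ for $q>0$), and the explicit DGA contraction, which is indeed the cleanest way to see multiplicativity (``Yoneda product'' is a slip of terminology -- the relevant product is the usual one on $\Tor$ of algebras -- but the DGA argument supersedes that remark).

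One justification, however, is wrong as stated over~$\Z$ and must be repaired. Linearly independent integral linear forms need not form a regular sequence in $R=\Z[v_1,\ldots,v_m]$: for example $2v_1,\,2v_2$ are independent but not regular in $\Z[v_1,v_2]$, since $2v_2\cdot v_1\in(2v_1)$ while $v_1\notin(2v_1)$. What rescues the step here is the unimodularity condition~\eqref{starprop}: the forms~\eqref{linreg} come from a characteristic matrix (equivalently, a length-$n$ linear regular sequence in $\Z[\sK_P]$ forces every vertex minor of $\varLambda$ to be $\pm1$), so some $n\times n$ minor of $\varLambda$ equals $\pm1$; after multiplying by a matrix in $\mathrm{GL}(n,\Z)$ the $t_j$ become $v_j$ plus integral combinations of the remaining variables, i.e.\ part of a polynomial coordinate system. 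This gives both regularity of $t_1,\ldots,t_n$ in $R$ and the fact that $R/\mathcal J$ is again a polynomial ring (needed at the end of your second argument, so that $\Lambda[\bar u_{n+1},\ldots,\bar u_m]\otimes R/\mathcal J$ is the Koszul resolution of $\Z$ over $R/\mathcal J$); the same unimodularity is what allows $\tilde u_1,\ldots,\tilde u_n$ to be completed to an integral basis change of the exterior generators. Finally, be careful with the phrase ``acyclic tensor factor'': the sub-DGA $\Lambda[\tilde u_1,\ldots,\tilde u_n]\otimes\Z[t_1,\ldots,t_n]$ is a resolution of $\Z$ (not acyclic), the splitting of the big Koszul DGA is a tensor product over $\Z[t_1,\ldots,t_n]$ rather than over $\Z$, and contracting it uses either that $\Z[\sK_P]$ is free (flat) over $\Z[t_1,\ldots,t_n]$ or an elimination of one $\tilde u_j$ at a time via the cone of multiplication by $t_j$, which requires exactly the regularity of the sequence on $\Z[\sK_P]$. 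With these points fixed, your argument is complete and coincides in substance with the proof in the cited references.
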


Note that $\Z[\sK_P]/\mathcal J$ is the cohomology ring of the
quasitoric manifold $M(P,\varLambda)$, see
Theorem~\ref{cohomqtoric}. The theorem above implies that the
spectral sequence of the principal $T^{m-n}$-fibration $\zp\to
M(P,\varLambda)$ degenerates at the $E_3$ term.

The complex conjugation $\mb z=(z_1,\ldots,z_m)\mapsto\bar{\mb z}=(\bar z_1,\ldots,\bar z_m)$ defines an involution on $\zp$ whose set of fixed points is the real moment-angle manifold~$\mathcal R_P$. For any element $t$ of the torus $\Ker\varLambda_T\cong T^{m-n}$, this involution satisfies $\overline{t\cdot\mb z}=t^{-1}\cdot\bar{\mb z}$, and therefore it descends to an involution on the quasitoric manifold $M(P,\varLambda)$. The fixed point set of the latter involution is the small cover $N(P,\varLambda)$ corresponding to the $\mod 2$-reduction of the $\Z$-characteristic matrix~$\varLambda$. It is not known whether any small cover over a simple $n$-polytope can be obtained in this way; this question is equivalent to Problem~\ref{mod2prob} (the answer is positive for $3$-polytopes, see Proposition~\ref{lambda2}).

We have a $\Z_2^{m-n}$-covering $\mathcal R_P\to N(P,\varLambda)$ for any small cover of $P$ corresponding to a $\Z_2$-characteristic matrix~$\varLambda$. The fundamental group of $\mathcal R_P$ is the commutator subgroup $G(P)'$ of the (abstract) right-angled Coxeter group~\eqref{racg} corresponding to~$P$. The fundamental group of a small cover $N=N(P,\varLambda)$ is determined by the following exact sequence
%\begin{equation}\label{fgN}
\[
  1\longrightarrow G(P)'\longrightarrow \pi_1(N)\longrightarrow\Z_2^{m-n}\longrightarrow1.
\]
%\end{equation}
The commutator subgroups of right-angled Coxeter groups were
studied in~\cite{pa-ve16}; in particular, a minimal set of
generators for $G(P)'$ was described there. The manifold $\mathcal
R_P$ (and therefore $N$) is aspherical if and only if the
polytope~$P$ is flag. This follows from Davis' construction of a
nonpositively curved piecewise Euclidean metric on
$N(P,\varLambda)$ for flag~$P$, see~\cite[Theorem~2.2.5]{d-j-s98}
and also~\cite[Corollary~3.4]{pa-ve16}.

When $P$ is a right-angled polytope in $\mathbb L^3$ (so that $P\in\mathcal P$; in particular, $P$ is flag), we have a sequence of coverings $\mathbb L^3\to\mathcal R_P\to N(P,\varLambda)$. Here, $G(P)$ is a geometric right-angled Coxeter group generated by reflections in the facets of~$P$, and both $\mathcal R_P$ and $N(P,\varLambda)$ have a genuine Riemannian metric of constant negative curvature:

\begin{proposition}The real moment-angle manifold $\mathcal R_P$ corresponding to a $3$-polytope from the Pogorelov class~$\mathcal P$ has a structure of a hyperbolic $3$-manifold. The fundamental group of $\mathcal R_P$ is isomorphic to the commutator subgroup $G(P)'$ of the corresponding right-angled Coxeter group.
\end{proposition}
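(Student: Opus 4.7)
The plan is to assemble the proposition from the pieces already laid out in the excerpt: Pogorelov's realisation theorem, Vesnin's Lemma~\ref{vesninlemma}, and the combinatorial description of $\mathcal R_P$ as a gluing of $2^m$ copies of $P$.

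First, since $P\in\mathcal P$, Theorem~\ref{Pogth} provides a right-angled realisation of $P$ in $\mathbb L^3$, giving a discrete action of the right-angled Coxeter group $G(P)$ on $\mathbb L^3$ by reflections in the facets, with fundamental domain~$P$. Next, I would apply Lemma~\ref{vesninlemma} to the abelianisation homomorphism
\[
  \varphi\colon G(P)\stackrel{\mathrm{ab}}{\longrightarrow}\Z_2^m,\qquad g_i\longmapsto \mb e_i.
\]
The hypothesis of the lemma is immediate: for any three facets $F_{i_1},F_{i_2},F_{i_3}$ meeting at a vertex of $P$, the images $\mb e_{i_1},\mb e_{i_2},\mb e_{i_3}$ are three distinct standard basis vectors and hence linearly independent in~$\Z_2^m$. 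Therefore $\Ker\varphi=G(P)'$ acts freely on~$\mathbb L^3$, and the quotient $N:=\mathbb L^3/G(P)'$ is a closed manifold carrying the induced Riemannian metric of constant negative curvature inherited from~$\mathbb L^3$. Since $\mathbb L^3$ is contractible, it is the universal cover of $N$ and the deck transformation group is~$G(P)'$, so $\pi_1(N)\cong G(P)'$.

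It then remains to identify $N$ with the real moment-angle manifold~$\mathcal R_P$ (this identification is already recorded in Subsection~\ref{sechyp}, but I would make it explicit here). Both spaces admit the same combinatorial description as a gluing of $2^m$ copies of~$P$ indexed by $\Z_2^m$. On the moment-angle side, one has the standard model
\[
  \mathcal R_P\;\cong\;P\times\Z_2^m/\!\sim,\qquad (x,s)\sim(x',s')\iff x=x'\text{ and }s^{-1}s'\in\Z_2(x),
\]
where $\Z_2(x)\subset\Z_2^m$ is the subgroup generated by the basis vectors $\mb e_i$ corresponding to the facets $F_i$ containing~$x$. On the hyperbolic side, $\mathbb L^3$ is tiled by reflection copies $gP$ of~$P$, indexed by $g\in G(P)$; the quotient by $G(P)'$ re-indexes the copies by the cosets $G(P)/G(P)'\cong\Z_2^m$, and adjacent cosets (those differing by a generator $g_i$ of the reflection in $F_i$) get glued along the facet~$F_i$. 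Matching the facet gluings shows that the two models coincide, so $\mathcal R_P\cong N$, proving both claims of the proposition.

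The only non-routine step is the last identification, but it reduces to a standard combinatorial check that the relation $\sim$ on $P\times\Z_2^m$ defining $\mathcal R_P$ coincides with the equivalence relation induced on the $2^m$ fundamental tiles of $\mathbb L^3/G(P)'$; no genuine obstacle arises beyond bookkeeping with the generators $g_i$ and the basis vectors $\mb e_i$.
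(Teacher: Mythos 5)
Your proposal is correct and follows essentially the same route as the paper: Pogorelov's realisation theorem, Lemma~\ref{vesninlemma} applied to the abelianisation $G(P)\to\Z_2^m$ (whose kernel is $G(P)'$ and acts freely on the contractible space $\mathbb L^3$), and the identification of $\mathbb L^3/G(P)'$ with $\mathcal R_P$ via the standard $P\times\Z_2^m/\!\sim$ model. The only difference is that you spell out the gluing identification explicitly, while the paper simply cites it as the known description of $\mathcal R_P$ as the universal abelian cover of~$P$.
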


\section{Cohomological rigidity} \label{cohrig}
We continue to consider cohomology
with coefficients in a commutative ring with unit~$\k$. When $\k$ is not specified explicitly,
we assume $\k=\Z$.

\begin{definition}\label{defcohorig}
We say that a family of closed manifolds is \emph{cohomologically
rigid} over $\mathbf k$ if manifolds in the family are
distinguished up to diffeomorphism by their cohomology rings with
coefficients in~$\mathbf k$. That is, a family is cohomologically
rigid if a graded ring isomorphism $H^*(M_1;\mathbf k)\cong
H^*(M_2;\mathbf k)$ implies a diffeomorphism $M_1\cong M_2$
whenever $M_1$ and $M_2$ are in the family.

There are homotopical and topological versions of cohomological
rigidity, with diffeomorphisms replaced by homotopy equivalences
and homeomorphisms, respectively.
\end{definition}

In toric topology, cohomological rigidity is studied for
(quasi)toric manifolds and moment-angle manifolds. We refer
to~\cite{ma-su08}, \cite{c-m-s11} and~\cite[\S7.8]{bu-pa15} for a
more detailed survey of related results and problems. The main
question here is as follows.

\begin{problem}\label{crqtm}
Let $M_1$ and $M_2$ be two toric manifolds with isomorphic
cohomology rings. Are they homeomorphic? In other words, is the
family of toric manifolds cohomologically rigid? One can ask the same question
for quasitoric and topological toric manifolds, and with homeomorphisms replaced by diffeomorphisms.
\end{problem}

The problem is solved positively for particular families of
toric and quasitoric manifolds, such as cohomologically trivial
Bott towers~\cite{ma-pa08}, $\Q$-cohomologically trivial Bott
towers~\cite{ch-ma12}, $\Z_2$-cohomologically trivial Bott
towers~\cite{c-m-m15}, Bott towers of real dimension up
to~$8$~\cite{choi}, quasitoric manifolds over a product of two
simplices~\cite{c-p-s12} and over some dual cyclic polytopes~\cite{hasu15}. \emph{Bott towers} (or \emph{Bott manifolds}) are toric manifolds over combinatorial cubes.
The problem is open for general Bott
towers, and for (quasi)toric manifolds of real dimension~$6$, that is,
over $3$-dimensional polytopes. The latter case is the subject of
this paper: we give a solution for a particular class of
$3$-polytopes.

There is also a cohomological rigidity problem for real toric objects, such as real toric manifolds, small covers, and real topological toric manifolds~\cite{i-f-m13}, with $\Z_2$-cohomology rings.
This problem is solved positively for real Bott towers~\cite{c-m-o16}, \cite{ka-ma09}, but negatively in some other cases~\cite{masu09}.

Cohomological rigidity is also open for moment-angle manifolds, in
both graded and multigraded versions:

\begin{problem}\label{crmam}
Let $\mathcal Z_{P_1}$ and $\mathcal Z_{P_2}$ be two moment-angle
manifolds with isomorphic (multigraded) cohomology rings. Are
they diffeomorphic? In other words, is the family of moment-angle
manifolds cohomologically rigid?
\end{problem}

%Furthermore, it is not known whether the \emph{bigraded Betti
%numbers} of $\zp$, i.\,e. the ranks of the bigraded components of
%the $\Tor$-groups,
%\[
%  \beta^{-i,2j}(\zp)=\rank
%  H^{-i,2j}(\zp)=\rank\Tor^{-i,2j}_{\Z[v_1,\ldots,v_m]}(\Z[\sK_P],\Z),
%\]
%are topological invariants of~$\zp$:
%
%\begin{problem}\label{tinvbbn}
%Let $\mathcal Z_{P_1}$ and $\mathcal Z_{P_2}$ be two homeomorphic
%moment-angle manifolds. Is it true that $\beta^{-i,2j}(\mathcal
%Z_{P_1})=\beta^{-i,2j}(\mathcal Z_{P_2})$ for any $0\le i,j\le m$?
%\end{problem}

A diffeomorphism of two quasitoric manifolds over $P_1$ and $P_2$
or a diffeomorphism of moment-angle manifolds $\mathcal Z_{P_1}$
and $\mathcal Z_{P_2}$ does not imply that the polytopes $P_1$ and
$P_2$ are combinatorially equivalent, as shown by the next
example.

\begin{example}\label{vtpol}
A \emph{vertex truncation}
operation~\cite[Construction~1.1.1]{bu-pa15} can be applied to a
simple polytope $P$ to produce a new simple polytope
$\mathop{\mathrm{vt}}(P)$ with one more facet. If one applies this
operation iteratively, then the combinatorial type of the
resulting polytope depends, in general, on the choice and order of
truncated vertices. For example, by applying this operation three
times to a $3$-simplex one can produce three combinatorially
different polytopes $P_i$, $i=1,2,3$, with $7$ facets each (their
dual simplicial polytopes are known as \emph{stacked}). The
corresponding moment-angle manifolds $\mathcal Z_{P_i}$ are
diffeomorphic, see~\cite[\S4.6]{bu-pa15}. The polytopes $P_i$ have
Delzant realisations such that the correponding toric manifolds
$V_{P_i}$ are obtained from $\C P^3$ by blowing it up three times
in three different ways. Each $V_{P_i}$ is therefore diffeomorphic
to a connected sum of $4$ copies of~$\C P^3$
(see~\cite[Example~4.3]{ma-su08}).
\end{example}

%\subsection{$B$-rigidity and $C$-rigidity.}
One can look for classes of simple polytopes $P$ whose
combinatorial type is determined by the cohomology ring of any
(quasi)toric manifold over $P$ or by the cohomology ring of the
moment-angle manifold~$\zp$. This leads to the following two
notions of rigidity for simple polytopes, considered in~\cite{ma-su08} and~\cite{buch08} respectively.

\begin{definition}
%Fix a class $\mathfrak P$ of simple polytopes.
A simple polytope $P$ is said to be \emph{C-rigid}
%in the class $\mathfrak P$
if any of the two conditions hold:
\begin{itemize}
\item[(a)] there are no quasitoric manifolds $M$ over $P$ (equivalently,
there are no linear regular sequences~\eqref{linreg}
in~$\Z[\sK_P]$), or

\item[(b)] whenever there exist a quasitoric manifold $M$ over $P$
and a quasitoric manifold $M'$ over another polytope $P'$ with a
cohomology ring isomorphism $H^*(M)\cong H^*(M')$, there is a
combinatorial equivalence $P\simeq P'$.
\end{itemize}

We say that a property of simple polytopes is \emph{C-rigid} if
for any ring isomorphism $H^*(M)\cong H^*(M')$, both $P$ and $P'$
either have or do not have the property.
%
%Let $\mathcal S(M)$ be a set of elements in $H^*(M)$ defined
%canonically for each quasitoric manifold. We say that this set is
%\emph{C-rigid} if for any cohomology ring isomorphism
%$\varphi\colon H^*(M)\stackrel\cong\longrightarrow H^*(M')$ one
%has $\varphi(\mathcal S(M))=\mathcal S(M')$.
\end{definition}

\begin{definition}
A simple polytope $P$ is said to be \emph{B-rigid} if any
cohomology ring isomorphism $H^*(\zp)\cong H^*(\mathcal Z_{P'})$
of moment-angle manifolds implies a combinatorial equivalence
$P\simeq P'$.

We say that a property of simple polytopes is \emph{B-rigid} if
for any ring isomorphism $H^*(\zp)\cong H^*(\mathcal Z_{P'})$,
both $P$ and $P'$ either have or do not have the property.
%
%Let $\mathcal S(P)$ be a set of elements in $H^*(\zp)$ defined
%canonically for each~$P$. We say that this set is \emph{B-rigid}
%if for any cohomology ring isomorphism $\psi\colon
%H^*(\zp)\stackrel\cong\longrightarrow H^*(\mathcal Z_{P'})$ one
%has $\psi(\mathcal S(P))=\mathcal S(P')$.
\end{definition}

According to Example~\ref{vtpol}, a truncated simplex with at
least 3 truncations (the dual to a stacked polytope with at least
3 stacks) is neither C-rigid nor B-rigid. Previously known examples of
C-rigid polytopes include products of simplices and their single
vertex truncations~\cite{c-p-s10},
as well as a product of a simplex and a polygon~\cite{ch-pa16}. Also, C-rigidity was
determined in~\cite{c-p-s10} for all simple $3$-polytopes with
$\le9$ facets. The following relation between the two notions of
rigidity can be extracted from the results of~\cite{c-p-s10}:

\begin{proposition}\label{bcrig}
If a simple polytope $P$ is B-rigid, then it is C-rigid.
\end{proposition}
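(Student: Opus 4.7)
Condition~(a) in the definition of C-rigidity is vacuous, so I only need to treat~(b): assume $P$ admits a quasitoric manifold $M$ with characteristic matrix~$\varLambda$, and take a graded ring isomorphism $\phi\colon H^*(M)\stackrel{\cong}\longrightarrow H^*(M')$ with $M'$ a quasitoric manifold over a simple polytope~$P'$. My plan is to lift $\phi$ to a graded ring isomorphism $H^*(\zp)\cong H^*(\mathcal Z_{P'})$ and then invoke B-rigidity of~$P$. Comparison of top degrees (via Poincar\'e duality) forces $\dim P=\dim P'=n$, and then $m-n=\rank H^2(M)=\rank H^2(M')=m'-n'$ gives $m=m'$.

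By Theorem~\ref{zkcohred},
\[
H^*(\zp)\cong\Tor_{R}\bigl(H^*(M),\Z\bigr),\qquad R:=\Z[v_1,\ldots,v_m]/(t_1,\ldots,t_n),
\]
as graded rings, and analogously for $P'$ with some polynomial ring~$R'$. Condition~\eqref{starprop} guarantees that some $n\times n$ minor of $\varLambda$ is $\pm1$, so the rows of $\varLambda$ span a direct summand of $\Z^m$; hence $R$ is a polynomial ring on $m-n$ generators and the structure map $R\to H^*(M)$ is an isomorphism in degree~$2$. Consequently $R$ is canonically identified with $\mathrm{Sym}(H^2(M))$, and the $R$-algebra structure on $H^*(M)$ is nothing but the tautological surjection $\mathrm{Sym}(H^2(M))\twoheadrightarrow H^*(M)$. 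In other words, both the polynomial ring $R$ and its action on $H^*(M)$ are determined by the graded ring $H^*(M)$ alone, independently of the choice of~$\varLambda$---this is the conceptual heart of the argument.

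The restriction $\phi_2:=\phi|_{H^2(M)}\colon H^2(M)\to H^2(M')$ extends by functoriality of the symmetric algebra to a graded ring isomorphism $\phi_*\colon R\to R'$. The compatibility $\phi(r\cdot x)=\phi_*(r)\cdot\phi(x)$ is checked on the generating subgroup $R^2=H^2(M)$, where it is simply the multiplicativity of~$\phi$. By standard functoriality of $\Tor$ under a compatible change of base ring---visible concretely from the Koszul DGA model $\Lambda[u_1,\ldots,u_{m-n}]\otimes H^*(M)$ with $du_i=w_i$, which is carried isomorphically to its primed counterpart by $\phi_*\otimes\phi$---the pair $(\phi_*,\phi)$ induces a graded ring isomorphism $H^*(\zp)\cong H^*(\mathcal Z_{P'})$. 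B-rigidity of $P$ then yields $P\simeq P'$, as desired. The main obstacle is really the intrinsic description of the $R$-algebra pair $(R,H^*(M))$ in the preceding paragraph; once this is in place, the remaining $\Tor$-functoriality is formal.
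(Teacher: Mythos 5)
Your proof is correct and follows essentially the same route as the paper: reduce to Theorem~\ref{zkcohred} and lift the isomorphism $H^*(M)\cong H^*(M')$ to a ring isomorphism $H^*(\zp)\cong H^*(\mathcal Z_{P'})$ via an isomorphism of the Koszul differential graded algebras, then invoke B-rigidity. Your explicit identification of $\Z[v_1,\ldots,v_m]/\mathcal J$ with $\mathrm{Sym}(H^2(M))$ is just a direct verification of the commutative-diagram extension step that the paper delegates to \cite[Lemma~3.7]{c-p-s10}, so the two arguments coincide in substance.
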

\begin{proof}
Assume that we have a cohomology ring isomorphism $\varphi\colon
H^*(M)\stackrel\cong\longrightarrow H^*(M')$ for quasitoric
manifolds $M$ over $P$ and $M'$ over~$P'$. We need to show that it
implies a ring isomorphism $\psi\colon
H^*(\zp)\stackrel\cong\longrightarrow H^*(\mathcal Z_{P'})$, as
the latter would give $P\simeq P'$ by B-rigidity. Let $\mathcal J$
and $\mathcal J'$ denote the corresponding ideals in $\Z[\sK_P]$
and $\Z[\sK_{P'}]$, respectively, generated by the linear regular
sequences~\eqref{linreg}. Then we have a ring isomorphism
$\varphi\colon\Z[\sK_P]/\mathcal
J\stackrel\cong\longrightarrow\Z[\sK_{P'}]/\mathcal J'$. We need
to show that this isomorphism gives rise to a ring isomorphism
\begin{equation}\label{quottor}
  \Tor_{\Z[v_1,\ldots,v_m]/\mathcal J}\bigl(\Z[\sK_P]/\mathcal
  J,\Z\bigr)\stackrel\cong\longrightarrow
  \Tor_{\Z[v_1,\ldots,v_{m'}]/\mathcal J'}\bigl(\Z[\sK_{P'}]/\mathcal
  J',\Z\bigr),
\end{equation}
as the latter is nothing but an isomorphism
$H^*(\zp)\stackrel\cong\longrightarrow H^*(\mathcal Z_{P'})$
according to Theorem~\ref{zkcohred}. This is is proved
in~\cite[Lemma~3.7]{c-p-s10}. Namely, the isomorphism
$\varphi\colon\Z[\sK_P]/\mathcal
J\stackrel\cong\longrightarrow\Z[\sK_{P'}]/\mathcal J'$ can be
extended to a commutative diagram
\[
\xymatrix{
  \Z[v_1,\ldots,v_m]/\mathcal J \ar[r]^{\cong} \ar[d] &
  \Z[v_1,\ldots,v_{m'}]/\mathcal J'\ar[d]\\
  \Z[\sK_P]/\mathcal J \ar[r]_\cong^\varphi & \Z[\sK_{P'}]/\mathcal J',
}
\]
implying in particular that $m=m'$. The commutative diagram above
gives rise to an isomorphism~\eqref{quottor} by the standard
properties of~$\Tor$. More specifically, the isomorphism $\varphi$
gives an isomorphism of the Koszul algebras
\begin{equation}\label{isokosz}
  \widetilde\varphi\colon\bigl(\Lambda[u_1,\ldots,u_m]/\mathcal J\otimes\Z[\sK_P]/\mathcal
  J,d\bigr)\stackrel\cong\longrightarrow
  \bigl(\Lambda[u'_1,\ldots,u'_m]/\mathcal J'\otimes\Z[\sK_{P'}]/\mathcal
  J',d\bigr),
\end{equation}
where the ideals in the exterior algebras are defined by the same
linear forms as in the face rings. Then \eqref{quottor} is
obtained by passing to the cohomology.
\end{proof}

\begin{remark}
The argument above is essentially~\cite[Lemma~3.7]{c-p-s10}.
The term ``B-rigidity'' was introduced in the last section of~\cite{c-p-s10}. However, the implication of Proposition~\ref{bcrig} was
erroneously stated there in the opposite direction:
``if $P$ is C-rigid, then it is B-rigid''. This was a confusion. It is not
known whether C-rigidity is equivalent to B-rigidity, and it is
unlikely to be true in general.
\end{remark}

\section{The Pogorelov class: flag $3$-polytopes without $4$-belts}
Recall that the Pogorelov class $\mathcal P$ consists of simple $3$-polytopes $P$ which are
flag and do not have $4$-belts (or, equivalently, simple $3$-polytopes $P\ne\varDelta^3$ without $3$- and $4$-belts). In this section we consider combinatorial properties of polytopes $P\in\mathcal P$ and cohomological properties of the corresponding moment-angle manifolds~$\zp$. The key statements here are Theorem~\ref{flagrig}, Theorem~\ref{4beltrig} and
Lemma~\ref{rig3cl}; they will be used in the proof of the main
results in the next section. More specific properties of Pogorelov polytopes are described in the Appendices.

The first property is straightforward:

\begin{proposition}\label{no34fa}
In a polytope $P\in\mathcal P$, there are no $3$-gonal or
$4$-gonal facets.
\end{proposition}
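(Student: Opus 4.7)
The plan is to handle triangular and quadrangular facets separately, in each case producing either a $3$-belt (contradicting flagness of $P$) or a $4$-belt (directly contradicting $P\in\mathcal P$).

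For a triangular facet, I would invoke the observation already recorded in the paper: a simple $3$-polytope with a triangular facet $F$ has a $3$-belt formed by the three facets adjacent to $F$, unless $P=\varDelta^3$. Since $\varDelta^3$ is not flag (the four facets pairwise intersect without a common vertex), $\varDelta^3\notin\mathcal P$, so any $P\in\mathcal P$ with a triangular facet would possess a $3$-belt, contradicting the flagness (equivalently, the no-$3$-belt condition for $P\ne\varDelta^3$) that is part of the definition of~$\mathcal P$.

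For a quadrangular facet $F$, let $F_1,F_2,F_3,F_4$ be the facets adjacent to $F$, listed in cyclic order along the boundary of~$F$. These are pairwise distinct because in a simple $3$-polytope any two facets meet in at most one edge (this is built into Steinitz's theorem via $3$-connectedness, Theorem~\ref{S-Theorem}). Since $P$ is simple, at each vertex of $F$ exactly three facets meet, so consecutive $F_i,F_{i+1}$ share an edge of $P$ emanating from that vertex of~$F$. Thus the cyclic sequence $(F_1,F_2,F_3,F_4)$ satisfies the adjacency condition of a $4$-belt.

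The remaining step is to verify the two other conditions in the definition of a $4$-belt, and to show that any failure produces a $3$-belt. First, suppose a non-consecutive pair, say $F_1\cap F_3$, is nonempty. Then $F_1,F_3$ are adjacent (they share an edge by simpleness), and together with $F$ they are pairwise adjacent. The three facets $F,F_1,F_3$ have no common vertex: any vertex of $F$ lies on $F$ together with two cyclically consecutive $F_i$'s only. So $(F,F_1,F_3)$ would be a $3$-belt, contradicting flagness. Hence $F_1\cap F_3=\varnothing$ and similarly $F_2\cap F_4=\varnothing$. Next, suppose three of the $F_i$'s share a vertex $v$, say $F_1,F_2,F_3$; by simpleness these are then the only facets at~$v$, so in particular $F_1\cap F_3\ni v$, which was already excluded. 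Hence no three of $F_1,\ldots,F_4$ share a vertex. All conditions of a $4$-belt are satisfied, contradicting $P\in\mathcal P$. The main (minor) subtlety is organising this case analysis so that each potential failure of the $4$-belt axioms is traded for a $3$-belt elsewhere; flagness then does all the remaining work.
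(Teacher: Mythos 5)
Your proof is correct, and it is exactly the argument the paper has in mind when it calls the statement ``straightforward'' (no proof is given in the text): a triangular facet forces a $3$-belt unless $P=\varDelta^3$, which is excluded since $\varDelta^3$ is not flag, and a quadrangular facet is surrounded by a $4$-belt, contradicting $P\in\mathcal P$. Your hands-on verification that the four facets around a quadrilateral form a $4$-belt (distinctness, disjointness of opposite pairs via a $3$-belt contradiction, no triple vertex) is precisely the $k=4$ case of the general statement proved in the Appendix as Proposition~\ref{34bp}~(a), namely that in a flag simple $3$-polytope every $k$-gonal facet is surrounded by a $k$-belt; so your route coincides with the paper's intended one, just carried out explicitly for the two relevant values of~$k$.
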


\begin{lemma}\label{2facets}
For any two
facets $F_i$ and $F_j$ in a polytope $P\in\mathcal P$, there is a vertex $x\notin F_i\cup F_j$.
\end{lemma}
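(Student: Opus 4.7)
The plan is to argue by contradiction: suppose every vertex of $P$ lies in $F_i\cup F_j$, and then produce a facet $F_k\ne F_i,F_j$ whose total vertex count exceeds the number of its vertices in $F_i\cup F_j$. Two inputs are essential. First, Proposition~\ref{no34fa} gives $|V(F_k)|\ge5$ for every facet $F_k$. Second, in a simple $3$-polytope any two distinct facets $F,F'$ are either disjoint or share a single edge, so $|V(F)\cap V(F')|\in\{0,2\}$: a common vertex $u$ of $F$ and $F'$ would lie on one of three facets meeting at $u$, and any two of these three facets share the edge between them at $u$, forcing $F\cap F'$ to contain that edge.

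If $F_i\cap F_j=\varnothing$, then for any facet $F_k\ne F_i,F_j$ one has $|V(F_k)\cap(V(F_i)\cup V(F_j))|\le 2+2=4$, contradicting $|V(F_k)|\ge 5$.

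Suppose instead that $F_i\cap F_j$ is an edge $e$ with endpoints $v,w$, and let $F_a,F_b$ denote the third facets at $v$ and $w$ respectively (they are distinct from $F_i,F_j$ and from each other, since $F_a=F_b$ would force $F_a\cap F_i$ to contain the distinct edges at $v$ and $w$, violating the simple-polytope structure). The facets adjacent to $F_i$ form a cyclic sequence of length $|V(F_i)|\ge 5$ indexed by the edges of $F_i$, and in this sequence $F_a,F_j,F_b$ occupy three consecutive positions. Hence we may choose $F_k$ adjacent to $F_i$ with $F_k\notin\{F_a,F_j,F_b\}$. By flagness applied to $\{F_i,F_j,F_k\}$, the assumption $F_k\cap F_j\ne\varnothing$ would force $F_i\cap F_j\cap F_k\ne\varnothing$; since this triple intersection lies inside $e$, it must contain $v$ or $w$, forcing $F_k\in\{F_a,F_b\}$, contrary to the choice of $F_k$. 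Therefore $F_k\cap F_j=\varnothing$, leaving $F_k$ with only $2<5$ vertices in $F_i\cup F_j$, the desired contradiction.

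The main obstacle is the adjacent case, where one must combine the cyclic adjacency of facets around $F_i$ with flagness (to exclude extra incidences of $F_k$ with $F_j$) in order to bound $|V(F_k)\cap(V(F_i)\cup V(F_j))|$ by $2$; the disjoint case reduces to a one-line vertex count.
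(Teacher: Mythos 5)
Your proof is correct and rests on the same counting idea as the paper: every facet of a Pogorelov polytope has at least $5$ vertices (Proposition~\ref{no34fa}), while two distinct facets share at most $2$ vertices. In fact the paper's proof is just your Case 1 applied uniformly — an arbitrary third facet $F_\ell$ meets each of $F_i$ and $F_j$ in at most $2$ vertices, hence has at most $4$ vertices in $F_i\cup F_j$ and so has a vertex outside, whether or not $F_i$ and $F_j$ are adjacent — so your Case 2 detour through flagness and the cyclic sequence of facets around $F_i$ is sound but unnecessary.
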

\begin{proof}
Take any facet $F_\ell$ different from $F_i$ and~$F_j$. Then
$F_\ell$ has at most two common vertices with $F_i$ and at most
two common vertices with~$F_j$. On the other hand, $F_\ell$ has at
least $5$ vertices by the Proposition~\ref{no34fa}. Thus, at least
one vertex of $F_\ell$ does not lie in $F_i\cup F_j$.
\end{proof}

\begin{lemma}
In a flag $3$-polytope $P$, for any facet $F_i$ there is a facet
$F_j$ such that $F_i \cap F_j=\varnothing$.
\end{lemma}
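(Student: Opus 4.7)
The plan is to argue by contradiction: assume that $F_i \cap F_j \neq \varnothing$ for every facet $F_j \ne F_i$, and show that this forces $P$ to contain either a 3-belt or a triangular facet, each contradicting flagness. The first observation is that since $P$ is simple, three facets meet at every vertex, and any two of them share an edge through that vertex; hence if $F_j$ meets $F_i$ at a vertex, it automatically shares an edge with $F_i$. Under the assumption, every facet $F_j \ne F_i$ is therefore adjacent to $F_i$. Listing these adjacent facets in the cyclic order in which they surround $F_i$ produces a belt $F_{j_1},\ldots,F_{j_k}$, where $k$ equals the number of edges of $F_i$. Since $P$ is flag, $P \ne \varDelta^3$ and $P$ has no 3-belts; combined with the fact recalled in the preliminaries that any triangular facet of a simple 3-polytope other than $\varDelta^3$ is enclosed by a 3-belt, this forces $F_i$ itself to be non-triangular, so $k \ge 4$.

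The main step is to show that within the belt only cyclically consecutive facets are mutually adjacent. Indeed, if $F_{j_a}$ and $F_{j_b}$ were adjacent for $j_b \ne j_{a\pm 1}$, then $F_i \cap F_{j_a}$ and $F_i \cap F_{j_b}$ would be two non-consecutive edges of the polygon $F_i$, hence would share no vertex; consequently the triple $(F_i, F_{j_a}, F_{j_b})$ would be pairwise adjacent with no common vertex, i.e.\ a 3-belt, contradicting flagness.

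To finish, I would pick any belt facet, say $F_{j_1}$. Every facet of $P$ other than $F_i$ lies in the belt by Step 1, and by Step 2 the only belt facets adjacent to $F_{j_1}$ are its cyclic neighbors $F_{j_2}$ and $F_{j_k}$. Hence $F_{j_1}$ is adjacent to exactly three facets ($F_i, F_{j_2}, F_{j_k}$) and is therefore triangular, again contradicting flagness. The only delicate point is verifying in Step 2 that a non-cyclic pair of belt facets produces a genuine 3-belt, and this reduces to the elementary observation that two non-consecutive edges of a convex polygon are disjoint; no finer combinatorial tool is required.
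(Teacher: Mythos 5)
Your proof is correct, but it runs along a different route than the paper's. The paper argues directly: by Proposition~\ref{34bp}~(a) (proved in Appendix~A) the facets adjacent to $F_i$ form a $k$-belt, and since this belt separates $\partial P$ into two components, one of which is the interior of $F_i$, the other component must contain the interior of some facet $F_j$; that facet is not adjacent to $F_i$ (all neighbours of $F_i$ lie in the belt), hence $F_i\cap F_j=\varnothing$. You instead argue by contradiction and never need the belt statement or the separation of the sphere: assuming every facet meets $F_i$, simplicity upgrades each intersection to an edge, so all other facets are neighbours of $F_i$; flagness (no $3$-belts, and no triangular facets for $P\ne\varDelta^3$) first forces $k\ge4$, then forces non-consecutive neighbours to be disjoint (otherwise two disjoint edges of the polygon $F_i$ would produce a $3$-belt $(F_i,F_{j_a},F_{j_b})$), and finally leaves each neighbour with only three adjacent facets, i.e.\ triangular — a contradiction. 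The small facts you use implicitly (two facets of a convex $3$-polytope meet in at most one edge; at a vertex of a simple polytope the three facets pairwise share edges; a triangular facet of $P\ne\varDelta^3$ is surrounded by a $3$-belt) are all standard or recalled in the paper, so there is no gap. What your approach buys is a self-contained, purely combinatorial argument independent of Proposition~\ref{34bp}; what the paper's approach buys is brevity (given that proposition, which it needs anyway) and a constructive picture — the disjoint facet is exhibited on the far side of the belt around $F_i$ — which is the viewpoint reused in the belt arguments of the appendices.
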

\begin{proof}
By Proposition~\ref{34bp}~(a) the facet $F_i$ is surrounded by a $k$-belt~$\mathcal B_k$. Then $\partial P\setminus\mathcal B_k$ consists of two
connected components: one of them is the interior of $F_i$, and
the other contains the interior of a facet $F_j$ that we look for.
\end{proof}

Now we consider cohomology of moment-angle manifolds $\zp$ with coefficients in~$\Z$. We recall from
Proposition~\ref{3dimcoh} that $H^3(\zp)$ has a basis of cohomology
classes $[u_iv_j]=[u_jv_i]$ corresponding to pairs of non-adjacent
facets $F_i,F_j$.

\begin{proposition}\label{prod3pol}
Let $P$ be a simple $3$-polytope with $m$ facets and let
$\sK=\sK_P$ be its dual simplicial complex. In the notation of
Theorem~\ref{prodfsc}, we have
\[
  H^\ell(\zp)=\begin{cases}
  \widetilde H^{-1}(\sK_\varnothing)=\Z&\text{for }\ell =0,\\
  \bigoplus_{|I|=\ell-1}\widetilde H^0(\sK_I)\oplus
  \bigoplus_{|I|=\ell-2}\widetilde H^1(\sK_I)
  &\text{for }3\le\ell\le m,\\[2pt]
  \widetilde H^{2}(\sK)=\Z&\text{for }\ell =m+3,\\
  0&\text{otherwise.}\\
  \end{cases}
\]
In particular, $H^*(\zp)$ does not have torsion. Furthermore, all
nontrivial products in $H^*(\zp)$ are of the form
\[
  \widetilde H^0(\sK_I)\otimes\widetilde H^0(\sK_J)\to
  \widetilde H^1(\sK_{I\cup J}), \quad I\cap J=\varnothing,
\]
or
\[
  \widetilde H^0(\sK_I)\otimes\widetilde H^1(\sK_{[m]\setminus I})\to
  \widetilde H^2(\sK)=\Z.
\]
For the multigraded components of $H^*(\zp)$, these two cases
correspond to
\begin{align*}
  &H^{-(|I|-1),\,2I}(\zp)\otimes H^{-(|J|-1),\,2J}(\zp)\to
  H^{-(|I|+|J|-2),\,2(I\sqcup J)}(\zp),\\
  &H^{-(|I|-1),\,2I}(\zp)\otimes H^{-(m-|I|-2),\,2([m]\setminus I)}(\zp)\to
  H^{-(m-3),\,2[m]}(\zp)=\Z,
\end{align*}
where the latter is the Poincar\'e duality pairing.
\end{proposition}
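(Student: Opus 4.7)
The plan is to apply Theorem~\ref{prodfsc} directly, exploiting the fact that for a simple $3$-polytope $P$ the dual complex $\sK_P$ is a simplicial $2$-sphere (the boundary of the dual simplicial $3$-polytope). Consequently every full subcomplex $\sK_J \subset \sK_P$ has dimension at most $2$, so $\widetilde H^i(\sK_J)=0$ for $i\ge 3$. Moreover $\widetilde H^{-1}(\sK_J)=\Z$ exactly when $J=\varnothing$, and $\widetilde H^2(\sK_J)=0$ whenever $J\ne[m]$: by Alexander duality in $S^2$ we have $\widetilde H^i(\sK_J)\cong\widetilde H_{1-i}(S^2\setminus\sK_J)$, and the complement is nonempty for $J\ne[m]$. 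The same Alexander duality also delivers torsion-freeness: for $J\ne[m]$ the complement $S^2\setminus\sK_J$ is an open subset of $S^2$, hence homotopy equivalent to a finite graph, so all its reduced homology groups are free; and for $J=[m]$ we have $\sK_J=S^2$.

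Feeding these restrictions into the decomposition $H^\ell(\zp)=\bigoplus_{J\subset[m]}\widetilde H^{\ell-|J|-1}(\sK_J)$, I enumerate the degrees. The cases $\ell=0$ and $\ell=m+3$ are immediate from $\widetilde H^{-1}(\varnothing)=\Z$ and $\widetilde H^2(\sK_P)=\Z$. For $\ell\in\{1,2\}$ the only potential summands come from $|J|\le 2$ and vanish individually. For $3\le\ell\le m$ only $\widetilde H^0(\sK_J)$ with $|J|=\ell-1$ and $\widetilde H^1(\sK_J)$ with $|J|=\ell-2$ can contribute, since a $\widetilde H^2$ contribution would require $|J|=\ell-3<m$, forcing $J\ne[m]$. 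The vanishing for $\ell=m+1,m+2$ I handle by noting that for $|I|=m-1$ the subcomplex $\sK_I$ is the antistar of a single vertex in $\sK_P$, hence $S^2$ with an open disc removed, which is contractible; combined with $\widetilde H^\ast(\sK_P)=\widetilde H^\ast(S^2)$ concentrated in degree $2$, this kills all remaining summands. Alternatively, once torsion-freeness is established, Poincar\'e duality on the closed orientable manifold $\zp$ gives $H^{m+1}(\zp)\cong H^2(\zp)=0$ and $H^{m+2}(\zp)\cong H^1(\zp)=0$.

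For the product structure, Theorem~\ref{prodfsc} identifies the multiplication with the maps induced by simplicial inclusions $\sK_{I\cup J}\to\sK_I\mathbin{*}\sK_J$ for $I\cap J=\varnothing$, and zero otherwise. A nontrivial product $\widetilde H^p(\sK_I)\otimes\widetilde H^q(\sK_J)\to\widetilde H^{p+q+1}(\sK_{I\cup J})$ needs nontrivial factors and nontrivial target; nontriviality of the factors forces $p,q\in\{0,1\}$. The case $p=q=1$ lands in $\widetilde H^3(\sK_{I\cup J})=0$ and is trivial; the case $p=q=0$ produces the first displayed formula with target $\widetilde H^1(\sK_{I\cup J})$; and the mixed case $p=0,q=1$ requires $\widetilde H^2(\sK_{I\cup J})\ne0$, forcing $I\cup J=[m]$ and target $\widetilde H^2(\sK_P)=\Z$. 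Translating through the bijection $H^{-i,2J}(\zp)\cong\widetilde H^{|J|-i-1}(\sK_J)$ yields the two displayed multigraded products; the second one lands in the top cohomology $H^{m+3}(\zp)=\Z$ of the closed orientable manifold $\zp$, which identifies it as the Poincar\'e duality pairing. The only delicate step is the vanishing in degrees $m+1$ and $m+2$, which either uses the antistar observation or requires torsion-freeness as a prerequisite for invoking Poincar\'e duality.
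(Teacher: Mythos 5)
Your proposal is correct and follows essentially the same route as the paper, whose proof consists of citing Theorems~\ref{zkcoh}, \ref{HTor} and~\ref{prodfsc}; you simply fill in the routine verifications (that full subcomplexes of the $2$-sphere $\sK_P$ have free cohomology vanishing above degree~$1$ unless $J=[m]$, the antistar observation for $\ell=m+1$, and the degree bookkeeping for the product maps), all of which check out.
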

\begin{proof}
This follows from Theorems~\ref{zkcoh},~\ref{HTor} and~\ref{prodfsc}.
\end{proof}

An element in a graded ring is called \emph{decomposable} if it
can be written as a sum of nontrivial products of elements of nonzero degree.

\begin{lemma}[{\cite[Proposition~6.3]{fa-wa}}]\label{proddeco}
Let $P$ be a flag $3$-polytope and $\sK$ its dual simplicial
complex. Then the ring $H^*(\zp)\cong\bigoplus_{J\subset[m]}
\widetilde H^*(\sK_J)$ is multiplicatively generated by
$\bigoplus_{J\subset[m]} \widetilde H^0(\sK_J)$.
\end{lemma}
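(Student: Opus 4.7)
Plan: Propositions~\ref{3dimcoh} and~\ref{prod3pol} reduce the lemma to two combinatorial claims about the summands of $H^*(\zp) \cong \bigoplus_{J \subset [m]} \widetilde H^*(\sK_J)$. The only nonzero reduced components are $\widetilde H^{-1}(\sK_\varnothing) = \Z$ (the unit), the groups $\widetilde H^0(\sK_I)$ and $\widetilde H^1(\sK_I)$ for $I \subsetneq [m]$, and the top summand $\widetilde H^2(\sK) \cong \Z$; the only nontrivial multiplications either pair two $\widetilde H^0$-classes into an $\widetilde H^1(\sK_{I \sqcup J})$-summand, or pair an $\widetilde H^0$-class with an $\widetilde H^1$-class into $\widetilde H^2(\sK)$. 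It therefore suffices to show
\begin{itemize}
\item[(a)] every class in $\widetilde H^1(\sK_K)$, $K \subsetneq [m]$, is a sum of products $\alpha\beta$ with $\alpha \in \widetilde H^0(\sK_I)$, $\beta \in \widetilde H^0(\sK_J)$ and $I \sqcup J = K$;
\item[(b)] the generator of $\widetilde H^2(\sK)$ is an iterated product of $\widetilde H^0$-classes.
\end{itemize}

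For (a) I would use that $\sK = \sK_P$ is a flag $2$-sphere, so every full subcomplex $\sK_K$ is flag as well. A standard chord-removal argument (every triangle in the $1$-skeleton is filled in by a $2$-simplex) shows that $\widetilde H^1(\sK_K)$ is spanned by classes $[C]$ of chordless cycles $C = (i_1, \ldots, i_n)$ of length $n \ge 4$. Given such a cycle I would partition $K = I \sqcup J$ by placing $\{i_1, i_3\} \subset I$ and the remaining vertices of $K$ in $J$. Chordlessness forces $i_1, i_3$ to be non-adjacent in $\sK$, so $\sK_I$ is disconnected and carries a class $\alpha = [\alpha_{\{i_1\}} - \alpha_{\{i_3\}}] \in \widetilde H^0(\sK_I)$. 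A parallel analysis of $\sK_J$, refined if necessary by transferring some vertices of $K \setminus V(C)$ to $I$ in order to keep $\sK_J$ disconnected, produces a complementary class $\beta \in \widetilde H^0(\sK_J)$ that separates $i_2$ from $i_4$. Under the isomorphism~\eqref{fmapr}, these classes correspond to monomials $u_{I \setminus \{i\}} v_i$ in $R^*(\sK)$, and an explicit multiplication in $R^*(\sK)$ shows that the only nonzero terms in $\alpha\beta$ correspond to edges $\{i_s, i_{s \pm 1}\}$ of $C$ (the potential cross-products $v_{i_1} v_{i_4}$, $v_{i_3} v_{i_n}$ and so on all vanish because $C$ is chordless), yielding $\alpha\beta = \pm [C]$ in $\widetilde H^1(\sK_K)$.

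For (b) I would combine (a) with Poincar\'e duality for the closed orientable $(m+3)$-manifold $\zp$. By Lemma~\ref{2facets}, $P$ has a pair of disjoint facets $F_i, F_j$, so $\sK_{\{i,j\}}$ consists of two isolated points and carries a nonzero class $\alpha \in \widetilde H^0(\sK_{\{i,j\}})$. Poincar\'e duality produces a complementary class $\delta$ with $\alpha\delta$ equal to the generator of $\widetilde H^2(\sK)$, and the product description of Proposition~\ref{prod3pol} forces $\delta \in \widetilde H^1(\sK_{[m] \setminus \{i,j\}})$. By part (a) this $\delta$ is already a sum of products of $\widetilde H^0$-classes, hence so is the top generator.

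The main obstacle lies in the product computation for (a) when $K \supsetneq V(C)$: extra vertices of $K \setminus V(C)$ may bridge the two components of $\sK_J$ and destroy the disconnectedness needed for $\beta$ to be nonzero, and they may introduce new $2$-simplices that alter which $1$-cocycles represent $[C]$. Overcoming this requires a case analysis (as carried out in~\cite{fa-wa}) that adjusts the partition $K = I \sqcup J$ to the flag structure of $\sK_K$, together with careful bookkeeping of the signs $\varepsilon(L, J)$ from~\eqref{fmapr}.
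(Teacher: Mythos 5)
Your reduction is sound: by Proposition~\ref{prod3pol} the only thing to prove is that every class in $\widetilde H^1(\sK_K)$, $K\subsetneq[m]$, is a sum of products of $\widetilde H^0$-classes (your part (a)), and your part (b) — getting the generator of $\widetilde H^2(\sK)$ from a class $[u_iv_j]$, Poincar\'e duality and part (a) — is correct and essentially what the paper leaves implicit. The problem is that part (a) in the only nontrivial case, $K\supsetneq V(C)$, is exactly the content of the lemma, and at that point you stop: you say the partition must be ``refined if necessary by transferring some vertices'' and that completing it ``requires a case analysis (as carried out in~\cite{fa-wa})''. Since the lemma itself is~\cite[Proposition~6.3]{fa-wa}, this is circular as a citation and leaves the heart of the argument undone. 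Concretely, with $I=\{i_1,i_3\}$ and $J=K\setminus I$, vertices of $K\setminus V(C)$ may join the two arcs of $C\setminus\{i_1,i_3\}$, so $\sK_J$ need not be disconnected, and even when a separating class $\beta$ exists the product $\alpha\beta$ computed via~\eqref{fmapr} or Theorem~\ref{prodfsc} need not equal $[C]$; it is not clear that moving vertices into $I$ can simultaneously preserve the disconnectedness of $\sK_I$, recover that of $\sK_J$, and keep the product equal to the prescribed class. Note also that flagness must enter this step in an essential way (a $3$-belt gives a class in $\widetilde H^1(\sK_{\{i,j,k\}})$ that is \emph{not} decomposable, cf.\ Lemma~\ref{proddeco1}), whereas in your sketch flagness is used only to discard $3$-cycles and to note that full subcomplexes are flag — a sign that the real work is still missing.

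For comparison, the paper's proof (Appendix~\ref{proofproddeco}) avoids the cycle-by-cycle splitting altogether: it dualises via $\widetilde H^i(\sK_I)\cong\widehat H_{2-i}(P_I,\partial P_I)$, where $P_I\subset\partial P$ is the corresponding union of facets, so that $\widetilde H^1(\sK_I)$ is generated by edge paths joining two boundary circles of a disc-with-holes component $P_{I^k}$. For each such generator one picks facets $F_p,F_q$ outside $P_{I^k}$ adjacent to the two circles and invokes Lemma~\ref{Fab} (every two disjoint facets of a flag $3$-polytope lie on a belt avoiding a prescribed third facet — itself a genuine induction, proved in Appendix~B); the belt cuts the sphere into two discs, and the facets of $P_{I^k}$ lying on half of the belt, respectively inside one of the discs, give the two $\widetilde H^0$-classes whose product is the chosen path generator. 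That belt lemma is exactly the combinatorial input your sketch lacks; either supply it (or the equivalent Fan--Wang case analysis) in full, or your proposal remains a reduction rather than a proof.
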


To prove this lemma it is enough to show that each nontrivial
cohomology class in $\widetilde H^1(\sK_I)\subset H^*(\zp)$ is
decomposable or, equivalently, the product map
\[
  \bigoplus_{I=I_1\sqcup I_2}\widetilde H^0(\sK_{I_1})\otimes\widetilde H^0(\sK_{I_2})\to
  \widetilde H^1(\sK_{I})
\]
is surjective. This proof is quite technical. We include it in
Appendix~\ref{proofproddeco} for the reader's convenience.

\begin{lemma}\label{proddeco1}
A simple $3$-polytope $P\ne\varDelta^3$ with $m$ facets is flag if
and only if any nontrivial cohomology class in $H^{m-2}(\zp)$ is
decomposable. In particular, if $H^{m-2}(\zp)=0$ then either $P$
is flag or $P=\varDelta^3$.
\end{lemma}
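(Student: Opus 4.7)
The plan is to split $H^{m-2}(\zp)$ via the multigraded decomposition of Proposition~\ref{prod3pol},
\[
H^{m-2}(\zp)\;\cong\;\bigoplus_{|J|=m-3}\widetilde H^{0}(\sK_J)\;\oplus\;\bigoplus_{|J|=m-4}\widetilde H^{1}(\sK_J).
\]
Proposition~\ref{prod3pol} also tells us that all nontrivial products of positive-degree classes in $H^*(\zp)$ land inside $\widetilde H^{1}(\sK_{I\sqcup J})$ summands. Hence a class supported in a $\widetilde H^{0}(\sK_J)$ summand can never be written as a sum of nontrivial products and so is automatically non-decomposable; conversely, only classes in the $\widetilde H^{1}$-summands of $H^{m-2}(\zp)$ stand a chance of being decomposable. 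The proof of the iff therefore splits naturally into analysing these two summands.

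For the direction ``$P$ flag $\Rightarrow$ every class in $H^{m-2}(\zp)$ is decomposable'', I would dispatch the $\widetilde H^{1}$ part using Lemma~\ref{proddeco}: since $H^*(\zp)$ is multiplicatively generated by the $\widetilde H^{0}(\sK_J)$ summands and our classes do not themselves lie in any such summand, they must be sums of products of generators. For the $\widetilde H^{0}$ part I would show, combinatorially, that $\widetilde H^{0}(\sK_J)=0$ whenever $|J|=m-3$. Since $|\sK|=S^{2}$, removing three vertices and their open stars disconnects the sphere precisely when those three vertices are pairwise joined by edges of $\sK$ but do not span a 2-simplex. In a flag complex every such 3-cycle is filled in, so this separation cannot occur and the relevant $\widetilde H^{0}$ groups all vanish.

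For the converse, suppose $P$ is not flag and $P\ne\varDelta^{3}$. By the combinatorial characterisation recalled earlier in the paper (a simple 3-polytope $\ne\varDelta^{3}$ is flag iff it has no 3-belts), $P$ contains a 3-belt $\{F_{i_{1}},F_{i_{2}},F_{i_{3}}\}$. The corresponding three vertices of $\sK$ are pairwise joined by edges but span no 2-simplex (because no three belt facets share a vertex), so they form a 3-cycle that separates $|\sK|=S^{2}$ into two open disks $D_{1},D_{2}$. I would then verify that \emph{both} $D_{1}$ and $D_{2}$ must contain a further vertex of $\sK$: otherwise one of the two boundary circles of the belt annulus in $\partial P$ would collapse to a single point, forcing the three belt facets to share that point and contradicting the definition of a belt. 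Consequently, with $J=[m]\setminus\{i_{1},i_{2},i_{3}\}$, $\sK_J$ is disconnected, $\widetilde H^{0}(\sK_J)\ne 0$, and any non-zero element of this summand produces a non-decomposable class in $H^{m-2}(\zp)$. The ``in particular'' statement is then immediate: if $H^{m-2}(\zp)=0$ then every class is vacuously decomposable, so by the iff we are forced into $P$ flag or $P=\varDelta^{3}$.

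The main obstacle is the last combinatorial step: the geometric statement that a 3-belt in any non-tetrahedral simple 3-polytope genuinely separates $\partial P$ into two regions \emph{each} containing a non-belt facet. This is the only place where the hypothesis $P\ne\varDelta^{3}$ is actually used, and it is exactly what makes the $\widetilde H^{0}$ summand detect the failure of flagness.
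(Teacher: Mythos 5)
Your argument is correct. It shares the paper's skeleton --- the splitting of $H^{m-2}(\zp)$ from Proposition~\ref{prod3pol}, indecomposability of anything with a nonzero $\widetilde H^0$-component because products of positive-degree classes never land in $\widetilde H^0$-summands, and Lemma~\ref{proddeco} for the $\widetilde H^1$-summands when $P$ is flag --- but it handles the $\widetilde H^0$-summands with $|J|=m-3$ by a genuinely different route. The paper invokes Poincar\'e duality of the manifold $\zp$ twice: in the non-flag case it pairs the degree-$5$ class coming from the missing $3$-face against the complementary multidegree to produce a nonzero (hence indecomposable) element of $\widetilde H^0(\sK_{[m]\setminus\{i_1,i_2,i_3\}})$, and in the flag case it pairs $\widetilde H^0(\sK_I)$ against $\widetilde H^1(\sK_{[m]\setminus I})=0$ to kill these summands. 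You instead prove the underlying combinatorial fact directly on the triangulated $2$-sphere $\sK$: the full subcomplex on the complement of three vertices is disconnected exactly when those vertices form a chordless $3$-cycle, and in that case each complementary disk contains a further vertex, which you correctly reduce to the belt condition that the three facets have no common vertex (an empty disk would force $\{i_1,i_2,i_3\}$ to span a $2$-simplex). This is Alexander duality on $\sK$ done by hand, i.e.\ the same duality the paper accesses through $H^*(\zp)$, so nothing is lost; your version is more elementary and makes the geometry visible (the indecomposable class literally distinguishes the two sides of the $3$-belt), while the paper's version is shorter because Proposition~\ref{prod3pol} already records the Poincar\'e pairing in exactly the multigraded form needed.
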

\begin{proof}
Suppose that $P$ is not flag. Since $P\ne\varDelta^3$, it has a
$3$-belt~$\{F_{j_1},F_{j_2},F_{j_3}\}$. Equivalently, the dual
complex $\sK$ has a missing $3$-face $J=\{j_1,j_2,j_3\}$. It gives a
nonzero cohomology class $\alpha\in H^{-1,2J}(\zp)\subset
H^5(\zp)$. Consider the Poincar\'e duality pairing
\[
  H^{m-2}(\zp)\otimes H^{5}(\zp)\to H^{m+3}(\zp)=\Z,
\]
which specifies to
\[
  H^{-(m-4),2([m]\setminus J)}(\zp)\otimes H^{-1,2J}(\zp)\to  H^{-(m-3),\,2[m]}(\zp)=\Z
\]
(see Proposition~\ref{prod3pol}). Take $\beta\in
H^{-(m-4),2([m]\setminus J)}(\zp)\subset H^{m-2}(\zp)$ such that $\alpha\cdot\beta$ is a generator of $H^{-(m-3),\,2[m]}(\zp)=\Z$. By Theorem~\ref{prodfsc}, $H^{-(m-4),2([m]\setminus
J)}(\zp)=\widetilde H^0(\sK_{[m]\setminus J})$, and any element of
$\widetilde H^0(\sK_{[m]\setminus J})$ is indecomposable by
Proposition~\ref{prod3pol}. We have therefore found an indecomposable
element $\beta\in H^{m-2}(\zp)$.

Now suppose that $P$ is flag. By Proposition~\ref{prod3pol},
\[
  H^{m-2}(\zp)= \bigoplus_{|I|=m-3}\widetilde H^0(\sK_I)\oplus
  \bigoplus_{|I|=m-4}\widetilde H^1(\sK_I),
\]
Consider the Poincar\'e duality pairing $\widetilde
H^0(\sK_I)\otimes \widetilde H^1(\sK_{[m]\setminus
  I})\to\Z$.
Since $\sK$ is flag, $\widetilde H^1(\sK_{[m]\setminus I})=0$ for
$|I|=m-3$ (as there are no missing faces with $3$ vertices). Hence,
$\bigoplus_{|I|=m-3}\widetilde H^0(\sK_I)$=0 by Poincar\'e
duality, and $H^{m-2}(\zp)= \bigoplus_{|I|=m-4}\widetilde
H^1(\sK_I)$. Then each nonzero element of $H^{m-2}(\zp)$ is
decomposable by Lemma~\ref{proddeco}.
\end{proof}

\begin{theorem}\label{flagrig}
Let $P$ be a flag $3$-polytope, and assume given a ring
isomorphism $H^*(\zp)\cong H^*(\mathcal Z_{P'})$ for another
simple $3$-polytope~$P'$. Then $P'$ is also flag.

In other words, the property of being a flag $3$-polytope is
$B$-rigid.
\end{theorem}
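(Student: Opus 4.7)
The plan is to reduce the statement to Lemma~\ref{proddeco1}, which characterises flagness of a simple $3$-polytope $P\ne\varDelta^3$ with $m$ facets by the decomposability of every nontrivial cohomology class in $H^{m-2}(\zp)$. Since decomposability in a graded ring is preserved by any graded ring isomorphism, the only nontrivial bookkeeping is to match the cohomological dimensions of $\zp$ and $\mathcal Z_{P'}$, and to rule out $P'=\varDelta^3$.

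First I would observe that Proposition~\ref{prod3pol} identifies the top nonzero cohomology of a moment-angle manifold $\zp$ associated with a simple $3$-polytope with $m$ facets: $H^{m+3}(\zp)\cong\widetilde H^2(\sK_P)\cong\Z$, and all higher degrees vanish. Hence, any ring isomorphism $H^*(\zp)\cong H^*(\mathcal Z_{P'})$ forces the number $m'$ of facets of $P'$ to coincide with the number $m$ of facets of $P$. Since $P$ is flag, $P\ne\varDelta^3$, and (in fact) the smallest flag simple $3$-polytope has at least $5$ facets, so $m=m'\ne 4$, which means that $P'\ne\varDelta^3$ as well (the only simple $3$-polytope with $4$ facets).

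Next I would invoke the ``only if'' direction of Lemma~\ref{proddeco1}: since $P$ is flag and $P\ne\varDelta^3$, every nontrivial class in $H^{m-2}(\zp)$ is decomposable. Transporting this through the graded ring isomorphism $H^*(\zp)\cong H^*(\mathcal Z_{P'})$, every nontrivial class in $H^{m'-2}(\mathcal Z_{P'})=H^{m-2}(\mathcal Z_{P'})$ is decomposable as well. By the ``if'' direction of Lemma~\ref{proddeco1}, applied to $P'\ne\varDelta^3$, this implies that $P'$ is flag.

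Given Lemma~\ref{proddeco1}, the argument is essentially formal; the only place where something could go wrong is the matching of dimensions and the exclusion of the simplex. Both issues are handled by the identification $\dim\zp=m+3$ from Proposition~\ref{prod3pol} together with the fact that flagness rules out $m=4$. The genuine work has already been absorbed into Lemma~\ref{proddeco1}, whose proof relies on the multiplicative generation of $H^*(\zp)$ by $\bigoplus_{J}\widetilde H^0(\sK_J)$ for flag $P$ (Lemma~\ref{proddeco}) and on Poincar\'e duality pairing the nonzero class in $H^{-1,2J}(\zp)$ coming from a missing $3$-face with an indecomposable class in $H^{m-2}(\zp)$ in the non-flag case.
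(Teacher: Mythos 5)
Your argument is correct and takes essentially the same route as the paper: both reduce the statement to Lemma~\ref{proddeco1} together with the observation that decomposability of cohomology classes is preserved under a graded ring isomorphism. The only cosmetic difference is that the paper rules out $P'=\varDelta^3$ by citing $B$-rigidity of the simplex, while you deduce it from the equality of facet numbers $m=m'$ forced by the top nonzero degree $m+3$ of $H^*(\zp)$; both are valid, and your explicit matching of degrees is a harmless extra detail.
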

\begin{proof}
We have $P'\ne\varDelta^3$, as a $3$-simplex is $B$-rigid. Suppose that $P'$ is not flag. By Lemma~\ref{proddeco1}, there is an indecomposable element in $H^{m-2}(\mathcal{Z}_{P'})$. Then the same holds for~$P$, which is a contradiction.
\end{proof}

\begin{remark}
Theorem~\ref{flagrig} also follows from~\cite[Theorem~6.6]{fa-wa}.
\end{remark}

\begin{proposition}\label{Massey}
Let $P$ be a simple $3$-polytope.
\begin{itemize}
\item[(a)]
The product $H^3(\zp)\otimes H^3(\zp)\to H^6(\zp)$ is trivial if
and only if $P$ does not have $4$-belts.
\item[(b)]
The triple Massey product $H^3(\zp)\otimes H^3(\zp)\otimes
H^3(\zp)\to H^8(\zp)$ is trivial if $P$ does not have $4$-belts.
\end{itemize}
\end{proposition}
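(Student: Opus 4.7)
For part~(a), the idea is to decompose the product $H^3(\zp)\otimes H^3(\zp)\to H^6(\zp)$ via Theorem~\ref{prodfsc} (combined with Proposition~\ref{prod3pol}) into its components
\[
  \widetilde H^0(\sK_I)\otimes \widetilde H^0(\sK_J)\longrightarrow \widetilde H^1(\sK_{I\cup J}),
\]
indexed by pairs $I,J\subset[m]$ of missing edges of $\sK=\sK_P$ with $I\cap J=\varnothing$ (contributions with $I\cap J\neq\varnothing$ vanish by Theorem~\ref{prodfsc}). I will then analyse the four-vertex full subcomplex $\sK_{I\cup J}$ combinatorially. Every three-element subset of $I\cup J$ contains the non-edge $I$ or~$J$, so $\sK_{I\cup J}$ has no $2$-simplex and no $3$-cycle. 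Consequently $\widetilde H^1(\sK_{I\cup J})\neq0$ if and only if all four possible ``cross'' edges between the two pairs are present, i.e. $\sK_{I\cup J}$ is a chordless $4$-cycle. Translating back to $P$, this is the same as the four corresponding facets forming a $4$-belt. Hence, in the absence of $4$-belts, every component of the product map is zero.

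For the converse, if $P$ has a $4$-belt I take $I$ and $J$ to be its two diagonal pairs of mutually non-adjacent facets. Then $\sK_{I\cup J}$ is the $4$-cycle and equals the join $\sK_I\mathbin{*}\sK_J\cong S^0\mathbin{*}S^0\cong S^1$. Under this identification, the map in Theorem~\ref{prodfsc} becomes the usual join cross-product $\widetilde H^0(S^0)\otimes \widetilde H^0(S^0)\to \widetilde H^1(S^1)$, which is an isomorphism $\Z\otimes\Z\stackrel\cong\longrightarrow\Z$. Therefore the product of the corresponding classes in $H^3(\zp)$ is nontrivial in $H^6(\zp)$, completing the ``only if'' direction of~(a).

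For part~(b), I would invoke the Denham--Suciu criterion (Theorem~\ref{desu}): the existence of a nontrivial triple Massey product of three-dimensional classes in $H^*(\zp)$ is equivalent to $\sK_P^1$ containing one of the five graphs of Figure~\ref{5gr} as an induced subgraph. A direct inspection exhibits a chordless $4$-cycle in each of these five graphs. Therefore, if $P$ has no $4$-belts, then $\sK_P^1$ has no chordless $4$-cycle and hence does not contain any of the five graphs as an induced subgraph, so every triple Massey product in question vanishes (part~(a) already ensures that these Massey products are defined). The main obstacles I anticipate are the careful identification of the join cross-product in Theorem~\ref{prodfsc} with the actual cup product in the $4$-belt case, and the verification by inspection that each of the five graphs in Figure~\ref{5gr} really contains a chordless $4$-cycle.
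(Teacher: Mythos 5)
Your proposal is correct and follows essentially the same route as the paper: both directions of (a) are handled through the multigraded decomposition of the cup product (Theorem~\ref{prodfsc}), with the key observation that the only four-vertex full subcomplexes $\sK_{I\cup J}$ contributing a nonzero $\widetilde H^1$ are chordless $4$-cycles, i.e.\ $4$-belts, and the nontrivial product in the belt case coming from the join $S^0\mathbin{*}S^0\cong S^1$; part~(b) is the same appeal to the Denham--Suciu criterion plus the inspection that each of the five graphs of Figure~\ref{5gr} contains a chordless $4$-cycle. Your extra remarks (vanishing of components with $I\cap J\ne\varnothing$, absence of $2$-simplices in $\sK_{I\cup J}$, definedness of the Massey products) only make explicit details the paper leaves implicit.
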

\begin{proof}
We first prove (a). Suppose $P$ has a $4$-belt
$(F_1,F_2,F_3,F_4)$. It corresponds to a chordless $4$-cycle
$\{1,2,3,4\}$ in $\sK=\sK_P$, i.\,e. a cycle with
$\{1,3\}\notin\sK$ and $\{2,4\}\notin\sK$. Hence, we have a
nontrivial product $\widetilde H^0(\sK_{\{1,3\}})\otimes\widetilde
H^0(\sK_{\{2,4\}})\to\widetilde H^1(\sK_{\{1,2,3,4\}})$, and a
nontrivial product $H^3(\zp)\otimes H^3(\zp)\to H^6(\zp)$.

Now suppose there is a nontrivial product $H^3(\zp)\otimes
H^3(\zp)\to H^6(\zp)$. We have
$H^6(\zp)=\bigoplus_{|I|=5}\widetilde
H^0(\sK_I)\oplus\bigoplus_{|I|=4}\widetilde H^1(\sK_I)$. Elements
of $\widetilde H^0(\sK_I)$ are indecomposable. An element of
$\widetilde H^1(\sK_I)$ with $|I|=4$ can be decomposed into a
product if and only if $I$ can be split into two pairs of
non-adjacent vertices, which means that $I$ is a chordless
$4$-cycle. It corresponds to a $4$-belt in~$P$.

To prove (b), assume that there is a nontrivial Massey product
$\langle\alpha,\beta,\gamma\rangle\in H^8(\zp)$. Then, by
Theorem~\ref{desu}, the graph $\sK_P^1$ contains an induced
subgraph isomorphic to one of the five graphs in Figure~\ref{5gr}.
By inspection, each of these five graphs has a chordless $4$-cycle
(the outer cycle for the first four graphs, and the left cycle for
the last one). Hence, the polytope $P$ has a $4$-belt.
\end{proof}

It is not known whether moment-angle manifolds of polytopes from the Pogorelov class $\mathcal P$ have nontrivial Massey products of cohomology
classes of dimension $>3$ or of order~$>3$, or whether these
moment-angle manifolds are formal in the sense of rational
homotopy theory. For general polytopes~$P$, there are examples of nontrivial
Massey products of any order in~$H^*(\zp)$, see~\cite{limo}.

\begin{theorem}\label{4beltrig}
Let $P$ be a simple $3$-polytope without $4$-belts, and assume
given a ring isomorphism $H^*(\zp)\cong H^*(\mathcal Z_{P'})$ for
another simple $3$-polytope~$P'$. Then $P'$ also does not have
$4$-belts.

It other words, the property of being a simple $3$-polytope without
$4$-belts is $B$-rigid.
\end{theorem}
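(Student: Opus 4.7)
The plan is to reduce the statement immediately to Proposition~\ref{Massey}(a), which already characterizes the absence of $4$-belts in~$P$ as a purely graded-ring-theoretic property of the cohomology of $\zp$: namely, the vanishing of the cup product
\[
  H^3(\zp)\otimes H^3(\zp)\longrightarrow H^6(\zp).
\]
Since any property phrased in terms of graded ring operations is preserved under graded ring isomorphisms, transporting it across an isomorphism $H^*(\zp)\cong H^*(\mathcal Z_{P'})$ is then a formality.

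More precisely, I would argue as follows. Let $\varphi\colon H^*(\zp)\to H^*(\mathcal Z_{P'})$ be the given isomorphism, which is a graded ring isomorphism in the sense of Definition~\ref{defcohorig}. Restricting to the relevant degrees gives isomorphisms $\varphi^3\colon H^3(\zp)\to H^3(\mathcal Z_{P'})$ and $\varphi^6\colon H^6(\zp)\to H^6(\mathcal Z_{P'})$, and multiplicativity of $\varphi$ yields $\varphi^6(\alpha\smile\beta)=\varphi^3(\alpha)\smile\varphi^3(\beta)$ for all $\alpha,\beta\in H^3(\zp)$. Consequently, the cup product pairing $H^3\otimes H^3\to H^6$ is identically zero for $\zp$ if and only if the corresponding pairing is identically zero for $\mathcal Z_{P'}$.

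Now apply Proposition~\ref{Massey}(a) in both directions: by hypothesis $P$ has no $4$-belts, so the pairing vanishes on $\zp$; by the displayed equivalence, it vanishes on $\mathcal Z_{P'}$; hence by the converse implication of the same proposition, $P'$ has no $4$-belts either.

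Because the serious combinatorial and algebraic work has already been absorbed into Proposition~\ref{Massey}(a), I do not anticipate any real obstacle in the proof of Theorem~\ref{4beltrig} itself. The only point that bears checking is that the notion of cohomological rigidity (and hence $B$-rigidity of a property) in this paper means \emph{graded} ring isomorphism, so that the grading on cup products is preserved; this is explicit in Definition~\ref{defcohorig}.
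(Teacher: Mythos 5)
Your proposal is correct and is exactly the paper's argument: the paper's proof of Theorem~\ref{4beltrig} is a one-line appeal to Proposition~\ref{Massey}(a), using precisely the observation that vanishing of the product $H^3(\zp)\otimes H^3(\zp)\to H^6(\zp)$ is a graded-ring invariant. Your spelled-out transfer of this vanishing across the isomorphism is just the implicit detail of that same step, so there is nothing further to add.
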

\begin{proof}
This follows from Proposition~\ref{Massey} (a).
\end{proof}

Recently Fan, Ma and Wang proved that any polytope $P\in\mathcal P$ is $B$-rigid, see~\cite[Theorem 3.1]{f-m-w}. The
proof builds upon the following crucial lemma:

\begin{lemma}[{\cite[Corollary 3.4]{f-m-w}}]\label{rig3cl}
Consider the set of cohomology classes
\[
  \mathcal T(P)=\{\pm[u_iv_j]\in H^3(\zp),\quad F_i\cap
  F_j=\varnothing\}.
\]
If $P\in\mathcal P$, then for any
cohomology ring isomorphism $\psi\colon
H^*(\zp)\stackrel\cong\longrightarrow H^*(\mathcal Z_{P'})$, we
have $\psi(\mathcal T(P))=\mathcal T(P')$.
\end{lemma}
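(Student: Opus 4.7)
The plan is to reduce the lemma to a purely ring-theoretic characterization of the set $\mathcal T(P)$ inside $H^3(\zp)$, as any such characterization is automatically preserved by the ring isomorphism~$\psi$.

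First I would apply Theorem~\ref{flagrig} and Theorem~\ref{4beltrig} to conclude that $P'$ is a flag $3$-polytope without $4$-belts; since $P\ne\varDelta^3$ and the simplex is easily distinguished (e.g.\ by the dimension of~$\zp$), one also has $P'\ne\varDelta^3$, so $P'\in\mathcal P$. Consequently Propositions~\ref{3dimcoh} and~\ref{Massey} apply on both sides, and the multigraded description of $H^*(\mathcal Z_{P'})$ from Theorem~\ref{prodfsc} is available for later comparison.

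The main task is then to single out the classes $\pm[u_iv_j]$ among all elements of $H^3(\zp)$. A general class is $\alpha=\sum_{\{i,j\}\notin\sK_P}a_{ij}[u_iv_j]$, and the aim is to characterize those $\alpha$ for which exactly one coefficient is $\pm1$. A direct approach via products inside $H^3$ fails, because $H^3\cdot H^3=0$ by Proposition~\ref{Massey}(a); the exploitable structure has to come from higher products $H^3\otimes H^k\to H^{3+k}$ together with Poincar\'e duality. My candidate characterization is the following: $\alpha\in H^3(\zp)$ is of the form $\pm[u_iv_j]$ iff $\alpha$ is primitive in the integral lattice and the rank of the multiplication map $\alpha\cdot(-)\colon H^{m-3}(\zp)\to H^m(\zp)$ is minimal among nonzero primitive classes in~$H^3$. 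The intuition is that, in terms of the decomposition $H^*(\zp)=\bigoplus_J\widetilde H^*(\sK_J)$, multiplication by $[u_iv_j]$ sends $\widetilde H^0(\sK_I)$ with $I\subseteq[m]\setminus\{i,j\}$ and $|I|=m-4$ into $\widetilde H^1(\sK_{I\cup\{i,j\}})$, hitting only multigraded components indexed by supersets of the single pair $\{i,j\}$; a genuine linear combination $a[u_iv_j]+b[u_kv_l]$ with $\{i,j\}\ne\{k,l\}$ generically hits supersets of both pairs, spanning a larger subspace of~$H^m$. Since the multiplication map and its rank are ring-theoretic invariants, the set cut out by this condition is preserved by~$\psi$.

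The principal obstacle is to turn this intuition into a rigorous statement, i.e.\ to recover enough of the multigrading from the ring structure to prove that single-monomial classes are precisely those of minimal multiplicative spread, and to rule out pathological cancellations that could artificially shrink the image for a non-monomial~$\alpha$. This is exactly where the hypothesis $P\in\mathcal P$ is decisive: flagness permits the use of Lemma~\ref{proddeco}, guaranteeing that every class in $\widetilde H^1(\sK_J)$ is a sum of products of $\widetilde H^0$ classes and is hence visible through multiplication by elements of~$H^3$, while the absence of $4$-belts kills the $H^3\cdot H^3$ contributions that would otherwise pollute the counting. I expect the subtlest step to be this cancellation analysis, which will require combinatorial input on how chordless configurations in $\sK_P^1$ interact under $\mathbb Z$-linear combinations of the generators of~$H^3$; this is presumably where the full strength of the Pogorelov hypothesis (no $3$- and no $4$-belts) is essential.
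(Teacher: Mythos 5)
Your reduction step is fine and matches the paper: Theorems~\ref{flagrig} and~\ref{4beltrig} give $P'\in\mathcal P$, so all the structural results apply on both sides. But the heart of the lemma is left unproved. Your proposed ring-theoretic characterization (``$\alpha$ is a single monomial $\pm[u_iv_j]$ iff $\alpha$ is primitive and the rank of $\alpha\cdot(-)\colon H^{m-3}(\zp)\to H^m(\zp)$ is minimal among nonzero primitive classes in $H^3$'') is exactly the step you flag as ``the principal obstacle,'' and it is both unestablished and stronger than what is true. There is no reason that all monomials $[u_iv_j]$ have the \emph{same} rank (equivalently, the same annihilator dimension) --- this depends on the combinatorics of the pair $\{i,j\}$ --- so ``minimal among all primitive classes'' need not be achieved by every element of $\mathcal T(P)$, and a characterization of that shape would then only show that $\psi$ preserves some distinguished subset of $\mathcal T(P)$, not all of it. What the paper actually proves (Lemma~\ref{annlemma}, after Fan--Ma--Wang) is a \emph{relative} inequality: if $\alpha=\sum r_{ij}[u_iv_j]$ has at least two nonzero coefficients, then $\dim\Ann_R[u_kv_l]>\dim\Ann_R\alpha$ for every monomial \emph{in the support of}~$\alpha$. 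Because the comparison is only with monomials appearing in $\alpha$, one cannot conclude by a one-shot invariance argument; the paper instead runs a two-sided argument, applying the inequality to $\psi([u_pv_q])$ on the $P'$ side and to $\psi^{-1}([u'_kv'_l])$ on the $P$ side, and the resulting chain $\dim\Ann_R[u_pv_q]<\dim\Ann_R[u_pv_q]$ gives the contradiction.

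Concretely, the missing idea is the construction that makes the strict inequality work: given a second monomial $[u_sv_t]$ in the support of $\alpha$, one uses the combinatorial Lemma~\ref{Fabc} (for $P\in\mathcal P$ there is a belt through $F_s,F_t$ avoiding $F_l$ whose relevant component does not meet $F_l$) to produce an explicit class $\xi\in\widetilde H^0(\mathcal C\setminus\{s,t\})\subset H^*(\zp)$ with $\xi\cdot[u_kv_l]=0$ but $\xi\cdot\alpha\ne0$, which enlarges $\Ann[u_kv_l]$ strictly beyond a complement of $\Ann\alpha$. Note that this $\xi$ lives in a multidegree dictated by the belt, not in the fixed degree $m-3$ you restrict to, so confining the rank comparison to $H^{m-3}\to H^m$ would also lose the room needed for this construction. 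Your intuition about multigraded supports and about where flagness and the no-$4$-belt condition enter is in the right direction, but without a proof of a discriminating inequality of this kind (and the two-sided $\psi$, $\psi^{-1}$ argument to exploit it, plus the final passage from a field computation to $\pm1$ over $\Z$ via primitivity), the proposal does not yet prove the lemma.
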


Note that the lemma above does not hold for all simple $3$-polytopes. For
example, if $P$ is a $3$-cube with the pairs of opposite facets
$\{F_1,F_4\}$, $\{F_2,F_5\}$, $\{F_3,F_6\}$, then $\zp\cong
S^3\times S^3\times S^3$ and there is an isomorphism $\psi\colon
H^*(\zp)\stackrel\cong\longrightarrow H^*(\mathcal Z_{P})$ which
maps $[u_1v_4]$ to $[u_1v_4]+[u_2v_5]$.

We include the proof of Lemma~\ref{rig3cl} in
Appendix~\ref{proofrig3cl} for the reader's convenience, and also
because some details were missing in the original argument. Note
that this proof uses Theorem~\ref{flagrig} and
Theorem~\ref{4beltrig}.

\section{Main results}
Here we prove the cohomological rigidity for small covers and quasitoric manifolds
over $3$-polytopes from the Pogorelov class~$\mathcal P$. We start with a crucial
lemma.

\begin{lemma}\label{crid2cl}
In the notation of Theorem~\ref{cohomqtoric}, consider the set of
cohomology classes
\[
  \mathcal D(M)=\{\pm[v_i]\in H^2(M),\quad i=1,\ldots,m\}.
\]
If $P\in\mathcal P$ then, for any cohomology ring isomorphism
$\varphi\colon H^*(M)\stackrel\cong\longrightarrow H^*(M')$ of
quasitoric manifolds over $P$ and $P'$, we have $\varphi(\mathcal
D(M))=\mathcal D(M')$. Furthermore, each of the sets $\mathcal
D(M)$ and $\mathcal D(M')$ consists of $2m$ different elements.
\end{lemma}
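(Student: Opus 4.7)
The plan is to lift $\varphi$ to a ring isomorphism of moment-angle cohomologies, apply Lemma~\ref{rig3cl}, and then extract information about each $\varphi([v_i])$ from the resulting constraints.

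Following the proof of Proposition~\ref{bcrig}, $\varphi$ extends to an isomorphism $\widetilde\varphi$ of the Koszul algebras that compute $H^*(\zp)$ and $H^*(\mathcal Z_{P'})$ (via Theorem~\ref{zkcohred}), inducing a ring isomorphism $\psi\colon H^*(\zp)\stackrel{\cong}{\to} H^*(\mathcal Z_{P'})$. Theorems~\ref{flagrig} and~\ref{4beltrig} together imply $P'\in\mathcal P$, so Lemma~\ref{rig3cl} yields a bijection $\psi|_{\mathcal T}\colon\mathcal T(P)\to\mathcal T(P')$: for each non-adjacent pair $\{F_i,F_j\}$ in $P$ there is a unique non-adjacent pair $\{F'_{s(i,j)},F'_{t(i,j)}\}$ in $P'$ and a sign $\epsilon_{ij}\in\{\pm 1\}$ with $\psi([u_iv_j])=\epsilon_{ij}[u'_{s(i,j)}v'_{t(i,j)}]$, and different non-adjacent pairs in $P$ correspond to different non-adjacent pairs in $P'$.

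Write $\varphi([v_i])=\sum_kc_{ik}[v'_k]$, so that $\widetilde\varphi(u_i)=\sum_k c_{ik}u'_k$. Whenever $F_i\cap F_j=\varnothing$, the cocycle $\widetilde\varphi(u_iv_j)=\sum_{k,l}c_{ik}c_{jl}\,u'_kv'_l$, reduced modulo the $3$-coboundaries $d(u'_au'_b)=u'_bv'_a-u'_av'_b$ and subject to the cocycle condition, must equal $\epsilon_{ij}[u'_{s(i,j)}v'_{t(i,j)}]$ in the basis of $H^3(\mathcal Z_{P'})$ from Proposition~\ref{3dimcoh}. Matching coefficients gives, for every $k\ne l$,
\begin{equation*}
 c_{ik}c_{jl}+c_{il}c_{jk}=\begin{cases}\epsilon_{ij}&\text{if }\{k,l\}=\{s(i,j),t(i,j)\},\\ 0&\text{otherwise.}\end{cases}
\end{equation*}
The crucial step, and the main obstacle, is the integer-valued analysis of this system. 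Let $S_i=\{k:c_{ik}\ne 0\}$. A direct support argument shows that if some $k\in S_i$ lies outside $\{s(i,j),t(i,j)\}$, then $S_j\subseteq S_i$, and symmetrically, so $S_i=S_j$; proportionality $c_{il}=\lambda c_{jl}$ then holds on this common support, and the relation at $\{s(i,j),t(i,j)\}$ simplifies to $2\lambda\,c_{js(i,j)}c_{jt(i,j)}=\epsilon_{ij}=\pm 1$, which has no integer solution. Hence $S_i,S_j\subseteq\{s(i,j),t(i,j)\}$. If $|S_i|=2$, so $S_i=\{s(i,j),t(i,j)\}$, the same analysis applied to every $j'$ with $F_i\cap F_{j'}=\varnothing$ forces $\{s(i,j'),t(i,j')\}=\{s(i,j),t(i,j)\}$; then $\psi|_{\mathcal T}$ would send every non-adjacent pair $\{i,j'\}$ to $\pm[u'_{s(i,j)}v'_{t(i,j)}]$, contradicting injectivity, provided $F_i$ has at least two non-neighbours. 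This last combinatorial fact follows from the $\ge 5$-gonality of the facets of $P$ (Proposition~\ref{no34fa}): a unique non-neighbour of $F_i$ would force a prism-type belt, whose side facets would have to be $4$-gons, which is excluded.

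Therefore $|S_i|=|S_j|=1$ and the nonzero $c_{ik}$ are $\pm 1$, so $\varphi([v_i])=\pm[v'_{\pi(i)}]$ for some map $\pi\colon[m]\to[m]$; applying the argument to $\varphi^{-1}$ shows that $\pi$ is a bijection and that $\varphi(\mathcal D(M))=\mathcal D(M')$. To conclude $|\mathcal D(M)|=2m$, I would show that distinct facets of $P\in\mathcal P$ have distinct neighbour sets in $\sK_P^1$: an equality $N(i)=N(j)$ would make $F_i,F_j$ non-adjacent (else $j\in N(j)$), and any two common neighbours $F_k,F_l$ would have to be adjacent---otherwise $(i,k,j,l)$ would be a chordless $4$-cycle forbidden by the no-$4$-belt hypothesis---making $N(i)$ a clique in the flag complex $\sK_P$, hence a simplex of dimension $\le 2$, contradicting $|N(i)|\ge 5$. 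Choosing $F_k$ adjacent to exactly one of $F_i,F_j$ and invoking Corollary~\ref{viprod}(b) gives $[v_i]\ne\pm[v_j]$, while non-vanishing of $[v_i]$ follows from Corollary~\ref{viprod}(a) applied at any vertex of $F_i$.
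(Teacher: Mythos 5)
Your route is essentially the paper's: lift $\varphi$ through the quotient Koszul algebras to $\psi\colon H^*(\zp)\to H^*(\mathcal Z_{P'})$, apply Lemma~\ref{rig3cl} (after Theorems~\ref{flagrig} and~\ref{4beltrig}), and then squeeze integer quadratic relations out of the equation $\psi([u_iv_j])=\pm[u'_sv'_t]$. The later stages of your analysis are workable: the combinatorial fact that each facet of $P\in\mathcal P$ has at least two non-neighbours is correct (a unique non-neighbour would force quadrilateral side facets), and your neighbour-set argument for $|\mathcal D(M)|=2m$ is valid, though the paper gets the exclusion of $|S_i|=2$ for free from the diagonal relations $c_{ip}c_{jp}=0$ (which you dropped) and gets $[v_i]\ne\pm[v_j]$ more cheaply from Lemma~\ref{2facets}. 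A smaller inaccuracy: the off-diagonal relations give $c_{ik}/c_{jk}=-c_{il}/c_{jl}$ for $\{k,l\}\ne\{s,t\}$, so there is no single proportionality constant on the common support (the ratio at the index $k_0\notin\{s,t\}$ has the opposite sign); the parity contradiction $\varepsilon=2c_{is}c_{jt}$ still comes out, but not by the route you state.

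The genuine gap is the ``matching coefficients'' step. The classes $[v'_1],\dots,[v'_m]$ satisfy three linear relations in $H^2(M')$ (the rows of $\varLambda'$), and in the quotient Koszul algebra of Proposition~\ref{bcrig} the $u'_k$ satisfy the same relations; hence the coefficients $c_{ik}$ in $\varphi([v_i])=\sum_k c_{ik}[v'_k]$ are not well defined, the monomials $u'_kv'_l$ are not linearly independent, and the coboundaries in the relevant bidegree are not freely spanned by the antisymmetrisations $u'_bv'_a-u'_av'_b$ over all $a<b$. So the identities $c_{ik}c_{jl}+c_{il}c_{jk}=\varepsilon_{ij}$ or $0$ cannot be read off by comparing coefficients, and in fact they fail for a general choice of the $c_{ik}$: adding a row of $\varLambda'$ to $(c_{ik})_k$ changes the coefficients (and typically their support) without changing $\varphi([v_i])$, whereas your identities, by your own support analysis, would force the support into $\{s,t\}$ for every representative. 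The missing ingredient is exactly the normalisation the paper performs: for each non-adjacent pair, use Lemma~\ref{2facets} to choose a vertex $x=F_{i_1}\cap F_{i_2}\cap F_{i_3}$ of $P$ with $x\notin F_i\cup F_j$ and a vertex $x'=F'_{p_1}\cap F'_{p_2}\cap F'_{p_3}$ of $P'$ with $x'\notin F'_s\cup F'_t$, and express everything in the bases $\{[v_p]\colon p\notin\{i_1,i_2,i_3\}\}$ and $\{[v'_q]\colon q\notin\{p_1,p_2,p_3\}\}$. Only then are the coefficients unique, the monomials $u'_pv'_q$ independent, and the comparison with $dc$ legitimate, yielding both your off-diagonal relations and the diagonal ones $c_{ip}c_{jp}=0$, after which the conclusion $\varphi([v_i])=\pm[v'_s]$ or $\pm[v'_t]$ follows as in the paper.
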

\begin{proof}
The idea is to reduce the statement to Lemma~\ref{rig3cl}. The
ring isomorphism $\varphi$ is determined uniquely by the
isomorphism $H^2(M)\stackrel\cong\longrightarrow H^2(M')$ of free
abelian groups. Let $\varphi([v_i])=\sum_{j=1}^m A_{ij}[v'_j]$ for
some $A_{ij}\in\Z$, $1\le i,j\le m$. The elements $A_{ij}$ are not
defined uniquely as there are linear relations between the classes
$[v'_j]$ in $H^2(M')$. To get rid of this indeterminacy, we can
choose a vertex $x=F_{i_1}\cap F_{i_2}\cap F_{i_3}$ of $P$ and a
vertex $x'=F'_{p_1}\cap F'_{p_2}\cap F'_{p_3}$ of $P'$. Then the
complementary cohomology classes $[v_i]$ with
$i\notin\{i_1,i_2,i_3\}$ form a basis in $H^2(M)$ and the
cohomology classes $[v'_p]$ with $p\notin\{p_1,p_2,p_3\}$ form a
basis in $H^2(M')$, so we have
\begin{equation}\label{phivi}
  \varphi([v_i])=\sum_{p\notin\{p_1,p_2,p_3\}} B_{ip}[v'_p], \qquad
  i\in[m]\setminus\{i_1,i_2,i_3\},
\end{equation}
with uniquely defined $B_{ip}\in\Z$ for $i
\in[m]\setminus\{i_1,i_2,i_3\}$, \ $p
\in[m]\setminus\{p_1,p_2,p_3\}$.

As we have seen in the proof of Proposition~\ref{bcrig}, the isomorphism $\varphi$ gives an
isomorphism $\psi\colon H^*(\zp)\stackrel\cong\longrightarrow
H^*(\mathcal Z_{P'})$, which is obtained from~\eqref{isokosz} by
passing to the cohomology. We write~\eqref{isokosz} as
$\widetilde\varphi\colon
C(P,\varLambda)\stackrel\cong\longrightarrow C(P',\varLambda')$.
This isomorphism is defined on the exterior generators $u_i$ and the polynomial
generators $v_i$ of the Koszul algebra $C(P,\varLambda)$ by the
same formulae as~$\varphi$.

Now take a cohomology class $[u_iv_j]\in H^3(\zp)$. By
Lemma~\ref{rig3cl}, it is mapped under $\psi$ to an element
$\varepsilon[u_k'v_l']\in H^3(\mathcal Z_{P'})$, $\varepsilon=\pm
1$. Choose vertices $x=F_{i_1}\cap F_{i_2}\cap F_{i_3}$ of $P$ and
$x'=F_{p_1}'\cap F_{p_2}'\cap F_{p_3}'$ of $P'$ such that $x\notin
F_i\cup F_j$ and $x'\notin F_k'\cup F_l'$ (see
Lemma~\ref{2facets}). We use the vertices $x$ and $x'$ to choose
bases in the groups $H^2(M)$ and $H^2(M')$ as described in the
first paragraph of the proof. Then we have
\[
  \psi[u_iv_j]=\sum_{p,q\notin\{p_1,p_2,p_3\}}B_{ip}B_{jq}[u_p'v_q'].
\]
On the other hand, we have $\psi[u_iv_j]=\varepsilon[u_k'v_l']$ by
Lemma~\ref{rig3cl}. Hence,
\[
  a=\sum_{p,q\notin\{p_1,p_2,p_3\}}B_{ip}B_{jq}u_p'v_q'-\varepsilon
  u_k'v_l'
\]
is a coboundary in $C(P',\varLambda')$, so there exists
\[
  c=\sum_{p,q\notin\{p_1,p_2,p_3\},\;p<q}L_{pq}u'_pu'_q, \qquad dc=a.
\]
We have
\[
  dc=\sum_{p,q\notin\{p_1,p_2,p_3\},\;p<q}L_{pq}(u'_qv'_p-u'_pv'_q).
\]
Comparing this with the expression for $a$ we obtain the following
relations between the coefficients:
\begin{equation}\label{Bijeq}
\begin{aligned}
  &B_{ip}B_{jq}=-B_{iq}B_{jp}=-L_{pq}\quad\text{for }p<q\text{ and }\{p,q\}\ne\{k,l\};\\
  &B_{ik}B_{jl}-\varepsilon=-B_{il}B_{jk}=\begin{cases}-L_{kl}&\text{ if }k<l,\\
                                        L_{lk}&\text{ if }l<k;\end{cases}\\
  &B_{ip}B_{jp}=0.%,\quad p\in[m]\setminus\{j_1,j_2,j_3\}.
\end{aligned}
\end{equation}
From the third equation of~\eqref{Bijeq} we have, for any
$p\in[m]\setminus\{p_1,p_2,p_3\}$, either $B_{ip}=0$ or $B_{jp}=0$.
The first equation of~\eqref{Bijeq} implies that for
$\{p,q\}\ne\{k,l\}$ the vectors $\begin{pmatrix}B_{ip}\\
B_{jp}\end{pmatrix}$ and $\begin{pmatrix}B_{iq}\\
-B_{jq}\end{pmatrix}$ are linearly dependent. Hence, for
$\{p,q\}\ne\{k,l\}$, either one of the vectors
$\begin{pmatrix}B_{ip}\\ B_{jp}\end{pmatrix}$ and
$\begin{pmatrix}B_{iq}\\ B_{jq}\end{pmatrix}$ is zero, or both
vectors are nonzero and have a zero entry on the same place. From
the second equation $B_{ik}B_{jl}+B_{il}B_{jk}=\varepsilon$ we see
that both vectors $b_k=\begin{pmatrix}B_{ik}\\
B_{jk}\end{pmatrix}$ and $b_l=\begin{pmatrix}B_{il}\\
B_{jl}\end{pmatrix}$ are nonzero. If
there is a nonzero vector $b_p=\begin{pmatrix}B_{ip}\\
B_{jp}\end{pmatrix}$ for some $p\notin\{k,l\}$, then by
considering the pairs $(b_p,b_k)$ and $(b_p,b_l)$ we see that both
$b_k$ and $b_l$ have zero on the same place, which contradicts the
second equation of~\eqref{Bijeq}. It follows that $B_{ip}=B_{jp}=0$ for any $p\notin\{k,l\}$,
and
\[
  \begin{pmatrix}B_{ik}&B_{il}\\
  B_{jk}&B_{jl}\end{pmatrix}=\begin{pmatrix}B_{ik} & 0\\
  0&\frac{\varepsilon}{B_{ik}}\end{pmatrix}\quad\text{ or }\quad
  \begin{pmatrix}B_{ik}&B_{il}\\B_{jk}&B_{jl}\end{pmatrix}=
  \begin{pmatrix}0&B_{il}\\\frac{\varepsilon}{B_{il}}&0\end{pmatrix}.
\]
Since all entries are integer, we have $B_{ik}=\pm1$ and
$B_{il}=\pm1$. Then~\eqref{phivi} gives
$\varphi([v_i])\in\{\pm[v_k'],\pm [v_l']\}$. It follows that
$\varphi(\mathcal D(M))\subset\mathcal D(M')$.

It remains to show that each of the sets $\mathcal D(M)$,
$\mathcal D(M')$ consists of $2m$ different elements. For this we
note that $[v_i]\ne\pm[v_j]$ in $H^2(M)$ for any $i\ne j$. Indeed,
by Lemma~\ref{2facets} we can choose a vertex $x\notin F_i\cup
F_j$. Then both $[v_i]$ and $[v_j]$ belong to a basis of $H^2(M)$.
Now, since $\varphi$ is an isomorphism, we also have
$\varphi([v_i])\ne\pm\varphi([v_j])$ in $H^2(M')$. Thus, each of
the sets $\mathcal D(M)$ and $\mathcal D(M')$ consists of $2m$
elements.
\end{proof}

It follows from the Steinitz Theorem that any toric manifold of complex dimension $3$
is a quasitoric manifold. Also, the family of quasitoric manifolds agrees with that of topological toric manifolds in real dimension $6$ if we forget the actions.

Now we state the first main result.

\begin{theorem}\label{3cohrig}
Let $M=M(P,\varLambda)$ and $M'=M(P',\varLambda')$ be
quasitoric manifolds over $3$-dimensional simple polytopes $P$ and
$P'$, respectively. Assume that $P$ belongs to the Pogorelov class~$\mathcal P$.
Then the following conditions are equivalent:
\begin{itemize}
\item[(a)] there is a cohomology ring isomorphism $\varphi\colon H^*(M)
\stackrel\cong\longrightarrow H^*(M')$;
\item[(b)] there is a diffeomorphism $M\cong M'$;
\item[(c)] there is an equivalence of characteristic pairs $(P,\varLambda)\sim(P',\varLambda')$.
\end{itemize}
\end{theorem}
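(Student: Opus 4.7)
The implications (c)$\Rightarrow$(b) and (b)$\Rightarrow$(a) are immediate: the first from Proposition~\ref{qteqd}, and the second because any diffeomorphism induces a cohomology isomorphism. The entire content therefore lies in (a)$\Rightarrow$(c). As a preliminary step I would lift the given isomorphism $\varphi\colon H^*(M)\stackrel\cong\longrightarrow H^*(M')$ to a ring isomorphism $\psi\colon H^*(\zp)\stackrel\cong\longrightarrow H^*(\mathcal Z_{P'})$ of the cohomology rings of the associated moment-angle manifolds, exactly as in the proof of Proposition~\ref{bcrig}. Theorems~\ref{flagrig} and~\ref{4beltrig} applied to $\psi$ then force $P'\in\mathcal P$, and in particular $P'\ne\varDelta^3$, so all $B$-rigidity results, including Lemmas~\ref{rig3cl} and~\ref{crid2cl}, apply symmetrically to $P$ and $P'$.

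Next I would extract a combinatorial equivalence $P\simeq P'$. By Lemma~\ref{crid2cl}, $\varphi$ restricts to a bijection $\mathcal D(M)\to\mathcal D(M')$ between sets of sizes $2m$ and $2m'$, so $m=m'$ and there exist a permutation $\sigma\in S_m$ and signs $\varepsilon_i\in\{\pm1\}$ with $\varphi([v_i])=\varepsilon_i[v'_{\sigma(i)}]$. Corollary~\ref{viprod}(b) then translates $\varphi$-preservation of the relation $[v_i][v_j]=0$ into the statement that $F_i\cap F_j=\varnothing$ iff $F'_{\sigma(i)}\cap F'_{\sigma(j)}=\varnothing$, i.e.\ $\sigma$ is an isomorphism of $1$-skeletons $\sK_P^1\cong\sK_{P'}^1$. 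Since both $\sK_P$ and $\sK_{P'}$ are flag (as $P,P'\in\mathcal P$), each is determined by its $1$-skeleton, so $\sigma$ extends to a simplicial isomorphism $\sK_P\cong\sK_{P'}$; and because a simple $3$-polytope is combinatorially determined by its dual simplicial $2$-sphere, this yields $P\simeq P'$.

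Finally, I would use this combinatorial equivalence to identify the facet sets and check the matrix relation. After relabelling so that $\sigma=\mathrm{id}$, set $B=\mathrm{diag}(\varepsilon_1,\ldots,\varepsilon_m)$. The substitution $v_i\mapsto\varepsilon_i v'_i$ defines a ring isomorphism $\tilde\varphi\colon\Z[v_1,\ldots,v_m]\to\Z[v'_1,\ldots,v'_m]$ of the ambient polynomial rings that descends to~$\varphi$, so it must carry the ideal $\mathcal I_M$ of Theorem~\ref{cohomqtoric} onto $\mathcal I_{M'}$. The linear-in-$v_i$ part of $\mathcal I_M$ is precisely the $\Z$-span of the rows of $\varLambda$, since the monomial generators of type (a) all have $v$-degree $\ge 2$; the same holds for $\varLambda'$. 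Consequently, the $\Z$-row lattice of $\varLambda B$ in $\Z^m$ equals that of $\varLambda'$, and because both matrices have full rank $3$, there exists $A\in\mathrm{GL}(3,\Z)$ with $\varLambda'=A\varLambda B$. This is exactly the equivalence of characteristic pairs required by~(c). The main obstacle is entirely hidden in Lemmas~\ref{rig3cl} and~\ref{crid2cl}, which use the $P\in\mathcal P$ hypothesis in an essential way (as the $3$-cube example illustrates); once those lemmas are in hand, the theorem is a routine assembly of the cohomological structure theorems and the characterisation of $\k[\sK_P]$-ideals by their degree-two parts.
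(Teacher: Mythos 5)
Your proposal is correct and takes essentially the same route as the paper: the two easy implications, Lemma~\ref{crid2cl} to get $\varphi([v_i])=\pm[v'_{\sigma(i)}]$, products of the classes $[v_i]$ to recover $\sK_P\cong\sK_{P'}$, and then a comparison of $\varLambda$ and $\varLambda'$ giving the equivalence of characteristic pairs. The only (harmless) differences are in the bookkeeping at the end: the paper reconstructs $\sK_P$ using triple products via Corollary~\ref{viprod}(a) rather than flagness of $P'$, and it normalises both characteristic matrices to refined form at a vertex and compares entries, instead of your argument that the degree-two parts of the defining ideals are the row lattices of $\varLambda B$ and $\varLambda'$, whose equality yields $A\in\mathrm{GL}(3,\Z)$.
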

\begin{proof}
The implication (b)$\Rightarrow$(a) is obvious. The implication (c)$\Rightarrow$(b) follows from Proposition~\ref{qteqd}. We need to prove (a)$\Rightarrow$(c).

By Lemma~\ref{crid2cl}, $\varphi([v_i])=\pm[v'_{\sigma(i)}]$,
where $\sigma$ is a permutation of the set~$[m]$. Renumbering the facets and multiplying the matrix
$\varLambda$ from the right by a matrix $B$ as in
Definition~\ref{defcharpair}, we may
assume that $\varphi([v_i])=v_i'$; this does not change the equivalence
class of the pair $(P,\varLambda)$. Then
$\varphi[v_iv_j]=[v'_iv'_j]$. By Corollary~\ref{viprod},
$[v_iv_j]=0$ in $H^*(M)$ if and only if $F_i\cap F_j=\varnothing$
and $[v_iv_jv_k]=0$ in $H^*(M)$ if and only if $F_i\cap F_j\cap
F_k=\varnothing$ in $P$, and the same holds for $H^*(M')$ and
$P'$. It follows that $\sK_P$ is isomorphic to~$\sK_{P'}$. Hence,
$P$ and $P'$ are combinatorially equivalent.

Now consider the $(3\times m)$-matrices $\varLambda$ and
$\varLambda'$. First, by changing the order of facets in $P$ and
$P'$ if necessary we may assume that $F_1\cap F_2\cap
F_3\ne\varnothing$ in~$P$ and $F'_1\cap F'_2\cap
F'_3\ne\varnothing$ in~$P'$. Then, by multiplying the matrices
$\varLambda$ and $\varLambda'$ from the left by appropriate
matrices from $\mbox{\textrm{GL}}(3,\Z)$ we may assume that
\[
  \varLambda=\begin{pmatrix}
  1&0&0&\lambda_{14}&\cdots& \lambda_{1m}\\
  0&1&0&\lambda_{24}&\cdots& \lambda_{2m}\\
  0&0&1&\lambda_{34}&\cdots& \lambda_{3m}
  \end{pmatrix},\qquad
  \varLambda'=\begin{pmatrix}
  1&0&0&\lambda'_{14}&\cdots& \lambda'_{1m}\\
  0&1&0&\lambda'_{24}&\cdots& \lambda'_{2m}\\
  0&0&1&\lambda'_{34}&\cdots& \lambda'_{3m}
  \end{pmatrix}.
\]
This does not change the equivalence class of pairs
$(P,\varLambda)$ and $(P',\varLambda')$. Now the entries
$\lambda_{jk}$, $4\le k\le m$, are the coefficients in the
expression of $[v_j]$, $1\le j\le 3$, via the basis
$[v_4],\ldots,[v_m]$ of~$H^2(M)$. The same holds for the
$\lambda'_{jk}$. Since $\varphi([v_i])=v_i'$, it follows that
$\lambda_{jk}=\lambda'_{jk}$. Thus, the pairs $(P,\varLambda)$ and
$(P',\varLambda')$ are equivalent.
\end{proof}

\begin{remark}
Any smooth structure on a quasitoric manifold $M$ over a polytope $P\in\mathcal P$ is equivalent to
the standard one defined on the canonical model $M(P,\varLambda)$ via Proposition~\ref{qteqd}. This follows from the general classification results for $6$-dimensional manifolds, see Corollary~\ref{homeodif}.
\end{remark}

\begin{corollary} \label{coro:5-1}
Toric, quasitoric and topological toric manifolds over polytopes from the Pogorelov class $\mathcal P$
are cohomologically rigid.
\end{corollary}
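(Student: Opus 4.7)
The plan is to reduce all three families to the single family of quasitoric manifolds, where cohomological rigidity is exactly the content of Theorem~\ref{3cohrig}. This reduction relies on two coincidences specific to dimension $n=3$ that have already been observed in the paper.

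First, I would argue that every nonsingular complete toric variety $V$ of complex dimension~$3$ is a quasitoric manifold over a simple $3$-polytope: the orbit space $V/T^3$ is a manifold with corners whose combinatorial structure is that of a simple $3$-polytope by Steinitz's theorem (Theorem~\ref{S-Theorem}), as noted at the end of Subsection~\ref{qtmani}. In particular, a toric manifold whose quotient polytope lies in $\mathcal P$ belongs to the family of quasitoric manifolds over $\mathcal P$. Second, I would appeal to the fact, recorded in Subsection~2.7 and in the Introduction, that in real dimension~$6$ the families of quasitoric manifolds and topological toric manifolds coincide (and agree on their smooth structures).

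Combining these two identifications, the union of the three families over polytopes from the Pogorelov class collapses to the single family of quasitoric manifolds $M(P,\varLambda)$ with $P\in\mathcal P$. The (a)$\Rightarrow$(b) implication of Theorem~\ref{3cohrig} then asserts that any cohomology ring isomorphism $H^*(M)\cong H^*(M')$ between two such manifolds implies a diffeomorphism, which is precisely the cohomological rigidity claim in Definition~\ref{defcohorig}.

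The only genuine subtlety, and the main candidate for an obstacle, is ensuring that the three notions of smooth structure in play — the algebraic one on a toric variety, the smooth atlas of a topological toric manifold, and the canonical one on $M(P,\varLambda)$ inherited from the smooth free torus action on $\zp$ — agree so that the diffeomorphism of Theorem~\ref{3cohrig} lives in the appropriate category. As noted in the remark following Construction~\ref{canqt}, this is guaranteed in real dimension~$6$ by the classification results of Wall and Jupp discussed in Section~\ref{class6}. Beyond this compatibility check, the corollary is an immediate unpacking of Theorem~\ref{3cohrig} in light of the dimensional coincidences above; the heavy lifting has already been done in the main theorem.
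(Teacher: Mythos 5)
Your reduction is exactly the paper's (implicit) argument: the corollary is stated as an immediate consequence of Theorem~\ref{3cohrig}, using the same two observations recorded just before it — that by the Steinitz theorem every complex $3$-dimensional toric manifold is quasitoric, and that quasitoric and topological toric manifolds coincide in real dimension~$6$ — and your smooth-structure caveat matches the remark following Theorem~\ref{3cohrig}, which invokes Corollary~\ref{homeodif}. So the proposal is correct and takes essentially the same route as the paper.
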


\begin{remark}
Theorem~\ref{3cohrig} says that a cohomology ring isomorphism of quasitoric manifolds over polytopes $P\in\mathcal P$ implies not only a diffeomorphism of manifolds, but also an equivalence of characteristic pairs. The latter is not true for quasitoric manifolds over arbitrary polytopes. For example, consider the \emph{Hirzebruch surfaces}
$H_k=\mathbb C P(\mathcal O(k)\oplus\underline{\mathbb C})$, where
$\mathcal O(k)$ is the $k$th power of the canonical line bundle
over $\C P^1$, $\underline{\mathbb C}$ is a trivial line bundle,
and $\C P(-)$ denotes the complex projectivisation. Each
$H_k$ is a toric manifold, and it can also be described as the quasitoric manifold over a quadrangle with
characteristic matrix
\[
\begin{pmatrix}
  1&0&-1&k\\
  0&1&0&-1
\end{pmatrix}.
\]
Manifolds $H_k$ with even $k$ are all diffeomorphic to $S^2\times
S^2$, but the characteristic matrices corresponding to different
positive $k$ are not equivalent. Similar examples exist in all
dimensions, see e.g.~\cite{ma-pa08}.
\end{remark}

The family of quasitoric (or topological toric) manifolds over
$3$-polytopes from the Pogorelov class~$\mathcal P$ is large
enough, as there is at least one quasitoric manifold over any such
polytope by Proposition~\ref{4cprop} (recall that this result uses
the Four Colour Theorem). There are fewer toric manifolds in this
family. In fact, there are no \emph{projective} toric manifolds
over polytopes from~$\mathcal P$. The reason is that a Delzant
3-polytope must have at least one triangular or quadrangular face
by the result of C.\,Delaunay~\cite{dela05} (see
also~\cite{ayze}). On the other hand, there are non-projective
toric manifolds in this family, see~\cite{suya15}.

\medskip

Our second main result is about small covers (or hyperbolic $3$-manifolds).

\begin{theorem}\label{3cohrigreal}
Let $N=N(P,\varLambda)$ and $N'=N(P',\varLambda')$ be
small covers of $3$-dimensional simple polytopes $P$ and
$P'$, respectively. Assume that $P$ belongs to the Pogorelov class~$\mathcal P$, so $N$ is
a hyperbolic $3$-manifold of L\"obell type.
Then the following conditions are equivalent:
\begin{itemize}
\item[(a)] there is a cohomology ring isomorphism $\varphi\colon H^*(N;\Z_2)
\stackrel\cong\longrightarrow H^*(N';\Z_2)$;
\item[(b)] there is an isomorphism of fundamental groups $\pi_1(N)\cong\pi_1(N')$;
\item[(c)] there is a diffeomorphism $N\cong N'$;
\item[(d)] there is an equivalence of $\Z_2$-characteristic pairs $(P,\varLambda)\sim(P',\varLambda')$.
\end{itemize}
\end{theorem}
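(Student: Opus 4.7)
The plan is to run the cycle $(a) \Rightarrow (d) \Rightarrow (c) \Rightarrow (a)$ together with $(c) \Rightarrow (b)$ and $(b) \Rightarrow (c)$, thereby reducing the three-dimensional mod~$2$ statement to the six-dimensional integral statement already proved in Theorem~\ref{3cohrig}. The implications $(c) \Rightarrow (a)$ and $(c) \Rightarrow (b)$ are tautological. For $(d) \Rightarrow (c)$, equivalent $\Z_2$-characteristic pairs produce weakly equivariantly homeomorphic canonical models by the small-cover analogue of Construction~\ref{canqt} and Proposition~\ref{qteqd}, which is smoothable in dimension three by Moise's theorem. For $(b) \Rightarrow (c)$, the hypothesis $P \in \mathcal P$ makes $N$ a closed hyperbolic 3-manifold (in particular aspherical, with torsion-free infinite fundamental group); the isomorphism $\pi_1(N) \cong \pi_1(N')$ then forces $N'$ to be a closed aspherical 3-manifold via Thurston's geometrization, and the Borel conjecture in dimension three (equivalently, Mostow rigidity together with geometrization) yields $N \cong N'$.

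The core step is $(a) \Rightarrow (d)$. By Proposition~\ref{lambda2} both $\varLambda$ and $\varLambda'$ admit integer lifts $\widetilde\varLambda$ and $\widetilde\varLambda'$; let $M = M(P,\widetilde\varLambda)$ and $M' = M(P',\widetilde\varLambda')$ be the associated quasitoric manifolds. Theorems~\ref{cohomsc} and~\ref{cohomqtoric} identify $H^*(N;\Z_2)$ and $H^*(M;\Z_2)$ as ungraded $\Z_2$-algebras: both equal $\Z_2[v_1,\dots,v_m]/\mathcal I$ for the same ideal $\mathcal I$, differing only in the internal grading ($\deg v_i = 1$ for $N$ and $\deg v_i = 2$ for $M$). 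A graded ring isomorphism $\varphi\colon H^*(N;\Z_2) \to H^*(N';\Z_2)$ therefore corresponds tautologically to a graded ring isomorphism $\varphi'\colon H^*(M;\Z_2) \to H^*(M';\Z_2)$. I would then prove a $\Z_2$-analogue of Lemma~\ref{crid2cl}: $\varphi'(\{[v_i]\}) = \{[v'_j]\}$ inside $H^2(M';\Z_2)$. Once this is established, the linear-algebra argument of the proof of Theorem~\ref{3cohrig} transcribes verbatim over $\Z_2$: renumber facets so that $\varphi'([v_i]) = [v'_i]$; Corollary~\ref{viprod} applied over $\Z_2$ gives $\sK_P \cong \sK_{P'}$; and after normalising $\widetilde\varLambda$ and $\widetilde\varLambda'$ by $\mathrm{GL}(3,\Z_2)$ at a chosen common vertex one reads off $\varLambda \equiv \varLambda' \pmod 2$, i.e.\ the $\Z_2$-characteristic pairs are equivalent.

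The hard part is the $\Z_2$-analogue of Lemma~\ref{crid2cl}, which in turn reduces to a $\Z_2$-analogue of Lemma~\ref{rig3cl}. I plan to adapt the Appendix proof of Lemma~\ref{rig3cl} to $\Z_2$-coefficients. The combinatorial and multiplicative input---Theorems~\ref{flagrig} and~\ref{4beltrig}, the product structure in $H^*(\zp)$ recorded in Proposition~\ref{prod3pol}, and the Massey product obstructions from Proposition~\ref{Massey}---is characteristic-independent, and since $H^*(\zp;\Z)$ is torsion-free by Proposition~\ref{prod3pol} we have $H^*(\zp;\Z_2) = H^*(\zp)\otimes\Z_2$, so no new torsion phenomena arise. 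The subtle point is that an arbitrary isomorphism $\psi\colon H^*(\zp;\Z_2) \to H^*(\mathcal Z_{P'};\Z_2)$ need not lift to an integer isomorphism, so the Appendix argument must be rerun purely inside $\Z_2$-coefficients. Once this is accomplished, the linear-algebra analysis of Lemma~\ref{crid2cl} carries over with each equation of~\eqref{Bijeq} read mod~$2$ and each $\pm 1$ collapsed to~$1$, yielding $\varphi'([v_i]) = [v'_{\sigma(i)}]$ for a unique permutation~$\sigma$ of~$[m]$ and completing the proof.
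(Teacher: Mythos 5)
Your proposal follows essentially the same route as the paper: the core implication (a)$\Rightarrow$(d) is handled exactly as in the paper's proof, by lifting the $\Z_2$-characteristic pairs to $\Z$ via Proposition~\ref{lambda2}, identifying $H^*(N;\Z_2)$ with $H^*(M;\Z_2)$ through the doubling of the grading, and rerunning the proof of Theorem~\ref{3cohrig} (hence Lemmas~\ref{crid2cl} and~\ref{rig3cl}) with $\Z_2$-coefficients, while the remaining implications are dispatched as in the paper ((d)$\Rightarrow$(c) via the real version of Proposition~\ref{qteqd}, and (b)$\Leftrightarrow$(c) via Mostow rigidity, for which your geometrization/Borel argument is just a more detailed version). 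The ``hard part'' you flag is in fact already in place, since Lemma~\ref{annlemma} is stated over an arbitrary field, so the $\Z_2$-version of Lemma~\ref{rig3cl} needs no new work beyond reading the signs $\pm1$ as $1$.
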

\begin{proof}
The implications (b)$\Rightarrow$(a) and (c)$\Rightarrow$(b) are obvious (in fact, the equivalence (b)$\Leftrightarrow$(c) follows from Mostow's rigidity theorem for hyperbolic manifolds).
The implication (d)$\Rightarrow$(c) follows from the real version of Proposition~\ref{qteqd}.

We need to prove the implication (a)$\Rightarrow$(d). Using Proposition~\ref{lambda2}
we upgrade $(P,\varLambda)$ and $(P',\varLambda')$ to $\Z$-characteristic pairs and consider the corresponding quasitoric manifolds $M=M(P,\varLambda)$ and $M'=M(P',\varLambda')$. Since the cohomology ring $H^*(M;\Z_2)$ is obtained from $H^*(N;\Z_2)$ by doubling the grading (see Theorem~\ref{cohomsc}), we have an isomorphism $H^*(M;\Z_2)
\stackrel\cong\longrightarrow H^*(M';\Z_2)$. Now the equivalence of characteristic pairs follows from Theorem~\ref{3cohrig} (with coefficients in $\Z_2$).
\end{proof}

\begin{example}\label{barrelex}
For $k\ge 5$, let $Q_k$ be a simple $3$-polytope with two ``top''
and ``bottom'' $k$-gonal facets and $2k$ pentagonal facets forming
two $k$-belts around the top and bottom, so that $Q_k$ has $2k+2$
facets in total. Note that $Q_5$ is a combinatorial dodecahedron,
while $Q_6$ is a fullerene, see Figure~\ref{barrel}. It is easy to
see that $Q_k\in\mathcal P$, so it admits a right-angled
realisation in~$\mathbb L^3$. The hyperbolic $3$-manifolds
$N(Q_k,\chi)$ corresponding to regular $4$-colourings $\chi$ of
$Q_k$ (as described by Proposition~\ref{4cprop}) were studied by
Vesnin in~\cite{vesn87}. For example, a dodecahedron $Q_5$ has a
unique regular $4$-colouring up to equivalence, while $Q_6$ has
four non-equivalent regular $4$-colourings  (with $4$-colourings
being equivalent if they differ by a permutation of colours).
\begin{figure}[h]
\begin{center}
\includegraphics[scale=0.4]{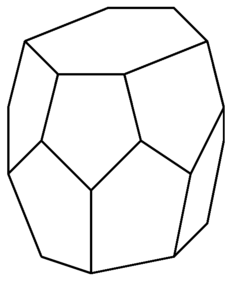}\qquad\qquad
\includegraphics[scale=0.14]{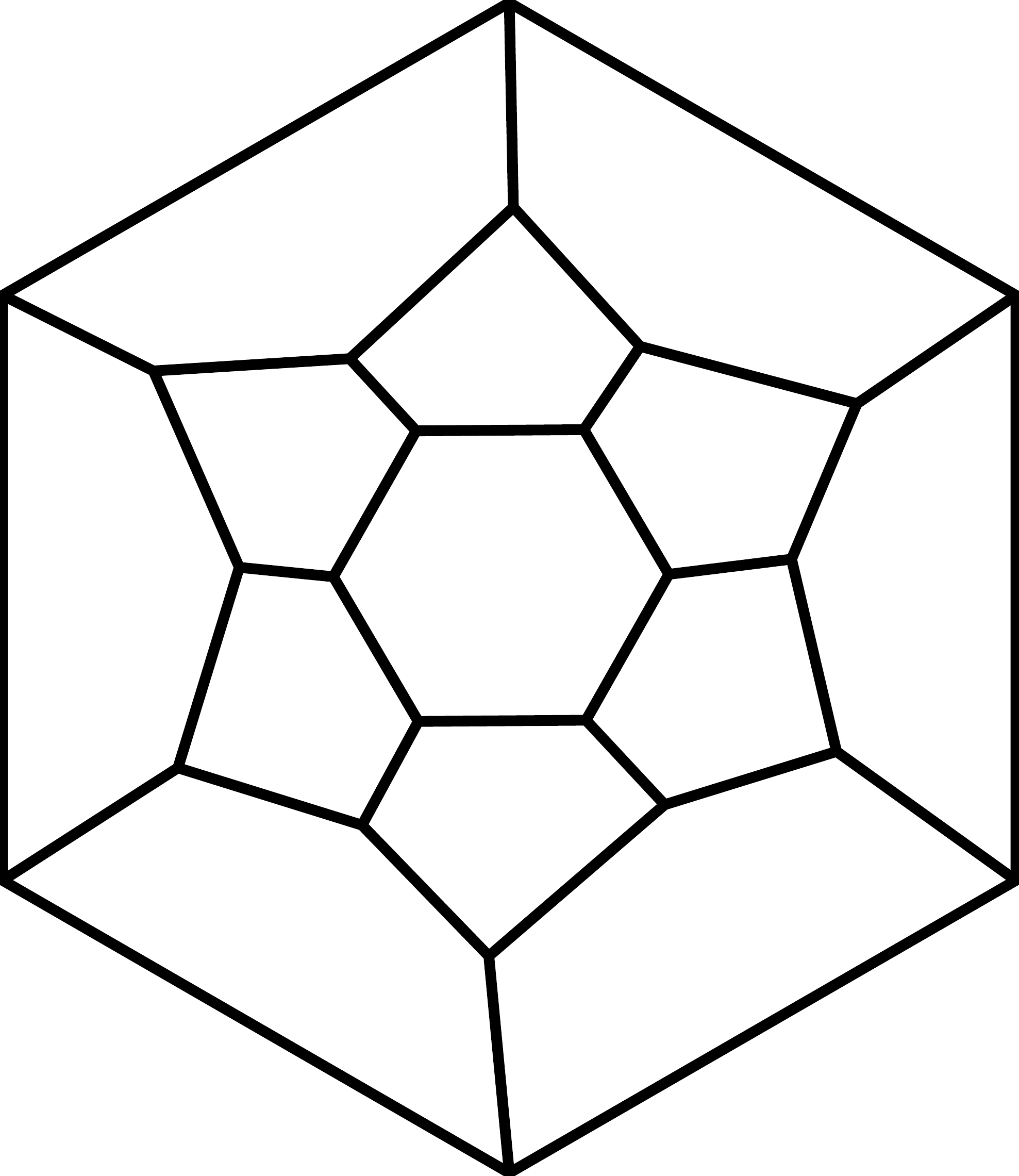}
\end{center}
\caption{The fullerene $Q_6$ and its Schlegel diagram}
\label{barrel}
\end{figure}

Vesnin conjectured the following: the manifolds $N(Q_k,\chi)$ and
$N(Q_k,\chi')$ are isometric if and only if the $4$-colourings
$\chi$ and $\chi'$ are equivalent. In~\cite{vesn91} this
conjecture was proved for those polytopes $Q_k$ whose
corresponding hyperbolic reflection groups $G(Q_k)$ are
\emph{non-arithmetic} (as subgroups of the full isometry group
of~$\mathbb L^3$). The proof used the Margulis
theorem~\cite{marg74} on the discreteness of the commensurator of
a non-arithmetic group. It was eventually proved in~\cite{a-m-r}
that the group $G(Q_k)$ is non-arithmetic for all $k$ except $5$,
$6$ and~$8$. As $Q_5$ has a unique $4$-colouring, Vesnin's
conjecture has remained open only for $k=6,8$.

Theorem~\ref{3cohrigreal} verifies Vesnin's conjecture completely, and this argument does not use previous results on this conjecture:

\begin{corollary}
The hyperbolic manifolds $N(Q_k,\chi)$ and $N(Q_k,\chi')$ defined by regular $4$-colourings of the polytope $Q_k$, $k\ge5$, are isometric if and only if the $4$-colourings $\chi$ and $\chi'$ are equivalent.
\end{corollary}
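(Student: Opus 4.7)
The plan is to deduce the corollary directly from Theorem~\ref{3cohrigreal} by translating the equivalence of $\Z_2$-characteristic pairs into the language of $4$-colourings. First I would verify that every regular $4$-colouring of $Q_k$ uses all four colours: since $Q_k$ has pentagonal facets, it is not $3$-face-colourable (by the classical result that a simple $3$-polytope is $3$-face-colourable if and only if every facet has an even number of edges), so its chromatic number is exactly~$4$. Hence the columns of the characteristic matrix $\varLambda_\chi$ built as in Proposition~\ref{4cprop} exhaust the distinguished set
\[
  E=\{\mb e_1,\,\mb e_2,\,\mb e_3,\,\mb e_1+\mb e_2+\mb e_3\}\subset\Z_2^3,
\]
which is precisely the affine hyperplane $\{v\in\Z_2^3 : v_1+v_2+v_3=1\}$.

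Next I would analyse the stabiliser $\mathrm{Stab}(E)\subseteq\mathrm{GL}(3,\Z_2)$. Since any three elements of $E$ form a basis of $\Z_2^3$ and their sum equals the fourth element (which again lies in $E$), a matrix $A\in\mathrm{GL}(3,\Z_2)$ preserves $E$ if and only if all three of its columns belong to $E$. A direct count gives $|\mathrm{Stab}(E)|=4\cdot 3\cdot 2=24$, so the restriction map $\mathrm{Stab}(E)\to\mathrm{Sym}(E)\cong S_4$ is an isomorphism. In particular, every colour permutation $\pi\in S_4$ is realised by a unique $A_\pi\in\mathrm{GL}(3,\Z_2)$ satisfying $A_\pi\mb e_j=\mb e_{\pi(j)}$.

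For the forward direction, if $\chi$ and $\chi'$ differ by a colour permutation $\pi$ (and possibly a combinatorial automorphism of $Q_k$), then $\varLambda_{\chi'}=A_\pi\varLambda_\chi$ up to a column permutation, giving an equivalence of $\Z_2$-characteristic pairs $(Q_k,\varLambda_\chi)\sim(Q_k,\varLambda_{\chi'})$ in the sense of Definition~\ref{defcharpair} (the diagonal factor $B$ is trivial over $\Z_2$). Theorem~\ref{3cohrigreal}~(d)$\Rightarrow$(c) then produces a diffeomorphism $N(Q_k,\chi)\cong N(Q_k,\chi')$, which Mostow rigidity upgrades to an isometry of closed hyperbolic $3$-manifolds.

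Conversely, an isometry is in particular a diffeomorphism, so Theorem~\ref{3cohrigreal}~(c)$\Rightarrow$(d) provides a combinatorial automorphism $\sigma$ of $Q_k$ and a matrix $A\in\mathrm{GL}(3,\Z_2)$ with $A\varLambda_\chi=\varLambda_{\chi'}\circ\sigma$. Both sides have all columns in $E$, and by the chromatic number argument every element of $E$ actually occurs as a column, so $A$ must map $E$ into itself. Thus $A=A_\pi$ for some colour permutation $\pi$, and the identity translates to $\pi\circ\chi=\chi'\circ\sigma$, exhibiting the equivalence of $\chi$ and $\chi'$. The main step I expect to be delicate is the identification $\mathrm{Stab}(E)\cong S_4$ together with the observation that the linear automorphism $A$ arising from Theorem~\ref{3cohrigreal} necessarily preserves the set $E$, which in turn relies crucially on $Q_k$ having chromatic number exactly~$4$.
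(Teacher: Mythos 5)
Your argument is correct, and its skeleton matches the paper's: in both proofs the nontrivial direction runs isometry $\Rightarrow$ diffeomorphism $\Rightarrow$ (by Theorem~\ref{3cohrigreal}) equivalence of $\Z_2$-characteristic pairs. The difference lies in how that equivalence is converted back into an equivalence of colourings. The paper closes this step by citing a result of \cite{bu-pa16} (equivalence of characteristic matrices defined by regular $4$-colourings implies equivalence of the colourings), whereas you prove it directly: since $Q_k$ has pentagonal facets it is not $3$-face-colourable by the even-facet criterion, so every regular $4$-colouring uses all four colours; hence all four vectors of $E=\{\mb e_1,\mb e_2,\mb e_3,\mb e_1+\mb e_2+\mb e_3\}$ occur as columns, the matrix $A$ provided by Definition~\ref{defcharpair} must carry $E$ to $E$, and the stabiliser of $E$ in $\mathrm{GL}(3,\Z_2)$ is exactly $\mathrm{Sym}(E)\cong S_4$ acting by colour permutations (any three distinct elements of $E$ form a basis and sum to the fourth, so the count $4\cdot3\cdot2=24$ together with injectivity of the restriction map gives the isomorphism). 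This makes the corollary self-contained at the cost of redoing what the citation supplies, and your explicit bookkeeping of the combinatorial automorphism $\sigma$, ending in $\pi\circ\chi=\chi'\circ\sigma$, is actually more careful than the paper's one-line formulation $\varLambda'=A\varLambda$. In the easy direction the paper just observes that equivalent colourings obviously give isometric manifolds (indeed $\varLambda_{\chi'}=A_\pi\varLambda_\chi$ yields the same composite $\varphi^{(3)}$ up to the automorphism $A_\pi$ of $\Z_2^3$, hence the same kernel and literally the same quotient of $\mathbb L^3$), while you take the slightly longer route through (d)$\Rightarrow$(c) of Theorem~\ref{3cohrigreal} followed by Mostow rigidity; both are valid.
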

\begin{proof}
Clearly, if the $4$-colourings $\chi$ and $\chi'$ are equivalent, then the corresponding hyperbolic manifolds are isometric. Conversely, if the manifolds are isometric, then they are diffeomorphic, and Theorem~\ref{3cohrigreal} implies that the corresponding characteristic matrices $\varLambda$ and $\varLambda'$ are equivalent (that is, $\varLambda'=A\varLambda$, where $A\in\mathrm{GL}_3(\Z_2)$). Now, according to a result of~\cite{bu-pa16}, equivalence of characteristic matrices defined by $4$-colourings implies equivalence of $4$-colourings.
\end{proof}
\end{example}

\section{Classification of 6-dimensional manifolds and related
problems}\label{class6}

The classification of smooth simply connected $6$-dimensional
manifolds with torsion-free homology was done in the works of
Wall~\cite{wall66} and Jupp~\cite{jupp73}. They also stated a
classification result in the topological category; the proof was
corrected later in the work of Zhubr~\cite{zhub00}. The latter
work also treated the case of homology with torsion. We only give
the following result, which will be enough for our purposes (the
cohomology is with integer coefficients, unless otherwise
specified).

\begin{theorem}[\cite{wall66}, \cite{jupp73}]\label{class1}
Let $\varphi\colon H^*(N)\stackrel\cong\longrightarrow H^*(N')$ be
an isomorphism of the cohomology rings of closed smooth simply
connected $6$-dimensional manifolds $N$ and $N'$ with
$H^3(N)=H^3(N')=0$. Assume that
\begin{itemize}
\item[(a)] $\varphi(w_2(N))=w_2(N')$, where $w_2(N)\in H^2(N;\Z_2)$
is the second Stiefel--Whitney class;

\item[(b)] $\varphi(p_1(N))=p_1(N')$, where $p_1(N)\in H^4(N)$ is
the first Pontryagin class.
\end{itemize}
Then the manifolds $N$ and $N'$ are diffeomorphic.
\end{theorem}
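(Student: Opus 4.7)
The plan is to follow the surgery-theoretic strategy that underlies the classification of simply connected $6$-manifolds: realise the ring isomorphism $\varphi$ by a degree-one map $f\colon N \to N'$, upgrade it to a normal map, perform surgery until $f$ becomes a homotopy equivalence, and finally identify it with $\mathrm{id}_{N'}$ in the structure set $\mathcal S(N')$ using the tangential invariants.

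First I would construct a continuous map $f\colon N \to N'$ with $f^* = \varphi$. Because $N$ is simply connected with torsion-free homology and $H^3(N) = 0$, it admits a minimal CW model with cells only in dimensions $0, 2, 4, 6$; equivalently, $N \setminus \{\pt\}$ is a $4$-dimensional complex. On the $2$-skeleton, $f$ is defined by $\varphi|_{H^2}$ via classifying maps into a product of copies of $K(\Z,2)$; the extensions over the $4$- and $6$-cells are controlled by obstructions whose relevant data is the triple cup-product structure, which $\varphi$ preserves by assumption. Preservation of the top intersection pairing forces $\deg f = \pm 1$; after reversing orientation if necessary, take $\deg f = 1$.

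Next I would turn $f$ into a degree-one normal map and run surgery. The condition $\varphi(w_2) = w_2'$ ensures that the stable bundles $TN$ and $f^*TN'$ share the same first two Stiefel--Whitney classes; higher obstructions to a stable isomorphism vanish in our setting because $N$ has no cells in dimensions $1, 3, 5$ and has torsion-free cohomology. A choice of such stable bundle isomorphism promotes $f$ to a normal map with a normal invariant $\eta(f) \in \mathcal N(N') = [N', G/O]$. Standard surgery below the middle dimension renders $f$ highly connected, and the only remaining obstruction to a homotopy equivalence is the Wall obstruction in $L_6(\Z) = \Z/2$, the Arf--Kervaire invariant, detected on a quadratic refinement of the intersection form on $\Ker(f_*\colon H_3(N) \to H_3(N'))$. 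Our hypothesis $H^3(N) = H^3(N') = 0$ combined with Poincar\'e duality forces $H_3(N) = H_3(N') = 0$, so this kernel is trivial and the obstruction vanishes automatically. Thus $f$ is normally cobordant to a homotopy equivalence $g\colon N \to N'$.

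Finally I would invoke the surgery exact sequence $L_7(\Z) = 0 \to \mathcal S(N') \to \mathcal N(N')$, which shows that $\mathcal S(N')$ injects into $[N', G/O]$. It then suffices to prove that $\eta(g) = \eta(\mathrm{id})$ in $[N', G/O]$, for this would give $(N, g) = (N', \mathrm{id})$ in $\mathcal S(N')$, which is exactly the desired diffeomorphism $N \cong N'$. Rationally, $[N', G/O] \otimes \Q$ is detected by pulled-back Pontryagin classes, and the hypothesis $\varphi(p_1(N)) = p_1(N')$ provides the rational matching; the $2$-local part, governed by Stiefel--Whitney data, is handled by $\varphi(w_2) = w_2'$ together with the vanishing of $H^3(N')$. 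The main obstacle I expect is precisely this last matching step: reading off $\eta(g)$ from the characteristic-class data and showing that the arbitrary choice of stable bundle lift in the previous paragraph does not spoil the identification in $[N', G/O]$. This is the technical core of the Wall--Jupp arguments and requires a careful study of the low-dimensional Postnikov tower of $G/O$, together with Wu-formula manipulations to control the integral lift of the mod-$2$ data.
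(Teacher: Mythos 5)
The paper offers no proof of this statement to compare against: Theorem~\ref{class1} is quoted from Wall~\cite{wall66} and Jupp~\cite{jupp73} as a known classification result. So your text is an attempt to reprove a classical theorem, and as a proof it has genuine gaps. The most concrete one is the middle step. If you actually had a degree-one map $f\colon N\to N'$ with $f^*=\varphi$, then $f$ would already be a homotopy equivalence by Whitehead's theorem (both manifolds are simply connected and $f$ induces an isomorphism on integral (co)homology), so the Arf-invariant discussion is beside the point; and as written it is wrong in a way that breaks the logic: killing the surgery obstruction produces a homotopy equivalence $g\colon M\to N'$ from some manifold $M$ normally bordant to $N$, not from $N$ itself, and the surgery kernel $K_3$ lives on the surgered manifold, so it is not forced to vanish by $H_3(N)=H_3(N')=0$. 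Run as stated, your structure-set argument would conclude $M\cong N'$, which says nothing about $N$.

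Consequently the entire weight falls on the two steps you treat most lightly, and both are asserted rather than proved. First, constructing $f$ with $f^*=\varphi$ is exactly the homotopy classification of Wall, Jupp and Zhubr, not a routine obstruction-theory exercise: the obstruction to extending over the top cell lies in $\pi_5$ of a $4$-dimensional complex, which contains $\eta$- and $\eta^2$-type elements invisible to cup products; controlling them is precisely where $w_2$ and Wu-formula arguments enter, so ``controlled by the triple cup-product structure'' does not establish the extension. Second, the identification $\eta(f)=\eta(\mathrm{id})$ in $[N',G/O]$ --- which you yourself flag as the main obstacle --- is the technical core: the rational and odd-primary part is indeed detected by $p_1$, but the $2$-local components of the normal invariant in $H^2(N';\Z_2)$ and $H^6(N';\Z_2)$ (the Kervaire-type classes) are not formal consequences of $\varphi(w_2(N))=w_2(N')$, and one must also control the indeterminacy in the choice of stable bundle isomorphism. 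Your use of the surgery exact sequence with $L_7(\Z)=0$ is the right final move, but without (i) an actual construction of a homotopy equivalence realizing $\varphi$ and (ii) an actual computation of its normal invariant, the proposal is a roadmap rather than a proof.
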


The following lemma is proved using Steenrod squares:

\begin{lemma}[{\cite[Lemma~8.1]{c-m-s10}}]\label{steen}
Assume that the ring $H^*(N;\Z_2)$ is generated by $H^k(N;\Z_2)$
for some $k>0$. Then any ring isomorphism
$\varphi\colon H^*(N;\Z_2)\stackrel\cong\longrightarrow
H^*(N';\Z_2)$ preserves the total Stiefel--Whitney class, i.\,e.
$\varphi(w(N))=w(N')$.
\end{lemma}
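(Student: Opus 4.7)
The plan is to reduce the preservation of Stiefel--Whitney classes, via Wu's formula, to the preservation of Steenrod squares, and then exploit the assumption that $H^*(N;\Z_2)$ is generated by $H^k$ to pin down the Steenrod algebra action on generators by degree considerations.

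More precisely, I would first recall Wu's formula $w(N)=\mathrm{Sq}(v(N))$, where $v(N)=1+v_1+v_2+\cdots$ is the total Wu class, and the classical characterization of Wu classes: for $n=\dim N$, the class $v_i\in H^i(N;\Z_2)$ is the unique element satisfying $v_i\cup x=\mathrm{Sq}^i(x)$ in $H^n(N;\Z_2)\cong\Z_2$ for every $x\in H^{n-i}(N;\Z_2)$, uniqueness being guaranteed by mod-$2$ Poincar\'e duality. Since $\varphi$ is a graded ring isomorphism and both $N$ and $N'$ are closed, $\varphi$ is the identity on the top group $H^n\cong\Z_2$; so if one shows that $\varphi\circ\mathrm{Sq}^i=\mathrm{Sq}^i\circ\varphi$ for all $i$, then the uniqueness property yields $\varphi(v_i(N))=v_i(N')$, and Wu's formula immediately gives $\varphi(w(N))=w(N')$.

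The task thus becomes showing that $\varphi$ commutes with all Steenrod squares. By the Cartan formula $\mathrm{Sq}^i(xy)=\sum_{a+b=i}\mathrm{Sq}^a(x)\mathrm{Sq}^b(y)$ and additivity, it is enough to check this on a system of ring generators, for which I would take $H^k(N;\Z_2)$. For any $x\in H^k(N;\Z_2)$ one always has $\mathrm{Sq}^0(x)=x$, $\mathrm{Sq}^k(x)=x\cup x$, and $\mathrm{Sq}^i(x)=0$ for $i>k$ by instability; the first two are preserved trivially by $\varphi$ (ring isomorphism), and the last is vacuous.

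The remaining intermediate squares $\mathrm{Sq}^i(x)$ with $0<i<k$ are the main point. Here I would use the following degree observation: since $H^*(N;\Z_2)$ is generated as a graded ring by $H^k(N;\Z_2)$, every homogeneous element lies in a degree that is a nonnegative multiple of $k$, and therefore $H^j(N;\Z_2)=0$ whenever $j>0$ is not divisible by $k$. As $\mathrm{Sq}^i(x)\in H^{k+i}(N;\Z_2)$ and $k<k+i<2k$ for $0<i<k$, the target group vanishes and $\mathrm{Sq}^i(x)=0$; the same argument in $N'$ gives $\mathrm{Sq}^i(\varphi(x))=0$. Consequently $\varphi\circ\mathrm{Sq}^i=\mathrm{Sq}^i\circ\varphi$ on generators, and the Cartan formula propagates this equality to the whole ring, completing the proof. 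The only nontrivial insight is this degree-based vanishing of the intermediate squares; everything else is a direct application of Wu's formula and the Cartan formula.
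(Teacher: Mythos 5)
Your argument is correct and is essentially the same as the proof in the cited reference \cite[Lemma~8.1]{c-m-s10}, which the paper invokes without reproducing: the generation of $H^*(N;\Z_2)$ in a single degree $k$ forces the intermediate squares $\mathrm{Sq}^i$, $0<i<k$, to vanish on generators, so any ring isomorphism commutes with the Steenrod squares, and then the Wu class characterization via Poincar\'e duality together with Wu's formula $w=\mathrm{Sq}(v)$ gives $\varphi(w(N))=w(N')$. No gaps; the only implicit points (that $H^*(N';\Z_2)$ is likewise generated in degree $k$ and that $\dim N=\dim N'$) follow at once from $\varphi$ being a graded isomorphism.
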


Lemma~\ref{steen} applies to toric or quasitoric manifolds, whose
cohomology is generated in degree two. From Theorem~\ref{class1}
we obtain

\begin{corollary}
Let $\varphi\colon H^2(M)\stackrel\cong\longrightarrow H^2(M')$ be
an isomorphism of second cohomology groups of $6$-dimensional
smooth quasitoric manifolds. Assume that
\begin{itemize}
\item[(a)] $\varphi$ preserves the cubic form
$H^2(M)\otimes H^2(M)\otimes H^2(M)\to\Z=H^6(M)$ given by the
cohomology multiplication;

\item[(b)] $\varphi$ preserves
the first Pontryagin class.
\end{itemize}
Then the manifolds $M$ and $M'$ are diffeomorphic.
\end{corollary}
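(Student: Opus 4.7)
The plan is to bootstrap the given data on $H^2$ into a full ring isomorphism and then feed it into Theorem~\ref{class1}. A $6$-dimensional quasitoric manifold $M$ is simply connected, has torsion-free integral cohomology concentrated in even degrees (so $H^3(M)=0$), and $H^*(M;\Z)$ is multiplicatively generated by $H^2(M;\Z)$; all of this follows from Theorem~\ref{cohomqtoric}. In particular, the hypotheses on $N,N'$ in Theorem~\ref{class1} are satisfied by $M$ and~$M'$. By Poincar\'e duality on~$M$, the pairing $H^2(M)\otimes H^4(M)\to H^6(M)\cong\Z$ is perfect, and since $H^2\cdot H^2\to H^4$ is surjective, an element $\alpha\in H^4(M)$ vanishes if and only if $\alpha\cdot z=0$ for every $z\in H^2(M)$.

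First I would define $\Phi\colon H^*(M;\Z)\to H^*(M';\Z)$ by $\Phi|_{H^2}=\varphi$, by $\Phi(\sum x_iy_i)=\sum\varphi(x_i)\varphi(y_i)$ on $H^4$, and by $\Phi(xyz)=\varphi(x)\varphi(y)\varphi(z)$ on $H^6$. Well-definedness on $H^4$ uses the previous paragraph: if $\sum x_iy_i=0$ in $H^4(M)$ then $\sum x_iy_iz=0$ in $H^6(M)=\Z$ for every $z\in H^2(M)$; preservation of the cubic form turns this into $\sum\varphi(x_i)\varphi(y_i)\varphi(z)=0$ for every $z$, and since $\varphi$ is surjective on $H^2$, Poincar\'e duality on $M'$ forces $\sum\varphi(x_i)\varphi(y_i)=0$. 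The symmetric argument, applied to $\varphi^{-1}$, gives injectivity (hence bijectivity) of $\Phi$ in degree~$4$, and the perfect pairing gives bijectivity in degree~$6$. Thus $\Phi$ is a ring isomorphism on the whole of $H^*(M;\Z)$.

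Next I would reduce $\Phi$ modulo~$2$ to obtain a ring isomorphism $\overline\Phi\colon H^*(M;\Z_2)\stackrel{\cong}\longrightarrow H^*(M';\Z_2)$. Since $H^*(M;\Z_2)$ is generated by $H^2(M;\Z_2)$, Lemma~\ref{steen} yields $\overline\Phi(w(M))=w(M')$, and in particular $\overline\Phi(w_2(M))=w_2(M')$, which is condition~(a) of Theorem~\ref{class1}. Condition~(b) of Theorem~\ref{class1} is given directly by the assumption that $\varphi$ (and hence $\Phi$) preserves $p_1$. Applying Theorem~\ref{class1} to $\Phi$ therefore produces a diffeomorphism $M\cong M'$.

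The main obstacle in this plan is the first step, namely verifying that the cubic-form condition really does suffice to extend $\varphi$ to a well-defined ring isomorphism; all the content is packaged into the compatibility between preservation of the cubic form and Poincar\'e duality in degrees $4$ and~$6$, and once that is checked the rest is a direct appeal to Theorem~\ref{class1} and Lemma~\ref{steen}.
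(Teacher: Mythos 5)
Your proposal is correct and follows the same route the paper intends: since the cohomology of a quasitoric $6$-manifold is torsion-free, concentrated in even degrees, and generated in degree two, the cubic-form hypothesis lets $\varphi$ extend (via Poincar\'e duality) to a ring isomorphism of $H^*(\,\cdot\,;\Z)$, whose mod~$2$ reduction preserves $w_2$ by Lemma~\ref{steen}, so Theorem~\ref{class1} applies. The paper leaves exactly this extension step implicit, and your write-up supplies it correctly.
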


From the topological invariance of rational Pontryagin classes (proved in general by S.\,P.~Novikov) we obtain

\begin{corollary}\label{homeodif}
Let $M$ and $M'$ be $6$-dimensional smooth quasitoric manifolds. If $M$ and $M'$ are homeomorphic, then they are diffeomorphic.
\end{corollary}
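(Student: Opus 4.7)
The plan is to verify that the hypotheses of the preceding Theorem~\ref{class1} (Wall--Jupp) apply to any homeomorphism $f\colon M\to M'$ between $6$-dimensional smooth quasitoric manifolds, and then simply quote that theorem.

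First I would check the purely topological hypotheses. By general properties of quasitoric manifolds recalled in Subsection~\ref{qtmani}, both $M$ and $M'$ are closed, smooth, simply connected, with torsion-free integral cohomology generated multiplicatively by $H^2$; in particular $H^{\mathrm{odd}}(M)=H^{\mathrm{odd}}(M')=0$, so $H^3(M)=H^3(M')=0$. The homeomorphism $f$ induces a graded ring isomorphism $\varphi=f^*\colon H^*(M')\stackrel\cong\longrightarrow H^*(M)$.

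Next I would verify conditions (a) and (b) of Theorem~\ref{class1}. For~(a): since $H^*(M';\Z_2)$ is generated by $H^2(M';\Z_2)$, Lemma~\ref{steen} applies and gives $\varphi(w(M'))=w(M)$, and in particular $\varphi(w_2(M'))=w_2(M)$. For~(b): by Novikov's theorem on the topological invariance of rational Pontryagin classes, $\varphi_\Q(p_1(M'))=p_1(M)$ in $H^4(M;\Q)$, where $\varphi_\Q=\varphi\otimes\Q$. Since $H^4(M)$ is torsion-free, the canonical map $H^4(M)\hookrightarrow H^4(M;\Q)$ is injective; as both $\varphi(p_1(M'))$ and $p_1(M)$ lie in the integral lattice $H^4(M)$ and agree after tensoring with~$\Q$, they are equal in $H^4(M)$. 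Hence $\varphi(p_1(M'))=p_1(M)$.

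With both hypotheses of Theorem~\ref{class1} verified, the theorem produces a diffeomorphism $M\cong M'$. The argument has no real obstacle: it is simply the combination of Wall--Jupp classification, the Steenrod-square argument of Lemma~\ref{steen}, and Novikov's theorem, the only mildly delicate point being the passage from rational to integral invariance of $p_1$, which is immediate from the torsion-freeness of $H^4$ in the quasitoric setting.
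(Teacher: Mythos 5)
Your proposal is correct and is essentially the paper's argument: the paper deduces Corollary~\ref{homeodif} from Theorem~\ref{class1} together with Lemma~\ref{steen} (for $w_2$) and Novikov's topological invariance of rational Pontryagin classes (for $p_1$), exactly as you do. Your explicit remark that torsion-freeness of $H^4$ upgrades the rational equality of $p_1$ to an integral one is a detail the paper leaves implicit, but it is the same route.
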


The characteristic classes of quasitoric manifolds are given as
follows:

\begin{proposition}[{\cite[Corollary~6.7]{da-ja91}}]
In the notation of Theorem~\ref{cohomqtoric}, the total
Stiefel--Whitney and Pontryagin classes of a quasitoric manifold
$M$ are given by
\[
  w(M)=\prod_{i=1}^m(1+v_i)\mod 2,\qquad
  p(M)=\prod_{i=1}^m(1+v^2_i).
\]
In particular, $w_2(M)=v_1+\cdots+v_m\mod 2$, and
$p_1(M)=v_1^2+\cdots+v_m^2$.
\end{proposition}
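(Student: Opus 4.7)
The plan is to reduce the computation to the multiplicative behavior of characteristic classes of sums of complex line bundles. Concretely, I would establish a stable isomorphism of real vector bundles
\[
  TM \oplus \underline{\R}^{2(m-n)} \;\cong\; L_1\oplus\cdots\oplus L_m
\]
over $M$, where each $L_i$ is a complex line bundle with first Chern class $c_1(L_i)=[v_i]\in H^2(M)$. Once this is in place, the two formulas follow immediately from the Whitney product formula together with the standard identities $w(L_{\R})=1+c_1(L)\bmod 2$ and $p(L_{\R})=1+c_1(L)^2$ for a complex line bundle $L$.

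To build the line bundles $L_i$, I would exploit the moment-angle presentation $M=\zp/K$ from Subsection~\ref{zpm}, where $K=\Ker\varLambda_T\subset T^m$ is an $(m-n)$-dimensional subtorus acting freely on $\zp$. The $i$th coordinate projection $\zp\hookrightarrow\C^m\to\C$ is equivariant for the $i$th circle factor $T_i\subset T^m$, and after passing to the quotient by $K$ it descends to a complex line bundle $L_i$ on~$M$. The identification $c_1(L_i)=[v_i]$ is then justified by observing that $L_i$ restricts to the equivariant complex normal bundle of the characteristic submanifold $M_i=\pi^{-1}(F_i)$, whose Thom class realises the Poincar\'e dual of $[M_i]$, which is $[v_i]$ by Theorem~\ref{cohomqtoric}.

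For the stable tangent splitting, I would use that $\zp\subset\C^m$ is cut out transversely by $m-n$ real Hermitian quadrics, as in the diagram~\eqref{defzp}; this yields a canonical $T^m$-equivariant stable trivialization
\[
  T\zp \oplus \underline{\R}^{2(m-n)} \;\cong\; \zp\times\C^m.
\]
Taking the quotient of both sides by the free action of $K$ and absorbing the vertical tangent bundle of the principal $K$-bundle $\zp\to M$ into the trivial summand yields the desired splitting of $TM$, with the coordinate decomposition of $\C^m$ corresponding precisely to $L_1\oplus\cdots\oplus L_m$.

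The main obstacle I anticipate is the bookkeeping of equivariant structures needed to ensure that the quotient by $K$ identifies the coordinate summands of $\zp\times\C^m$ with the line bundles $L_i$ constructed above, and that the trivial $\underline{\R}^{2(m-n)}$ really has the right rank after absorbing the tangent bundle along $K$-orbits. Once this geometric step is secured, the Stiefel--Whitney and Pontryagin formulas follow by a direct application of the Whitney product formula, with no further cohomological input.
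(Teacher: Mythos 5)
Your proposal is essentially the standard argument behind the cited result: the paper itself gives no proof, quoting \cite[Corollary~6.7]{da-ja91}, and the Davis--Januszkiewicz proof is exactly your stable splitting $TM\oplus\underline{\R}^{2(m-n)}\cong L_1\oplus\cdots\oplus L_m$ with $c_1(L_i)=[v_i]$ (the $L_i$ being the bundles $\zp\times_K\C_i$ associated to the coordinate characters of $T^m$ restricted to $K=\Ker\varLambda_T$), followed by the Whitney formula. One correction to your intermediate display: since $\zp\subset\C^m$ is cut out by $m-n$ real (Hermitian) quadrics, its equivariantly framed normal bundle has rank $m-n$, so the framing reads $T\zp\oplus\underline{\R}^{m-n}\cong\zp\times\C^m$; the second trivial summand of rank $m-n$ appears only after passing to $M$, from the trivialised tangent bundle along the free $K$-orbits, which together give the correct $\underline{\R}^{2(m-n)}$ in the final splitting.
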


\begin{corollary}
A family of $6$-dimensional quasitoric manifolds is
cohomologically rigid if any cohomology ring isomorphism between
manifolds from the family preserves the first Pontryagin class.
\end{corollary}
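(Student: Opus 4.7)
The plan is to deduce the corollary directly from Theorem~\ref{class1} of Wall and Jupp. Let $M$ and $M'$ be $6$-dimensional quasitoric manifolds in the family, and suppose $\varphi\colon H^*(M)\stackrel\cong\longrightarrow H^*(M')$ is a ring isomorphism. First I would check that the hypotheses of Theorem~\ref{class1} are satisfied on the manifolds themselves: any quasitoric manifold is closed, smooth (via the canonical smooth structure induced from the moment-angle manifold $\zp$ as in Subsection~\ref{zpm}), simply connected, and, by Theorem~\ref{cohomqtoric}, has cohomology concentrated in even degrees, so $H^3(M)=H^3(M')=0$.

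Next I would verify the two characteristic-class conditions in Theorem~\ref{class1}. The Pontryagin condition $\varphi(p_1(M))=p_1(M')$ is exactly the hypothesis of the corollary (the assumption on the family). For the Stiefel--Whitney condition, I would apply Lemma~\ref{steen}: since the cohomology ring $H^*(M;\Z_2)$ of a quasitoric manifold is generated by degree-two classes (again by Theorem~\ref{cohomqtoric}, reduced $\bmod\,2$), the $\Z_2$-reduction $\bar\varphi\colon H^*(M;\Z_2)\to H^*(M';\Z_2)$ of $\varphi$ automatically preserves the total Stiefel--Whitney class, and in particular $\bar\varphi(w_2(M))=w_2(M')$.

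Having verified all the hypotheses of Theorem~\ref{class1}, I conclude that $M$ and $M'$ are diffeomorphic, which is exactly the statement of cohomological rigidity for the family. The argument is essentially a bookkeeping exercise: the only real content is the combination of the Wall--Jupp classification, the Steenrod-square argument of Lemma~\ref{steen} that handles $w_2$ for free in the quasitoric setting, and the Pontryagin hypothesis built into the statement. Accordingly, I do not expect any genuine obstacle here; the corollary should follow in a few lines.
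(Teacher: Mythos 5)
Your argument is correct and is essentially the paper's own route: the corollary is stated there as the immediate combination of the Wall--Jupp theorem (Theorem~\ref{class1}), the Steenrod-square argument of Lemma~\ref{steen} handling $w_2$ because $H^*(M;\Z_2)$ is generated in degree two, and the hypothesis on $p_1$, exactly as you assemble it. The only cosmetic point is that smoothness of the manifolds in the family is part of the setting (cohomological rigidity is a statement about smooth manifolds), so your aside about the canonical smooth structure is not needed for the application of Theorem~\ref{class1}.
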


This reduces cohomological rigidity for $6$-dimensional quasitoric
manifolds $M$ to a problem of combinatorics and linear algebra, as
both the cohomology ring $H^*(M)$ and the first Pontryagin class
$p_1(M)=v_1^2+\cdots+v_m^2$ are defined entirely in terms of the
characteristic pair~$(P,\varLambda)$.

Our result on cohomological rigidity for quasitoric manifolds over Pogorelov polytopes (Theorem~\ref{3cohrig}) gives a complete classification for this particular class of simply connected $6$-manifolds, and its proof is indepenent of the general classification results of Wall and Jupp.
The invariance of the first Pontryagin class for quasitoric
manifolds over Pogorelov polytopes follows directly
from Lemma~\ref{crid2cl}. It would be interesting to find a direct (combinatorial?) proof
of this fact. Bott towers (of any dimension) form
another family of toric manifolds for which the invariance of
Pontryagin classes under cohomology ring isomorphisms is known, see~\cite{c-m-m15}.

\begin{remark}
In dimension $4$ we have the identity $\langle p_1(M),[M]\rangle=3\mathop{\mathrm{sign}}(M)$, where $[M]\in H_4(M)$ is the fundamental class and $\mathop{\mathrm{sign}}(M)$ is the signature of~$M$. Therefore, $p_1$ is invariant under cohomology ring isomorphisms. When $M$ is a toric manifold, the signature is equal to $4-m$, where $m$ is the number of vertices in the quotient polygon~$P$ (see e.g.~\cite[Example~9.5.3]{bu-pa15}). The identity $\langle p_1(M),[M]\rangle=3\mathop{\mathrm{sign}}(M)$ then becomes
\[
  \langle v_1^2+\cdots+v_m^2,[M]\rangle=12-3m,
\]
which can be seen directly from Theorem~\ref{cohomtoric}.
\end{remark}

\begin{appendix}

\section{Belts in flag 3-polytopes}
Here we give proofs of two combinatorial lemmata on belts in flag
$3$-polytopes, originally due to~\cite{fa-wa} and~\cite{f-m-w}
respectively. These proofs are included mainly for the sake of completeness, but we also fill in some details missing in the original works. Lemma~\ref{Fab} is used in the proof of the product
decomposition lemma in Appendix~\ref{proofproddeco}, while
Lemma~\ref{Fabc} is used in the proof of rigidity of the
set of canonical generators of $H^3(\zp)$ in
Appendix~\ref{proofrig3cl}.

Recall that a belt of facets in a simple polytope $P$ corresponds to a
chordless cycle in the dual simplicial complex $\sK_P$, or to
a full subcomplex $(\sK_P)_I$ isomorphic to the boundary of a polygon.

\begin{lemma}\label{Fab}
Let $P$ be a flag $3$-polytope. Then for every three different facets $F_i$,
$F_{i'}$, $F_k$ with $F_i\cap F_{i'}=\varnothing$, there exists a belt
$\mathcal{B}$ such that $F_i,F_{i'}\in \mathcal{B}$ and $F_k\notin\mathcal B$.
\end{lemma}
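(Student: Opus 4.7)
Plan: I would pass to the dual. The hypothesis $F_i\cap F_{i'}=\varnothing$ is equivalent to $i,i'$ being non-adjacent vertices of the dual simplicial $2$-sphere $\sK=\sK_P$, and a belt in $P$ through $F_i,F_{i'}$ avoiding $F_k$ corresponds exactly to a chordless cycle in $\sK^1$ through the vertices $i,i'$ and missing the vertex $k$. Consider the full subcomplex $\widetilde\sK:=\sK_{[m]\setminus\{k\}}$. Since $\sK$ is a flag simplicial $2$-sphere and removing the open star of $k$ from $|\sK|\cong S^2$ leaves a closed disk, $\widetilde\sK$ is a flag simplicial disk with boundary cycle $\mathrm{lk}_\sK(k)$. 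A chord of a cycle $C\subset\widetilde\sK^1$ not passing through $k$ is the same thing in $\sK^1$ as in $\widetilde\sK^1$, so it suffices to exhibit a chordless cycle in $\widetilde\sK^1$ through $i,i'$.

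Since $\sK^1$ is $3$-connected by Steinitz's theorem applied to the simplicial polytope dual to $P$, the graph $\widetilde\sK^1=\sK^1\setminus\{k\}$ is $2$-connected; hence there is at least one cycle in $\widetilde\sK^1$ through both $i$ and $i'$. Among all such cycles I would take $C$ minimizing the number of $2$-simplices of $\widetilde\sK$ enclosed by $C$ in the planar embedding of the disk $\widetilde\sK$, and claim that $C$ is chordless. Suppose $C$ has a chord $\{u,v\}$. By planarity $\{u,v\}$ is an edge inside the region $\Omega_C$ bounded by $C$, and it splits $\Omega_C$ into subregions $\Omega_1,\Omega_2$ with boundaries $C_1\cup\{uv\}$ and $C_2\cup\{uv\}$, where $C_1,C_2$ are the two arcs of $C$ between $u$ and $v$. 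If both $i$ and $i'$ lie on the same arc, say $C_1$, then $C_1\cup\{uv\}$ is a cycle through $i,i'$ bounding the strictly smaller region $\Omega_1$, contradicting the minimality of $|\Omega_C|$.

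The remaining case, in which the chord $\{u,v\}$ separates $i\in C_1$ from $i'\in C_2$, is the main obstacle. To handle it I would exploit flagness of $\sK$ essentially: in a flag simplicial $2$-sphere every edge lies in a $2$-simplex on each of its two sides, so there is a vertex $w\in\overline{\Omega}_1$ with $\{u,v,w\}$ a $2$-face of $\widetilde\sK$. Using $w$, one performs a local reduction --- replace the chord $\{u,v\}$ by the path $u,w,v$ inside $\Omega_1$ and re-route through $\Omega_2$ via a suitable path passing through $i'$ --- to obtain a new cycle through both $i$ and $i'$ enclosing strictly fewer $2$-simplices than $\Omega_C$, contradicting minimality. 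The non-separating case is an immediate planar minimization; the separating case is delicate and is precisely where flagness (rather than mere $2$-connectedness of $\widetilde\sK^1$) is needed, and making the local reduction and termination of the induction fully rigorous is the most technical part of the argument.
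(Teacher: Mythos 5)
Your reduction to the dual disk $\widetilde\sK=\sK_{[m]\setminus\{k\}}$ is the same starting point as the paper (which proves exactly this dual statement, with the extra observation that flagness makes the boundary $\mathrm{lk}_\sK\{k\}$ an induced cycle), and the existence of \emph{some} cycle through $i,i'$ via $2$-connectivity is fine. But the proposal has a genuine gap, in two places. First, the assertion ``by planarity $\{u,v\}$ is an edge inside the region $\Omega_C$'' is unjustified: a chord of $C$ may be embedded in the annular region between $C$ and $\partial\widetilde\sK$, i.e.\ outside $\Omega_C$. In that situation, even when $i$ and $i'$ lie on the same arc, the cycle $C_1\cup\{uv\}$ bounds a region disjoint from (not contained in) $\Omega_C$, so area-minimality of $C$ gives no contradiction; your minimization criterion does not control outside chords at all.

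Second, and more seriously, the separating case --- which you yourself identify as the heart of the matter --- is not proved but only gestured at. ``Replace the chord by the path $u,w,v$ and re-route through $\Omega_2$ via a suitable path passing through $i'$'' is not a well-defined operation: the chord $\{u,v\}$ is not an edge of $C$, so there is nothing on $C$ to replace; the natural modification $C_2\cup\{uw,wv\}$ passes through $i'$ but not through $i$; and no argument is given that any rerouted curve is a \emph{simple} cycle through both $i$ and $i'$ enclosing strictly fewer $2$-simplices. It is exactly here that flagness must enter in an essential way, and making this work is not routine: the paper's proof (following Fan--Wang) is an induction on the number of vertices with a careful case analysis --- deleting boundary vertices whose star has only four vertices, performing a bistellar $1$-move when such a deletion is obstructed, and, when every relevant boundary star is large, a descent along separating paths $j_p\mbox{--}k_p\mbox{--}j_{q_p}$ that terminates in a contradiction with flagness. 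Your minimal-area strategy might conceivably be completed, but as written the key claim ``an area-minimal cycle through $i,i'$ is chordless'' is neither proved nor obviously true, so the proposal does not yet constitute a proof.
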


We reformulate this lemma in the dual notation; this is how the lemma was stated and proved in~\cite{fa-wa}:

\begin{lemma}[{\cite[Lemma~6.1]{fa-wa}}] \label{Fabd}
Let $\sK$ be a flag triangulation of the disk $D^2$ with $m$ vertices, and let $S$ be the set of vertices of the boundary~$\partial\sK$. Assume that $\sK_{S}=\partial\sK$. Then for every missing edge $\{i,i'\}\notin\sK$ there exists a subset $I\subset[m]$ such that $\{i,i'\}\subset I$ and $\sK_I$ is a chordless cycle (the boundary of a polygon).
\end{lemma}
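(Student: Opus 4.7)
My plan is to prove Lemma~\ref{Fabd} by a minimality argument in the $1$-skeleton $\sK^{(1)}$, exploiting both the disk structure of $|\sK|$ and the chordless-boundary hypothesis $\sK_S = \partial\sK$.

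First I would observe that $\sK^{(1)}$ is a $2$-connected planar graph: the triangulation of $D^2$ has no cut vertices (the link of every vertex of $\sK$ is a path or a cycle, so removing a vertex leaves the complex connected), and the hypothesis $\sK_S=\partial\sK$ in particular excludes degenerate boundary identifications. Applying Menger's theorem to $\sK^{(1)}$, there exist two internally disjoint paths $\gamma_1,\gamma_2$ from $i$ to $i'$, and their union $C=\gamma_1\cup\gamma_2$ is a cycle through $i$ and $i'$. Among all such pairs $(\gamma_1,\gamma_2)$, I would pick one that minimizes the number of triangles of $\sK$ enclosed by $C$ in the planar embedding of $|\sK|$, breaking ties by minimizing $|\gamma_1|+|\gamma_2|$.

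The claim is that this minimizing cycle $C$ is chordless; then setting $I:=V(C)$ gives the required subset. Suppose for contradiction that $\{u,v\}\in\sK$ is a chord of $C$, i.e.\ $u,v\in V(C)$ are not adjacent in $C$. The hypothesis $\sK_S=\partial\sK$ rules out the case $\{u,v\}\subset\partial\sK$, so $\{u,v\}$ must be an interior edge of $|\sK|$. If $u$ and $v$ both lie on the same path $\gamma_k$, one replaces the sub-arc of $\gamma_k$ from $u$ to $v$ by the single edge $\{u,v\}$, yielding a shorter path and hence a pair enclosing strictly fewer triangles, contradicting minimality. If $u\in\gamma_1$ and $v\in\gamma_2$, then the chord together with the appropriate sub-arcs of $\gamma_1,\gamma_2$ cuts the enclosed planar region into two smaller pieces, and one modifies \emph{both} paths simultaneously across the chord to obtain a new pair of internally disjoint $i$-$i'$ paths whose enclosed region is strictly smaller. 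The flagness of $\sK$ is used to guarantee these modifications remain valid paths in $\sK^{(1)}$.

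The main obstacle will be the second sub-case (a chord with one endpoint on each of $\gamma_1$ and $\gamma_2$): neither sub-arc of $C$ between $u$ and $v$ contains both $i$ and $i'$, so one cannot simply shorten one path. One must argue geometrically in the planar embedding that the two sub-regions cut out by the chord can be used to reconstruct a new internally disjoint pair $(\gamma_1',\gamma_2')$ with strictly smaller enclosed area. This step depends essentially on the flag condition—which forbids missing $2$-simplices that could otherwise sabotage the rerouting—and on the chordless-boundary hypothesis, which guarantees there are no boundary shortcuts to worry about.
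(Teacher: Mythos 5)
Your proposal is a plan rather than a proof, and the decisive step is exactly the one you leave open. When the chord $\{u,v\}$ has $u$ interior to $\gamma_1$ and $v$ interior to $\gamma_2$, the chord splits the disk bounded by $C=\gamma_1\cup\gamma_2$ into two sub-disks, one whose boundary cycle contains $i$ and one whose boundary cycle contains $i'$; neither contains both ends of the missing edge, and there is no evident way to assemble from the chord a new pair of internally disjoint $i$--$i'$ paths at all, let alone one enclosing strictly fewer triangles (any path using the edge $\{u,v\}$ forces the second path to avoid both $u$ and $v$, and such a path need not exist inside the enclosed region). This is not a technicality: the whole content of the lemma is concentrated in ruling out configurations where edges of $\sK$ ``trap'' $i'$ away from $i$ (for instance, an edge $\{u,v\}$ whose endpoints separate $i$ from $i'$ in the $1$-skeleton would make \emph{every} cycle through $i,i'$ chorded), and excluding these is precisely where flagness and the hypothesis $\sK_S=\partial\sK$ must be used in a concrete way. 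Your appeal to flagness (``forbids missing $2$-simplices that could otherwise sabotage the rerouting'') is a gesture, not an argument, so the case you yourself call the main obstacle is simply unresolved.

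There is also a smaller flaw in the case you do treat: a chord with both endpoints on the same path $\gamma_k$ may lie \emph{outside} the region enclosed by $C$, and then replacing the sub-arc by the chord shortens the path but strictly enlarges the enclosed region, so with your lexicographic order (enclosed triangles first, length second) minimality is not contradicted; you would have to reorder the criteria or argue separately about exterior chords. For comparison, the paper avoids these issues altogether: it proves the lemma by induction on the number of vertices, with a case analysis on boundary vertices whose stars have few vertices, deletion of such vertices, a bistellar $1$-move in one sub-case, and, in the remaining case, a nested-path argument that produces a vertex joined to two boundary vertices at distance two, contradicting the hypotheses. If you want to pursue your direct minimality strategy, you need a genuine argument for the crossing-chord case (and this is likely to end up resembling the innermost/outermost path analysis used in the paper's proof of the companion Lemma on belts avoiding a prescribed facet), not an appeal to planarity and flagness in general terms.
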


To obtain Lemma~\ref{Fab} from Lemma~\ref{Fabd} we take as $\sK$ the simplicial complement
to the vertex of $\sK_P$ corresponding to the facet $F_k\subset P$, that is, $\sK:=(\sK_P)_{[m]\setminus\{k\}}$. Then $\sK$ is a flag triangulation of $D^2$ (as a full subcomplex in the flag complex~$\sK_P$), and $\sK_{S}=\partial\sK$ because $\sK_P$ is flag. Lemma~\ref{Fabd} gives a chordless cycle $\sK_I$ in $\sK\subset\sK_P$, which corresponds to the required belt in~$P$.

The \emph{star} and \emph{link} of a vertex $\{i\}\in\sK$ are the subcomplexes
\[
  \st_\sK\{i\}=\{I\in\sK\colon \{i\}\cup I\in\sK\},\quad
   \lk_\sK\{i\}=\{I\in\sK\colon  \{i\}\cup I\in\sK, i\notin I\}.
\]

\begin{proof}[Proof of Lemma~\ref{Fabd}]
We use the induction on $m$, the number of vertices of $\sK$. Since $\sK$ is flag, $|S|\ge4$ and $m\ge5$. If $m=5$, then $|S|=4$ and $\sK$ is the cone over a square, so $\{i,i'\}\in S$ and we can take $I:=S$.

%We use induction on the number of vertices of $\sK$. We have $m> 5$, because $\sK$ is flag, $\{i,i'\}\not\subset S$ and $\{i,i'\}$ is a missing face. If $m=6$, then $|S|=4$. There is only one flag triangulation of $D^2$ with $6$ vertices and $4$ boundary vertices, shown in Figure~\ref{fig:6-vertex}.
%\begin{figure}[h]
%        \begin{tikzpicture}[scale=.75]
%            \draw[ultra thick, yellow] (0,2)--(1,1)--(3,0)--(3,2)--cycle;
%            \draw (0,0)--(3,0)--(3,2)--(0,2)--cycle;
%            \draw (0,0)--(1,1)--(2,1)--(3,0);
%            \draw (0,2)--(1,1)--(3,0);
%            \draw (0,2)--(2,1)--(3,2);
%            \fill (1,1) circle(2pt);
%            \fill (3,2) circle(2pt);
%            \draw (1,0.7) node{\tiny{$i$}};
%            \draw (3,2.3) node{\tiny{$i'$}};
%        \end{tikzpicture}
%\caption{Flag simplicial complex of $6$ vertices}\label{fig:6-vertex}
%\end{figure}
%One can easily find the required chordless cycle in this simplicial complex.
%
Now assume that the statement holds for simplicial complexes with $<m$ vertices.
If both vertices $i$ and $i'$ lie in $\partial\sK$, then $I:=S$ gives the required chordless cycle. Hence, we only need to consider the case $\{i,i'\}\not\subset S$.
%If $|S|\ge m-1$, then there is no missing face $\{i,i'\}$ such that $\{i,i'\}\not\subset S$.
Hence, $|S|<m-1$.
For a vertex $j\in S$, denote by $m_j$ the number of vertices in $\st_\sK\{j\}$. Then $m_j\ge 4$ for any $j\in S$, since $\sK_S=\partial\sK$. We consider several cases.

\smallskip

I. Suppose that there is a vertex $j\in S\setminus\{i,i'\}$ such that $m_j=4$. Then the set of vertices of $\st_\sK\{j\}$ is $\{j,j',j'',k\}$, where $j,j',j''\in S$ and $k\notin S$, see Figure~\ref{fig:star-4-vertex}.
\begin{figure}[h]
        \begin{tikzpicture}[scale=.75]
            \draw (0,0)--(1,-1)--(2,0)--(1,1)--cycle;
            \draw (1,-1)--(1,1);
            \draw (-0.3,0) node{\tiny{$j'$}};
            \draw (1,-1.3) node{\tiny{$k$}};
            \draw (1,1.3) node{\tiny{$j$}};
            \draw (2.3,0) node{\tiny{$j''$}};
            \draw (0+4,0)--(1+4,-1)--(2+4,0)--(1+4,1)--cycle;
            \draw (4,0)--(6,0);
            \draw (-0.3+4,0) node{\tiny{$j'$}};
            \draw (1+4,-1.3) node{\tiny{$k$}};
            \draw (1+4,1.3) node{\tiny{$j$}};
            \draw (2.3+4,0) node{\tiny{$j''$}};
        \end{tikzpicture}
\caption{$\st_\sK\{j\}$ and its bistellar $1$-move}\label{fig:star-4-vertex}
\end{figure}

(i) If there is no vertex $k'\in S\setminus \{j,j',j''\}$ such that $\{k,k'\}$ is an edge of $\sK$, then the simplicial complex $\sK':=\sK_{[m]\setminus\{j\}}$ satisfies the hypothesis of the lemma. By the inductive hypothesis, there is a subset $I'$ of $[m]\setminus \{j\}$ such that $\{i,i'\}\subset I'$ and $\sK'_{I'}$ is a chordless cycle. Then $I:=I'$ is the required set, as $\sK_{I'}=\sK'_{I'}$.

(ii) Now assume that there exists a vertex $k'\in S\setminus \{j,j',j''\}$ such that $\{k,k'\}$ is an edge in~$\sK$. Let $\sK'$ be the simplicial complex obtained from $\sK$ by applying a bistellar $1$-move at $\st_\sK\{j\}$, see Figure~\ref{fig:star-4-vertex}. Then
$\sK'':=\sK'_{[m]\setminus\{j\}}$ satisfies the hypothesis of the lemma. By induction, there is a subset $I''$ of $[m]\setminus\{j\}$ such that $\{i,i'\}\subset I''$ and $\sK''_{I''}$ is a chordless cycle. If $j'$ or $j''$ is not in $I''$, then $I:=I''$ is the required set. If both $j'$ and $j''$ are in $I''$, then
$I:=I''\cup\{j\}$ is the required set.

\smallskip

II. Suppose that $m_j>4$ for every $j\in S\setminus\{i,i'\}$. Let $S=\{j_1,\ldots,j_n\}$, ordered
counterclockwise, and assume that $j_1\notin\{i,i'\}$. Let $\mathcal V_{j_p}$ denote the set of vertices of $\st_\sK(j_p)$, so $|\mathcal V_{j_p}|=m_{j_p}$, for $1\le p\le n$. Note that if $j_p\in S\setminus\{i,i'\}$, then $m_{j_p}>4$ and $|\mathcal{V}_{j_p}\setminus S|>1$.

(i) Assume that, for some $j_p\in S\setminus \{i,i'\}$, there is no edge $\{k,k'\}$ in $\sK$ such that
\[\leqno{(\ast)}\qquad
   k\in \mathcal{V}_{j_p}\setminus S\quad\text{and}\quad
   k'\in S\setminus\{j_{p-1},j_p,j_{p+1}\},\quad\text{where }j_0=j_n.
\]
Then $\sK':=\sK_{[m]\setminus\{j_p\}}$ satisfies the hypothesis of the lemma, so we can find the required subset $I$ of $[m]\setminus\{j_p\}$.

(ii) Assume that, for every $j_p\in S\setminus \{i,i'\}$, there is an edge $\{k_p,j_{q_p}\}$ in $\sK$ satisfying~$(\ast)$ for $k=k_p$ and $k'=j_{q_p}$. We shall lead this case to a contradiction. Set $I_1:=\{j_1,k_1,j_{q_1}\}$. Then $\sK_{I_1}$ divides $\sK$ into two simplicial complexes $\sK_1$ and $\sK_2$,
where $\sK_1$ has boundary vertices $j_1,\ldots,j_{q_1},k_1$, and $\sK_2$ has boundary vertices $j_{q_1},\ldots,j_n,j_1,k_1$, see Figure~\ref{fig:complex from I_1}.
    \begin{figure}[h]
        \begin{tikzpicture}
            \draw (1.75,0.75)--(2,0.5)--(3,0)--(3,1.5)--(3,3)--(2,2.5)--(1.75,2.25);
            \draw[dotted] (1.5,2)--(1.5,1);
            \draw (3,3)--(4,2.5)--(4.25,2.25);
            \draw[dotted] (4.5,2)--(4.5,1);
            \draw (3,0)--(4,0.5)--(4.25,0.75);
            \fill (3,0) circle(2pt);
            \fill (3,3) circle(2pt);
            \fill (3,1.5) circle(2pt);
            \fill (2,0.5) circle(2pt);
            \fill (4,0.5) circle(2pt);
            \fill (2,2.5) circle(2pt);
            \fill (4,2.5) circle(2pt);
            \draw (3,-0.3) node{\tiny{$j_{q_1}$}};
            \draw (3,3.3) node{\tiny{$j_1$}};
            \draw (3.3,1.5) node{\tiny{$k_1$}};
            \draw (2.25,0.75) node{\tiny{$j_{q_1-1}$}};
            \draw (3.75,0.75) node{\tiny{$j_{q_1+1}$}};
            \draw (2,2.25) node{\tiny{$j_2$}};
            \draw (4,2.25) node{\tiny{$j_n$}};
        \end{tikzpicture}
        \caption{$\sK_{I_1}$ divides $\sK$ into two simplicial complexes.}\label{fig:complex from I_1}
    \end{figure}
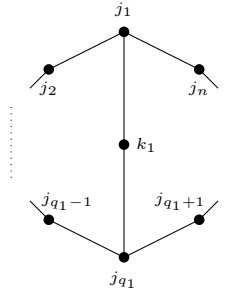

Since $\{i,i'\}\not\subset S$, either $\{i,i'\}\cap\{j_1,\ldots,j_{q_1-1}\}=\varnothing$ or
$\{i,i'\}\cap\{j_{q_1},\ldots,j_{n}\}=\varnothing$. Without loss of generality, assume that
$\{i,i'\}\cap\{j_1,\ldots,j_{q_1-1}\}=\varnothing$. Then $m_{j_p}>4$ for $1\le p\le q_1-1$.
By the flagness of $\sK$ and the condition for the existence of an edge satisfying~$(\ast)$, there is no vertex $k\in[m]\setminus S$ such that $k$ is connected to the vertices $j_p$ and $j_{p+2}$ for $1\le p \le q_1-2$. This implies in particular that $q_1>3$.

Now consider the path from $j_2$ to $k_2$ and to $j_{q_2}$. If $k_2=k_1$, then we may assume that $j_{q_2}=j_{q_1}$. Otherwise, $k_2$ must be contained in the simplicial complex~$\sK_1$. In either case, the the path $j_2-k_2-j_{q_2}$  is contained in the simplicial subcomplex $\sK_1$ with boundary vertices $j_1,\ldots,j_{q_1},k_1$. Proceeding inductively, we obtain that the path
$j_p-k_p-j_{q_p}$ is contained in the simplicial subcomplex whose boundary vertices are $j_{p-1},\ldots,j_{q_{p-1}},k_{p-1}$, see Figure~\ref{fig:path}.
It follows that $p<q_p\le q_{p-1}\le\cdots\le q_1$. Eventually we obtain $p$ such that $q_p=p+2$, so the vertex $k_p$ is connected to the vertices $j_{p}$ and $j_{p+2}$. This is a contradiction.
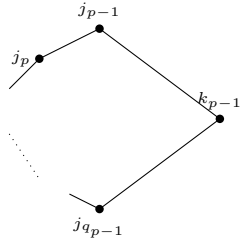
\begin{figure}[h]
        \begin{tikzpicture}[scale=.8]
            \draw (0.5,2)--(1,2.5)--(2,3)--(4,1.5)--(2,0)--(1.5,0.25);
            \draw[dotted] (0.5,1.25)--(1,0.5);
            \fill (1,2.5) circle(2pt);
            \fill (2,3) circle(2pt);
            \fill (4,1.5) circle(2pt);
            \fill (2,0) circle(2pt);
            \draw (0.7,2.5) node{\tiny{$j_p$}};
            \draw (2,3.3) node{\tiny{$j_{p-1}$}};
            \draw (4,1.8) node{\tiny{$k_{p-1}$}};
            \draw (2,-.3) node{\tiny{$j_{q_{p-1}}$}};
        \end{tikzpicture}
        \caption{The path $j_p-k_p-j_{q_p}$ is contained in the above simplicial complex.}\label{fig:path}
\end{figure}

From I and II, the lemma is proved.
\end{proof}

\begin{lemma}[{\cite[Lemma 3.2]{f-m-w}}] \label{Fabc}
Let $P$ be a flag $3$-polytope without $4$-belts. Then for every
three different facets $F_i$, $F_{i'}$, $F_k$ with $F_i\cap
F_{i'}=\varnothing$ there is a belt $\mathcal B$ such that
$F_i,F_{i'}\in\mathcal B$, $F_k\notin\mathcal B$, and $F_k$ does not
intersect at least one of the two connected components of
$\mathcal{B}\setminus\{F_i,F_{i'}\}$.
\end{lemma}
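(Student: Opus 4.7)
I would translate Lemma~\ref{Fabc} to the dual simplicial complex $\sK_P$, which is a flag triangulation of $S^2$ without chordless $4$-cycles. Belts correspond to chordless cycles, and ``$F_k$ intersects a facet'' becomes ``the corresponding vertex belongs to $\lk(k)$''. The task then becomes to produce a chordless cycle $\gamma$ in $\sK_P$ through the vertices $i$ and $i'$ with $k \notin \gamma$, such that $\gamma \cap \lk(k)$ is contained in one of the two closed arcs of $\gamma$ with endpoints $\{i,i'\}$.

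The plan is to take a chordless cycle furnished by Lemma~\ref{Fab} and, among all such cycles, select one that minimises
\[
  N(\gamma) := \min\bigl(|A_1 \cap \lk(k)|,\, |A_2 \cap \lk(k)|\bigr),
\]
where $A_1, A_2$ are the two open arcs of $\gamma$ cut by $\{i, i'\}$. I would then argue $N(\gamma) = 0$. Suppose the contrary, and assume without loss of generality that $|A_1 \cap \lk(k)| \le |A_2 \cap \lk(k)|$. Pick $v \in A_1 \cap \lk(k)$ with neighbours $v^-, v^+$ on $\gamma$; since $\gamma$ is chordless, $v^-$ and $v^+$ are non-adjacent in $\sK_P$. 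The link $\lk(v)$ is itself a chordless cycle (flagness of $\sK_P$ forces every vertex link to be induced), and it contains $k$ because $v \in \lk(k)$. Splitting $\lk(v)$ by $\{v^-, v^+\}$ into two arcs $L^+, L^-$ with $k \in L^+$, the following crucial observation applies: since $P$ is simple, $\lk(v) \cap \lk(k)$ consists of exactly two vertices—the two vertices of the edge $F_v \cap F_k$ of $P$—which coincide with the two neighbours of $k$ in $\lk(v)$ and therefore both lie on $L^+$. Consequently the interior of $L^-$ is disjoint from $\lk(k)$, and $L^-$ has at least one interior vertex (otherwise $v^-v^+$ would be a chord of $\gamma$).

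Replacing the subpath $v^- - v - v^+$ in $\gamma$ by the arc $L^-$ yields a new cycle $\widetilde\gamma$ through $i, i'$ avoiding $k$, with $\widetilde\gamma \cap \lk(k) = (\gamma \cap \lk(k)) \setminus \{v\}$ and no new $\lk(k)$-vertices introduced. The main obstacle—and the heart of the proof—is that $\widetilde\gamma$ may fail to be chordless: chords can arise between interior vertices of $L^-$ and vertices of $\gamma$ outside $\{v, v^-, v^+\}$. I would complete the argument by iteratively tightening $\widetilde\gamma$ along chords whose endpoints do not separate $i$ and $i'$, obtaining a chordless cycle $\gamma^*$ through $i, i'$ satisfying $\gamma^* \cap \lk(k) \subset \widetilde\gamma \cap \lk(k)$. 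Verifying that a sequence of such non-separating tightenings always terminates in a chordless cycle is the delicate step, and this is where the no-$4$-belt hypothesis becomes indispensable: the absence of chordless $4$-cycles in $\sK_P$ obstructs the short chord configurations that would otherwise allow every chord to separate $i$ from $i'$, via a short-cycle analysis involving $v$, the relevant vertex of $L^-$, and the putative chord endpoint. The resulting cycle $\gamma^*$ then has $N(\gamma^*) < N(\gamma)$, contradicting minimality and establishing $N(\gamma) = 0$ for the minimiser, which is the required belt.
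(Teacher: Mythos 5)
Your setup is sound as far as it goes, and it parallels the first half of the paper's argument: dualise, start from the chordless cycle given by Lemma~\ref{Fab}, and push the arc on the $k$-side off $\lk_\sK\{k\}$. In a flag triangulated $2$-sphere every vertex link is indeed an induced cycle, and for $v\in\lk_\sK\{k\}$ it is flagness (the absence of $3$-belts, not simplicity) that forces the common neighbours of $v$ and $k$ to be exactly the two neighbours of $k$ in $\lk_\sK\{v\}$; hence the arc $L^-$ of $\lk_\sK\{v\}$ not containing $k$ has interior disjoint from $\lk_\sK\{k\}$ and, by chordlessness of $\gamma$, also from $\gamma$, so $\widetilde\gamma$ is a genuine cycle through $i,i'$ avoiding $k$ with one fewer $\lk_\sK\{k\}$-vertex on the modified arc. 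Your instinct about where the no-$4$-belt hypothesis enters is also half right: a chord $\{w,x\}$ of $\widetilde\gamma$ with $w$ in the interior of $L^-$ and $x$ a vertex of the opposite arc lying in $\lk_\sK\{k\}$ is impossible, since $(k,v,w,x)$ would be a chordless $4$-cycle ($kv$, $vw$, $wx$, $xk$ are edges, while $kw$ and $vx$ are not).

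The genuine gap is exactly the step you defer. Chords $\{w,x\}$ with $x$ in the interior of the opposite arc but $x\notin\lk_\sK\{k\}$ are not excluded by any such $4$-cycle argument, and nothing in your sketch removes them. Such a chord has its endpoints in the interiors of the two different arcs between $i$ and $i'$, hence separates $i$ from $i'$, so your tightening cannot shortcut it without losing $i$ or $i'$; if at some stage all remaining chords are of this kind, the process halts on a non-chordless cycle, no $\gamma^*$ with $N(\gamma^*)<N(\gamma)$ is produced, and the minimality argument never reaches a contradiction. The claim that the absence of chordless $4$-cycles ``obstructs the short chord configurations that would otherwise allow every chord to separate $i$ and $i'$'' is precisely what needs to be proved, and at the level of single chords it is false. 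This is where the paper's proof does its real work: it pushes the whole $X$-side at once to the outermost path in $\sK_{\rm out}$, uses the no-$4$-belt hypothesis only to rule out edges from the new path to $Y=\lk_\sK\{k\}\cap\widetilde Y$, and then handles the surviving chords into $Y_+\cup Y_-$ constructively, by modifying the opposite arc as well (the innermost paths in $\sK_Y$, $\sK^+_Y$, $\sK^\pm_Y$ and the adjusted outer paths $\sK^+_X$, $\sK^\pm_X$ of Cases 1 and 2). Until you supply an argument of comparable substance for disposing of these separating chords, the proof is incomplete.
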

\begin{proof}
We work with the dual simplicial complex $\sK=\sK_P$, which is a triangulated 2-sphere. We need to find a subset $I\subset[m]\setminus\{k\}$ such that $\{i,i'\}\subset I$, $\sK_I$ is a chordless cycle, and $\widetilde H^0(\sK_{(I\setminus\{i,i'\})\cup\{k\}})\ne0$.
By Lemma~\ref{Fabd}, there is a subset $I_0$ of $[m]\setminus\{k\}$ such that $\{i,i'\}\subset I_0$ and $\sK_{I_0}$ is a chordless cycle. We construct the required subset $I$ by modifying $I_0$.

\begin{figure}[h]
\begin{subfigure}[b]{0.3\textwidth}
      \centering
      \begin{tikzpicture}[scale=1]
        \filldraw[fill=gray!10] (0,0) circle(1cm);
        \fill (0.86,0.5) circle(2pt);
        \fill (0.5,0.86) circle(2pt);
        \fill (-0.5,0.86) circle(2pt);
        \fill (-0.86,0.5) circle(2pt);
        \fill (-0.86,-0.5) circle(2pt);
        \fill (-0.5,-0.86) circle(2pt);
        \fill (0.5,-0.86) circle(2pt);
        \fill (0,1) circle(2pt);
        \fill (0,-1) circle(2pt);
        \draw (0,1.2) node{\tiny$i$};
        \draw (0,-1.2) node{\tiny$i'$};
        \fill  (1,0) circle(2pt) (0.86,-0.5) circle(2pt) (-1,0) circle(2pt);
        \draw (-1.5,0.5) node{$\sK_{I_0}$};
      \end{tikzpicture}
      \end{subfigure}
\begin{subfigure}[b]{0.3\textwidth}
      \centering
      \begin{tikzpicture}[scale=1]
        \filldraw[fill=gray!10] (0,0) circle(1cm);
        \fill (0,0) circle(2pt);
        \fill (0.86,0.5) circle(2pt);
        \fill (0.5,0.86) circle(2pt);
        \fill (-0.5,0.86) circle(2pt);
        \fill (-0.86,0.5) circle(2pt);
        \fill (-0.86,-0.5) circle(2pt);
        \fill (-0.5,-0.86) circle(2pt);
        \fill (0.5,-0.86) circle(2pt);
        \fill (0,1) circle(2pt);
        \fill (0,-1) circle(2pt);
        \fill  (1,0) circle(2pt) (0.86,-0.5) circle(2pt) (-1,0) circle(2pt);
        \draw (0,1.2) node{\tiny$i$};
        \draw (0,-1.2) node{\tiny$i'$};
        \draw[line width=7pt,black!50,cap=round,opacity=.5] (-0.5,0.86) arc(120:240:1cm);
        \draw[line width=7pt,black!50,cap=round,opacity=.5] (0.5,-0.86) arc(-60:60:1cm);
        \draw (-1.5,0) node{$\widetilde X$};
        \draw (1.5,0) node{$\widetilde Y$};
        \draw (0,-0.2) node{\tiny$k$};
        \draw (0.5,0.3) node{$\sK_{\rm in}$};
        \draw (1.5,1) node{$\sK_{\rm out}$};
      \end{tikzpicture}
      \end{subfigure}
\begin{subfigure}[b]{0.3\textwidth}
      \centering
      \begin{tikzpicture}[scale=1]
        \filldraw[fill=gray!10] (0,0) circle(1cm);
        \fill (0,0) circle(2pt);
        \fill (0.86,0.5) circle(2pt);
        \fill (0.5,0.86) circle(2pt);
        \fill (-0.5,0.86) circle(2pt);
        \fill (-0.86,0.5) circle(2pt);
        \fill (-0.86,-0.5) circle(2pt);
        \fill (-0.5,-0.86) circle(2pt);
        \fill (0.5,-0.86) circle(2pt);
        \fill (0,1) circle(2pt);
        \fill (0,-1) circle(2pt);
        \fill  (1,0) circle(2pt) (0.86,-0.5) circle(2pt) (-1,0) circle(2pt);
        \draw (0,1.2) node{\tiny$i$};
        \draw (0,-1.2) node{\tiny$i'$};
        \draw (-1.5,0) node{$X$};
        \draw (1.5,0) node{$Y$};
        \fill[black!50,opacity=.5] (-0.86,0.5) circle(5pt);
        \fill[black!50,opacity=.5] (-0.86,-0.5) circle(5pt);
        \fill[black!50,opacity=.5] (0.86,0.5) circle(5pt);
        \fill[black!50,opacity=.5] (1,0) circle(5pt);
        \fill[black!50,opacity=.5] (0.5,-0.86) circle(5pt);
        \draw (0,-0.2) node{\tiny$k$};
        \draw (0,0) edge (0.86,0.5) edge (-0.86,0.5) edge (-0.86,-0.5) edge (0.5,-0.86) edge (1,0);
      \end{tikzpicture}
      \end{subfigure}
%\begin{subfigure}[b]{0.3\textwidth}
%      \centering
%      \begin{tikzpicture}[scale=1]
%        \filldraw[fill=gray!10] (0,0) circle(1cm);
%        \fill (0,0) circle(2pt);
%        \fill (0.86,0.5) circle(2pt);
%        \fill (0.5,0.86) circle(2pt);
%        \fill (-0.5,0.86) circle(2pt);
%        \fill (-0.86,0.5) circle(2pt);
%        \fill (-0.86,-0.5) circle(2pt);
%        \fill (-0.5,-0.86) circle(2pt);
%        \fill (0.5,-0.86) circle(2pt);
%        \fill (0,1) circle(2pt);
%        \fill (0,-1) circle(2pt);
%        \fill  (1,0) circle(2pt) (0.86,-0.5) circle(2pt) (-1,0) circle(2pt);
%        \draw (0,1.2) node{\tiny$i$};
%        \draw (0,-1.2) node{\tiny$i'$};
%%        \draw (-1.2,0) node{$\widetilde X$};
%%        \draw (1.2,0) node{$\widetilde Y$};
%        \draw (0.86+0.2,0.5+0.1) node{\tiny$y_+$};
%%        \draw (0.5+0.2,0.86+0.1) node{\tiny$y_+$};
%%        \draw (-0.5-0.2,0.86+0.1) node{\tiny$x_+$};
%        \draw (-0.86-0.2,0.5+0.1) node{\tiny$x_+$};
%       \draw (-0.86-0.2,-0.55-0.1) node{\tiny$x_-$};
%%        \draw (-0.45-0.2,-0.86-0.1) node{\tiny$x_-$};
%        \draw (0.5+0.2,-0.86-0.1) node{\tiny$y_-$};
%        \draw (0,-0.2) node{\tiny$k$};
%        \draw (0,0) edge (0.86,0.5) edge (-0.86,0.5) edge (-0.86,-0.5) edge (0.5,-0.86) edge (1,0);
%        \draw (0.5,0.3) node{$\sK_{\rm in}$};
%        \draw (1.5,1) node{$\sK_{\rm out}$};
%      \end{tikzpicture}
%      \end{subfigure}
      \caption{Complexes $\sK_{I_0}$, $\sK_{\rm in}$ and $\sK_{\rm out}$, and sets $\widetilde X$, $\widetilde Y$, $X$ and $Y$}\label{fig:I0}
\end{figure}
Since $\sK_{I_0}$ is a cycle, it divides $\sK$ into two polygons (triangulated discs) $\sK_{\rm in}$ and $\sK_{\rm out}$ with the common boundary $\sK_{I_0}$. Assume that the vertex $k$ is contained in~$\sK_{\rm in}$. The vertices $i$ and $i'$ divide the cycle $\sK_{I_0}$ into two arcs, and we denote by $\widetilde X$ and $\widetilde Y$ the sets of vertices in $I_0\setminus\{i,i'\}$ contained in these arcs, so $I_0\setminus\{i,i'\}=\widetilde X \sqcup\widetilde Y$. We set $X:=\lk_\sK\{k\}\cap \widetilde X$ and $Y:=\lk_\sK\{k\}\cap\widetilde Y$, see Figure~\ref{fig:I0}. If either $X$ or $Y$ is empty, then $\widetilde{H}^0(\sK_{(I_0\setminus\{i,i'\})\cup\{k\}})\ne 0$, so $I:=I_0$ is the required subset. In what follows we assume that both $X$ and $Y$ are nonempty.

We consider the links of all $x\in X$ in $\sK_{\rm out}$. Since $\sK_{I_0}$ is a chordless cycle, every such link has at least three vertices, that is, there is a vertex in $\lk_{\sK_{\rm out}}\{x\}$ which is not in $I_0$.
To simplify notation, for $X\subset[m]$, we write $\lk_\sK X$ instead of $\bigcup_{x\in X}\lk_\sK \{x\}$. Now define
\[
  \sK_X:=\text{ the full subcomplex of }\sK \text{ induced on the set }
  \widetilde X \cup\{i,i'\}\cup \lk_{\sK_{\rm out}}X.
\]
We take the outermost path $\mathcal P_X$ between $i$ and $i'$ in $\sK_X$ with respect to the vertex~$k$, so that all vertices of $\sK_X$ not in $\mathcal P_X$ are on the side of~$k$, see Figure~\ref{fig:outermost}. Let $I_X$ be the vertex set of $\mathcal P_X$.
\begin{figure}[h]
 \begin{subfigure}[c]{0.3\textwidth}
 \centering
   \begin{tikzpicture}[scale=1]
        \fill[gray!10] (0,0) circle(1cm);
        \path (0,1) coordinate (v) (-0.34,0.93) coordinate (u1) (-0.64,0.76) coordinate(u2) (-0.86,0.5)
        coordinate (u3) (-0.98,0.17) coordinate (u4) (-0.98,-0.17) coordinate (u5) (-0.86,-0.5)
        coordinate (u6) (-0.64,-0.76) coordinate(u7) (-0.34,-0.93) coordinate (u8) (0,-1)
        coordinate (vp) (0,0) coordinate (w) (-0.8,1.2)
        coordinate (a) (-1.5,0.8) coordinate (b) (-1.3,0) coordinate (d) (-1.5,-0.9) coordinate (c) (-0.17,0.98) coordinate (u0);
        \path (0.86,0.5) coordinate (up2) (0.5,0.86) coordinate (up1) (0.5,-0.86) coordinate (up6) (0.86,-0.5) coordinate (up5) (0.98,-0.17) coordinate (up4) (0.98,0.17) coordinate (up3) (0.5,0) coordinate (e) (0.5,-0.5) coordinate (f) (0.75,-0.25) coordinate (g);
        \fill (u1) circle (2pt) (u2) circle (2pt) (u3) circle (2pt) (u4) circle (2pt) (u5) circle (2pt) (u6) circle (2pt) (u7) circle (2pt) (a) circle (2pt) (b) circle (2pt) (c) circle (2pt) (d) circle (2pt) (up6) circle (2pt) (up5) circle (2pt) (up4) circle (2pt) (up3) circle (2pt) (up2) circle (2pt) (up1) circle (2pt) (u0) circle(2pt);
        \fill (u8) circle (2pt) (v) circle (2pt) (vp) circle (2pt);
        \fill (0,0) circle(2pt);
        \draw (0,0) circle (1cm);
        \draw (0,1.2) node{\tiny$i$};
        \draw (0,-1.2) node{\tiny$i'$};
        \draw (0,-0.2) node{\tiny$k$};
        \path (0,0) edge (u2) edge (u7) edge (up2) edge (up6);
        \path (a) edge (u0) edge (u1) edge (u2) edge (b);
        \path (b) edge (u2) edge (u3) edge (d) edge (c);
        \path (c) edge (u8) edge (u7) edge (u6) edge (d);
        \path (d) edge (u3) edge (u4) edge (u5) edge (u6);
        \draw[line width=3pt,black!50,opacity=.5] plot coordinates {(u1) (a) (b) (u3)};
        \draw[line width=3pt,black!50,opacity=.5,join=round] plot coordinates {(u6) (c) (u8)};
  \end{tikzpicture}
  \caption*{$\lk_{\sK_{\rm out}}X$}
 \end{subfigure}
%%%%%%%%%%%
 \begin{subfigure}[c]{0.3\textwidth}
 \centering
  \begin{tikzpicture}[scale=1]
        \fill[gray!10] (0,0) circle(1cm);
        \path (0,1) coordinate (v) (-0.34,0.93) coordinate (u1) (-0.64,0.76) coordinate(u2) (-0.86,0.5)
        coordinate (u3) (-0.98,0.17) coordinate (u4) (-0.98,-0.17) coordinate (u5) (-0.86,-0.5)
        coordinate (u6) (-0.64,-0.76) coordinate(u7) (-0.34,-0.93) coordinate (u8) (0,-1)
        coordinate (vp) (0.5,-0.86) coordinate (up5) (0.86, -0.5) coordinate (up4) (0,1)
        coordinate (up3) (0.86,0.5) coordinate (up2) (0.5,0.86) coordinate (up1) (-0.8,1.2)
        coordinate (a) (-1.5,0.8) coordinate (b) (-1.3,0) coordinate (d) (-1.5,-0.9) coordinate (c) (-0.17,0.98) coordinate (u0);
        \fill (u1) circle (2pt) (u3) circle (2pt) (u4) circle (2pt) (a) circle (2pt) (b) circle (2pt) (c) circle (2pt);
        \fill (u5) circle (2pt) (u6) circle (2pt) (u8) circle (2pt) (u0) circle (2pt)  (u2) circle (2pt) (u7) circle (2pt);
        \fill (0,1) circle(2pt);
        \fill (0,-1) circle(2pt);
        \draw (0,1) arc (90:270:1cm);
        \path (u2) edge (a) edge (b);
        \path (u7) edge (c);
        \draw (0,1.2) node{\tiny$i$};
        \draw (0,-1.2) node{\tiny$i'$};
        \path (a) edge (u0) edge (u1) edge (b);
        \path (b) edge (u3) edge (c);
        \path (c) edge (u8) edge (u6);
  \end{tikzpicture}
  \caption*{$\sK_X$}
 \end{subfigure}
%%%%%%%%%%%%%
 \begin{subfigure}[c]{0.3\textwidth}
 \centering
  \begin{tikzpicture}[scale=1]
        \fill[gray!10] (0,0) circle(1cm);
        \path (0,1) coordinate (v) (-0.34,0.93) coordinate (u1) (-0.64,0.76) coordinate(u2) (-0.86,0.5)
        coordinate (u3) (-0.98,0.17) coordinate (u4) (-0.98,-0.17) coordinate (u5) (-0.86,-0.5)
        coordinate (u6) (-0.64,-0.76) coordinate(u7) (-0.34,-0.93) coordinate (u8) (0,-1)
        coordinate (vp) (-0.8,1.2)
        coordinate (a) (-1.5,0.8) coordinate (b) (-1.3,0) coordinate (d) (-1.5,-0.9) coordinate (c) (-0.17,0.98) coordinate (u0);
        \fill (u1) circle (2pt) (u3) circle (2pt) (u4) circle (2pt) (a) circle (2pt) (b) circle (2pt) (c) circle (2pt);
        \fill (u5) circle (2pt) (u6) circle (2pt) (u8) circle (2pt);
        \fill (0,-1) circle(2pt) (v) circle (2pt) (0,0) circle (2pt) (u0) circle (2pt);
        \draw[line width=3pt,black!50,opacity=.5] (u8) arc (-110:-90:1cm);
        \fill (u2) circle (2pt) (u7) circle (2pt);
        \draw (0,1) arc (90:270:1cm);
        \path (u2) edge (a) edge (b);
        \path (u7) edge (c);
        \draw (0,1.2) node{\tiny$i$};
        \draw (0,-1.2) node{\tiny$i'$} (0,0)[below] node{\tiny$k$};
        \path (a) edge (u0) edge (u1) edge (b);
        \draw[line width=3pt,black!50,opacity=.5] (v) arc(90:100:1cm);
        \draw (v) arc(90:100:1cm);
        \path[line width=3pt,black!50,opacity=.5] (a) edge (u0) edge (b);
        \path (a) edge (u0) edge (b);
        \path (b) edge (u3) edge (c);
        \path[line width=3pt,black!50,opacity=.5] (b) edge (c);
        \path (b) edge (c);
        \path (c) edge (u8) edge (u6);
        \path[line width=3pt,black!50,opacity=.5] (c) edge (u8);
        \path (c) edge (u8);
        \path (0,0) coordinate (w) (0.86,0.5) coordinate (up2) (0.5,0.86) coordinate (up1) (-0.5,0.86) coordinate (u1) (-0.86,0.5) coordinate (u2) (-1,0) coordinate (u3) (-0.86,-0.5) coordinate (u4) (-0.5,-0.86) coordinate (u5) (0.5,-0.86) coordinate (up6) (0.86,-0.5) coordinate (up5) (0.98,-0.17) coordinate (up4) (0.98,0.17) coordinate (up3) (0,1) coordinate (v) (0,-1) coordinate (vp) (0.5,0) coordinate (a) (0.5,-0.5) coordinate (b) (0.75,-0.25) coordinate (c);
  \end{tikzpicture}
  \caption*{The gray thick path is $\mathcal P_X$.}
 \end{subfigure}
 \caption{Complex $\sK_X$ and path $\mathcal P_X$}\label{fig:outermost}
\end{figure}
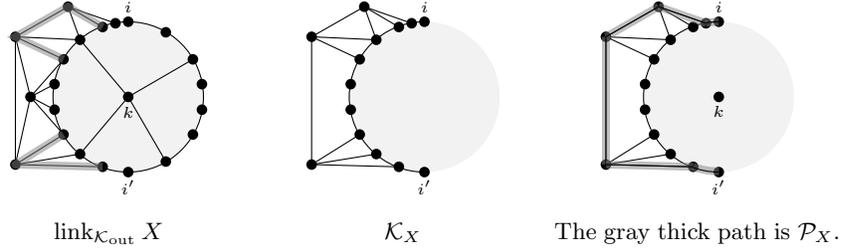

\noindent\textbf{Claim.} The full subcomplex of $\sK$ induced on $I_X$ is the path $\mathcal P_X$, i.e., $\sK_{I_X}=\mathcal P_X$.

\begin{proof}[Proof of Claim]
Suppose to the contrary that there is a subset $\{p,q,r\}$ of $I_1$ such that $\sK_{\{p,q,r\}}$ is a triangle. Consider the intersection $\{p,q,r\}\cap\widetilde X$. Note that $|\{p,q,r\}\cap \widetilde X|<3$ because $\sK_{\widetilde X}$ is a part of a chordless cycle $\sK_{I_0}$.
We have the following cases, shown in Figure~\ref{fig:no-triangle}.
    \begin{figure}[h]
        \begin{subfigure}[b]{.18\textwidth}
        \centering
        \begin{tikzpicture}[scale=1]
            \filldraw[fill=gray!10] (0,0) circle (1cm);
            \path (-0.34,0.93) coordinate (uip) (-1,0) coordinate (y) (-0.86,0.5) coordinate (x) (-0.5,0.86) coordinate
            (u1) (-1.2,1.5) coordinate (z) (-0.9,1) coordinate (c);
            \draw (x) node[below]{\tiny$p$} (y) node[below]{\tiny$q$} (z) node[above]{\tiny$r$} (uip) node[below]{\tiny$x$};
            \path (z) edge (y) edge (x) edge (uip);
            \fill (x) circle(2pt) (y) circle(2pt) (z) circle(2pt) (uip) circle(2pt);
            \fill[blue, opacity=.5] (uip) circle (5pt);
        \end{tikzpicture}
        \caption*{(1)}
        \end{subfigure}
        \begin{subfigure}[b]{.18\textwidth}
        \centering
        \begin{tikzpicture}[scale=1]
            \filldraw[fill=gray!10] (0,0) circle (1cm);
            \path (-1,0) coordinate (uip) (-0.86,0.5) coordinate (x) (-0.5,0.86) coordinate (uiq) (-1.4,0.8) coordinate
            (y) (-0.9,1.2) coordinate (z);
            \draw (x) node[below]{\tiny$p$} (y) node[above]{\tiny$q$} (z) node[above]{\tiny$r$} (uip) node[below]{\tiny$x$} (uiq)
            node[below]{\tiny$x'$};
            \draw (x)--(y)--(z)--cycle;
            \fill (x) circle(2pt) (y) circle(2pt) (z) circle(2pt) (uiq) circle(2pt) (uip) circle(2pt);
            \draw (y)--(uip);
            \draw (z)--(uiq);
            \fill[blue, opacity=.5] (uip) circle (5pt);
            \fill[blue, opacity=.5] (uiq) circle (5pt);
        \end{tikzpicture}
        \caption*{(2)-(a)}
        \end{subfigure}
        \begin{subfigure}[b]{.18\textwidth}
        \centering
        \begin{tikzpicture}[scale=1]
            \filldraw[fill=gray!10] (0,0) circle (1cm);
            \path (-0.34,0.93) coordinate (uiq) (-1,0) coordinate (x) (-0.86,0.5) coordinate (uip) (-0.5,0.86) coordinate
            (u1) (-1.2,1.5) coordinate (y) (-0.9,1) coordinate (z);
            \draw (x) node[below]{\tiny$p$} (y) node[above]{\tiny$q$} (z) node[above]{\tiny$r$} (uiq) node[below]{\tiny$x$};
            \draw (x)--(y)--(z)--cycle;
            \fill (x) circle(2pt) (y) circle(2pt) (z) circle(2pt) (uiq) circle(2pt);
            \draw (y)--(uiq);
            \draw (z)--(uiq);
            \fill[blue, opacity=.5] (uiq) circle (5pt);
        \end{tikzpicture}
        \caption*{(2)-(b)}
        \end{subfigure}
        \begin{subfigure}[b]{.18\textwidth}
        \centering
        \begin{tikzpicture}[scale=1]
            \filldraw[fill=gray!10] (0,0) circle (1cm);
            \path (-0.34,0.93) coordinate (uiq) (-1.5,0.5) coordinate (y) (-0.86,0.5) coordinate (uir) (-1,0) coordinate
            (uip) (-0.9,1.5) coordinate (z) (-1.2,0.8) coordinate (x);
            \draw (x) node[below]{\tiny$p$} (y) node[below]{\tiny$q$} (z) node[above]{\tiny$r$} (uip) node[below]{\tiny$x'$} (uiq)
            node[below]{\tiny$x''$} (uir) node[below]{\tiny$x$};
            \fill (x) circle(2pt) (y) circle(2pt) (z) circle(2pt) (uiq) circle (2pt) (uip) circle(2pt) (uir) circle(2pt);
            \draw (x)--(y)--(z)--cycle;
            \path (z) edge (uiq);
            \path (y) edge (uip);
            \path (x) edge (uir);
            \fill[blue, opacity=.5] (uip) circle (5pt);
            \fill[blue, opacity=.5] (uiq) circle (5pt);
            \fill[blue, opacity=.5] (uir) circle (5pt);
        \end{tikzpicture}
        \caption*{(3)-(a)}
        \end{subfigure}
        \begin{subfigure}[b]{.18\textwidth}
        \centering
        \begin{tikzpicture}[scale=1]
            \filldraw[fill=gray!10] (0,0) circle (1cm);
            \path (-0.34,0.93) coordinate (uiq) (-0.8,1.2) coordinate (y) (-0.86,0.5) coordinate (uir) (-1,0) coordinate
            (uip) (-0.9,1.5) coordinate (z) (-1.2,0.8) coordinate (x);
            \draw (x) node[below]{\tiny$p$} (y) node[below]{\tiny$q$} (z) node[above]{\tiny$r$} (uiq) node[below]{\tiny$x'$} (uir) node[below]{\tiny$x$};
            \fill (x) circle(2pt) (y) circle(2pt) (z) circle(2pt) (uiq) circle (2pt) (uip) (uir) circle(2pt);
            \draw (x)--(y)--(z)--cycle;
            \path (z) edge (uiq);
            \path (y) edge (uiq);
            \path (x) edge (uir);
            \fill[blue, opacity=.5] (uir) circle (5pt);
            \fill[blue, opacity=.5] (uiq) circle (5pt);
        \end{tikzpicture}
        \caption*{(3)-(b)}
        \end{subfigure}
        \caption{$\sK_{X}$ has no triangle.}\label{fig:no-triangle}
    \end{figure}
\begin{enumerate}
\item Let $|\{p,q,r\}\cap \widetilde X|=2$, say $\{p,q,r\}\cap \widetilde X=\{p,q\}$. Then $p$ and $q$
  are consecutive vertices in $X$, and $r$ is in $\lk_{\sK_{\rm out}}\{x\}$ for some $x\in X$. Then, $p$ or $q$ is on the side of $k$ in $\sK_{X}$.
  This is a contradiction.

\item Let $|\{p,q,r\}\cap \widetilde X|=1$, say $\{p,q,r\}\cap \widetilde X=\{p\}$. Then $q\in \lk_{\sK_2} x$ and
                $r\in\lk_{\sK_2}\{x'\}$ for some $x,x'\in X$.
    \begin{enumerate}
    \item[(a)] If $x\neq x'$, then $p$ must be on the side of $k$ in $\sK_{X}$, which contradicts
    the assumption that $p\in \mathcal P_X$.
    \item[(b)] If $x=x'$, then either $q$ or $r$ is on the side of $k$ in $\sK_{X}$, and we obtain
    a contradiction again.
    \end{enumerate}

\item Let $|\{p,q,r\}\cap \widetilde X|=0$. Then there are $x,x',x''$ in $X$ such that $p\in \lk_{\sK_{\rm out}}\{x\}$, $q\in \lk_{\sK_{\rm out}}\{x'\}$, and $r\in \lk_{\sK_{\rm out}}\{x''\}$. Since $p,q,r$ are in the outermost path $\mathcal P_X$,  the case $x=x'=x''$ is impossible.  Hence, we may assume that $x\neq x'$ or $x\neq x''$.
\begin{enumerate}
  \item[(a)] If $x,x',x''$ are all distinct, then one of $p$, $q$, and $r$ must be on the side of $k$ in $\sK_{X}$, which contradicts the assumption that $p,q,r$ are on $\mathcal P_X$ and $\mathcal P_X$ is the outermost path with respect to~$k$.
  \item[(b)] If $x'=x''$, then either $q$ or $r$ is on the side of $k$ in $\sK_{X}$.
  This final contradiction finishes the proof of the claim.\qedhere
\end{enumerate}
\end{enumerate}
\end{proof}

We return to the proof of Lemma~\ref{Fabc}.
The endpoints of the path $\mathcal P_X=\sK_{I_X}$ are $i,i'$ and there is no edge connecting $k$ and $I_X$. Therefore, if $\sK_{I_X\cup\widetilde Y}$ is a chordless cycle, then $I_X\cup\widetilde Y$ is the required set~$I$.

Suppose that $\sK_{I_X\cup \widetilde Y}$ has a chord. Then the chord must be an edge in $\sK_{\rm out}$. Note that since $\sK$ has no chordless $4$-cycles, there is no edge connecting $\lk_{\sK_{\rm out}}X$ and~$Y$. We consider the vertices $x_+\in X$ and $x_-\in X$ that are closest to $i$ and $i'$, respectively, on the arc containing~$\widetilde X$. Similarly, consider the vertices $y_+\in Y$ and $y_-\in Y$ that are closest to $i$ and $i'$, respectively, on the arc containing~$\widetilde Y$. Denote by $X_+$ the subset of vertices in $\widetilde X$ lying strictly between $i$ and~$x_+$. Define the subsets $X_-\subset\widetilde X$, $Y_+\subset\widetilde Y$ and $Y_-\subset\widetilde Y$ similarly.
See Figure~\ref{fig:case1}, left.

\begin{figure}[h]
 \begin{subfigure}[c]{0.3\textwidth}
 \centering
   \begin{tikzpicture}[scale=1]
        \fill[gray!10] (0,0) circle(1cm);
        \path (0,1) coordinate (v) (-0.34,0.93) coordinate (u1) (-0.64,0.76) coordinate(u2) (-0.86,0.5)
        coordinate (u3) (-0.98,0.17) coordinate (u4) (-0.98,-0.17) coordinate (u5) (-0.86,-0.5)
        coordinate (u6) (-0.64,-0.76) coordinate(u7) (-0.34,-0.93) coordinate (u8) (0,-1)
        coordinate (vp) (0,0) coordinate (w) (-0.8,1.2)
        coordinate (a) (-1.5,0.8) coordinate (b) (-1.3,0) coordinate (d) (-1.5,-0.9) coordinate (c);
        \path (0.86,0.5) coordinate (up2) (0.5,0.86) coordinate (up1) (0.5,-0.86) coordinate (up6) (0.86,-0.5) coordinate (up5) (0.98,-0.17) coordinate (up4) (0.98,0.17) coordinate (up3) (0.5,0) coordinate (e) (0.5,-0.5) coordinate (f) (0.75,-0.25) coordinate (g) (-0.17,0.98) coordinate (u0);
        \fill (u1) circle (2pt) (u2) circle (2pt) (u3) circle (2pt) (u4) circle (2pt) (u5) circle (2pt) (u6) circle (2pt) (u7) circle (2pt) (a) circle (2pt) (b) circle (2pt) (c) circle (2pt) (d) circle (2pt) (up6) circle (2pt) (up5) circle (2pt) (up4) circle (2pt) (up3) circle (2pt) (up2) circle (2pt) (up1) circle (2pt) (e) circle (2pt) (f) circle (2pt) (g) circle (2pt);
        \fill (u8) circle (2pt) (v) circle (2pt) (vp) circle (2pt) (u0) circle (2pt);
        \fill (0,0) circle(2pt);
        \draw (0,0) circle (1cm);
        \draw (0,1.2) node{\tiny$i$};
        \draw (0,-1.2) node{\tiny$i'$};
        \draw (0,-0.2) node{\tiny$k$};
        \path (0,0) edge (u2) edge (u7) edge (up2) edge (up6);
        \path (a) edge (u0) edge (u1) edge (u2) edge (b);
        \path (b) edge (u2) edge (u3) edge (d) edge (c);
        \path (c) edge (u8) edge (u7) edge (u6) edge (d);
        \path (d) edge (u3) edge (u4) edge (u5) edge (u6);
        \fill (u2) node[left]{\tiny$x_+$} (u7) node[below]{\tiny$x_-$};
        \path (up2) node[right]{\tiny$y_+$} (up6) node[below]{\tiny$y_-$};
        \path (e) edge (w) edge (f) edge (g) edge (up2) edge (up3);
        \path (f) edge (w) edge (up6) edge (up5) edge (g);
        \path (g) edge (up3) edge (up4) edge (up5);
        \draw[line width=3pt,black!50,opacity=.5] plot coordinates {(up2) (e) (g) (up4)};
        \draw[line width=3pt,black!50,opacity=.5] plot coordinates {(up4) (g) (f) (up6)};
        \draw[line width=3pt,black!50,opacity=.5] plot coordinates {(up3) (g) (up5)};
  \end{tikzpicture}
  \caption*{$\lk_{\sK_{\rm in}}(\widetilde Y\setminus(Y\cup Y_-\cup Y_+))$}
 \end{subfigure}
%%%%%%%%%%%
 \begin{subfigure}[c]{0.3\textwidth}
 \centering
  \begin{tikzpicture}[scale=1]
        \fill[gray!10] (0,0) circle(1cm);
        \path (0,1) coordinate (v) (-0.34,0.93) coordinate (u1) (-0.64,0.76) coordinate(u2) (-0.86,0.5)
        coordinate (u3) (-0.98,0.17) coordinate (u4) (-0.98,-0.17) coordinate (u5) (-0.86,-0.5)
        coordinate (u6) (-0.64,-0.76) coordinate(u7) (-0.34,-0.93) coordinate (u8) (0,-1)
        coordinate (vp) (0.5,-0.86) coordinate (up5) (0.86, -0.5) coordinate (up4) (0,1)
        coordinate (up3) (0.86,0.5) coordinate (up2) (0.5,0.86) coordinate (up1) (-0.8,1.2)
        coordinate (a) (-1.5,0.8) coordinate (b) (-1.3,0) coordinate (d) (-1.5,-0.9) coordinate (c) (-0.17,0.98) coordinate (u0);
        \fill (0,1) circle(2pt);
        \fill (0,-1) circle(2pt);
        \draw (0,-1) arc (-90:90:1cm);
        \path (0,0) coordinate (w) (0.86,0.5) coordinate (up2) (0.5,0.86) coordinate (up1) (-0.5,0.86) coordinate (u1) (-0.86,0.5) coordinate (u2) (-1,0) coordinate (u3) (-0.86,-0.5) coordinate (u4) (-0.5,-0.86) coordinate (u5) (0.5,-0.86) coordinate (up6) (0.86,-0.5) coordinate (up5) (0.98,-0.17) coordinate (up4) (0.98,0.17) coordinate (up3) (0,1) coordinate (v) (0,-1) coordinate (vp) (0.5,0) coordinate (a) (0.5,-0.5) coordinate (b) (0.75,-0.25) coordinate (c);
        \fill (up1) circle(2pt) (up2) circle(2pt) (up6) circle (2pt) (a) circle(2pt) (b) circle (2pt) (c) circle (2pt) (up3) circle (2pt) (up4) circle (2pt) (up5) circle (2pt);
        \path (v) node[above]{\tiny$i$} (vp) node[below]{\tiny$i'$} (up2) node[right]{\tiny$y_+$} (up6) node[below]{\tiny$y_-$};
        \path (a) edge (b) edge (c) edge (up2) edge (up3);
        \path (b) edge (c) edge (up6) edge (up5);
        \path (c) edge (up4) edge (up5) edge (up3);
  \end{tikzpicture}
  \caption*{$\sK_Y$}
 \end{subfigure}
%%%%%%%%%%%%%
 \begin{subfigure}[c]{0.3\textwidth}
 \centering
  \begin{tikzpicture}[scale=1]
        \fill[gray!10] (0,0) circle(1cm);
        \path (0,1) coordinate (v) (-0.34,0.93) coordinate (u1) (-0.64,0.76) coordinate(u2) (-0.86,0.5) coordinate (u3) (-0.98,0.17) coordinate (u4) (-0.98,-0.17) coordinate (u5) (-0.86,-0.5) coordinate (u6) (-0.64,-0.76) coordinate(u7) (-0.34,-0.93) coordinate (u8) (0,-1) coordinate (vp) (-0.8,1.2) coordinate (a) (-1.5,0.8) coordinate (b) (-1.3,0) coordinate (d) (-1.5,-0.9) coordinate (c) (-0.17,0.98) coordinate (u0);
        \fill (a) circle (2pt) (b) circle (2pt) (c) circle (2pt);
        \fill (u8) circle (2pt);
        \fill (0,-1) circle(2pt) (v) circle (2pt) (0,0) circle (2pt) (u0) circle (2pt);
        \draw (u8) arc (-110:-90:1cm);
        \draw (v) arc (90:100:1cm);
        \draw (0,1.2) node{\tiny$i$};
        \draw (0,-1.2) node{\tiny$i'$} (0,0)[below] node{\tiny$k$};
        \path (a) edge (u0) edge (b);
        \path (b) edge (c);
        \path (c) edge (u8);
        \draw (0,-1) arc (-90:90:1cm);
        \draw[line width=3pt,black!50,opacity=.5] (0,-1) arc (-90:-60:1cm);
        \draw[line width=3pt,black!50,opacity=.5] (0.86,0.5) arc (30:90:1cm);
        \path (0,0) coordinate (w) (0.86,0.5) coordinate (up2) (0.5,0.86) coordinate (up1) (-0.5,0.86) coordinate (u1) (-0.86,0.5) coordinate (u2) (-1,0) coordinate (u3) (-0.86,-0.5) coordinate (u4) (-0.5,-0.86) coordinate (u5) (0.5,-0.86) coordinate (up6) (0.86,-0.5) coordinate (up5) (0.98,-0.17) coordinate (up4) (0.98,0.17) coordinate (up3) (0,1) coordinate (v) (0,-1) coordinate (vp) (0.5,0) coordinate (a) (0.5,-0.5) coordinate (b) (0.75,-0.25) coordinate (c);
        \fill (up1) circle(2pt) (up2) circle(2pt) (up6) circle (2pt) (a) circle(2pt) (b) circle (2pt) (c) circle (2pt) (up3) circle(2pt) (up4) circle(2pt) (up5) circle (2pt);
        \path (v) node[above]{\tiny$i$} (vp) node[below]{\tiny$i'$} (up2) node[right]{\tiny$y_+$} (up6) node[below]{\tiny$y_-$};
        \path (a) edge (b) edge (c) edge (up2) edge (up3);
        \path (b) edge (c) edge (up6) edge (up5);
        \path (c) edge (up4) edge (up5) edge (up3);
        \path (0,0) edge (up2) edge (a) edge (b) edge (up6);
        \draw[line width=3pt,black!50,opacity=.5] plot coordinates {(up2) (a) (b) (up6)};
  \end{tikzpicture}
  \caption*{The gray thick path is $\mathcal P_Y$.}
 \end{subfigure}
 \caption{Example of Case 1}\label{fig:case1}
\end{figure}
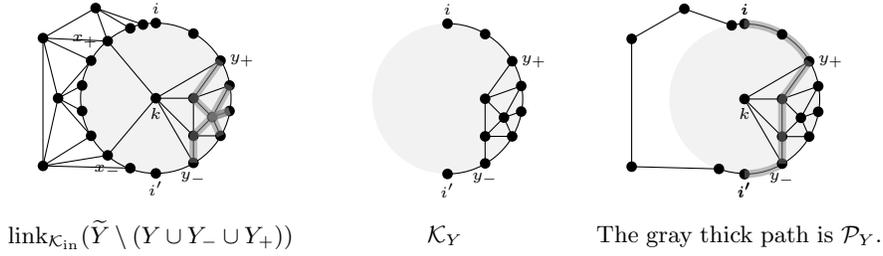

We consider two cases.

\smallskip

\noindent\textbf{Case 1. There is no edge connecting $I_X$ and $Y_-\cup Y_+$ in $\sK_{\rm out}$.}

We define
$$
  \sK_Y:=\text{ the full subcomplex of }\sK\text{ induced on }\widetilde Y\cup\{i,i'\}\cup
  \lk_{\sK_{\rm   in}}(\widetilde Y\setminus (Y\cup Y_-\cup Y_+)).
$$
We take the innermost path $\mathcal P_Y$ connecting $i$ and $i'$ in $\sK_Y$ with respect to $k$, see Figure~\ref{fig:case1}, and let $I_Y$ be the vertex set of $\mathcal P_Y$. Then $\sK_{I_Y}=\mathcal P_Y$ by the same argument as the claim above, and $I_X\cup I_Y$ is the required subset $I$.

\smallskip

\noindent\textbf{Case 2. There is an edge connecting $I_X$ and $Y_+$ or $Y_-$ in $\sK_{\rm out}$.}

Suppose that $I_X$ is connected by an edge in $\sK_{\rm out}$ to only one of $Y_+$ and $Y_-$, say to $Y_+$. We define
$$
  \sK^+_Y:=\text{the full subcomplex of }\sK\text{ induced on }\widetilde Y\cup\{i,i'\}\cup
  \lk_{\sK_{\rm     in}}(\widetilde Y\setminus(Y\cup Y_-)).
$$
We take the innermost path $\mathcal P^+_Y$ connecting $i$ and $i'$ in $\sK_{\rm in}$ with respect to the vertex $k$, and let $I^+_Y$ be the vertex set of $\mathcal P^+_Y$. See Figure~\ref{fig:case-2}, middle. Then $\sK_{I^+_Y}=\mathcal P^+_Y$ by the same reason as the claim above. If $\sK_{I_X\cup I_Y^+}$ is a chordless cycle, then $I_X\cup I_Y^+$ is the required subset $I$.
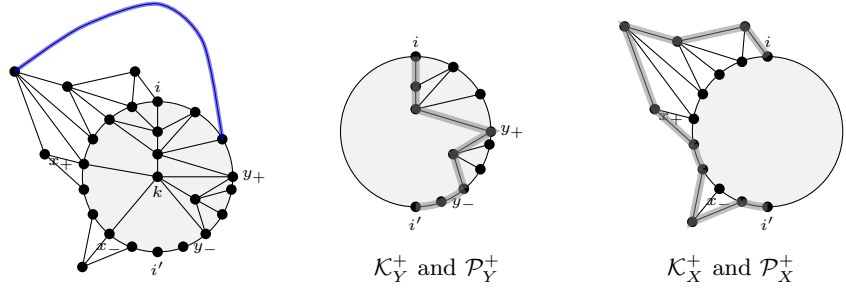
\begin{figure}[h]
    \begin{subfigure}[b]{0.3\textwidth}
    \centering
      \begin{tikzpicture}[scale=1]
        \fill[gray!10] (0,0) circle (1cm);
        \draw (0,0) circle(1cm);
        \path (0,0) coordinate (w) (0,1) coordinate (v) (-0.34,0.93) coordinate (u1) (-0.64,0.76) coordinate(u2)
        (-0.86,0.5) coordinate (u3) (-0.98,0.17) coordinate (u4) (-0.98,-0.17) coordinate (u5) (-0.86,-0.5) coordinate
        (u6) (-0.64,-0.76) coordinate(u7) (-0.34,-0.93) coordinate (u8) (0.34,-0.93) coordinate (up7) (0.64,-0.76) coordinate (up6) (0.86,-0.5) coordinate (up5) (0.98,-0.17) coordinate (up4) (1,0) coordinate (up3) (0.86,0.5) coordinate (up2) (0.5,0.86) coordinate (up1);
        \path (0,0.3) coordinate (a) (0,0.6) coordinate (x) (-1.2,1.2) coordinate (g) (-1.9,1.4) coordinate (h) (-1.5,0.3) coordinate (j) (-0.3,1.4) coordinate (y) (0.5,-0.3) coordinate (b) (-1,-1.2) coordinate (c);
        \fill (u1) circle (2pt) (u2) circle (2pt) (u3) circle (2pt) (u4) circle (2pt) (a) circle (2pt) (b) circle (2pt);
        \fill (u5) circle (2pt) (u6) circle (2pt) (u7) circle (2pt) (u8) circle (2pt) (v) circle (2pt) (vp) circle (2pt)
        (up1) circle (2pt) (up2) circle (2pt) (up3) circle (2pt) (up4) circle (2pt) (up5) circle (2pt) (w) circle (2pt) (up6) circle (2pt) (up7) circle (2pt) (c) circle (2pt);
        \fill (g) circle (2pt) (h) circle (2pt) (j) circle (2pt) (x) circle (2pt) (y) circle (2pt);
        \fill (u4) node[left]{\tiny$x_+$};
        \fill  (v) node[above]{\tiny$i$} (vp) node[below]{\tiny$i'$}  (up3) node[right]{\tiny$y_+$}  (w) node[below]{\tiny$k$} (u7) node[below]{\tiny$x_-$} (up6) node[below]{\tiny$y_-$};
        \path (b) edge (up3) edge (up4) edge (up5) edge (w) edge (up6);
        \path (c) edge (u6) edge (u7) edge (u8);
        \path (w) edge (a) edge (up3) edge (u4) edge (u7) edge (up6);
        \path (a) edge (up3) edge (up2) edge (up1) edge (u2);
        \path (g) edge (u1) edge (u2) edge (u3) edge (h);
        \path (h) edge (u3) edge (u4) edge (j);
        \path (j) edge (u4) edge (u5);
        \path (x) edge (v) edge (u1) edge (u2) edge (up1) edge (a);
        \path (y) edge (v) edge (g) edge (u1);
        \draw plot [smooth] coordinates {(h) (-1,2) (0,2.3) (0.6,1.8) (up2)};
        \draw[blue, ultra thick, opacity=.5] plot [smooth] coordinates {(h) (-1,2) (0,2.3) (0.6,1.8) (up2)};
      \end{tikzpicture}
    \end{subfigure}
    \begin{subfigure}[b]{0.3\textwidth}
    \centering
      \begin{tikzpicture}[scale=1]
        \fill[gray!10] (0,0) circle (1cm);
        \draw (0,0) circle(1cm);
        \path (0,0) coordinate (w) (0,1) coordinate (v) (0.34,-0.93) coordinate (up7) (0.64,-0.76) coordinate (up6) (0.86,-0.5) coordinate (up5) (0.98,-0.17) coordinate (up4) (1,0) coordinate (up3) (0.86,0.5) coordinate (up2) (0.5,0.86) coordinate (up1);
        \path (0,0.3) coordinate (a) (0,0.6) coordinate (x) (0.5,-0.3) coordinate (b);
        \fill (a) circle (2pt) (b) circle (2pt);
        \fill (v) circle (2pt) (vp) circle (2pt)
        (up1) circle (2pt) (up2) circle (2pt) (up3) circle (2pt) (up4) circle (2pt) (up5) circle (2pt) (up6) circle (2pt) (up7) circle (2pt);
        \fill (x) circle (2pt);
        \fill  (v) node[above]{\tiny$i$} (vp) node[below]{\tiny$i'$}  (up3) node[right]{\tiny$y_+$}  (up6) node[below]{\tiny$y_-$};
        \path (b) edge (up3) edge (up4) edge (up5) edge (up6);
        \path (a) edge (up3) edge (up2) edge (up1);
        \path (x) edge (v) edge (up1) edge (a);
        \draw[black!50,line width=3pt,opacity=.5] plot coordinates {(v) (x) (a) (up3) (b) (up6)};
        \draw[black!50,line width=3pt,opacity=.5] (vp) arc (-90:-50:1cm);
      \end{tikzpicture}
      \caption*{$\sK^+_Y$ and $\mathcal P^+_Y$}
    \end{subfigure}
    \begin{subfigure}[b]{0.3\textwidth}
    \centering
      \begin{tikzpicture}[scale=1]
        \fill[gray!10] (0,0) circle (1cm);
        \draw (0,0) circle(1cm);
        \path (0,0) coordinate (w) (0,1) coordinate (v) (-0.34,0.93) coordinate (u1) (-0.64,0.76) coordinate(u2)
        (-0.86,0.5) coordinate (u3) (-0.98,0.17) coordinate (u4) (-0.98,-0.17) coordinate (u5) (-0.86,-0.5) coordinate
        (u6) (-0.64,-0.76) coordinate(u7) (-0.34,-0.93) coordinate (u8);
        \path (-1.2,1.2) coordinate (g) (-1.9,1.4) coordinate (h) (-1.5,0.3) coordinate (j) (-0.3,1.4) coordinate (y) (-1,-1.2) coordinate (c);
        \fill (u1) circle (2pt) (u2) circle (2pt) (u3) circle (2pt) (u4) circle (2pt);
        \fill (u5) circle (2pt) (u6) circle (2pt) (u7) circle (2pt) (u8) circle (2pt) (v) circle (2pt) (vp) circle (2pt) (c) circle (2pt);
        \fill (g) circle (2pt) (h) circle (2pt) (j) circle (2pt) (y) circle (2pt);
        \fill (u4) node[left]{\tiny$x_+$};
        \fill  (v) node[above]{\tiny$i$} (vp) node[below]{\tiny$i'$} (u7) node[below]{\tiny$x_-$};
        \path (c) edge (u6) edge (u7) edge (u8);
        \path (g) edge (u1) edge (u2) edge (u3) edge (h);
        \path (h) edge (u3) edge (u4) edge (j);
        \path (j) edge (u4) edge (u5);
        \path (y) edge (v) edge (g) edge (u1);
        \draw[black!50,line width=3pt,opacity=.5] plot coordinates {(v) (y) (g) (h) (j) (u5)};
        \draw[black!50,line width=3pt,opacity=.5] plot coordinates {(u6) (c) (u8)};
        \draw[black!50,line width=3pt,opacity=.5] (u5) arc (190:210:1cm);
        \draw[black!50,line width=3pt,opacity=.5] (vp) arc (-90:-110:1cm);
      \end{tikzpicture}
      \caption*{$\sK^+_X$ and $\mathcal P^+_X$}
    \end{subfigure}
      \caption{Example of Case 2}\label{fig:case-2}
\end{figure}

If $\sK_{I_X\cup I^+_Y}$ has a chord, then it must be an edge in $\sK_{\rm in}$ connecting $\lk_{\sK_{\rm in}} Y_+$ and $X_+\cap I_X$. In this case we modify $I_X$ as follows. We define
$$
  \sK^+_X:=\text{ the full subcomplex of }\sK\text{ induced on }\widetilde X\cup\{i,i'\}
  \cup \lk_{\sK_{\rm out}}(X\cup X_+).
$$
We take the outermost path $\mathcal P^+_X$ connecting $i$ and $i'$ in $\sK^+_X$ with respect to the vertex~$k$, see Figure~\ref{fig:case-2}, right. Let $I^+_X$ be the vertex set of $\mathcal P^+_X$. Then $\sK_{I^+_X} =\mathcal P^+_X$ by the same argument as the claim above, and we can see that $I^+_X\cup I_Y$ is the required subset~$I$. Indeed, we only need to check that there is no edge connecting $\lk_{\sK_{\rm out}}X_+$ and $Y$ in $\sK_{\rm out}$. This is because there is an edge connecting $I_X$ and $Y_+$.

\smallskip

It remains to consider the case when $I_X$ is connected to both $Y_+$ and $Y_-$ by edges in $\sK_{\rm out}$. Here the same argument as above works if we consider
\begin{gather*}
  \sK^\pm_Y:=\text{the full subcomplex of }\sK\text{ induced on }\widetilde Y
  \cup     \{i,i'\}\cup\lk_{\sK_{\rm in}}(\widetilde Y\setminus Y),\\
  \sK^\pm_X:=\text{the full subcomplex of }\sK\text{ induced on }\widetilde X
  \cup \{i,i'\}\cup\lk_{\sK_{\rm out}}(X\cup X_+\cup X_-)
\end{gather*}
instead of $\sK^+_Y$ and $\sK^+_X$, respectively.
\end{proof}

\section{Combinatorics and constructions of Pogorelov polytopes}
We recall (see Subsection~\ref{sechyp}) that a Pogorelov polytope is a simple $3$-polytope $P\ne\varDelta^3$ without $3$-belts (that is, flag) and without $4$-belts. The class of Pogorelov polytopes is denoted by~$\mathcal{P}$.
%In what follows we refer to two-dimensional faces of a $3$-polytope as facets.

We shall use the following reformulation of the Steinitz Theorem:

\begin{theorem}[see~\cite{bu-erS}]\label{S2theorem}
A simple graph on a $2$-dimensional sphere is the graph of a
convex $3$-polytope if and only if the following two conditions
are satisfied:
\begin{itemize}
\item[(a)]
each connected component of the complement to the graph in the
sphere is bounded by a simple edge cycle, and

\item[(b)]
the intersection of the closures of any two different connected
components of the complement is either a single edge, a single
vertex, or empty.
\end{itemize}
\end{theorem}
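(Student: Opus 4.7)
The plan is to deduce Theorem~\ref{S2theorem} from the classical form of Steinitz's Theorem (Theorem~\ref{S-Theorem}) by showing that, for a simple graph already embedded in $S^2$, conditions~(a) and~(b) are equivalent to $3$-connectedness.

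For the ``only if'' direction, I would radially project the polytope $P$ from any interior point onto a surrounding sphere, obtaining a planar embedding of $G_P$ in $S^2$ in which the connected components of the complement are in bijection with the facets of $P$. Each facet is a convex polygon and hence bounded by a simple edge cycle, yielding~(a). Any two distinct facets of a convex $3$-polytope meet in a common face of strictly lower dimension, which is empty, a vertex, or an edge; this gives~(b).

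For the ``if'' direction, since simplicity and planarity are assumed, by Theorem~\ref{S-Theorem} it remains only to prove $3$-connectedness. I would first use~(a) to rule out cut vertices: if $v$ separated $G$ into components $H_1$ and $H_2$, then some two consecutive edges at $v$ in the cyclic order would go to different components, and the face wedged between them would have a closed boundary walk that must revisit $v$, contradicting the assertion that this boundary is a simple cycle. Then I would combine (a) and~(b) to exclude $2$-cuts $\{u,v\}$. Using the cyclic order at $u$ together with the $2$-connectedness just established, I would locate two distinct faces $F,F'$ whose simple boundary cycles both pass through $u$ and $v$: each such face ``bridges'' an $H_1$-edge to an $H_2$-edge at $u$, so its simple boundary cannot close up inside $H_1\cup\{u\}$ or $H_2\cup\{u\}$ and must return through~$v$. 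Then $\overline F\cap\overline{F'}\supseteq\{u,v\}$, which contradicts~(b) as soon as $u$ and $v$ are not adjacent, since this intersection is then neither empty, nor a single vertex, nor a single edge.

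The main obstacle I foresee is the residual case of a $2$-cut $\{u,v\}$ with $uv\in G$, because then the two faces incident to the edge $uv$ automatically share $\{u,v\}$ in a way permitted by~(b). To handle it I would examine the cyclic orders at $u$ and $v$ simultaneously: since each of $u$ and $v$ has neighbours in both $H_1$ and $H_2$, I expect to produce a \emph{third} face (on the $H_1$-side, say) whose simple boundary still contains both $u$ and $v$ but which is distinct from the two faces hugging the edge $uv$; its closure then meets one of those two faces in more than a single edge, violating~(b). This step will be the most delicate, since it is the one that genuinely uses the planar embedding on $S^2$ rather than a purely combinatorial argument on~$G$.
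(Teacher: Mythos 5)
The paper does not actually prove Theorem~\ref{S2theorem}; it quotes it from~\cite{bu-erS}, so there is no in-text argument to compare yours against. Judged on its own, your reduction to the classical Steinitz theorem (Theorem~\ref{S-Theorem}) via ``(a)$+$(b) $\Leftrightarrow$ $3$-connectedness for an embedded simple graph'' is the right and standard route, your cut-vertex argument is correct, and the step you flag as delicate does go through; let me close it. Suppose $\{u,v\}$ is a $2$-cut, let $H_1$ be one component of $G-\{u,v\}$ and $H_2$ the union of the others. Since there are no cut vertices, $u$ has neighbours in both $H_1$ and $H_2$. In the rotation of edges around $u$ the $H_1$-edges and $H_2$-edges give at least two block transitions in the cyclic order, and the (at most one) edge $uv$ can occupy only one of them; hence there is a corner at $u$ lying directly between an $H_1$-edge and an $H_2$-edge. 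The face $F_3$ at that corner has, by (a), a simple boundary cycle which enters $u$ from $H_1$ and leaves into $H_2$, and its return path avoids $u$, so it must pass through $v$. Since a simple boundary cycle visits $u$ exactly once, and that visit does not use $uv$, the edge $uv$ is not on $\partial F_3$, so $F_3$ is distinct from the two faces $F_1,F_2$ incident to $uv$. Then $\overline{F_3}\cap\overline{F_1}$ contains both $u$ and $v$ but no interior point of the edge $uv$, so it is neither a single edge, nor a single vertex, nor empty, contradicting (b). (Note the precise contradiction: the intersection may be exactly $\{u,v\}$, i.e.\ not ``more than a single edge'' as you wrote, but it still violates (b).) In the non-adjacent case your two-face argument is fine, since distinct corners at $u$ give distinct faces once each face boundary is a simple cycle; the same rotation count guarantees the two corners.

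Two smaller points should be added to make the reduction to Theorem~\ref{S-Theorem} airtight. First, the paper's definition of $3$-connectedness presupposes a connected graph with at least six edges; connectivity follows from (a) (if $G$ were disconnected, some face boundary would meet two components and hence could not be a connected simple cycle), and minimum degree $\geqslant 3$ --- hence at least six edges --- follows since vertices of degree $\le 1$ violate (a) and a degree-$2$ vertex is separated from the rest by its two neighbours, which your $2$-cut argument (or a direct small-case check) excludes. Second, in the ``only if'' direction the theorem speaks of a graph already embedded in $S^2$, while radial projection produces one particular embedding; to conclude (a) and (b) for the given embedding you should either invoke the uniqueness of sphere embeddings of $3$-connected planar graphs (Whitney's theorem) or read the statement as asserting the existence of such an embedding. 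With these additions your argument is complete.
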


The following proposition gives a characterisation of flag $3$-polytopes and Pogorelov polytopes in terms of $k$-belts.

\begin{proposition}\label{34bp}\hphantom{sdf}\
\begin{enumerate}
\item[(a)] A simple $3$-polytope $P$ is flag if and only if each of its facets is surrounded by a
$k$-belt, where $k$ is the number of edges in the facet. Furthermore, for a flag polytope we have $k\geqslant 4$.

\item[(b)] A simple $3$-polytope $P$ is a Pogorelov polytope if and only if each pair of its adjacent facets is surrounded by a $k$-belt; if the facets have $k_1$ and $k_2$ edges, then $k=k_1+k_2-4$. Furthermore, $k_1,k_2\geqslant 5$, hence, $k\geqslant 6$.
\end{enumerate}
\end{proposition}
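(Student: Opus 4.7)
The plan is to identify, for each case, a canonical cycle of facets around the given face ($F$ in (a), the edge $F_1\cap F_2$ in (b)), verify it has the asserted length, and translate the $k$-belt axioms into the absence of $3$-belts (for (a)) or of $3$- and $4$-belts (for (b)) in $P$.

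For (a), around a facet $F$ with $k$ edges, the $k$ adjacent facets $F_1,\ldots,F_k$ form a cycle in which consecutive members share an edge of $F$. I would verify the belt axioms by showing that each possible violation is equivalent to a $3$-belt containing $F$: a non-consecutive intersecting pair $F_i,F_j$ shares an edge off $F$ by simplicity, so $\{F,F_i,F_j\}$ is pairwise adjacent with no common vertex (else four facets would meet at a vertex of $P$), i.e.\ a $3$-belt; three facets $F_i,F_j,F_\ell$ meeting at a common vertex must meet at a vertex outside $F$ by simplicity, which forces two of them to be non-consecutive, reducing to the first case. The converse direction is the same translation in reverse. Flag polytopes exclude triangular facets: if $F$ has vertices $v_1,v_2,v_3$ and opposite adjacent facets $F_1,F_2,F_3$, then $F_i\cap F_j\ni v_k$ and simplicity prevents a common vertex of the triple, yielding the $3$-belt $\{F_1,F_2,F_3\}$. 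Hence $k\ge4$.

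For (b), at each endpoint of the shared edge $F_1\cap F_2$ simplicity gives exactly three facets, producing two \emph{corner} facets $F_u,F_v$, each adjacent to both $F_1$ and $F_2$. The $k_1-2$ remaining neighbours of $F_1$, the $k_2-2$ remaining neighbours of $F_2$, and the two corners assemble into a cycle of length $k_1+k_2-4$. To show this is a belt under $P\in\mathcal P$: first $k_1,k_2\ge5$, since $k_i=4$ would produce a $4$-belt around that quadrangular facet by (a), forbidden in $\mathcal P$. Any chord is then analysed in cases based on the sides of its endpoints. If both endpoints lie on the $F_1$-side (or both on the $F_2$-side), the chord produces a $3$-belt containing $F_1$ (resp.\ $F_2$) by the argument of (a). If they lie on opposite sides and are not corners, the endpoints $G,G'$ satisfy $G\cap F_2=\varnothing$ and $G'\cap F_1=\varnothing$, so $F_1,G,G',F_2$ is a chordless $4$-cycle in $\sK_P$, giving a $4$-belt. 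The cases involving a corner $F_u$ reduce via its double adjacency to the same-side analysis, except when the chord is $F_u$--$F_v$, which directly yields the $3$-belt $\{F_1,F_u,F_v\}$. The no-triple-intersection axiom follows automatically, since a common vertex of three cycle facets must lie outside $F_1\cup F_2$, forcing the three pairs to be pairwise non-consecutive and adjacent, i.e.\ yielding chords already ruled out. Conversely, a $3$-belt $\{G_1,G_2,G_3\}$ in $P$ has $G_3$ adjacent to both $G_1$ and $G_2$, so $G_3$ must be one of the two corners of the pair $\{G_1,G_2\}$, forcing a common vertex and contradicting the $3$-belt axiom; and a $4$-belt $\{H_1,H_2,H_3,H_4\}$ puts $H_3,H_4$ on opposite non-corner sides of the surrounding cycle of $\{H_1,H_2\}$, so their adjacency is a chord. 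The case $P=\varDelta^3$ is excluded since $k_1+k_2-4=2<3$ for every pair and a $k$-belt requires $k\ge3$.

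The main obstacle is the case analysis of chords in (b), particularly the bookkeeping for the two corner facets $F_u,F_v$ which sit simultaneously on both sides of the cycle. I would organise this by working dually in the simplicial $2$-sphere $\sK_P$, viewing the surrounding cycle as the boundary of $\mathrm{st}(v_1)\cup\mathrm{st}(v_2)$; then every chord is either a missing $3$-face of $\sK_P$ (a $3$-belt in $P$) or a chordless $4$-cycle (a $4$-belt), which makes the enumeration systematic.
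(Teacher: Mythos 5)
Your overall route is the same as the paper's: in (a) the neighbours of a facet form the candidate belt and each failure of the belt axioms is converted into a $3$-belt, and in (b) the cycle around $F_1\cup F_2$ is analysed chord by chord using (a), with the converse obtained by locating the facets of a putative $3$- or $4$-belt inside the surrounding cycle. Two of your micro-justifications, however, fail as written. First, the claim that ``simplicity prevents a common vertex of the triple'' of facets adjacent to a triangular facet is false: in the simple polytope $\varDelta^3$ the three facets surrounding any facet do share a vertex. The correct dispatch of this case --- which you also need to close the $k=3$ case of your main argument in (a), where ``two of them are non-consecutive'' breaks down --- is that a common vertex of the triple forces $P\simeq\varDelta^3$, which is not flag; this is exactly the case the paper treats separately.

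Second, in the forward direction of (b) you assert that a non-corner facet $G$ on the $F_1$-side satisfies $G\cap F_2=\varnothing$, and you tacitly assume that the assembled cyclic sequence has no repeated members, i.e.\ that no facet other than the two corners is adjacent to both $F_1$ and $F_2$. Neither of these is true by the definition of ``side''; both require the hypothesis on $P$: if such a $G$ met $F_2$, then $F_1$, $F_2$, $G$ would be pairwise adjacent, and by flagness (equivalently, by the belt around $F_1$ or $F_2$ from part (a)) they would share a vertex, which lies on the edge $F_1\cap F_2$ and makes $G$ a corner --- a contradiction. This one-line argument is precisely where the paper does the corresponding work (a repeated facet $G$ in the surrounding sequence yields the $3$-belt $(F_1,F_2,G)$, and part (a) is invoked to show that two intersecting members on the same side must be consecutive). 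With these two patches --- and noting that in your ``no triple intersection'' step it suffices that \emph{some} pair of the three facets is non-consecutive, not all three pairs --- your argument is complete and coincides with the paper's proof.
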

\begin{proof}
(a) Assume that $P$ is flag. Let $\mathcal{B}=(F_{i_1},\ldots,F_{i_k})$ be the sequence of facets
adjacent to a facet~$F$, written in a cyclic order. If $k=3$ and $F_{i_1}\cap F_{i_2}\cap F_{i_3}$ is a vertex, then $P\simeq \varDelta^3$. A contradiction. Let $k\geqslant 4$. If two different facets $F_{i_p}$ and $F_{i_q}$ with $|p-q|\ne 1\mod k$ have nonempty intersection, then $(F,F_{i_p},F_{i_q})$ is a
$3$-belt. A contradiction. Therefore, in either case the sequence $\mathcal{B}$ is a $k$-belt. Since a flag polytope does not have $3$-belts, we have $k\geqslant 4$ for any of its facets.

A simplex $\varDelta^3$ is not flag, and none of its facets is surrounded by a belt. If $P\ne\varDelta^3$ is not a flag polytope, then it has a $3$-belt $(F, F_i,F_j)$. Then the facets $F_i$ and $F_j$ have nonempty intersection, are adjacent to $F$, and are not consecutive in the sequence of facets around~$F$. Therefore, the facet $F$ is not surrounded by a belt.

\smallskip

(b) Assume that $P$ is a Pogorelov polytope. A pair of adjacent
facets $(F_i,F_j)$ is bounded by a simple edge cycle. Let
$\mathcal{L}=(F_{i_1}, \dots, F_{i_k})$ be the sequence of facets
around $F_i\cup F_j$, written in a cyclic order. If
$F_{i_a}=F_{i_b}$ for some $a\ne b$, then $(F_i,F_j,F_{i_a})$ is a
$3$-belt. A contradiction. If $\mathcal{L}$  is not a $k$-belt,
then $F_{i_a}\cap F_{i_b}\ne\varnothing$ for some $a,b$,
$|a-b|\ne0,1\mod k$. Since $P$ is flag, statement~(a) implies that
neither of $F_i$ and $F_j$ can be adjacent to both  $F_{i_a}$ and
$F_{i_b}$. Let $F_{i_a}$ be  adjacent to $F_i$, and $F_{i_b}$
adjacent to~$F_j$. Then $(F_{i_a}, F_i,F_j,F_{i_b})$ is a
$4$-belt. A contradiction. Therefore, $\mathcal{L}$ is a $k$-belt.
A simple calculation shows that $k=k_1+k_2-4$.

Assume now that each pair of adjacent facets in a simple polytope $P$ is surrounded by a belt. Then $P\not\simeq\varDelta^3$. If $(F_i,F_j,F_k)$ is a $3$-belt, then the facet $F_k$ appears twice in the cyclic sequence of facets around the pair of adjacent facets $F_i,F_j$. A contradiction. If $(F_i,F_j,F_k,F_l)$ is a $4$-belt, then the facets $F_k$ and $F_l$ belong to the cyclic sequence of facets around the pair of adjacent facets $F_i,F_j$. Since $F_i\cap F_k=\varnothing=F_j\cap F_l$, the facets $F_k$ and $F_l$ are not consecutive in this cyclic sequence. Therefore, the cyclic sequence is not a
$k$-belt. A contradiction. Thus, $P$ is a Pogorelov polytope.
\end{proof}

To each belt $\mathcal{B}$ on a simple $3$-polytope $P$ we assign a simple closed broken line $\gamma(\mathcal{B})$ in the following way: each segment of $\gamma(\mathcal{B})$ joins the midpoints of the edges obtained as the intersection of a facet from the belt with the preceding and subsequent facets. Theorem~\ref{S2theorem} implies the following result.

\begin{proposition}
Let $P$ and $Q$ be simple $3$-polytopes with chosen $k$-gonal facets $F\subset P$ and $G\subset Q$. Assume that each of $F$ and $G$ is surrounded by a $k$-belt. Then there exists a simple
$3$-polytope $R$ with a $k$-belt $\mathcal{B}$ such that the surfaces of the polytopes $P$ and $Q$ are  obtained by cutting the surface of $R$ along the broken line $\gamma(\mathcal{B})$ and gluing a pair of $k$-gons along this line. Furthermore, every polytope $R$ with a $k$-belt $\mathcal{B}$ is obtained from some polytopes $P$ and $Q$ by reversing this procedure.
\end{proposition}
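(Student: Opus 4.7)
The plan is to use Steinitz's theorem in the form of Theorem~\ref{S2theorem} and reduce the claim to a combinatorial verification on the 2-sphere, performing the cut-and-glue operation at the graph level in both directions.

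For the forward direction, I would first remove the interiors of $F$ and $G$ from $\partial P$ and $\partial Q$, obtaining disks $D_P$ and $D_Q$ whose boundaries are the $k$-cycles $\partial F=(f_1,\dots,f_k)$ with $f_i=F\cap F_i$, and $\partial G=(g_1,\dots,g_k)$ with $g_i=G\cap G_i$. Gluing $D_P$ to $D_Q$ along the cyclic identification $f_i\leftrightarrow g_i$ yields a 2-sphere $\Sigma$ on which I would install the following facet structure: keep the facets of $P$ outside $\{F,F_1,\dots,F_k\}$ and those of $Q$ outside $\{G,G_1,\dots,G_k\}$, and merge each pair $F_i,G_i$ into a single facet $B_i:=F_i\cup G_i$. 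The identified edge $f_i=g_i$ becomes interior to $B_i$, and at each identified vertex $F\cap F_i\cap F_{i+1}=G\cap G_i\cap G_{i+1}$ the two belt edges $F_i\cap F_{i+1}$ and $G_i\cap G_{i+1}$ fuse into a single edge $B_i\cap B_{i+1}$ of~$R$.

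Then I would verify conditions (a) and (b) of Theorem~\ref{S2theorem}. For (a), non-belt facets keep their original simple boundary cycles from $P$ or $Q$, and each $B_i$ is bounded by the concatenation of $\partial F_i\setminus\{f_i\}$ and $\partial G_i\setminus\{g_i\}$ with the two adjacent pairs of belt edges fused, giving a simple cycle of length $k_i+g_i-4\geq 4$ (here one uses that no $F_i$ can be a triangle, since otherwise the three belt facets $F_{i-1},F_i,F_{i+1}$ would share a vertex, contradicting the belt definition). For (b), pairs of non-belt facets retain their old intersections on the same side of the gluing or are disjoint across it; a non-belt facet $H$ of $P$ meets $B_i$ in $H\cap F_i$ as in $P$; consecutive $B_i,B_{i+1}$ meet in one edge; and $B_i\cap B_j=\varnothing$ for non-consecutive $i,j$ by the belt conditions $F_i\cap F_j=G_i\cap G_j=\varnothing$. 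Simplicity of $R$ (every vertex of degree 3, every edge incident to two facets) follows since every vertex of $R$ comes from a vertex of $P$ or $Q$ outside $\partial F\cup\partial G$, with its three incident facets inherited (belt ones replaced by the merged $B_i$). One then checks directly that $\mathcal{B}=(B_1,\dots,B_k)$ is a $k$-belt in $R$ and that the broken line $\gamma(\mathcal{B})$ through midpoints of $B_i\cap B_{i+1}$ cuts $\partial R$ back into two disks combinatorially equivalent to $D_P$ and $D_Q$, recovering $P$ and $Q$ up to combinatorial equivalence.

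The reverse direction is the same operation run backwards: given $R$ with a $k$-belt $\mathcal{B}$, the broken line $\gamma(\mathcal{B})$ partitions $\partial R$ into two topological disks, and capping each with a $k$-gon and verifying Theorem~\ref{S2theorem} for the resulting graphs in $S^2$ yields the polytopes $P$ and $Q$ with the caps as the distinguished $k$-gonal facets $F$ and $G$. The main obstacle will be the combinatorial bookkeeping around the glued boundary $\partial F=\partial G$: one must verify carefully that erasing the identified edges, fusing pairs of belt edges at identified vertices, and merging $F_i$ with $G_i$ produces a facet structure in which every vertex lies on exactly three facets and the intersection pattern required by condition~(b) of Theorem~\ref{S2theorem} holds. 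The three defining conditions of a $k$-belt---consecutive facets adjacent, non-consecutive facets disjoint, and no triple intersection---are precisely what rules out the potential degeneracies in this merging.
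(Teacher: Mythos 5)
Your proposal is correct and takes essentially the same route as the paper, which derives this proposition directly from the Steinitz-type criterion of Theorem~\ref{S2theorem}: your merging of $F_i$ with $G_i$ into belt facets $B_i$ (and the reverse cut-and-cap construction) together with the verification of conditions (a) and (b) is exactly the intended argument. The only small detail worth adding is that Theorem~\ref{S2theorem} also requires the resulting graph on the sphere to be simple (no loops or multiple edges), which follows from the same belt conditions (no three belt facets share a vertex, non-consecutive belt facets are disjoint) that you already invoke.
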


We refer to the polytope $R$ as the \emph{connected sum of simple polytopes $P$ and $Q$ at the facets $F$ and~$G$}. The result depends on the ordering of facets around $F$ and~$G$. Truncating a simple $3$-polytope at a vertex gives a new triangular facet surrounded by a $3$-belts. In this way the \emph{vertex connected sum of two simple polytopes} is defined, see
\cite[Construction~1.1.13]{bu-pa15}). Truncating a simple $3$-polytope at an edge gives a quadrilateral facet, which is surrounded by a $4$-belt whenever the two facets having a common vertex with the cut edge are not adjacent. If the chosen edges satisfy this property, the \emph{edge connected sum of two simple polytopes} is defined. For flag polytopes, the edge connected sum is defined at any edges.

\begin{example}\label{csexa}\hphantom{sdf}\

\noindent 1. A vertex connected sum of two dodecahedra gives a simple polytope with $18$ pentagonal and $3$ octagonal facets. This $3$-polytope is not flag, as it has a $3$-belt.

\smallskip

\noindent 2. An edge connected sum of two dodecahedra gives a simple polytope with $16$ pentagonal facets and either $4$ heptagonal facets, or $2$ hexagonal and $2$ octagonal facets, depending on the ordering of quadruples of facets around the chosen edges.  This $3$-polytope has a $4$-belt.

\smallskip

\noindent 3. A connected sum of a dodecahedron with two other dodecahedra, one at a pair of vertices and the other at a pair of edges, gives a simple $3$-polytope without triangular and quadrangular facets, but having both $3$- and $4$-belts.

\smallskip

These examples show that the absence of triangular and quadrangular facets does not guarantee that a $3$-polytope belongs to the Pogorelov class~$\mathcal P$.
\end{example}

The above operations of vertex and edge connected sum are used in the following structural result on simple $3$-polytopes.

\begin{theorem}\
\begin{enumerate}
\item[(a)] A simple $3$-polytope has a $3$-belt if and only if it can be decomposed into a connected sum of two simple polytopes at vertices.

\item[(b)] Any simple $3$-polytope is a vertex connected sum of simplices and flag polytopes.

\item[(c)] A flag $3$-polytope has a $4$-belt if and only if it either has a quadrangular facet, or is an edge connected sum of two flag polytopes.

\item[(d)] A $3$-polytope $P$ is flag if and only if it can be obtained from a set of Pogorelov polytopes and cubes by the operations of edge connected sum and edge truncation.
\end{enumerate}
\end{theorem}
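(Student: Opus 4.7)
The common thread in all four parts will be Steinitz's theorem in the form of Theorem~\ref{S2theorem}, together with the separating broken line $\gamma(\mathcal{B})$ associated to a belt $\mathcal{B}$. In each case I will cut $\partial P$ along $\gamma(\mathcal{B})$ into two closed discs and cap one or both sides with a $k$-gon; Steinitz's theorem then certifies that the new graphs on $S^2$ realise convex $3$-polytopes, and the combinatorial decomposition is read off directly.

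For part (a), the direction $\Leftarrow$ is immediate: a vertex truncation produces a triangular facet surrounded by a $3$-belt. For $\Rightarrow$, given a $3$-belt $\mathcal{B}=(F_{i_1},F_{i_2},F_{i_3})$, I will cap the two discs cut out by $\gamma(\mathcal{B})$ with a triangle each. The resulting graphs on $S^2$ are planar, simple and $3$-connected, so Theorem~\ref{S2theorem} yields simple $3$-polytopes $P_1,P_2$ with distinguished triangular facets, and $P\simeq P_1\#_v P_2$ by construction. Part (b) then follows by strong induction on the number of facets: the base cases are $P=\varDelta^3$ and $P$ flag, and the inductive step applies (a) to any $3$-belt, strictly reducing the number of facets in both summands.

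Part (c) runs parallel to (a) with $4$-belts in place of $3$-belts. For $\Leftarrow$: a quadrangular facet of a flag polytope is surrounded by a $4$-belt by Proposition~\ref{34bp}(a), and an edge connected sum produces a $4$-belt at the gluing seam. For $\Rightarrow$, given a $4$-belt $\mathcal{B}$, I will look at the two discs cut by $\gamma(\mathcal{B})$; if one of them contains no vertex of $P$, then it is a single quadrangular facet of $P$, and otherwise I cap both sides with quadrangles to obtain $P_1,P_2$ with $P\simeq P_1\#_e P_2$. The most delicate step here will be to verify that each $P_i$ is flag: a putative $3$-belt in $P_i$ must involve the new quadrangle $Q$, and two facets of $\mathcal{B}$ adjacent to $Q$ share a vertex of $Q$, so such a triple has a common vertex and is excluded from being a belt.

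For part (d), the direction $\Leftarrow$ reduces to showing that edge connected sum and edge truncation preserve flagness when applied to flag polytopes; this is a routine verification using Proposition~\ref{34bp}. For $\Rightarrow$ I will use strong induction on the number of facets of a flag polytope $P$: if $P$ has no $4$-belts then $P\in\mathcal{P}$; otherwise by (c) either $P$ is an edge connected sum, in which case I induct on smaller flag summands, or $P$ has a quadrangular facet $F$, and I un-truncate $F$ to a simpler simple polytope $P'$. The hard part will be that un-truncation need not preserve flagness: un-truncating any facet of $I^3$ yields the triangular prism, which carries a $3$-belt. I expect the precise obstruction to arise when $P$ carries a $4$-belt through $F$ whose opposite facets are linked by a further facet, and the crucial technical lemma will be that any flag polytope whose every quadrangular facet is obstructed in this way is combinatorially the cube $I^3$. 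This forces $I^3$ to appear as the irreducible base case alongside Pogorelov polytopes, completing the induction.
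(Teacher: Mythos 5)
Your overall architecture --- cut $\partial P$ along $\gamma(\mathcal{B})$, cap with a polygon, invoke Steinitz, and induct --- is the same as the paper's, and (a), (b) go through essentially as you describe, up to a bookkeeping point: the summands of the vertex connected sum are not the capped pieces $P_1,P_2$ themselves but their un-truncations $Q_i$ with $P_i\simeq\mathrm{vt}(Q_i)$ (note $P_1\#_v P_2$ has the wrong number of facets); that a polytope with a triangular facet, other than $\varDelta^3$, is always a vertex truncation is exactly what the paper extracts from Theorem~\ref{S2theorem}, and is an easy step you should state. The genuine gap is in (c) and (d). In (c), proving that the two capped pieces are flag (the analogue of \cite[Lemma~2.11]{bu-erS}, which you sketch --- though you should also rule out $3$-belts avoiding the cap, whose common vertex could a priori lie on the far side of the belt; this forces all three facets to be belt facets, which a $4$-belt forbids) is \emph{not} the same as proving that $P$ is an edge connected sum of two flag polytopes. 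By the paper's definition an edge connected sum glues edge truncations, so you must un-truncate the quadrilateral cap of each piece and show the result is still a (flag) polytope. Un-truncation in a given direction destroys flagness exactly when some facet other than the two remaining belt members is adjacent to both facets being merged, and the statement that for every quadrangular facet of a flag $3$-polytope $\not\simeq I^3$ some direction is unobstructed is precisely \cite[Lemma~2.17]{bu-er15}, which the paper invokes for both (c) and (d). Your (c) does not even register that this is needed, and your (d) defers it to a ``crucial technical lemma'' that is left unproven; that lemma is the hard combinatorial core of the theorem, not a routine expectation.

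Moreover, even granted, your lemma is not strong enough as formulated: ``every quadrangular facet obstructed $\Rightarrow P\simeq I^3$'' is vacuously false for flag polytopes with no quadrangular facets (e.g.\ Pogorelov polytopes), and after restating it only yields un-truncatability of \emph{some} quadrangular facet, whereas the $4$-belt decomposition in (c)/(d) requires un-truncating the specific cap facet of each piece; you need the full form of \cite[Lemma~2.17]{bu-er15} (any quadrangular facet of a flag $P\not\simeq I^3$ is the truncation facet of an edge truncation of a flag polytope). A smaller but real error: your case criterion in (c), ``one of the two discs contains no vertex of $P$'', does not detect the quadrangular-facet case. If the discs cut by $\gamma(\mathcal{B})$ are meant, each contains an endpoint of every cut edge, so the test never fires (and then you would try to ``decompose'' even the cube); if the components of $\partial P\setminus\mathcal{B}$ are meant, the pentagonal prism with the $4$-belt formed by the top, the bottom and two non-adjacent side facets has a component consisting of two adjacent quadrilaterals and containing no vertex of $P$. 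The correct dichotomy is whether one side of the belt consists of a single facet of $P$, equivalently whether one capped piece is $I^3$.
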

\begin{proof}
As we have seen above, a simple $3$-polytope $P$ can be cut along a $k$-belt, therefore decomposing it into a connected sum of two polytopes along $k$-gonal facets. By~\cite[Lemma~2.11]{bu-erS}, if $P$ is a flag polytope, then the two resulting polytopes are also flag. Theorem~\ref{S2theorem} implies that for any triangular facet $F$ of a polytope $P\not\simeq\varDelta^3$ there exists a polytope $Q$ such that  $P$ is combinatorially equivalent to a vertex truncation of $Q$, with the new facet corresponding to~$F$. This is equivalent to taking a vertex connected sum of $Q$ with a simplex. Thus, statements (a) and~(b) are proved. It is easy to see that an edge truncation or an edge connected sum of flag polytopes is a flag polytope (see~\cite{bu-er15,bu-erS}). By~\cite[Lemma~2.17]{bu-er15}, for any quadrangular facet $F$ of a flag $3$-polytope $P\not\simeq I^3$, there exists a flag polytope $Q$ such that $P$ is combinatorially equivalent to an edge truncation of~$Q$, with the new facet corresponding to~$F$. This proves (c) and~(d).
\end{proof}

\begin{proposition}
Given $P\in\mathcal{P}$, let $Q$ be the polytope obtained from $P$ by cutting off a sequence of $s\geqslant 2$ adjacent edges lying on a $k$-gonal facet~$F$.  Assume that $k\geqslant s+4$. Then $Q\in\mathcal P$ (see Figure~\ref{sk-trunk}).
\end{proposition}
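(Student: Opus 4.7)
My plan is to describe $Q$'s combinatorial structure explicitly, verify simpleness and face sizes using the hypothesis $k\geqslant s+4$, and then rule out $3$- and $4$-belts in $Q$ by case analysis, reducing each hypothetical belt to an impossible configuration in $P$.

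First I would set up notation: let $G_0,\ldots,G_{k-1}$ be the belt of facets around $F$ in $P$, with $G_i$ sharing the edge $e_i=u_{i-1}u_i$ with $F$, and let $f_i$ denote the third edge of $P$ at vertex $u_i$ (shared between $G_i$ and $G_{i+1}$), where the cut edges are $e_1,\ldots,e_s$. Cutting $P$ by a single plane in general position that separates $\{u_0,\ldots,u_s\}$ from the rest of $P$ yields a new facet $H$: an $(s+3)$-gon with vertices $u_0^*,u_0',u_1',\ldots,u_s',u_s^*$, where $u_0^*\in e_0$, $u_s^*\in e_{s+1}$, and $u_i'\in f_i$. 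The cyclic sequence of neighbours of $H$ in $Q$ is $F,G_0,G_1,\ldots,G_{s+1}$. The facet $F$ becomes a $(k-s+1)$-gon; the facets $G_0$ and $G_{s+1}$ each gain one edge; the facets $G_1,\ldots,G_s$ keep the same number of edges (losing $e_i$ but gaining a new edge on $H$); all other facets are unchanged. The hypothesis $k\geqslant s+4$ gives $k-s+1\geqslant 5$, and $s\geqslant 2$ gives $s+3\geqslant 5$, so every facet of $Q$ has at least $5$ edges; by Steinitz's theorem $Q$ is a simple $3$-polytope.

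The key bookkeeping is: two old facets $A,B$ are adjacent in $Q$ iff they are adjacent in $P$ and $\{A,B\}\neq\{F,G_i\}$ for any $i\in\{1,\ldots,s\}$, and they share a common vertex in $Q$ iff they share a common vertex of $P$ outside $\{u_0,\ldots,u_s\}$. Flagness of $Q$ follows: a $3$-belt of three old facets would have a common vertex in $P$ (by flagness of $P$) that must lie in $\{u_0,\ldots,u_s\}$, forcing the triple to be $\{F,G_i,G_{i+1}\}$ for some $i\in\{0,\ldots,s\}$, but then at least one of $(F,G_i),(F,G_{i+1})$ is no longer adjacent in $Q$; a $3$-belt containing $H$ needs two facets of $\{F,G_0,\ldots,G_{s+1}\}$ adjacent in $Q$, and direct inspection shows every such pair is consecutive in the cyclic order around $H$ and therefore shares a vertex with $H$.

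The core of the proof is the absence of $4$-belts. For a $4$-belt $(A,B,C,D)$ of old facets in $Q$ the diagonal pairs $\{A,C\},\{B,D\}$ are non-adjacent in $Q$; if both are non-adjacent in $P$ the quadruple is a $4$-belt in $P$ (no three of four facets can share a vertex when every $3$-subset contains one of the two diagonal pairs), contradicting $P\in\mathcal{P}$; otherwise one diagonal is $\{F,G_i\}$ with $i\in\{1,\ldots,s\}$, forcing the two middle facets into $\{G_{i-1},G_{i+1}\}\cap\{G_0,G_{s+1}\}$, which has at most one element when $s\geqslant 2$, so they cannot be distinct. For a $4$-belt $(H,A,B,C)$ containing $H$ the pair $\{A,C\}$ is non-consecutive in the cyclic order around $H$, so either $\{A,C\}=\{F,G_i\}$ with $i\in\{1,\ldots,s\}$ — flagness of $P$ then forces $B\in\{G_{i-1},G_{i+1\}}\subset\{G_0,\ldots,G_{s+1}\}$, which is adjacent to $H$, a contradiction — or $\{A,C\}=\{G_i,G_j\}$ with $i,j\in\{0,\ldots,s+1\}$ and $|i-j|\geqslant 2$; in the latter case I form the quadruple $(G_i,B,G_j,F)$ in $P$, whose diagonal $G_i$-$G_j$ is non-adjacent (non-consecutive in the belt around $F$) and whose diagonal $B$-$F$ is non-adjacent (otherwise $B$ lies in the belt around $F$ and is consecutive to both $G_i,G_j$, forcing $B\in\{G_{i-1},G_{i+1}\}\subset\{G_0,\ldots,G_{s+1}\}$, adjacent to $H$); this produces a $4$-belt in $P$, again a contradiction. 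The main obstacle is this last case, because it requires tracking simultaneously how adjacencies and common vertices change under the cut while invoking both the flagness and the no-$4$-belt property of $P$; the bound $k\geqslant s+4$ enters both to keep $F$ a pentagon-or-bigger and to ensure enough ``room'' in the belt around $F$ for $G_{s+2},\ldots,G_{k-1}$ to be genuinely separate from $\{G_0,\ldots,G_{s+1}\}$.
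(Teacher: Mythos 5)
Your proof is correct and follows essentially the same route as the paper's: you rule out $3$- and $4$-belts of $Q$ by splitting into belts of old facets versus belts through the new facet, using that the only adjacencies destroyed by the cut are $\{F,G_i\}$ with $1\le i\le s$, and then invoking flagness and the absence of $4$-belts in $P$, with $s\ge 2$ and $k\ge s+4$ entering at exactly the same points as in the paper's argument. (One cosmetic remark: in your last case the inclusion $\{G_{i-1},G_{i+1}\}\subset\{G_0,\ldots,G_{s+1}\}$ is not literally true when $i=0$ (or $j=s+1$), but since $B$ must be adjacent to both $G_i$ and $G_j$ it is forced to lie strictly between them in the belt around $F$, so your conclusion that $B$ is a neighbour of $H$ still holds.)
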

\begin{figure}[h]
\begin{center}
\includegraphics[width=.9\textwidth] {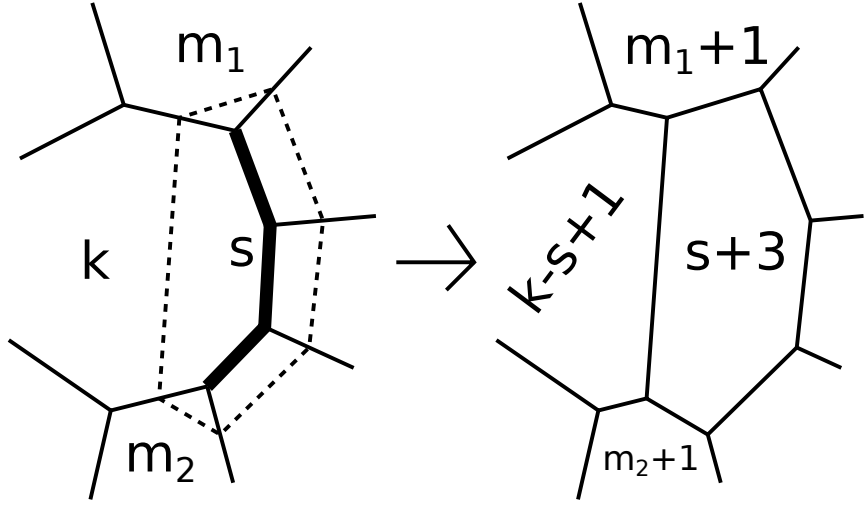}
\end{center}
\caption{A $(s,k)$-truncation}\label{sk-trunk}
\end{figure}
\begin{proof}
Let $G$ be the new facet of $Q$ obtained as the result of truncation. For a facet $F_i$ of $P$, we denote by $\widehat F_i$ the corresponding facet of~$Q$. If $F_{i_1}\cap\dots\cap F_{i_k}=\varnothing$ in $P$, then $\widehat{F}_{i_1}\cap\dots\cap \widehat{F}_{i_k}=\varnothing$ in~$Q$.
%As only edges of $F$ are being cut,
If $F_i$ and $F_j$ are adjacent facets different from~$F$, then the corresponding facets $\widehat{F}_i$ and $\widehat{F}_j$ are also adjacent.

If $(\widehat{F}_i,\widehat{F}_j,\widehat{F}_k)$ is a $3$-belt, then $F_i\cap F_j\cap F_k$ is a cut vertex. This vertex is cut together with an incident edge, so two of the facets $\widehat{F}_i$, $\widehat{F}_j$, $\widehat{F}_k$ do not intersect in~$Q$. A contradiction. Therefore, if $Q$ has a $3$-belt, then it has the form $(\widehat{F}_i,\widehat{F}_j,G)$. The facets $F_i$ and $F_j$ are different from~$F$, as otherwise one of these facets has two common edges with~$F$. Furthermore,  $F_i\cap F_j\ne\varnothing$, $F\cap F_i\ne\varnothing$ and $F\cap F_j\ne\varnothing$, because both $F_i$ and $F_j$ intersect with the cut edges. Since $s<k-2$, the edge $F_i\cap F_j$ intersects the set of cut edges, but is not contained in it, so $\widehat{F}_i\cap \widehat{F}_j\cap G\ne\varnothing$. A contradiction.

If $(\widehat{F}_i,\widehat{F}_j,\widehat{F}_k,\widehat{F}_l)$ is a $4$-belt, then $(F_i,F_j,F_k,F_l)$ are the facets around a cut edge $F_i\cap F_k$ or $F_j\cap F_l$. Since $s>1$, one more adjacent edge $F_i\cap F_j$, $F_j\cap F_k$, $F_k\cap F_l$ or $F_l\cap F_i$ is also cut, so the corresponding facets of $Q$ do not intersect. A contradiction. Therefore, if $Q$ has a $4$-belt, then it has the form $(\widehat{F}_i,\widehat{F}_j,\widehat{F}_k,G)$, where $F_i\cap F_j\ne\varnothing$, $F_j\cap F_k\ne\varnothing$, the facets $F_i$, $F_k$ intersect the cut edges of~$P$, and the facet $F_j$ does not intersect the cut edges. Then $F_i\ne F$, as otherwise $(F,F_j,F_k)$ is a $3$-belt of~$P$, because $\widehat{F}\cap \widehat{F}_k=\varnothing$. Similarly, $F_k\ne F$. We also have $F_j\ne F$, because $\widehat{F}\cap G\ne\varnothing$.  In the cyclic sequence $(F,F_i,F_j,F_k)$ consecutive facets have nonempty intersections, so $F\cap F_j\ne\varnothing$ or $F_i\cap F_k\ne\varnothing$, because $P$ does not have $4$-belts. We have $F_i,F_j\ne F$, so  $F_i\cap F_k\ne\varnothing$ implies $\widehat{F}_i\cap\widehat{F}_k\ne\varnothing$. A contradiction. Therefore, $F\cap F_j\ne\varnothing$ and the facets $F_i$ and $F_k$ intersect the edge $F\cap F_j$ at a vertex. Since $s<k-3$, the edge $F\cap F_j$ is being cut. Hence, $G\cap \widehat{F}_j\ne\varnothing$. A contradiction.

Thus, the polytope $Q\not\simeq\varDelta^3$ does not have $3$- and $4$-belts, so it is a Pogorelov polytope.
\end{proof}

\begin{proposition}\label{consumpog}
Let $P,Q\in\mathcal{P}$, and let $F\subset P$, $G\subset Q$ be $k$-gonal facets. Then the connected sum of the polytopes $P$ and $Q$ at the facets $F$ and $G$ is defined, and belongs to the class~$\mathcal{P}$.
\end{proposition}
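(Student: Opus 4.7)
The plan is to first verify that the connected sum $R$ is well-defined as a simple $3$-polytope, and then to show that $R$ has neither $3$- nor $4$-belts by reducing any such hypothetical belt to a belt of the same length in $P$ or $Q$, contradicting $P,Q\in\mathcal P$. For well-definedness, Proposition~\ref{34bp}(a) applied to $P$ produces a $k$-belt $\mathcal B_P=(F_1^P,\ldots,F_k^P)$ surrounding $F$, and similarly a $k$-belt $\mathcal B_Q=(F_1^Q,\ldots,F_k^Q)$ surrounds $G$ in $Q$. Both belts have exactly $k$ facets, matching the edge count of $F$ and $G$, so the broken lines $\gamma(\mathcal B_P)$ and $\gamma(\mathcal B_Q)$ agree combinatorially; by Theorem~\ref{S2theorem}, cutting off $F$ and $G$ and gluing along the resulting $k$-cycles yields a simple $3$-polytope $R$ containing a $k$-belt $\mathcal B=(F_1,\ldots,F_k)$ with merged facets $F_i=F_i^P\cup F_i^Q$. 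Since $k\geqslant 5$, we have $R\neq\varDelta^3$.

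The key structural observation classifies the facets of $R$ into three types: \emph{P-interior} (facets of $P$ distinct from $F$ and not in $\mathcal B_P$), \emph{Q-interior} (defined symmetrically), and the belt facets $F_1,\ldots,F_k$. Two P-interior facets are adjacent in $R$ iff they were adjacent in $P$, and likewise for Q-interior; a P-interior facet is adjacent to $F_i$ iff it was adjacent to $F_i^P$ in $P$ (symmetrically for Q); belt facets $F_i,F_j$ are adjacent iff $|i-j|\equiv 1\pmod k$; and a P-interior facet is never adjacent to a Q-interior one. Each vertex of $R$ is inherited either from a vertex of $P$ not on $F$ or from a vertex of $Q$ not on $G$, so if three facets of $R$ share a common vertex, then the corresponding facets in $P$ (or $Q$) share a common vertex as well.

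Suppose $R$ contains a $3$-belt $(G_1,G_2,G_3)$. The non-adjacency of P- and Q-interior facets, together with the triangle-freeness of the cycle graph $C_k$ for $k\geqslant 5$, forces all three facets onto one side; substituting $F_i^P$ for each $F_i$ occurring in the triple produces a $3$-belt in $P$, contradicting $P\in\mathcal P$. For a putative $4$-belt $(G_1,G_2,G_3,G_4)$ in $R$ one runs the analogous argument, dividing into cases by the number and cyclic positions of belt facets (the case of four belt facets is ruled out because $C_k$ contains no $4$-cycle on distinct vertices for $k\geqslant 5$). In most configurations the substitution produces a $4$-belt in $P$ or $Q$. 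The delicate subcase occurs when two belt facets $F_i,F_j$ with $|i-j|\not\equiv\pm 1\pmod k$ occupy opposite positions of the $4$-cycle while the remaining two positions are occupied by a P-interior and a Q-interior facet; here $(F_i^P,G_2,F_j^P,F)$ forms a $4$-belt in $P$, with $F$ itself playing the role of the fourth facet.

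I expect the principal obstacle to be the bookkeeping for the $4$-belt analysis and the verification, in every case, that the replacement cycle in $P$ or $Q$ genuinely satisfies the no-common-vertex condition. The latter uses the vertex-inheritance principle above, together with the standard fact that in a simple $3$-polytope two facets share a vertex only if they share an edge (since each vertex lies in exactly three facets). Once completed, every case yields a contradiction with $P\in\mathcal P$ or $Q\in\mathcal P$, proving $R\in\mathcal P$.
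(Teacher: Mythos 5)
Your argument is correct, but it follows a genuinely different route from the paper's. The paper does not exclude hypothetical $3$- and $4$-belts in the connected sum $R$; instead it invokes the characterisation of Proposition~\ref{34bp}~(b) — a simple $3$-polytope lies in $\mathcal P$ if and only if every pair of adjacent facets is surrounded by a belt — and verifies this criterion for $R$ directly, splitting into three cases according to whether none, one, or both facets of the adjacent pair lie on the gluing belt $\mathcal B$, and in each case assembling the surrounding belt from facet sequences coming from $P$ and from $Q$. You instead pull a putative short belt of $R$ back to $P$ or $Q$: your adjacency classification (interior facets of the two halves never meet, the merged belt facets form a $k$-cycle with $k\ge 5$) does make every configuration reduce to a $3$- or $4$-belt of $P$ or $Q$, and you correctly isolate the only genuinely new configuration — two non-consecutive belt facets in opposite positions of the $4$-cycle separated by a $P$-interior and a $Q$-interior facet — where the pulled-back cycle must be closed up through $F$ itself, yielding the $4$-belt $(F_i^P,G_2,F_j^P,F)$ in $P$; this works because every neighbour of $F$ lies on the belt surrounding it, so the $P$-interior facet is disjoint from~$F$. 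One small point of precision: for the no-common-vertex checks you need the implication in the direction opposite to the one you state, namely that a common vertex in $P$ of the substituted facets which is not a vertex of $F$ survives as a common vertex of the original facets in $R$, together with the observation that a vertex of $F$ lies only on $F$ and two consecutive belt facets; both are elementary and fit your plan. In sum, your route gives a self-contained exclusion of short belts at the cost of the case bookkeeping you describe, while the paper's route, by checking the belt-around-adjacent-pairs criterion, treats all belt lengths uniformly and avoids distinguishing $3$- from $4$-belts, at the cost of relying on the equivalence of Proposition~\ref{34bp}~(b).
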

\begin{proof} Since both $P$ and $Q$ are flag polytopes,
Proposition~\ref{34bp}~(a) implies that the facets $F$ and $G$ are
surrounded by $k$-belts. Therefore, the connected sum at these
facets is defined and gives a simple polytope $R$ with a $k$-belt
$\mathcal{B}$. The combinatorial type of $R$ depends on the order
of facets around $F$ and~$G$. Choose  a pair of adjacent facets
of~$R$. If none of these facets belongs to the belt $\mathcal{B}$,
then we may assume without loss of generality that both chosen
facets belong to $P\setminus\mathcal{B}$. The intersections of
facets around the pair of chosen facets in~$R$ are the same as the
corresponding intersections in~$P$, whence the pair of chosen
facets is surrounded by a belt. If both chosen adjacent facets
belong to the belt, then the facets around them form a cyclic
sequence consisting of two facets from the belt~$\mathcal{B}$ and
two sequences of facets lying in $P\setminus\mathcal{B}$ and
$Q\setminus\mathcal{B}$, respectively. The facets in each sequence
together with the two facets from the belt surround the pairs of
facets in $P$ and~$Q$ corresponding to the chosen adjacent facets,
and the facets from the different sequences do not intersect, so
the whole cyclic sequence is a belt. Finally if one of the chosen
adjacent facets belongs to the belt $\mathcal{B}$, and the other
does not belong to the belt, then we may assume without loss of
generality that the other facet belongs to
$P\setminus\mathcal{B}$. Then the facets around the pair of chosen
facets form a cyclic sequence consisting of two facets from the
belt $\mathcal{B}$ and two sequences of facets lying in
$P\setminus\mathcal{B}$ and $Q\setminus\mathcal{B}$, respectively.
The facets in the first sequence together with the two facets from
the belt surround the pair of facets in $P$ corresponding to the
chosen pair, and the facets in the second sequence together with
the two facets from the belt surround the facet of $P$
corresponding to the facet in the belt. The facets from the
different sequences do not intersect, so the whole cyclic sequence
is a belt. Thus, every pair of adjacent facets in $R$ is
surrounded by a belt, so $R\in\mathcal{P}$ by
Proposition~\ref{34bp}~(b).
\end{proof}

\begin{remark}
Proposition~\ref{consumpog} has a geometric interpretation. By
Theorem~\ref{Pogth} each of the polytopes $P$ and $Q$ has a unique
right-angled realisation in~$\mathbb L^3$. If the corresponding
facets $F$ and $G$ are congruent (for example, if $P\simeq Q$),
then gluing $P$ and $Q$ along $F$ and $G$ gives a right-angled
polytope~$R$. Otherwise the connected sum is a non-local operation
on right-angled polytopes, that is, the shape of $P$ and $Q$
changes globally after realising their connected sum $R$ with
right dihedral angles.
\end{remark}

The following result was obtained by Inoue in~\cite{inou08} (see
also the survey paper~\cite{vesn17}):

\begin{theorem}[\cite{inou08}]
A simple $3$-polytope $P$ belongs to the Pogorelov class $\mathcal
P$ if and only if it can be obtained from a collection of barrels
$Q_r$, $r\ge5$, (see Example~\ref{barrelex}) by a sequence of
connected sums along $p$-gonal facets with $p\ge5$ and
$(s,k)$-truncations, where $k\ge6$ and $2\le s\le k-4$.
\end{theorem}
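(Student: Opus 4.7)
The ``if'' direction follows immediately from the results already established in this appendix. The barrels $Q_r$ with $r\ge 5$ lie in $\mathcal P$ by Example~\ref{barrelex}; the connected sum of two polytopes from $\mathcal P$ at $p$-gonal facets with $p\ge 5$ stays in $\mathcal P$ by Proposition~\ref{consumpog} (note that by Proposition~\ref{no34fa} all facets of a Pogorelov polytope have at least $5$ edges, so the restriction $p\ge 5$ is automatic); and $(s,k)$-truncations with $2\le s\le k-4$ stay in $\mathcal P$ by the proposition on edge-sequence truncations preceding Proposition~\ref{consumpog}. Hence any polytope built from barrels by these operations lies in $\mathcal P$.

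For the ``only if'' direction the plan is induction on the number $m$ of facets. The base case consists of identifying all Pogorelov polytopes that cannot be further simplified by the inverse operations and verifying that these are precisely the barrels $Q_r$, $r\ge 5$; the smallest of these is the dodecahedron $Q_5$, which is well known to be the Pogorelov polytope with the fewest facets. The inductive step requires showing that any $P\in\mathcal P$ which is not a barrel admits at least one simplifying move:
\begin{itemize}
\item[(i)] either a $k$-belt $\mathcal B$ with $k\ge 5$ that does \emph{not} bound a single facet of~$P$, in which case cutting along $\gamma(\mathcal B)$ and capping off with two $k$-gons decomposes $P$ as a connected sum $P=P_1\cs P_2$ along $k$-gonal facets, with $P_1,P_2\in\mathcal P$ by the argument in the proof of Proposition~\ref{consumpog};
\item[(ii)] or a $k$-gonal facet $F$ of $P$ admitting a sequence of $s$ consecutive edges that arose from an $(s,k')$-truncation of some polytope $P'$ with $k'\ge s+4$ and $P'\in\mathcal P$.
\end{itemize}

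The hard part is proving that such a move always exists when $P$ is not a barrel. The strategy is a case analysis driven by the facet-vector $(p_5,p_6,p_7,\ldots)$ and the belt structure around pairs of adjacent facets (Proposition~\ref{34bp}(b)). Concretely, fix a pentagonal facet $F$ of $P$ (which exists by a standard Euler-characteristic count since $p_3=p_4=0$ and $p_5\ge 12$), and examine the $5$-belt $\mathcal B$ surrounding~$F$ together with the ``outer'' region $P\setminus(F\cup\mathcal B)$. If this outer region meets $\mathcal B$ in such a way that $\mathcal B$ can also be viewed as the belt of another facet, then $P$ itself is already a barrel $Q_5$; otherwise one locates either a separating belt giving a decomposition of type~(i) or a run of consecutive edges giving a reduction of type~(ii). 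Ruling out the Pogorelov-violating configurations (\,triangles, quadrilaterals, $3$-belts, $4$-belts\,) that could appear after the reverse operation is carried out via the combinatorial criteria in Proposition~\ref{34bp} and the proof technique of the $(s,k)$-truncation proposition.

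The main obstacle is precisely this last step: verifying that when no separating belt~(i) is available, one can \emph{always} find a facet $F$ and a run of $s\ge 2$ consecutive edges on $F$ whose ``uncutting'' produces a valid Pogorelov polytope, i.e.\ does not create a $3$- or $4$-belt. This amounts to showing that obstructions to such a move (short belts running through the chosen edges, or adjacent cut-edges creating triangular/quadrangular facets in the reduced polytope) force the existence of a separating $p$-belt of type~(i), so that the two cases together exhaust all $P\in\mathcal P$ that are not barrels. Once this dichotomy is established, the induction closes and the theorem follows.
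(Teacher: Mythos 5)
Your ``if'' direction is correct, and it is exactly what the appendix supplies: barrels lie in $\mathcal P$ (Example~\ref{barrelex}), $(s,k)$-truncations with $2\le s\le k-4$ preserve $\mathcal P$ (the truncation proposition), and connected sums along facets preserve $\mathcal P$ (Proposition~\ref{consumpog}). But note that this is also all the paper itself establishes: the theorem is quoted from Inoue~\cite{inou08} without proof, so the entire burden of a proof lies in the ``only if'' direction. There your text is a plan rather than a proof. The decisive claim --- that every $P\in\mathcal P$ which is not a barrel admits either a belt along which it decomposes as a connected sum of two Pogorelov polytopes, or a realisation as an $(s,k)$-truncation of a smaller Pogorelov polytope --- is precisely what you call ``the main obstacle'' and leave unproved. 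That dichotomy is the actual content of Inoue's theorem; establishing it requires a substantial analysis of belts (in \cite{inou08} the reduction is organised with the help of hyperbolic volume, which decreases under both inverse operations), so as written nothing beyond the easy implication has been proved.

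There is also an unjustified step inside your sketch. In (i) you assert that cutting $P$ along a non-facial $k$-belt yields two pieces in $\mathcal P$ ``by the argument in the proof of Proposition~\ref{consumpog}''; that proposition proves the opposite implication (composition preserves $\mathcal P$). For a decomposition, flagness of the pieces is indeed known, but a $4$-belt can appear through the new $k$-gonal facet $F$: if some facet $v$ on one side of the belt $\mathcal B$, with $v\notin\mathcal B$, is adjacent to two non-consecutive belt facets $F_a$ and $F_c$, then $(F,F_a,v,F_c)$ is a $4$-belt of the corresponding capped piece, since every belt facet meets $F$, $v$ does not, and $F_a\cap F_c=\varnothing$. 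The Pogorelov conditions on $P$ do not exclude such configurations --- already in the barrel $Q_8$ a lower pentagon is adjacent to two non-consecutive facets of the $9$-belt surrounding the pair formed by the top octagon and an adjacent upper pentagon --- so the belt along which one cuts must be chosen with care, and this choice is part of the missing argument. (Your base case also quietly assumes that the barrels are exactly the irreducible Pogorelov polytopes; irreducibility of barrels is itself a nontrivial statement, cf.\ the theorem of~\cite{bu-er17} quoted after Inoue's theorem, although for the induction you only need the existence of a reducing move for non-barrels.)
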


Inoue's theorem was strengthened in~\cite{bu-er17}. We denote by
$\mathcal P_B$ the set of $r$-barrels $Q_r$ with $r\ge 5$ and
consider the class $\mathcal P_B^\perp=\mathcal P\setminus\mathcal
P_B$.

\begin{theorem}[\cite{bu-er17}]\hphantom{dsf}\
\begin{itemize}
\item[(a)]
An $r$-barrel $Q_r$ with $r\ge 5$ cannot be obtained from another
Pogorelov polytope by $(2,k)$-truncations and connected sums with
a dodecahedron $Q_5$ along a pentagonal facet.

\item[(b)]
A polytope $P$ belongs to the class $\mathcal P_B^\perp$ if and
only if it can be obtained from $Q_5$ or $Q_6$ by a nonempty
sequence of connected sums with $Q_5$ along a pentagonal facet and
$(2,k)$-truncations with $k\ge6$.
\end{itemize}
\end{theorem}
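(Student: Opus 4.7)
The plan is to prove both parts by a structural analysis of pentagonal facets and $5$-belts in $Q_r$, combined with Euler-characteristic counting. For part~(a), I would argue case-by-case on the type of the last operation in any hypothetical sequence producing $Q_r$ from a Pogorelov polytope $P'\neq Q_r$. If the last step is a connected sum of some $P'\in\mathcal P$ with $Q_5$ along a pentagonal facet, it corresponds to a $5$-belt $\mathcal B$ in $Q_r$ bounding a disc that carries $11$ pentagonal facets (a dodecahedron with one pentagon removed). The trivial $5$-belts around a single pentagonal facet (guaranteed by Proposition~\ref{34bp}(a)) bound only a single facet on one side. I would then classify all remaining $5$-belts of $Q_r$ by a direct argument on the two-belt-plus-caps structure of a barrel, showing that none bounds an $11$-facet ``cap'' on either side. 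If the last step is a $(2,k)$-truncation with $k\ge 6$, then $Q_r$ contains a pentagonal ``truncation facet'' whose fusion gives a Pogorelov $P'$ with one fewer facet. For $r=5$ the Euler count rules out $P'$ immediately: $P'$ would have $10$ pentagons and one heptagon, contradicting $2|E(P')|=10\cdot5+7=57$ being odd. For $r\ge6$ I would perform a local analysis around the two $r$-gonal facets of $Q_r$ and show that the required truncation pattern forces $P'$ to contain a $3$- or $4$-belt, contradicting $P'\in\mathcal P$.

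For part~(b), the ``if'' direction follows from the preservation results already in the excerpt: Proposition~\ref{consumpog} shows that a connected sum of Pogorelov polytopes along a pentagonal facet stays in $\mathcal P$, the $(2,k)$-truncation with $k\ge6$ preserves $\mathcal P$ by the proposition on $(s,k)$-truncations, and by part~(a) a barrel never arises from $Q_5$ or $Q_6$ by a nonempty sequence of these operations, so the output lies in $\mathcal P_B^\perp$. The ``only if'' direction I would prove by induction on the number of facets: the task is to show that every $P\in\mathcal P_B^\perp$ distinct from $Q_5$ and $Q_6$ admits a reduction by one of the two operations to a smaller polytope in $\mathcal P$. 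This requires the dichotomy: locate either a $5$-belt bounding a disc combinatorially isomorphic to $Q_5$ minus a pentagon (reverse connected sum with $Q_5$), or a pentagonal facet whose neighborhood matches that of a ``truncation pentagon'' (reverse $(2,k)$-truncation). Once either reduction is performed, the resulting polytope lies in $\mathcal P$ by the preservation results, and it lies in $\mathcal P_B^\perp$ unless it equals $Q_5$ or $Q_6$, at which point the induction terminates.

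The main obstacle is establishing this dichotomy. I would proceed by introducing an auxiliary invariant such as the number of non-pentagonal facets, which equals $0$ for $Q_5$ and exactly $2$ for $Q_r$ with $r\ge 6$, and organize the case analysis around whether $P$ has a heptagonal-or-larger facet and how its $k$-belts (guaranteed by Proposition~\ref{34bp}(b)) sit. The hypothesis $P\notin\mathcal P_B$ must be used essentially, since barrels are precisely the polytopes simultaneously obstructing both reductions. I expect the technical heart of the argument to be a careful combinatorial analysis, in the spirit of the full-subcomplex techniques of Appendix~A: when $P$ contains a suitably placed ``block'' of adjacent pentagons, a full-subcomplex argument in $\sK_P$ locates the $5$-belt bounding the required $Q_5$-cap; when no such configuration exists, the remaining structure is rigid enough to force the existence of a pentagonal facet admitting a reverse $(2,k)$-truncation.
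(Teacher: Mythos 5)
You should first note that the paper you are working from does not prove this theorem at all: it is quoted from \cite{bu-er17}, so there is no in-paper argument to match, and your proposal has to stand on its own. As it stands it is an outline with a missing core and with concrete errors in the two cases it does treat. In part~(a), connected-sum case, your reduction to ``no $5$-belt of $Q_r$ bounds an $11$-pentagon cap'' is false for $r=5$: the belt around any facet of the dodecahedron does bound such a cap, and indeed $Q_5$ is a connected sum of a pentagonal prism with $Q_5$ along a pentagonal facet. What saves the statement is that the complementary piece (the prism) has $4$-belts and hence is not a Pogorelov polytope; your argument never uses the hypothesis that the other summand lies in $\mathcal P$, and without it the claimed belt classification cannot close the case. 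In part~(a), truncation case with $r=5$, the facet count is wrong: undoing a $(2,k)$-truncation on $Q_5$ makes the two ``end'' facets lose an edge (they become quadrilaterals) and the truncated facet become a hexagon, so the putative predecessor has $8$ pentagons, $2$ quadrilaterals and $1$ hexagon; the contradiction is with the absence of quadrilateral facets (flagness plus no $4$-belts), not with parity, and your ``$10$ pentagons and one heptagon'' is not the result of reversing the operation (it does not even satisfy the Euler relation~\eqref{pkformula}).

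More seriously, in part~(b) the hard direction is only announced, not proved. The entire content of the theorem is the reduction dichotomy: every $P\in\mathcal P_B^\perp$ different from $Q_5$ and $Q_6$ contains either a $5$-belt whose one side, after cutting and capping, is $Q_5$ with the complementary piece again in $\mathcal P$, or a pentagonal facet in the position of a $(2,k)$-truncation facet whose reversal stays in $\mathcal P$ (note that reversing either operation can a priori create $3$- or $4$-belts or leave the class, so ``the resulting polytope lies in $\mathcal P$ by the preservation results'' does not follow from the forward-direction propositions you cite). You correctly identify this dichotomy as ``the main obstacle'' and sketch an invariant (number of non-pentagonal facets) and a hoped-for case analysis, but no mechanism is given for producing the required $5$-belt or truncation pentagon, nor for showing that the non-barrel hypothesis rules out the obstructed configurations. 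Until that combinatorial heart is supplied, the proposal does not constitute a proof of either part.
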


Denote by $p_k$ the number of $k$-gonal facets in a polytope~$P$. The Euler formula implies the following identity for a simple $3$-polytope:
\begin{equation}\label{pkformula}
3p_3+2p_4+p_5=12+\sum\limits_{k\geqslant 7}(k-6)p_k.
\end{equation}

The following result was proved by V.~Eberhard in 1891.

\begin{theorem}%[\cite{eber91}]
\label{Ebth}
For any sequence of nonnegative integers $p_k$, $k\ge3$, $k\ne6$, satisfying the identity~\eqref{pkformula}, there exists an integer $p_6$ and a simple $3$-polytope $P$ whose number of $k$-gonal facets is~$p_k$.
\end{theorem}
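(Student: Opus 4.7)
The plan is to construct a $3$-valent planar $3$-connected graph realising the prescribed face-size sequence after adjoining a suitable number of hexagonal faces; the polytope $P$ will then exist by Steinitz's theorem (Theorem~\ref{S-Theorem}). First I would note that the hypothesised identity
\[
  3p_3+2p_4+p_5=12+\sum_{k\geq 7}(k-6)p_k
\]
is equivalent to $\sum_{k\ge 3}(6-k)p_k=12$, which is precisely Euler's formula $V-E+F=2$ applied to a cubic planar graph (using $3V=2E$ together with $2E=\sum_k k\,p_k$). Since $p_6$ appears with coefficient $0$ in this relation, any value of $p_6$ is arithmetically consistent, and the freedom to choose $p_6$ is exactly what must be exploited.

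The natural approach is induction on $N:=\sum_{k\ge 7}(k-6)p_k$. For the base case $N=0$, i.e.\ $p_k=0$ for every $k\geq 7$, the condition reduces to $3p_3+2p_4+p_5=12$, which has only finitely many solutions. Each can be realised explicitly: the tetrahedron yields $(p_3,p_4,p_5)=(4,0,0)$, the cube gives $(0,6,0)$, the dodecahedron gives $(0,0,12)$, and the remaining small cases are handled by prisms, antiprisms, pyramids, their vertex- or edge-truncations, and suitable vertex connected sums (in the sense of Construction~1.1.13 of~\cite{bu-pa15}). Moreover, one can always insert a ``hexagonal belt'' around any existing $\ell$-gonal facet, replacing it by an $\ell$-gon surrounded by $\ell$ hexagons; this increases $p_6$ by $\ell$ without changing any other $p_k$, giving complete flexibility in the final value of~$p_6$.

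For the inductive step, pick $k^\ast\ge 7$ with $p_{k^\ast}>0$, and apply the inductive hypothesis to the sequence $(p_k')$ obtained by decreasing $p_{k^\ast}$ by $1$ (so that $N$ has strictly decreased). The key local operation is a \emph{patch replacement} inside an existing hexagonal face: excise a hexagonal disc and insert a patch containing exactly one $k^\ast$-gon together with a controlled number of hexagons, arranged so that the outer boundary of the patch matches the hexagonal hole. A concrete realisation places the $k^\ast$-gon at the centre and surrounds it with $(k^\ast-6)$ hexagonal ``wedges'' that step by step reduce the boundary valence until a hexagonal outer cycle is obtained. Inserting such a patch introduces one $k^\ast$-gon and only additional hexagons, completing the induction once $p_6$ is adjusted.

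The main obstacle will be maintaining $3$-connectedness of the graph at every inductive stage so that Steinitz's theorem applies, together with the combinatorial bookkeeping needed to simultaneously realise all prescribed values $p_k$, $k\ne 6$. The $3$-connectedness is handled by performing the patch insertion strictly in the interior of a hexagonal facet of an already $3$-connected polytope: any hypothetical separating pair of vertices would have to meet the inserted patch along two vertex-disjoint paths, which the combinatorial structure of the patch precludes. The delicate bookkeeping in the finitely many base cases where $p_3+p_4+p_5$ is small, and the explicit verification that the patches realise the correct face counts, is where care is needed; this is classical, going back to Eberhard's original 1891 argument and refined in the treatments of Gr\"unbaum and others.
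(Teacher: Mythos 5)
Your plan has a genuine and fatal gap at the inductive step, and it is worth noting at the outset that the paper offers no proof to compare against: Theorem~\ref{Ebth} is quoted as Eberhard's classical 1891 result (a full proof is in Gr\"unbaum's book), so your argument must stand entirely on its own. The step that fails is the ``patch replacement'': you cannot excise a hexagonal face of a simple $3$-polytope and refill it with a patch consisting of exactly one $k^\ast$-gon ($k^\ast\ge 7$) and hexagons. The six vertices of the hexagonal hole already carry their third edge outside the hexagon, so no new edge may be attached to them; hence every boundary vertex has degree $2$ inside the patch, while all interior vertices of the patch are trivalent. Writing $V,E,F$ for the vertices, edges and faces of such a patch, one has $2E=3(V-6)+2\cdot 6=3V-6$, $\sum_{\text{faces}} f=2E-6$ and $V-E+F=1$, which combine to give $\sum_{\text{faces}}(6-f)=0$. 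A patch made of hexagons and a single $k^\ast$-gon therefore forces $k^\ast=6$. This combinatorial Gauss--Bonnet obstruction is exactly why Eberhard's theorem is delicate: each face with $k\ge 7$ carries positive excess $k-6$ that must be compensated by triangles, quadrilaterals and pentagons, and this balancing cannot be performed one large face at a time inside a hexagon; the known proofs (Eberhard, Gr\"unbaum) assemble the entire $2$-sphere from carefully designed global pieces rather than by local insertions.

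The same count undermines your auxiliary ``hexagonal belt'' move: the flower consisting of an $\ell$-gon surrounded by $\ell$ hexagons has outer boundary of length $3\ell$, not $\ell$, so it does not glue into the hole left by an $\ell$-gonal facet, and the asserted ``complete flexibility in the final value of $p_6$'' is both unsupported and stronger than what Eberhard's theorem claims (only the existence of \emph{some} $p_6$). Two smaller points: pyramids and antiprisms are not simple polytopes, so the base-case list needs repair (and even the base case $3p_3+2p_4+p_5=12$ requires genuine constructions for each of the finitely many solutions, with a suitable $p_6$); and note that the hexagon-producing device the paper actually uses for its related Corollary~\ref{Pogpk} is the global edge-cutting construction $P\mapsto P_E$ combined with Gr\"unbaum's Theorem~\ref{Gbth}, which adds one hexagon per edge of $P$ all at once---consistent with the fact that the purely local substitutions you propose are obstructed.
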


Let $P$ be a simple $3$-polytope given by a system of inequalities~\eqref{ptope}. Each edge $E$ of $P$ is an intersection of two facets, and each facet is defined by setting one of the inequalities $\langle\mb a_i,\mb x\rangle+b_i\ge0$ to equality. Therefore, the edge $E=F_i\cap F_j$ can be specified in $P$ by a single equality $\langle\mb a_i+\mb a_j,\mb x\rangle+(b_i+b_j)=0$.
\begin{figure}[h]
\begin{center}
\includegraphics[scale=0.5]{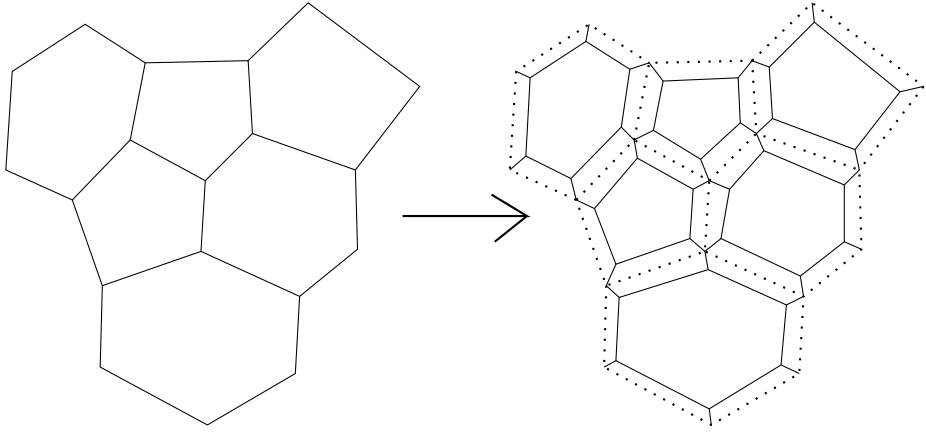}
\end{center}
\caption{Construction of the polytope $P_E$}
\label{ecuts}
\end{figure}

\begin{construction}[see~\cite{l-m-r06, bu-er15}]
Let $P_E$ be the polytope obtained by simultaneous cutting off all edges of a polytope~$P$:
$$
  P_E=P\cap\{\mb x\in\mathbb R^n\colon \langle\mb a_i+
  \mb a_j,\mb x\rangle+(b_i+b_j)\ge\varepsilon\quad
  \text{for all edges } F_i\cap F_j\ne\varnothing\text{ of }P\},
$$
where $\varepsilon>0$ is small enough, see Figure~\ref{ecuts}. Each facet of $P_E$ corresponds either to a facet of $P$ with the same number of edges, or to an edge of~$P$, and in the latter case the facet of $P_E$ is a hexagon. Furthermore,
\begin{itemize}
\item[--] the facets of $P_E$ corresponding to facets of the original polytope~$P$ do not intersect;

\item[--] the facets of $P_E$ corresponding to a facet and an edge of $P$ intersect if and only if the edge is contained in the facet;

\item[--] the facets of $P_E$ corresponding to edges of $P$ intersect if and only if the edges are incident.
\end{itemize}

We therefore obtain
$$
  p_k(P_E)=\begin{cases}p_k(P),& k\ne 6,\\p_6(P)+f_1(P),&k=6,\end{cases}
$$
where $f_1(P)$ is the number of edges of $P$.
\end{construction}

In general, Eberhard's theorem only guarantees the existence of a polytope with some $p_6$ hexagonal facets. The above construction of edge cutting gives infinitely many possible values of~$p_6$. We are interested in the case $p_3=p_4=0$, where the following result of Gr\"unbaum holds:

\begin{theorem}[\cite{grue68}]\label{Gbth}
For any sequence of nonnegative integers $p_k$, $k\ge3$, $k\ne6$, satisfying the conditions \eqref{pkformula}, $p_3=p_4=0$ and $p_6\geqslant 8$, there exists a simple $3$-polytope $P$ whose number of $k$-gonal facets is~$p_k$.
\end{theorem}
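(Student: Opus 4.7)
The plan is to deduce the theorem from Eberhard's theorem~\ref{Ebth} by showing that the constraint $p_3=p_4=0$ still leaves enough flexibility to realise every $p_6\ge 8$. Under this constraint the identity~\eqref{pkformula} becomes
\[
  p_5 = 12 + \sum_{k\ge 7}(k-6)p_k,
\]
so the non-hexagonal profile $(p_3,p_4,p_5,p_7,p_8,\ldots)$ is entirely determined by the prescribed $p_k$ for $k\ge 7$, and only $p_6$ remains to be controlled.

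First I would apply Theorem~\ref{Ebth} to produce some simple $3$-polytope $P^*$ realising the prescribed profile together with \emph{some} initial hexagon count $p_6^*$ that we cannot directly control. The strategy is then to adjust $p_6$ by local combinatorial moves that preserve the counts $p_k$ for all $k\ne 6$. The fundamental such move is a \emph{hexagonal prism insertion}: given a hexagonal facet $F$, one slices the polytope along a hexagonal section parallel to $F$ and glues in a hexagonal prism, producing a new simple $3$-polytope with exactly six additional hexagonal facets and all other face counts unchanged. An auxiliary move, designed to change $p_6$ by an amount coprime to $6$, consists of a targeted sequence of edge-truncations and face-merging operations localised in a region of the polytope containing only pentagonal and hexagonal facets, arranged so as to avoid producing any triangular or quadrilateral facets in the process.

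Combining these two moves by a Frobenius-type argument yields polytopes realising the prescribed profile with any $p_6$ exceeding some threshold. For the remaining low-end range $8\le p_6<p_6^*$, a separate argument is required: one would start from a fullerene whose hexagon count is $\le 8$ (these exist by the paper's discussion of fullerenes, which realise every $p_6\ne 1$), and then introduce each required $k$-gonal facet for $k\ge 7$ by a controlled local modification that increases $p_k$ by one and simultaneously increases $p_5$ by $k-6$, consistent with~\eqref{pkformula}.

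The main obstacle is precisely the design of these $k$-gon-creating moves: one must alter the combinatorics locally without ever violating $p_3=p_4=0$. A natural candidate is to choose a vertex $v$ surrounded by three hexagonal facets of a sufficiently large fullerene and to perform a facet-splitting operation in a neighbourhood of $v$ that converts one of the three hexagons into a $k$-gon at the cost of creating precisely $k-6$ extra pentagons. Verifying simplicity and the absence of $3$- and $4$-gonal faces at every intermediate step, together with the Steinitz realisability~\ref{S-Theorem} of the resulting combinatorial sphere, constitutes the core technical content of the argument.
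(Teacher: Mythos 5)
This statement is a quoted result of Gr\"unbaum (the paper gives no proof of it, citing \cite{grue68}), so the only question is whether your sketch would itself constitute a proof, and it does not: the decisive steps are announced rather than carried out. The move you describe concretely, ``slice along a hexagonal section parallel to a hexagonal facet $F$ and glue in a hexagonal prism'', does not do what you claim: a cut transversal to the six edges leaving $F$ produces a hexagonal cross-section, and the inserted band consists of six facets each bounded by two pieces of cut edges and the two copies of the section, i.e.\ six \emph{quadrilaterals}, which destroys the condition $p_4=0$. (One can insert a ring of hexagons, but only along a zigzag cut crossing twelve edges, which is a different and unstated construction.) The ``auxiliary move changing $p_6$ by an amount coprime to $6$'' is not constructed at all, and you yourself flag the $k$-gon-creating modifications needed for the range $8\le p_6<p_6^*$ as ``the core technical content'' --- which is exactly the content of Gr\"unbaum's argument that is missing here. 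Designing local modifications that create a $k$-gon together with exactly $k-6$ new pentagons while never passing through a $3$- or $4$-gonal face, and verifying Steinitz realisability at each step, is the whole difficulty; it cannot be delegated to ``a natural candidate''.

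There is also a quantitative gap that the architecture of your argument cannot close: a Frobenius-type combination of moves of sizes $6$ and some number coprime to $6$ only yields all \emph{sufficiently large} $p_6$ relative to the uncontrolled base value $p_6^*$ coming from Theorem~\ref{Ebth}, whereas the theorem asserts the sharp threshold $p_6\ge 8$ uniformly in the prescribed $p_k$, $k\ge 7$. Nothing in the proposal bounds $p_6^*$ or explains why the exceptional set stops at $8$; the low-range argument starting from small fullerenes would have to produce, for arbitrary prescribed $(p_7,p_8,\ldots)$, polytopes with every hexagon number from $8$ up to the threshold, and no mechanism for this is given. So the proposal is a plausible plan in the spirit of Eberhard-type constructions, but as written it has genuine gaps at every load-bearing point and does not establish the statement.
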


\begin{proposition}\label{34cut}
Let $P$ be a simple $3$-polytope with $p_3=p_4=0$. Then $P_E\in\mathcal{P}$.
\end{proposition}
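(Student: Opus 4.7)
The plan is to verify that $P_E$ belongs to $\mathcal{P}$ by showing it is a simple $3$-polytope distinct from $\varDelta^3$ (both clear from the construction, since $P_E$ has strictly more facets than $P$) with no $3$-belts and no $4$-belts. Both belt-free properties will follow from a case analysis of belt configurations by facet type, combined with a graph-theoretic lemma about $G_P$.

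Using the description of facets of $P_E$ (old facets $\widehat{F}_i$ and hexagons $G_E$) and the adjacency rules recorded in the construction, the only candidate $3$-belt is a triple $(G_{E_1}, G_{E_2}, G_{E_3})$ of hexagons arising from three pairwise-incident edges of $P$: two old facets are never adjacent, while $(\widehat{F}, G_{E_1}, G_{E_2})$ always shares the vertex of $P_E$ lying at the corner of $\widehat{F}$ where the cut lines from $E_1$ and $E_2$ meet. Three pairwise-incident edges either share a common vertex $v$ of $P$, in which case the three corresponding cutting hyperplanes intersect at a vertex of $P_E$ and the hexagons share that vertex, or they form a $3$-cycle in $G_P$. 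A parallel enumeration for $4$-belts reduces to four hexagons coming from a $4$-cycle in $G_P$: any mixed configuration is forbidden because two old facets cannot be adjacent, because $F_i\cap F_j$ contains at most one edge, or because no edge of $P$ can be a chord of a facet (the third edge at any vertex of a facet leaves that facet).

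The crux is the following lemma: if $p_3 = p_4 = 0$ then $G_P$ has girth at least $5$. Given a cycle $C = v_1 \cdots v_s$ of length $s \in \{3, 4\}$, none of the third neighbors $w_i$ of $v_i$ lies on $C$, as otherwise a chord would produce a triangle, contradicting $p_3 = 0$. Each $w_i$ lies in one of the two disks into which $C$ divides $\partial P$; let $k$ be the number of $w_i$ on a fixed side. If $k \in \{0, s\}$, the opposite disk has no interior vertices and must be a single facial $s$-gon, contradicting $p_3 = 0$ or $p_4 = 0$. For intermediate $k$, I would track the unique inside face adjacent to each edge of $C$: at a vertex $v_i$ with $w_i$ on the opposite side, the two inside faces adjacent to the two $C$-edges at $v_i$ coincide, whereas with $w_i$ on the same side they differ and are separated by the edge $v_i w_i$. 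Transporting these equalities and non-equalities around $C$ forces either a direct contradiction between the identifications of inside faces, or a pair of distinct facets sharing two edges $v_i w_i$ and $v_j w_j$, which is impossible in a simple polytope. The main technical obstacle is the case analysis for the $4$-cycle, which splits into adjacent and opposite subcases for $k = 2$; once the lemma is established, the proposition follows from the preceding enumeration.
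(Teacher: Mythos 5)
Your argument is correct, but it takes a genuinely different route from the paper's. The paper verifies the criterion of Proposition~\ref{34bp}~(b): it checks directly that every pair of adjacent facets of $P_E$ --- either an old facet together with the hexagon of one of its edges, or two hexagons coming from two edges meeting at a vertex --- is surrounded by a belt, using only that the relevant facets of $P$ have at least five edges. You instead exclude $3$- and $4$-belts of $P_E$ outright: the adjacency rules for $P_E$ reduce every candidate belt either to a configuration sharing a vertex of $P_E$ (three hexagons at an original vertex, or an old facet with two hexagons at a corner), to a forbidden incidence (two old facets adjacent, $F_i\cap F_j$ containing two edges, an edge of $P$ that is a chord of a facet), or to a $3$- or $4$-cycle in $G_P$, which your girth lemma (girth of $G_P$ at least $5$ when $p_3=p_4=0$; equivalently, for cubic polyhedral graphs the girth equals the minimal face size) rules out. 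The paper's proof is shorter because Proposition~\ref{34bp}~(b) is already available; yours is self-contained modulo the adjacency description of $P_E$ and isolates a combinatorial lemma of independent interest, and your inside-face-tracking proof of it is sound (odd distributions of the third edges give conflicting identifications of the inner faces, the $k=2$ cases give two distinct facets sharing two edges, and $k\in\{0,s\}$ gives a facial $3$- or $4$-gon). Two small points to tighten: in the $4$-cycle case a chord produces a $3$-cycle of the graph, which is excluded by your $s=3$ case rather than by $p_3=0$ directly; and the parenthetical reason for ``no edge of $P$ is a chord of a facet'' only says the third edge leaves the facet --- to finish, observe that if $E=F_k\cap F_l\not\subset F$ had both endpoints $a,b$ on $F$, then $F_k,F_l$ would be the two facets other than $F$ at each of $a$ and $b$, so $F\cap F_k$ would be a face containing both $a$ and $b$, i.e.\ a second edge with endpoints $a,b$, which is impossible.
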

\begin{proof}
We use the criterion of Proposition~\ref{34bp}~(b).

Choose a pair of adjacent facets of $P_E$ and consider the corresponding edges and facets in~$P$.

If the chosen pair corresponds to a facet $F$ and an edge $E\subset F$ of~$P$, then the sequence of facets of $P_E$ around this pair of facets corresponds to edges in $\partial F\setminus E$, the facet $G$ satisfying $F\cap G=E$, and the two edges of $G$ adjacent to~$E$, see Figure~\ref{fbelts}~a). Since $G$ is not a triangle and the pair of adjacent facets $F$ and $G$ is bounded by a simple edge cycle, it follows easily that the cyclic sequence of facets of $P_E$ around the chosen pair of facets is a belt.

If the chosen pair of facets of $P_E$ corresponds to a pair of adjacent edges $E_i$ and $E_j$ of~$P$, then the chosen pair of facets is surrounded by eight facets: the facets corresponding to the facets $F_i$, $F_j$ and $F_k$ of~$P$ meeting at the vertex $E_i\cap E_j\cap E_k$, and the facets corresponding to the edges which are incident to at least one of $E_i$ and $E_j$, see Figure~\ref{fbelts}~b). Each of the facets corresponding to $F_i$, $F_j$ and $F_k$ intersects only two facets out of eight, namely those corresponding to the edges contained in the facet. Since the three facets $F_i$, $F_j$ and $F_k$ are bounded by a simple edge cycle, have a common vertex, and none of the facets is a triangle or quadrangle, it follows easily that the eight facets form a belt.

Thus, in either case the chosen pair of facets is surrounded by a belt.
\begin{figure}[h]
\begin{center}
\includegraphics[scale=0.6]{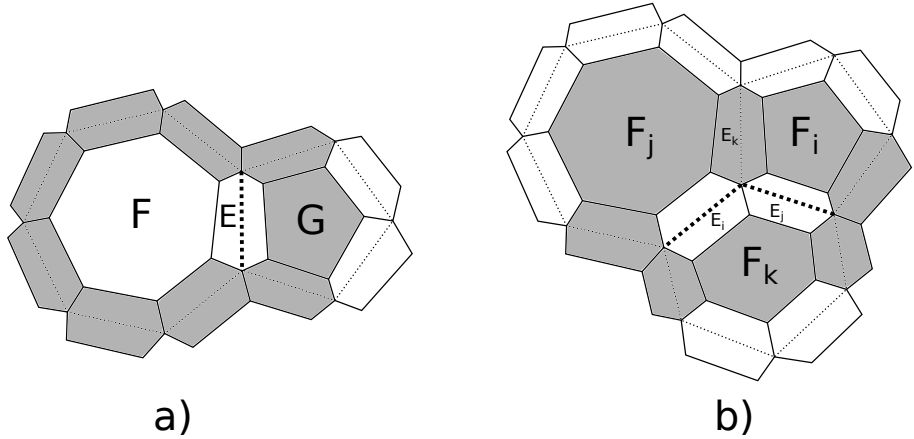}
\end{center}
\caption{Belts around the pairs of adjacent facets of $P_E$}
\label{fbelts}
\end{figure}
\end{proof}

\begin{corollary}\label{Pogpk}
Assume given a sequence of nonnegative integers $p_k$, $k\ge3$, satisfying the following conditions:
\begin{itemize}
\item[(a)] the relation~\eqref{pkformula};
\item[(b)] $p_3=p_4=0$;
\item[(c)] $\frac{p_6-3\left(10+\sum_{k\geqslant 7}(k-5)p_k\right)}{4}$ is an integer~$\ge8$.
\end{itemize}
Then there exists a Pogorelov polytope whose number of $k$-gonal facets is~$p_k$.
\end{corollary}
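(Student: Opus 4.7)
The plan is to derive the corollary from Grünbaum's theorem (Theorem~\ref{Gbth}) together with the edge-cutting construction of Proposition~\ref{34cut}. Given a target sequence $\{p_k\}_{k\ge 3}$ satisfying (a)--(c), I would seek an auxiliary simple $3$-polytope $Q$ with $p_3(Q)=p_4(Q)=0$ and $p_k(Q)=p_k$ for $k\ne 6$, chosen so that after the simultaneous edge truncation of Proposition~\ref{34cut} the resulting Pogorelov polytope $Q_E$ has exactly $p_6$ hexagonal facets. Since $p_k(Q_E)=p_k(Q)$ for $k\ne 6$ and $p_6(Q_E)=p_6(Q)+f_1(Q)$, the face vector of $Q_E$ will then match the prescribed~$\{p_k\}$.

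The core step is a short arithmetic computation that fixes the value of $p_6(Q)$. Using $f_1(Q)=\frac12\sum_{k} k\,p_k(Q)$ for a simple $3$-polytope and solving $p_6(Q)+f_1(Q)=p_6$, one obtains
\[
  p_6(Q)=\frac{p_6-\tfrac12\sum_{k\ne 6}k\,p_k}{4}.
\]
Substituting the identity $p_5=12+\sum_{k\ge 7}(k-6)p_k$, which follows from (a) under (b), into $\tfrac12\sum_{k\ne 6}k\,p_k=\tfrac12\bigl(5p_5+\sum_{k\ge 7}k\,p_k\bigr)$ and collecting terms yields
\[
  \tfrac12\textstyle\sum_{k\ne 6}k\,p_k=3\bigl(10+\sum_{k\ge 7}(k-5)p_k\bigr),
\]
so hypothesis (c) is precisely the statement that the required $p_6(Q)$ is an integer that is at least~$8$.

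With $p_6(Q)$ fixed in this way, I would verify that $\{p_k(Q)\}$ satisfies the hypotheses of Grünbaum's theorem: the relation \eqref{pkformula} for $\{p_k(Q)\}$ coincides with \eqref{pkformula} for $\{p_k\}$ since only $p_3,p_4,p_5$ and $p_k$ for $k\ge 7$ enter, and $p_3(Q)=p_4(Q)=0$, $p_6(Q)\ge 8$ by construction and by (c). Theorem~\ref{Gbth} then produces a simple $3$-polytope $Q$ with the required face numbers, and Proposition~\ref{34cut} guarantees $Q_E\in\mathcal{P}$ with $p_k(Q_E)=p_k$ for all~$k$. I do not anticipate a serious obstacle; the only nontrivial point is the identification of hypothesis (c) with the integrality and size condition on $p_6(Q)$, which is the computation indicated above.
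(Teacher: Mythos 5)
Your proposal is correct and follows essentially the same route as the paper: apply Gr\"unbaum's Theorem~\ref{Gbth} to produce a simple $3$-polytope with the prescribed $p_k$ for $k\ne6$ and with $p_6$ equal to the quantity in condition~(c), then pass to the edge-truncated polytope via Proposition~\ref{34cut} and check arithmetically that its number of hexagonal facets is the target~$p_6$. Your bookkeeping via $2f_1=\sum_k k\,p_k$ is just a cosmetic variant of the paper's use of $f_1=3(f_2-2)$, and the identification of (c) with the integrality and the bound $p_6(Q)\ge 8$ matches the paper's computation.
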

\begin{proof}
By Theorem \ref{Gbth}, for the given $p_k$, $k\ne 6$, there exists a simple $3$-polytope $P$ whose number of $k$-gonal facets is $p_k$ for $k\ne6$ and whose number of hexagonal facets is $p_6(P)=\frac{p_6-3\left(10+\sum_{k\geqslant 7}(k-5)p_k\right)}{4}\geqslant 8$. Then the polytope $P_E$ has the same numbers $p_k$ for $k\ne 6$, while $p_6(P_E)=p_6(P)+f_1(P)$. Furthermore, $P_E\in\mathcal{P}$ by Proposition~\ref{34cut}. Using the Euler formula and identity~\eqref{pkformula} we calculate
\begin{multline*}
p_6(P_E)=p_6(P)+f_1(P)=p_6(P)+3(f_2(P)-2)=p_6(P)+3\Bigl(\sum\limits_{k\geqslant 5}p_k(P)-2\Bigr)\\
=4p_6(P)+3\Bigl(p_5+\sum\limits_{k\geqslant 7}p_k-2\Bigr)=4p_6(P)+3\Bigl(10+\sum\limits_{k\geqslant 7}(k-5)p_k\Bigr)=p_6.\qedhere
\end{multline*}
\end{proof}

\begin{corollary}\label{Pogpk2}
For any finite sequence of nonnegative integers $p_k$, $k\geqslant 7$, there exists a Pogorelov polytope whose number of $k$-gonal facets is~$p_k$.
\end{corollary}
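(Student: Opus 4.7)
The plan is to derive Corollary~\ref{Pogpk2} as an immediate consequence of Corollary~\ref{Pogpk} by supplementing the given data $p_k$, $k\ge 7$, with appropriate values of $p_3,p_4,p_5,p_6$ so that all three conditions (a), (b), (c) of Corollary~\ref{Pogpk} hold.

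First I would set $p_3 = p_4 = 0$, which is exactly condition~(b). Next I would use identity~\eqref{pkformula} to force condition~(a): with $p_3=p_4=0$ it reduces to
\[
  p_5 \;=\; 12+\sum_{k\ge 7}(k-6)p_k,
\]
and this prescription defines $p_5$ as a nonnegative integer since the right-hand side is~$\ge 12$.

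Finally, to satisfy condition~(c) I would set
\[
  p_6 \;=\; 3\Bigl(10+\sum_{k\ge 7}(k-5)p_k\Bigr)+32,
\]
so that the quantity $\frac{p_6-3(10+\sum_{k\ge 7}(k-5)p_k)}{4}$ equals~$8$, which is a nonnegative integer~$\ge 8$ as required. Since $\sum_{k\ge 7}(k-5)p_k\ge 0$, the chosen $p_6$ is itself a nonnegative integer. Applying Corollary~\ref{Pogpk} to the sequence $(p_3,p_4,p_5,p_6,p_7,\ldots)$ thus constructed produces the desired Pogorelov polytope.

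There is no real obstacle here: the content of the corollary is entirely packaged into Corollary~\ref{Pogpk} (which in turn relies on Gr\"unbaum's theorem and the edge-cut construction $P\mapsto P_E$), and the only thing to verify is that the arithmetic conditions on $p_k$ can always be met by adjusting $p_5$ and $p_6$ freely. One minor alternative worth noting: any $p_6$ of the form $3(10+\sum_{k\ge 7}(k-5)p_k)+4t$ with $t\ge 8$ also works, so in fact infinitely many Pogorelov polytopes realise the prescribed $p_k$ for $k\ge 7$.
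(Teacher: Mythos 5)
Your proposal is correct and is essentially the argument the paper intends: Corollary~\ref{Pogpk2} is stated as an immediate consequence of Corollary~\ref{Pogpk}, obtained precisely by setting $p_3=p_4=0$, letting $p_5=12+\sum_{k\ge 7}(k-6)p_k$ be forced by \eqref{pkformula}, and choosing $p_6$ (e.g.\ $p_6=3\bigl(10+\sum_{k\ge 7}(k-5)p_k\bigr)+32$) so that condition~(c) holds. Your arithmetic verifying conditions (a)--(c) is accurate, so nothing further is needed.
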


\section{Proof of Lemma~\ref{proddeco}.}\label{proofproddeco}
Here we give a proof which is different from the original proof of~\cite{fa-wa}. It uses a reformulation of the description of product in the cohomology of a moment-angle complex (Theorem~\ref{prodfsc}) in terms of the polytope~$P$. A detailed description of this approach can be found in~\cite[\S5.8]{bu-erS}.

We need to prove that the product map
\begin{equation}\label{prodK}
  \bigoplus_{I=I_1\sqcup I_2}\widetilde H^0(\sK_{I_1})\otimes\widetilde H^0(\sK_{I_2})\to
  \widetilde H^1(\sK_{I})
\end{equation}
is surjective  for any flag $3$-polytope $P$ and $I\subset[m]$. We
first restate this in terms of the polytope $P$ rather than its
dual simplical complex~$\sK$. The decomposition of $\partial P$
into facets $F_1,\ldots,F_m$ defines a cell decomposition of
$\partial P$ which is Poincar\'e dual to the simplicial
decomposition~$\sK$. The two decompositions have the same
barycentric subdivision, $(\partial P)'\cong\sK'$. We identify the
set of facets $\{F_1,\ldots,F_m\}$ with $[m]$, and for each
$I\subset[m]$ define
\[
  P_I=\bigcup_{i\in I}F_i\subset\partial P.
\]
Note that $P_I$ is the combinatorial neighbourhood of $(\sK_I)'$
in $\sK'$, so there is a deformation retraction
$P_I\stackrel\simeq\longrightarrow\sK_I$. We have Poincar\'e
duality isomorphisms
\begin{equation}\label{PDunred}
  H_{2-i}(P_I,\partial P_I)\cong H^i(\sK_I), \quad i=0,1,2,
\end{equation}
where the boundary $\partial P_I$ consists of points $x\in P_I$
such that $x\in F_j$ for some $j\notin I$. Topologically, $P_I$ is
a disjoint union of several discs with holes, and $\partial P_I$
is a disjoint union of edge cycles.

The cellular homology groups $H_i(P_I,\partial P_I)$ have the
following description. Let $P_I=P_{I^1}\sqcup\cdots\sqcup P_{I^s}$
be the decomposition into connected components. Then
\begin{itemize}
\item[(a)] $H_2(P_I,\partial P_I)$ is a
free abelian group with basis of homology classes
$[P_{I^k}]=\sum_{i\in I^k}[F_i]$, $k=1,\ldots,s$;

\item[(b)] $H_1(P_I,\partial P_I)=\bigoplus\limits_{k=1}^s H_1(P_{I^k},\partial P_{I^k})$,
where $H_1(P_{I^k},\partial P_{I^k})$ is a free abelian group of
rank one less the number of cycles in~$\partial P_{I^k}$. A basis
of $H_1(P_{I^k},\partial P_{I^k})$ is given by any set of edge
paths in $P_{I^k}$ connecting one fixed boundary cycle with the other
boundary cycles.

\item[(c)] $H_0(P_I,\partial P_I)=\Z$ if $I=[m]$, and $0$ otherwise.
\end{itemize}

As the product map~\eqref{prodK} is stated in terms of the reduced
cohomology groups $\widetilde H^i(\sK_I)$, we introduce the
corresponding ``reduced'' homology groups
\[
  \widehat H_i(P_I,\partial P_I)=
  \begin{cases}
    H_i(P_I,\partial P_I), & i=0,1;\\
    H_2(P_I,\partial P_I)\big/\bigl(\sum_{i\in I}[F_i]\bigr), &
    i=2.
  \end{cases}
\]
Then we can rewrite~\eqref{PDunred} as
\begin{equation}\label{PDred}
  \widehat H_{2-i}(P_I,\partial P_I)\cong\widetilde H^i(\sK_I), \quad
  i=0,1,2.
\end{equation}
With this interpretation in mind, we can rewrite the product
map~\eqref{prodK} as the ``intersection pairing''
\begin{equation}\label{prodP}
\begin{aligned}
  \bigoplus_{I=I_1\sqcup I_2}\widehat H_2(P_{I_1},\partial P_{I_1})\otimes
  \widehat H_2(P_{I_2},\partial P_{I_2})&\to
  \widehat H_1(P_I,\partial P_I),\\
  [P_{I_1^p}]\otimes[P_{I_2^q}]
  &\mapsto[P_{I_1^p}\cap
  P_{I_2^q}]=[\gamma_1]+\cdots+[\gamma_r],
\end{aligned}
\end{equation}
where $P_{I_1^p}$ is a connected component of $P_{I_1}$,
$P_{I_2^q}$ is a connected component of $P_{I_2}$, and
$\gamma_1,\ldots,\gamma_r$ are edge paths in $P$ which form the
connected components of the intersection $P_{I_1^p}\cap
P_{I_2^q}$. (There is a sign involved in the transition
from~\eqref{prodK} to~\eqref{prodP}, but it does not affect our
subsequent considerations.)

\begin{proof}[Proof of Lemma~\ref{proddeco}]
To see that \eqref{prodP} is surjective for a flag $3$-polytope $P$, we recall
that $\widehat H_1(P_I,\partial P_I)=\bigoplus\limits_{k=1}^s
\widehat H_1(P_{I^k},\partial P_{I^k})$  and consider for each
connected component $P_{I^k}$ of $P_I$ the decomposition $\partial
P_{I^k}=\eta_1\sqcup\dots\sqcup \eta_{t_k}$ into boundary cycles.
We may assume that $t_k\ge2$, as otherwise $P_{I^k}$ is a disc and
$\widehat H_1(P_{I^k},\partial P_{I^k})=0$. For each pair of
boundary cycles $\eta_p$ and $\eta_q$ among
$\eta_1,\ldots,\eta_{t_k}$, we shall decompose the generator
$g_{pq}$ of $\widehat H_1(P_{I^k},\partial P_{I^k})$ corresponding
to an edge path from $\eta_p$ to $\eta_q$ into a product of
elements of $\widehat H_2(P_{I_1},\partial P_{I_1})$ and $\widehat
H_2(P_{I_2},\partial P_{I_2})$, $I_1\sqcup I_2=I$. This will prove
the surjectivity of~\eqref{prodP}.

We choose facets $F_p$ and $F_q$ in $\partial P\setminus P_{I^k}$
adjacent to $\eta_p$ and $\eta_q$ respectively, see
Figure~\ref{a-b-belt}.
\begin{figure}[h]
\begin{center}
\includegraphics[width=.9\textwidth]{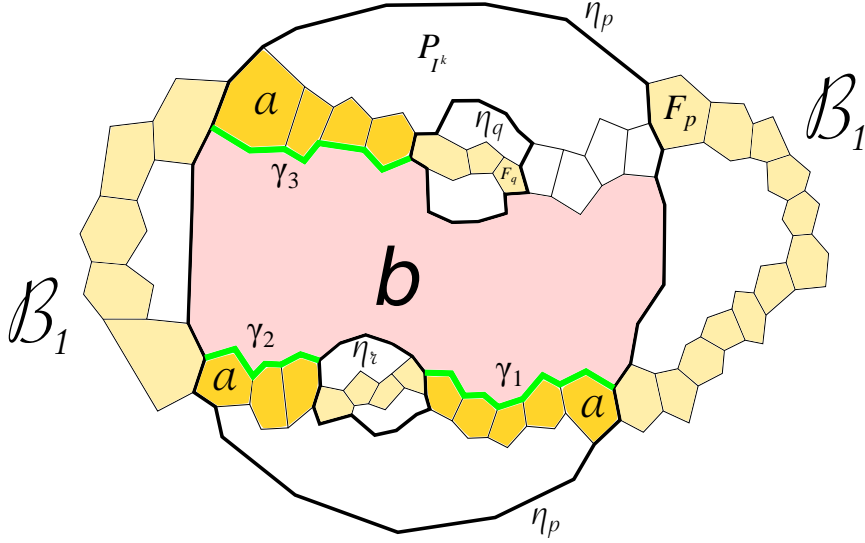}
\end{center}
\caption{A belt crossing a disc with holes.}\label{a-b-belt}
\end{figure}
By Lemma~\ref{Fab}, there is a belt $\mathcal{B}=(F_{j_1},\dots,
F_{j_l})$ with $F_{j_1}=F_p$ and $F_{j_r} = F_q$, where
$3\leqslant r\leqslant l-1$. Let $\mathcal B_1=(F_{j_1},\dots,
F_{j_r})$ be a part of the belt between $F_p$ and $F_q$ (there are
two such parts, and we can take any of them). The complement
$\partial P\setminus\mathcal B$ is a union of two open discs. We
denote the closures of these discs by $\mathcal W_1$ and $\mathcal
W_2$; each of them is a union of facets not in~$\mathcal B$. Now
set
\begin{align*}
  &I_1=\{i\colon F_i\in P_{I^k}\cap\mathcal{B} \},&& I_2=I\setminus I_1,\\
  &a=\sum\limits_{F_i\in P_{I^k}\cap \mathcal B_1}[F_i]\in
  \widehat{H}_2(P_{I_1},\partial P_{I_1}), &&
  b=\sum\limits_{F_j\in P_{I^k}\cap \mathcal{W}_1}[F_j]\in
  \widehat{H}_2(P_{I_2},\partial P_{I_2}).
\end{align*}
Then $a\cdot b=[\gamma_1]+\cdots+[\gamma_s]\in H_1(P_I,\partial
P_I)$, where each $\gamma_i$ is an edge path in $P_{I^k}$ that
begins at some boundary cycle $\eta_{j_{i-1}}$ and ends at
$\eta_{j_i}$. We may assume that $\gamma_1$ begins at~$\eta_p$ and
$\gamma_s$ ends at~$\eta_q$ (where $\eta_p,\eta_q$ is the pair of
boundary cycles chosen above). The homology class
$[\gamma_1]+\cdots+[\gamma_s]\in\widehat H_1(P_I,\partial P_I)$ is
then equal to the chosen generator $g_{pq}$ of $\widehat
H_1(P_{I^k},\partial P_{I^k})$ corresponding to an edge path
from $\eta_p$ to $\eta_q$. We have therefore decomposed $g_{pq}$
into a product $a\cdot b$, as needed.
\end{proof}

\section{Proof of Lemma~\ref{rig3cl}}\label{proofrig3cl}
The proof uses the combinatorial result of Lemma~\ref{Fabc} and an
algebraic ``annihilator lemma'' of Fan, Ma and Wang~\cite{f-m-w}.

Recall that the \emph{annihilator} of an element $r$ in a ring $R$
is defined as
\[
  \Ann_R(r)=\{s\in R\colon rs=0\}.
\]

\begin{lemma}[{\cite[Lemma 3.3]{f-m-w}}]\label{annlemma}
Let $P$ be a $3$-polytope from the Pogorelov class~$\mathcal P$, with the dual
complex $\sK=\sK_P$. Let $R=H^*(\zp;\mathbf k)$, where $\mathbf k$ is a field. In the notation of
Lemma~\ref{rig3cl}, consider a $\mathbf k$-linear combination of elements
of $\mathcal T(P)$,
\[
  \alpha=\sum_{\{i,j\}\notin\sK}r_{ij}[u_iv_j]
\]
with at least two nonzero $r_{ij}\in\mathbf k$. Then, for any $\{k,l\}$
such that $r_{kl}\ne0$,
\[
  \dim\Ann_R[u_kv_l]>\dim\Ann_R\alpha.
\]
\end{lemma}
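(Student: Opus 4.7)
The plan is to reformulate the inequality via Poincaré duality and then analyze the two principal ideals multigradedly.

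First, since $\zp$ is a closed orientable $(m+3)$-manifold with perfect cup-product pairing into $H^{m+3}(\zp)\cong\mathbf{k}$, we have $\dim\Ann_R(r)=\dim R-\dim(Rr)$ for every $r\in R$. The asserted inequality is therefore equivalent to
\[
  \dim R\alpha > \dim R[u_kv_l].
\]

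Next, I would use the multigraded decomposition $R=\bigoplus_J \widetilde H^*(\sK_J)$ from Theorem~\ref{prodfsc}. Multiplication by $[u_kv_l]$ shifts each summand $\widetilde H^*(\sK_J)$ to $\widetilde H^{*+1}(\sK_{J\cup\{k,l\}})$ when $J\cap\{k,l\}=\varnothing$ (and to zero otherwise), and the shift $J\mapsto J\cup\{k,l\}$ is injective on multigradings. Hence $R[u_kv_l]$ is itself multigraded, and
\[
  \dim R[u_kv_l] \;=\; \sum_{J} \dim\bigl(\widetilde H^*(\sK_J)\cdot[u_kv_l]\bigr).
\]
Multiplication by $\alpha$ on the same source summand produces contributions $r_{ij}\,y_J[u_iv_j]$ (for $(i,j)\in\mathrm{supp}(\alpha)$ with $J\cap\{i,j\}=\varnothing$) lying in pairwise distinct target summands $\widetilde H^{*+1}(\sK_{J\cup\{i,j\}})$. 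Its kernel on $\widetilde H^*(\sK_J)$ is therefore $\bigcap_{(i,j)}\ker(\cdot[u_iv_j])$, which is contained in $\ker(\cdot[u_kv_l])$; this gives the column-wise inequality $\dim(\widetilde H^*(\sK_J)\cdot\alpha)\ge\dim(\widetilde H^*(\sK_J)\cdot[u_kv_l])$ for every $J$.

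The strict inequality on a specific column will come from Lemma~\ref{Fabc}. Given a second pair $(k',l')\in\mathrm{supp}(\alpha)\setminus\{(k,l)\}$, I would apply Lemma~\ref{Fabc} to the triple $(F_{k'},F_{l'},F_k)$ (and, analogously, to $(F_{k'},F_{l'},F_l)$) to obtain a belt $\mathcal{B}$ around $F_{k'},F_{l'}$ that avoids $F_k$ and $F_l$, with an arc of $\mathcal{B}\setminus\{F_{k'},F_{l'}\}$ disjoint from $F_k$ (respectively $F_l$). Setting $J$ to be the union of the two arcs, $\sK_J$ is a disjoint union of two simplicial paths, so $\widetilde H^0(\sK_J)\cong\mathbf{k}$ with a canonical generator~$y$; meanwhile $\sK_{J\cup\{k',l'\}}$ equals the chordless cycle $\mathcal{B}$ itself, giving $\widetilde H^1(\sK_{J\cup\{k',l'\}})\cong\mathbf{k}$ and $y\cdot[u_{k'}v_{l'}]\ne 0$. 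A further case analysis, invoking the second application of Lemma~\ref{Fabc} and the Pogorelov "no $4$-belt" hypothesis to rule out chordal shortcuts in $\sK_{J\cup\{k,l\}}$, verifies $y\cdot[u_kv_l]=0$. On this $J$, the $\alpha$-column therefore strictly dominates the $[u_kv_l]$-column.

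The main obstacle will be transferring column-wise strict inequality to the global inequality. For $R[u_kv_l]$, the column ranks sum exactly to the total dimension since the columns hit distinct multigraded components. For $R\alpha$, different columns may contribute to the same multigraded component, so a priori the total rank could be less than the sum of column ranks. The decisive point is that the extra contribution identified above lives in $\widetilde H^1(\sK_{J\cup\{k',l'\}})$, a component where no other column of $\cdot\alpha$ contributes: any such column would require a distinct pair $(i,j)\in\mathrm{supp}(\alpha)$ with $\{i,j\}\subseteq J\cup\{k',l'\}$, a configuration excluded by the belt-based choice of $J$ together with the Pogorelov hypothesis (no $3$- and $4$-belts force the relevant full subcomplexes to be precisely the chordless cycle $\mathcal{B}$). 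This gives $\dim R\alpha\ge \dim R[u_kv_l]+1$, completing the argument.
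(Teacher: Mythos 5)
Your reduction to images via rank--nullity and your column-wise kernel containment are both fine, but the argument breaks exactly at the point you flag and never recovers: the images of distinct columns under multiplication by $\alpha$ need not be independent, so the column-wise inequalities do not yield even the baseline bound $\dim R\alpha\ge\dim R[u_kv_l]$. The missing idea (and the heart of the paper's proof) is a multidegree rigidity statement rather than rank counting: if $J$ and $J'$ are both disjoint from $\{k,l\}$ and $J'\sqcup\{i,j\}=J\sqcup\{k,l\}$ with $\{i,j\}$ disjoint from $J'$, then necessarily $\{i,j\}=\{k,l\}$ and $J'=J$. Consequently, if one chooses a \emph{multigraded} complement $L=\bigoplus_J L_J$ of $\Ann_R[u_kv_l]$, then for any $\beta=\sum_J\beta_J\in L$ the multidegree-$(J\cup\{k,l\})$ component of $\beta\alpha$ is exactly $r_{kl}\,\beta_J[u_kv_l]$; hence multiplication by $\alpha$ is injective on $L$, giving $\dim R\alpha\ge\dim L=\dim R[u_kv_l]$ (equivalently $L\cap\Ann_R\alpha=0$, which is how the paper phrases it). Your proposal never isolates a subspace on which multiplication by $\alpha$ is injective, and without that the acknowledged overlap problem is fatal.

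The strict-inequality step has two further problems. First, Lemma~\ref{Fabc} applied to $(F_{k'},F_{l'},F_k)$ and to $(F_{k'},F_{l'},F_l)$ produces two \emph{a priori different} belts; it does not give a single belt avoiding both $F_k$ and $F_l$. In fact you do not need that: as in the paper, it suffices (after assuming wlog $l\notin\{k',l'\}$) to apply the lemma once to $(F_{k'},F_{l'},F_l)$ and to represent the class $y\in\widetilde H^0(\sK_{C\setminus\{k',l'\}})$ (where $C$ is the vertex set of the chordless cycle) by the $0$-cocycle supported on the arc not adjacent to $F_l$; then $y\cdot[u_kv_l]=0$ already on the cochain level because $v_iv_l=0$ for every vertex $i$ of that arc. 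Second, and more seriously, your ``decisive point'' is false: nothing in the Pogorelov hypothesis prevents the support of $\alpha$ from containing another pair $\{i,j\}$ of non-adjacent facets lying on the same belt (belts around adjacent facets have length at least $6$, so such pairs exist), and then the column $C\setminus\{i,j\}$ also maps into $\widetilde H^1(\sK_C)$. The paper sidesteps this by never comparing total column ranks: it adjoins the single class $\xi=y$ to the complement $L$ and verifies directly that multiplication by $\alpha$ is injective on $L\oplus\langle\xi\rangle$, using (i) that any contribution of $\xi\alpha$ to a multidegree containing $l$ must come from a term $[u_jv_l]$ and hence vanishes, so the components $\beta_J r_{kl}[u_kv_l]$ survive, and (ii) that the only support pair completing the multidegree of $\xi$ to $C$ is $\{k',l'\}$ itself, so $\xi\alpha\ne0$. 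Without an argument of this kind your conclusion $\dim R\alpha\ge\dim R[u_kv_l]+1$ does not follow.
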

\begin{proof}
In view of the isomorphisms~\eqref{PDred}, we can rewrite the isomorphism of Theorem~\ref{prodfsc} as
\[
  R=H^*(\zp)\cong\bigoplus_{I\subset[m]}\widehat H_*(P_I,\partial P_I)
\]
(we omit the coefficient field $\mathbf k$ in the notation for homology).

Take a complementary subspace $L_{kl}$ to $\Ann_R[u_kv_l]$ in~$R$,
so that $L_{kl}\oplus\Ann_R[u_kv_l]=R$.  For any $\beta\in
L_{kl}\setminus\{0\}$ we have $\beta\cdot[u_kv_l]\ne0$. Furthermore, we can choose $L_{kl}$ respecting the
multigrading, so that the $I$th multigraded component of $L_{kl}$ is a complementary subspace to
$\Ann_R[u_kv_l]\cap\widehat H_*(P_I,\partial P_I)$ in $\widehat H_*(P_I,\partial P_I)$.
Then we
can write $\beta=\sum_{I\subset[m]\setminus\{k,l\}}\beta_I$, where
$\beta_I$ denotes the $I$th multigraded component of~$\beta\in
L_{kl}\setminus\{0\}$. (Note that $\beta_I=0$ whenever $I\cap\{k,l\}\ne\varnothing$, as such $\beta_I$ would annihilate $[u_kv_l]$.) We
can choose $I\subset[m]\setminus\{k,l\}$ such that
$\beta_I\cdot[u_kv_l]\ne0$. Now consider $\alpha=\sum
r_{ij}[u_iv_j]$. We claim that the $(I\cup\{k,l\})$th multigraded
component of $\beta\cdot\alpha$ consists of $\beta_I\cdot[u_kv_l]$
only. Indeed, for any other component $\beta_{I'}$ of $\beta$ with
$I'\ne I$ and any summand $r_{ij}[u_iv_j]$ of $\alpha$, we have
$I'\cup \{i,j\}\ne I\cup\{k,l\}$, as $I'\in[m]\setminus\{k,l\}$.
Then $(\beta\cdot\alpha)_{I\cup\{k,l\}}=\beta_I\cdot[u_kv_l]\ne0$.
Hence, $L_{kl}\cap\Ann_R\alpha=\{0\}$, which implies that
$\dim\Ann_R[u_kv_l]\ge\dim\Ann_R\alpha$.

In order to show that the strict inequality holds, we shall find
an element $\xi\in \Ann_R[u_kv_l]$ such that
$(L_{kl}\oplus\langle\xi\rangle)\cap\Ann_R\alpha=\{0\}$. Take a
summand $r_{st}[u_sv_t]$ of $\alpha$ different from
$r_{kl}[u_kv_l]$. That is, $\{s,t\}\ne\{k,l\}$ and $r_{st}\ne0$.
We can assume without loss of generality that $l\notin\{s,t\}$. By
Lemma~\ref{Fabc}, there is a belt $\mathcal{B}$ in $P$ such that
$F_s,F_t\in \mathcal{B}$, $F_l\notin\mathcal{B}$, and $F_l$ does
not intersect one of the two connected components $B_1$ and $B_2$
of $\mathcal{B}\setminus\{F_s,F_t\}$, say~$B_1$. In the dual
language, there is a chordless cycle $\mathcal C$ in $\sK_P$ such
that $s,t\in\mathcal C$, $l\notin\mathcal C$, and the vertex $l$
is not joined by an edge to any vertex of the connected component
$L_1$ of $\mathcal C\setminus\{s,t\}$.

Now we observe that $\mathcal C\setminus\{s,t\}$ is a full
subcomplex of $\sK_P$ and take $\xi$ to be the cohomology class in
$R=H^*(\zp)$ given by a generator of $\widetilde H^0(\mathcal
C\setminus\{s,t\})\cong\Z$. Such a generator is represented by the
0-cocycle $\sum_{i\in L_1}\alpha_{\{i\}}$ (see
Example~\ref{exacochain}). We have $\xi\cdot[u_kv_l]=0$ because we
can write $\xi=\sum_{i\in L_1}\pm[u_{J_i}v_i]$ (see
Example~\ref{exacochain}) and $v_iv_l=0$ for any $i\in L_1$ by the
choice of the cycle~$\mathcal C$. On the other hand, the product
$\xi\cdot[u_sv_t]$ corresponds to a generator of $\widetilde
H^1(\mathcal C)\cong\Z$. Therefore, $\xi\in \Ann_R[u_kv_l]$ and
$\xi\cdot\alpha\ne0$ (the latter is because the multigraded
component of $\xi\cdot\alpha$ corresponding to $\mathcal C$ is
$\xi\cdot r_{st}[u_sv_t]\ne0$). Take
$\beta=\sum_{I\subset[m]\setminus\{k,l\}}\beta_I\in
L_{kl}\setminus\{0\}$ and choose $I\subset[m]\setminus\{k,l\}$
such that
$(\beta\cdot\alpha)_{I\cup\{k,l\}}=\beta_I\cdot r_{kl}[u_kv_l]\ne0$, as
in the beginning of the proof. The multigrading of $\xi$ does not
contain $l$, so we have
$(\xi\cdot\alpha)_{I\cup\{k,l\}}=\xi\cdot r_{jl}[u_jv_l]$ for some
$j\in[m]$. Now, $\xi\cdot r_{jl}[u_jv_l]=0$ because $\xi=\sum_{i\in
L_1}\pm[u_{J_i}v_i]$ and $v_iv_l=0$ for any $i\in L_1$, as $i$ and $l$ are not joined by an edge. Hence,
$((\beta+\xi)\cdot\alpha)_{I\cup\{k,l\}}=(\beta\cdot\alpha)_{I\cup\{k,l\}}\ne
0$. Thus, $(\beta+\xi)\cdot \alpha\ne0$ and we have proved that
$(L_{kl}\oplus\langle\xi\rangle)\cap\Ann_R\alpha=\{0\}$. This
implies that $\dim\Ann_R[u_kv_l]>\dim\Ann_R\alpha$.
\end{proof}

\begin{proof}[Proof of Lemma~\ref{rig3cl}]
We are given a $3$-polytope $P$ from the Pogorelov class~$\mathcal P$ and a ring
isomorphism $\psi\colon R=H^*(\zp)\stackrel\cong\longrightarrow
H^*(\mathcal Z_{P'})=R'$. We defined the set
\[
  \mathcal T(P)=\{\pm[u_iv_j]\in H^3(\zp),\quad F_i\cap
  F_j=\varnothing\},
\]
and the corresponding set for $P'$,
\[
  \mathcal T(P')=\{\pm[u'_iv'_j]\in H^3(\mathcal Z_{P'}),\quad F'_i\cap
  F'_j=\varnothing\}.
\]
We need to show that $\psi(\mathcal T(P))=\mathcal T(P')$, in
other words, $\psi([u_pv_q])=\pm[u'_rv'_s]$. We first use
Theorems~\ref{flagrig} and~\ref{4beltrig} to conclude that $P'$ also belongs to the class~$\mathcal P$. Now suppose that
$\psi([u_pv_q])=\alpha'=\sum r_{ij}[u'_iv'_j]$ with at least two
nonzero $r_{ij}$. We are then in the situation of
Lemma~\ref{annlemma}, which we can apply to~$P'$. We obtain that
$\dim\Ann_{R'}\alpha'<\dim\Ann_R[u'_kv'_l]$ for any nonzero summand
$r_{kl}[u'_kv'_l]$ of~$\alpha'$. Considering the inverse
isomorphism $\psi^{-1}\colon R'\to R$, we can choose $[u'_kv'_l]$
such that $\psi^{-1}([u'_kv'_l])=\alpha=\sum r_{ab}[u_av_b]$ where
$[u_pv_q]$ appears in the latter sum. As an isomorphism preserves
the dimension of the annihilator subspace, we obtain
\begin{multline*}
  \dim\Ann_R[u_pv_q]=\dim\Ann_{R'}\alpha'<\dim\Ann_R[u'_kv'_l]=
  \dim\Ann_R\alpha\\<\dim\Ann_R[u_pv_q],
\end{multline*}
which is a contradiction. It follows that $\psi([u_pv_q])$ is a
multiple of a single $[u'_rv'_s]$. Since $\psi$ is an
isomorphism over~$\mathbb Z$, we have $\psi([u_pv_q])=\pm[u'_rv'_s]$.
\end{proof}

\end{appendix}


\begin{thebibliography}{99}

\bibitem{andr70}
Andreev, Evgeniy M. \emph{Convex polyhedra in Loba\v{c}evski\u\i{} spaces.}
Mat. Sbornik~\textbf{81} (123) (1970), no.~3, 445--478 (Russian); Math. USSR Sbornik~\textbf{10}
(1970), 413--440 (English translation).

\bibitem{a-m-r}
Antolin-Camarena, Omar; Maloney, Gregory R.; Roeder, Roland K.\,W. \emph{Computing arithmetic invariants for hyperbolic reflection groups.} In: \emph{Complex Dynamics}. A\,K~Peters, Wellesley, MA, 2009, pp.~597--631; arXiv:0708.2109.

\bibitem{ayze}
Ayzenberg, Anton. \emph{Toric manifolds over $3$-polytopes.} Preprint (2016), arXiv:1607.03377.

\bibitem{bask03}
Baskakov, Ilia. \emph{Massey triple products in the cohomology of
moment-angle complexes.} Uspekhi Mat. Nauk~\textbf{58} (2003),
no.~5, 199--200 (Russian); Russian Math. Surveys~\textbf{58}
(2003), no.~5, 1039--1041 (English translation).

\bibitem{b-b-p04}
Baskakov, Ilia; Buchstaber, Victor; Panov, Taras.
\emph{Cellular cochain algebras and torus actions}. Uspekhi Mat. Nauk~\textbf{59} (2004), no.~3, 159--160 (Russian); Russian Math. Surveys~\textbf{59} (2004), no.~3, 562--563 (English translation).

\bibitem{brow72}
Browder, William. \emph{Surgery on simply-connected manifolds.} Ergebnisse der Mathematik und ihrer Grenzgebiete, 65. Springer-Verlag, New York-Heidelberg, 1972.

\bibitem{buch08}
Buchstaber, Victor. \emph{Lectures on toric topology}. In:
\emph{Proceedings of Toric Topology Workshop KAIST 2008}.
Information Center for Mathematical Sciences, KAIST. Trends in
Math.--New Series~{\bf10} (2008), no.~1, 1--64.

\bibitem{bu-er15}
Buchstaber, Victor; Erokhovets, Nikolay. \emph{Truncations of
simple polytopes and applications}. Trudy Mat. Inst. Steklova~{\bf
289} (2015), 115--144 (Russian); Proc. Steklov Inst. Math.~{\bf
289} (2015), no.~1, 104--133 (English translation).

\bibitem{bu-er}
Buchstaber, Victor; Erokhovets, Nikolay.
\emph{Construction of fullerenes}. Preprint (2015); arXiv:1510.02948.

\bibitem{bu-erS}
Buchstaber, Victor; Erokhovets, Nikolay.
\emph{Fullerenes, Polytopes and Toric Topology}.
Lecture Note Series, IMS, National University of Singapore, to appear; arXiv:1609.02949.

\bibitem{bu-erC}
Buchstaber, Victor; Erokhovets, Nikolay.
\emph{Finite sets of operations sufficient to construct any fullerene from $C_{20}$}. Structural Chemistry (2016), to appear;
%DOI: 10.1007/s11224-016-0885-8,
arXiv:1611.05298.

\bibitem{bu-er17}
Buchstaber, Victor; Erokhovets, Nikolay. \emph{Constructions of
families of three-dimensional polytopes, characteristic fragments
of fullerenes and Pogorelov polytopes}. Izv. Ross. Akad. Nauk,
Ser. Mat.~{\bf 81} (2017), no.~5 (Russian); Izv. Math.~{\bf 81}
(2017), no.~5 (English translation).

\bibitem{bu-pa00}
Buchstaber, Victor; Panov, Taras. \emph{Torus actions,
combinatorial topology and homological algebra}. Uspekhi Mat.
Nauk~{\bf 55} (2000), no.~5, 3--106 (Russian). Russian Math.
Surveys~{\bf 55} (2000), no.~5, 825--921 (English translation).

\bibitem{bu-pa15}
Buchstaber, Victor; Panov, Taras. \emph{Toric Topology}. Math.
Surv. and Monogr.,~204, Amer. Math. Soc., Providence, RI, 2015.

\bibitem{bu-pa16}
Buchstaber, Victor; Panov, Taras.
\emph{On manifolds defined by $4$-colourings of simple $3$-polytopes.}
Uspekhi Mat. Nauk~{\bf 71} (2016), no.~6, 157--158 (Russian). Russian Math.
Surveys~{\bf 71} (2016), no.~6 (English translation).

\bibitem{b-p-r07}
Buchstaber, Victor; Panov, Taras; Ray, Nigel.
\emph{Spaces of polytopes and cobordism of quasitoric manifolds.} Mosc. Math. J.~{\bf7}
(2007), no.~2, 219--242.

\bibitem{ch-da95}
Charney, Ruth; Davis, Michael W. \emph{The $K(\pi,1)$-problem for hyperplane complements
associated to infinite reflection groups.} J. Amer. Math. Soc.~\textbf{8} (1995), no.~3, 597--627.

\bibitem{choi}
Choi, Suyoung. \emph{Classification of Bott manifolds up to dimension $8$}.
Proc. Edinb. Math. Soc. (2)~\textbf{58} (2015), no.~3, 653--659.

\bibitem{ch-ma12}
Choi, Suyoung; Masuda, Mikiya. \emph{Classification of
$\Q$-trivial Bott manifolds}. J. Symplectic Geom.~{\bf 10} (2012),
447--461.

\bibitem{c-m-m15}
Choi, Suyoung; Masuda, Mikiya; Murai, Satoshi. \emph{Invariance of
Pontrjagin classes for Bott manifolds}. Algebr. Geom.
Topol.~\textbf{15} (2015), no.~2, 965--986.

\bibitem{c-m-o16}
Choi, Suyoung; Masuda, Mikiya; Oum, Sang-il. \emph{Classification
of real Bott manifolds and acyclic digraphs}. Trans. Amer. Math.
Soc.~\textbf{369} (2017), no.~4, 2987--3011.

\bibitem{c-m-s10}
Choi, Suyoung; Masuda, Mikiya; Suh, Dong Youp. \emph{Topological
classification of generalized Bott towers}. Trans. Amer. Math.
Soc.~\textbf{362} (2010), no.~2, 1097--1112.

\bibitem{c-m-s11}
Choi, Suyoung; Masuda, Mikiya; Suh, Dong Youp. \emph{Rigidity
problems in toric topology: a survey}. Proc. Steklov Inst.
Math.~{\bf 275} (2011), 177--190.

\bibitem{c-p-s10}
Choi, Suyoung; Panov, Taras; Suh, Dong Youp.
\emph{Toric cohomological rigidity of simple convex polytopes.} J. Lond. Math.
Soc.~(2)~\textbf{82} (2010), no.~2, 343--360.

\bibitem{ch-pa16}
Choi, Suyoung; Park, Seonjeong. \emph{Projective bundles over toric surfaces.}
Internat. J. Math.~\textbf{27} (2016), no.~4, 1650032, 30 pp.

\bibitem{c-p-s12}
Choi, Suyoung; Park, Seonjeong; Suh, Dong Youp.
\emph{Topological classification of quasitoric manifolds with second Betti
number~$2$.} Pacific J. Math.~\textbf{256} (2012), no.~1, 19--49.

\bibitem{da-ja91}
Davis, Michael W.; Januszkiewicz, Tadeusz. \emph{Convex
polytopes, Coxeter orbifolds and torus actions}. Duke Math.
J.~{\bf62} (1991), no.~2, 417--451.

\bibitem{d-j-s98}
Davis, Michael W.; Januszkiewicz, Tadeusz; Scott, Richard A.
\emph{Nonpositive curvature of blow-ups}.
Selecta Math. (N.S.)~\textbf{4} (1998), no.~4, 491--547.

\bibitem{dela05}
Delaunay, Claire. \emph{On hyperbolicity of toric real threefolds.}
Int. Math. Res. Not.~\textbf{2005}, no.~51, 3191--3201.

\bibitem{de-su07}
Denham, Graham; Suciu, Alexander. \emph{Moment-angle
complexes, monomial ideals, and Massey products}. Pure Appl. Math.
Q.~\textbf{3} (2007), no.~1, 25--60.

\bibitem{d-s-s13}
Deza, Michel; Dutour Sikiri\'c,  Mathieu; Shtogrin, Mikhail.
\emph{Fullerenes and disk-fullerenes}. Uspekhi Mat.
Nauk~\textbf{68} (2013), no.~4, 69--128 (Russian); Russian Math.
Surveys~\textbf{68} (2013), no.~4, 665--720 (English).

\bibitem{dosl03}
Do\v{s}li\'{c}, Tomislav. \emph{Cyclical edge-connectivity of fullerene graphs and $(k,6)$-cages.}
J. Math. Chem.~\textbf{33} (2003), no.~2, 103--112.

\bibitem{f-m-w}
Fan, Feifei; Ma, Jun; Wang, Xiangjun. \emph{$B$-Rigidity of flag
$2$-spheres without $4$-belt}. Preprint (2015); arXiv:1511.03624.

\bibitem{fa-wa}
Fan, Feifei; Wang, Xiangjun.  \emph{On the cohomology of
moment-angle complexes associated to Gorenstein* complexes}.
Preprint (2015); arXiv:1508.00159.

\bibitem{grom87}
Gromov, Mikhail. \emph{Hyperbolic groups.} In: \emph{Essays in
Group Theory}, S.\,M.~Gersten, ed. Math. Sci. Res. Inst. Publ.,~8.
Springer, New York, 1987, pp.~75--263.

\bibitem{grue68}
Gr\"{u}nbaum, Branko. \emph{Some analogues of Eberhard's theorem on convex polytopes}.
Israel J. Math.~\textbf{6} (1968), 398--411.

\bibitem{hasu15}
Hasui, Sho. \emph{On the classification of quasitoric manifolds over dual cyclic polytopes.}
Algebr. Geom. Topol.~\textbf{15} (2015), no.~3, 1387--1437.

\bibitem{inou08}
Inoue, Taiyo. \emph{Organizing volumes of right-angled hyperbolic
polyhedra.} Algebr. Geom. Topol.~\textbf{8} (2008), no.~3,
1523--1565.

\bibitem{i-f-m13}
Ishida, Hiroaki; Fukukawa, Yukiko; Masuda, Mikiya. \emph{Topological toric manifolds}.
Moscow Math. J.~\textbf{13} (2013), no.~1, 57--98.

\bibitem{jupp73}
Jupp, P.\,E. \emph{Classification of certain $6$-manifolds.} Proc. Cambridge Philos. Soc.~\textbf{73} (1973), 293--300.

\bibitem{ka-ma09}
Kamishima, Yoshinobu; Masuda, Mikiya. \emph{Cohomological rigidity of real Bott manifolds}.
Algebr. Geom. Topol. \textbf{9} (2009),  no.~4, 2479--2502.

\bibitem{limo}
Limonchenko, Ivan. \emph{Massey products in cohomology of
moment-angle manifolds for 2-truncated cubes}. Uspekhi Mat.
Nauk~\textbf{71} (2016), no.~2, 207--208 (Russian); Russian Math.
Surveys~\textbf{71} (2016), no.~2, 376--378 (English translation).

\bibitem{l-m-r06}
Lord, Eric A.; Mackay, Alan L.; Ranganathan, S. \emph{New Geometries for New Materials}. Cambridge Univ. Press, Cambridge, 2006.

\bibitem{marg74}
Margulis, Grigory A.
\emph{Arithmetic properties of discrete groups.} Uspekhi Mat. Nauk~\textbf{29} (1974), no.~1, 49--98
 (Russian); Russian Math. Surveys~\textbf{29} (1974), no.~1, 107--156 (English translation).

\bibitem{masu09}
Masuda, Mikiya. \emph{Cohomological non-rigidity of generalized real Bott manifolds of height~$2$}.
Proc. Steklov Inst. Math.~\textbf{268} (2010), 242--247.

\bibitem{ma-pa08}
Masuda, Mikiya; Panov, Taras. \emph{Semifree circle actions, Bott
towers, and quasitoric manifolds}. Mat. Sbornik~{\bf 199} (2008),
no.~8, 95--122 (Russian); Sbornik Math.~\textbf{199} (2008),
no.~7--8, 1201--1223 (English translation).

\bibitem{ma-su08}
Masuda, Mikiya; Suh, Dong Youp. \emph{Classification problems of
toric manifolds via topology}. In: \emph{Toric Topology},
M.~Harada \emph{et al.}, eds. Contemp. Math.,~460. Amer. Math.
Soc., Providence, RI, 2008, pp.~273--286.

\bibitem{miro04}
Mironov, Andrey E. \emph{New examples of Hamilton-minimal and
minimal Lagrangian submanifolds in $\C^n$ and $\C P^n$.} Mat.
Sbornik~\textbf{195} (2004), no.~1, 89--102 (Russian); Sbornik
Math.~\textbf{195} (2004), no.~1--2, 85--96 (English translation).

\bibitem{mi-pa13}
Mironov, Andrey E.; Panov, Taras. \emph{Intersections of
quadrics, moment-angle manifolds, and Hamiltonian-minimal
Lagrangian embeddings}. Funktsional. Anal. i Prilozhen.~{\bf 47}
(2013), no.~1, 47--61 (Russian); Funct. Anal. and Appl.~{\bf 47}
(2013), no.~1, 38--49 (English translation).

\bibitem{niku81}
Nikulin, Vyacheslav V. \emph{On the classification of arithmetic groups generated by reflections in Lobachevski\u\i{} spaces}. Izv. Akad. Nauk SSSR Ser. Mat.~\textbf{45} (1981), no.~1, 113--142 (Russian);
Math. USSR-Izv.~\textbf{18} (1982), no.~1, pp. 99--123 (English translation).

\bibitem{novi64}
Novikov, Sergei P. \emph{Homotopically equivalent smooth manifolds.\ I.} Izv. Akad. Nauk SSSR Ser. Mat.~\textbf{28}
(1964), no.~2, 365--474 (Russian); English translation available at http://www.mi.ras.ru/\~{}snovikov

\bibitem{novi96}
Novikov, Sergei P. \emph{Topology I}. Encyclopaedia Math. Sci. 12, Springer-Verlag, Berlin, 1996.

\bibitem{pano08}
Panov, Taras. \emph{Cohomology of face rings, and torus actions}.
In: \emph{Surveys in Contemporary Mathematics}. London Math. Soc.
Lecture Note Series,~347. Cambridge Univ. Press, Cambridge, 2008,
pp.~165--201; arXiv:math.AT/0506526.

\bibitem{pano13}
Panov, Taras. \emph{Geometric structures on moment-angle
manifolds}. Uspekhi Mat. Nauk~\textbf{68} (2013), no.~3, 111--186
(Russian); Russian Math. Surveys~\textbf{68} (2013), no.~3,
503--568 (English translation).

\bibitem{pa-ve16}
Panov, Taras; Veryovkin, Yakov.
\emph{Polyhedral products and commutator subgroups of right-angled Artin and Coxeter groups}.
Mat. Sbornik~\textbf{207} (2016), no.~11, 105--126 (Russian); Sbornik Math.~\textbf{207} (2016), no.~11 (English translation).

\bibitem{pogo67}
Pogorelov, Alexei V. \emph{Regular decomposition of the Loba\v cevskii space.}
Mat. Zametki~{\bf1} (1967), no.~1, 3--8 (Russian).

\bibitem{po-vi05}
Potyagailo, Leonid; Vinberg, Ernest.
\emph{On right-angled reflection groups in hyperbolic spaces.} Comment. Math. Helv.~\textbf{80} (2005), no.~1, 63--73.

\bibitem{suya14}
Suyama, Yusuke. \emph{Examples of smooth compact toric varieties
that are not quasitoric manifolds.} Algebr. Geom.
Topol.~\textbf{14} (2014), no.~5, 3097--3106.

\bibitem{suya15}
Suyama, Yusuke. \emph{Simplicial $2$-spheres obtained from
non-singular complete fans.} Dal'nevost. Mat. Zh.~\textbf{15}
(2015), no.~2, 277--288; arXiv:1409.3713.

\bibitem{thur98}
Thurston, William P. \emph{Shapes of polyhedra and triangulations of the sphere.}
In: \emph{The Epstein birthday schrift}, Geom. Topol. Monogr.,~1. Geom. Topol. Publ., Coventry, 1998, pp.~511--549.

\bibitem{vesn87}
Vesnin, Andrei Yu.
\emph{Three-dimensional hyperbolic manifolds of L\"obell type}. Sibirsk. Mat. Zh.~\textbf{28} (1987), no.~5, 50--53 (Russian); Siberian Math.~J. \textbf{28} (1987), no.~5,
731--734 (English translation).

\bibitem{vesn91}
Vesnin, Andrei Yu. \emph{Three-dimensional hyperbolic manifolds
with a common fundamental polyhedron}. Mat. Zametki~\textbf{49}
(1991), no.~6, 29--32 (Russian); Math. Notes~\textbf{49} (1991),
no.~5--6, 575--577 (English translation).

\bibitem{vesn17}
Vesnin, Andrei Yu. \emph{Right-angled polytopes and
three-dimensional hyperbolic manifolds}. Uspekhi Mat.
Nauk~\textbf{72} (2017), no.~2 (Russian), 67--110; Russian Math.
Surveys~\textbf{72} (2017), no.~2 (English translation).

\bibitem{vinb67}
Vinberg, \'Ernest B.
\emph{Discrete groups generated by reflections in Loba\v{c}evski\u\i{} spaces.}
Mat. Sbornik~\textbf{72 (114)} (1967), no.~3, 471--488 (Russian); Math. USSR-Sb.~\textbf{1} (1967), no.~3, 429--444 (English translation).

\bibitem{vinb84}
Vinberg, \'Ernest B.
\emph{Absence of crystallographic groups of reflections in Lobachevski\u\i{} spaces of large dimension}. Trudy Moskov. Mat. Obshch.~\textbf{47} (1984), 68--102 (Russian); Trans. Mosc. Math. Soc. 1985, pp.~75--112 (English translation).

\bibitem{wall66}
Wall, C.\,T.\,C. \emph{Classification problems in differential topology. V. On certain $6$-manifolds.} Invent. Math.~\textbf{1} (1966), 355--374.

\bibitem{zhub00}
Zhubr, Alexei V. \emph{Closed simply connected six-dimensional manifolds: proofs of classification theorems.} Algebra i Analiz~\textbf{12} (2000), no.~4, 126--230 (Russian); St. Petersburg Math.~J.~\textbf{12} (2001), no.~4, 605--680 (English translation).

\bibitem{zieg07}
Ziegler, G\"unter M. \emph{Lectures on polytopes.}
Graduate Texts in Mathematics, 152. Springer-Verlag, New York,
1995.
\end{thebibliography}
\end{document}